
\documentclass[10pt,twoside,openright]{report} 
\usepackage[phd]{edmaths}
\usepackage{subfig}
\usepackage{amsmath}
\usepackage{breakcites}
\usepackage{dsfont}
\usepackage{tikz}
\usetikzlibrary{matrix}
\usepackage{amssymb}
\usepackage{amsthm}
\usepackage{multirow}
\usepackage{color}
\usepackage{colortbl}
\usepackage{xcolor}
\usepackage{float}
\usepackage{fancyhdr}
\usepackage{array}
\usepackage{tabularx}
\usepackage{colortbl}
\usepackage{mathrsfs}
\usetikzlibrary{shapes,arrows}
\usepackage{textcomp}
\usepackage{graphicx, hyperref,color}
\usepackage{mathtools}
\usepackage{enumitem}
\usepackage{romannum}
\usepackage{breqn}
\allowdisplaybreaks

\AtBeginDocument{\pagenumbering{arabic}}

\theoremstyle{plain} 
\newtheorem{theorem}{Theorem}[section]
\newtheorem{lemma}{Lemma}[section]
\newtheorem{proposition}{Proposition}[section]
\newtheorem{corollary}{Corollary}[section]

\newtheorem{assumption}{Assumption}[section]


\newcommand{\twopartdef}[4]
{
	\left\{
	\begin{array}{ll}
		#1 &  #2 \\
		#3 &  #4
	\end{array}
	\right.
}

\newcommand{\RN}[1]{%
	\textup{\uppercase\expandafter{\romannumeral#1}}%
}
\newcommand\numberthis{\addtocounter{equation}{1}\tag{\theequation}}
\newcommand{\Rnum}[1]{\lowercase\expandafter{\romannumeral #1\relax}}

\newcommand{\sindex}{k}
\newcommand{\diffuscal}{\hat}

\newcommand{\diffufactor}{\frac{1}{\sqrt{r}}}
\newcommand{\fluidscal}{\bar}
\newcommand{\offeredload}{\frac{\lambda^r}{\bar{\mu}}}
\newcommand{\systemindex}{r}

\newcommand{\viva}[1]{{#1}}

\DeclareMathOperator*{\argmin}{arg\,min} 

\bibliographystyle{apalike}

\numberwithin{equation}{section}

\title{Many Server Queueing Models with Heterogeneous Servers and Parameter Uncertainty with Customer Contact Centre Applications}
\author{Wenyi Qin}
\date{2018}

\begin{document}

\maketitle

\declaration


\newcommand{\acknowledgements}{%
	\chapter*{Acknowledgements}
}

\acknowledgements
I would like to express deep gratitude to my supervisor : Dr.\ Burak B\"uke, without whom I would never come to Edinburgh to pursue a PhD degree. He taught me to do research from the very beginning, and helped me during all these years. He was always patient with me when I faced troubles in research, from basic mathematics to more advanced problems. I also would like to thank my second supervisor Dr.\ Tibor Antal, who has always been nice and humorous. Thank you for the research advices and being supportive during my study.

I would like to thank Gill Law and Iain Dornan, who provided plenty of assistance and made my study smooth enough. I am very thankful to the China Scholarship Council for funding my research. I also wish to thank the School of Mathematics for financial support which helped me through final stages of my study, as well as allowed me to attend many conferences and summer schools. 

I would like to thank my examiners Dr.\ Gon\c{c}alo dos Reis and Dr.\ Rouba Ibrahim for their valuable comments and feedback.

I would like to thank my friends in Edinburgh for all of the priceless memories we created together, and for their kindness when I needed help, including Xavier Cabezas, Lena Freire, Hanyi Chen, Robert Gower, Marie Humbert, Saranthorn Phusingha(Mook), Minerva Mart\'in de Campo, Ivet Galabova, Marion Lemery, Rodrigo Garcia, Nicolas Loizou, Xiling Zhang, Dominik Csiba, Ba\c{s}ak Gever and many others. I would like to thank my flatmate Hongsheng Dong, who is always kind and willing to help. Special thanks to Tom Byrne for proofreading this thesis, and to Jakub Kone\v{c}n\'y, who offered tremendous help in every facet of my life during these four years.

There are no words that can express my feelings for Edinburgh. It is such a fantastic city that magically combines nature and culture, tranquility and vitality, antiquity and modernity. It is also an open-minded city that welcomes people with all kinds of backgrounds. I am so lucky that I could spend one of the most important period of my life in this romantic city with many amazing people from different culture. Thanks Edinburgh.

\thispagestyle{empty}

\begin{abstract}
In this thesis, we study the queueing systems with heterogeneous servers and service rate uncertainty under the Halfin-Whitt heavy traffic regime. First, we analyse many server queues with abandonments when service rates are i.i.d.\ random variables. We derive a diffusion approximation using a novel method. The diffusion has a random drift, and hence depending on the realisations of service rates, the system can be in \textit{Quality Driven} (QD), \textit{Efficiency Driven} (ED) or \textit{Quality-Efficiency-Driven} (QED) regime. When the system is under QD or QED regime, the abandonments are negligible in the fluid limit, but when it is under ED regime, the probability of abandonment will converge to a non-zero value. We then analyse the optimal staffing levels to balance holding costs with staffing costs combining these three regimes. We also analyse how the variance of service rates influence abandonment rate. 

Next, we focus on the state space collapse (SSC) phenomenon. We prove that under some assumptions, the system process will collapse to a lower dimensional process without losing essential information. We first formulate a general method to prove SSC results inside pools for heavy traffic systems using the hydrodynamic limit idea. Then we work on the SSC in multi-class queueing networks under the Halfin-Whitt heavy traffic when service rates are i.i.d.\ random variables within pools. For such systems, exact analysis provides limited insight on the general properties. Alternatively, asymptotic analysis by diffusion approximation proves to be effective. Further, limit theorems, which state the diffusively scaled system process weakly converges to a diffusion process, are usually the central part in such asymptotic analysis. The SSC result is key to proving such a limit. We conclude by giving examples on how SSC is applied to the analysis of systems.
\end{abstract}

\renewcommand{\abstractname}{Lay Summary}
\begin{abstract}
Contact centres have been playing a more and more important role in the society. Almost everyone has to interact with contact centres such as airline companies, banks, and utility companies. For managers, how to make decisions to balance the cost and service quality of the contact centre becomes a significant problem. Thus we need to analyse call centre properties such as the probability of waiting, staffing costs, holding costs (for delayed customers) and rate of customers abandoning the service without getting service. In reality, servers are unlikely to be identical in such systems. There are many reasons which will cause heterogeneity among servers, such as personal skills, health and weather. Hence we focus on systems where service rates are random variables. We develop approximations of the systems to analyse the basic properties and how they behave when the system becomes large. We then show that under some assumptions, the dimensions of the system process will reduce while still keeping the essential information. So called state space collapse (SSC) results will simplify the analysis of the system and help us  gain key insights using approximations.  
\end{abstract}

\tableofcontents

\chapter{Introduction and Literature Review}
\section{Introduction}
\viva{Many server queues are widely seen nowadays. For example, in banks, customers come for service, then staffs process customers' requests, and after their services are completed, customers leave banks. This is a classical many server queue system with arrivals, services, and departures. It can also be observed in other scenarios such as emergency departments of hospitals, call centres, post offices, and computers. For these systems, servers are usually different from each other. They may possess different types of skills. Even when they have the same skill, their ability on this skill may be different. Such heterogeneous systems are not studied sufficiently in literature yet. We are going to investigate their behaviours and properties in this thesis. More specifically, call centres is a very important area where this heterogeneity can be applied. We are going to focus on the applications of heterogeneous systems in call centres. }

A call centre is a service operation over the phone. It consists of groups of people, called agents or servers, who provide service to customers. Call centres are increasingly important in today's business world since they have become a preferred and prevalent means for companies to communicate with their customers. Call centres are data rich environments which triggers many interesting mathematical problems. For call centre managers, it is important to guide the system to achieve certain service levels while the costs remain reasonable. There are many factors that can influence service levels. For example, the routing scheme which leads arriving customers to specific servers, the deployment of servers, and the scheduling policy which guides servers to accept customers. Existing research provides fruitful results regarding these problems for call centres with identical servers. However, in reality, it is usually the case that servers will be heterogeneous. \viva{Individual servers can possess unique skills, and even when a group of servers have the same skills, their abilities to show the skills are subject to environment.} In this thesis, we are going to focus on call centres with heterogeneous servers and analyse their properties. Our proxy for heterogeneity is the servers' service time distribution.

A call centre can be seen as a queueing system. Customers arrive at the centre according to a stochastic process, then they are routed to available idle servers based on some routing policies. If there are no idle servers available upon their arrival, they will wait in the queue, and they may wait until they get service or abandon the queue before they get service. On the other hand, a prescribed scheduling policy is used to dispatch a server to serve a customer. Once a customer is routed to a server, s/he will be served with a specific rate based on the server, and s/he will leave the system when the service is finished.

The cost-service level trade-off has a central place in quantitative call centre management. \viva{When the cost of capacity is dominated by the waiting cost of customers, the decision maker concentrates on the waiting cost and sets the staffing levels so that the utilisation ends up being less than one}. This is called the \textit{Quality-Driven} (QD) regime. The other extreme is when staffing costs dominate the waiting costs. In such case, utilisation of servers is fixed and is equal to one. In the long run, such a large scale system will be unstable. Customers will accumulate and a significant portion of the customers will abandon. This is called the \textit{Efficiency-Driven} (ED) regime. In between these two extremes is the \textit{Quality-Efficiency-Driven} (QED) regime, under which quality and efficiency are balanced. Under this regime, utilisations will approach to one from below as the system size increases, and the proportion of customers who wait before service converges to a constant which is related to the staffing level.

In this thesis, we focus on the QED regime. \cite{sqrt} proposed the approach to analyse the system performance under the QED regime for homogeneous servers. This work is a milestone in queueing theory which initiated a lot of research. We are going to modify their assumptions and apply it to systems where each server is unique and different. \viva{In particular, we assume service rates are i.i.d.\ random variables, and remain the same once the system starts operating, i.e.\ they do not change with time.}

Apart from the staffing decisions, how to route customers to different servers will also influence the system quality. We have the routing policy to ensure customers are routed to servers in a certain way. For homogeneous systems, routing policy does not play a major role as no matter to which server a customer is routed, there is no change in the system process. For heterogeneous systems, each routing policy will yield different performances. We will mainly focus on two policies : Longest Idle Server First (LISF) policy, which will route a customer to the server who has been idle for the longest time, and Faster Server First (FSF) policy, which will always route a customer to the fastest server among those idle servers. As for scheduling policies, in our study we let it be First-In-First-Out (FIFO), i.e.\ when customers are queueing, they will get service by the order of their arrival.

Routing policies will cause not only difference in service levels, but also different fairness among servers, which is another measure for system quality. We also analyse fairness for heterogeneous servers under different routing policies.

For such large scale systems, exact analysis is usually intractable. Instead, we use asymptotic analysis. We use diffusion processes to approximate original queueing processes. By analysing properties of limiting diffusions, we can get insights for decision making in reality. Based on \cite{Atar}'s result, first we analyse many server queues with random service rates under the QED regime. Then we use a similar framework as in \cite{borst2004dimensioning} to obtain the optimal staffing levels while balancing waiting and staffing costs. Later in this thesis, we prove the diffusion limit result of \cite{Atar} using a novel method. 

Then, we analyse systems with random service rates and abandonments similarly as above. In addition to the same results for systems without abandonments, we also show that the influence of service rates variance on abandonment rate are different under different routing policies. For LISF, the abandonment rate is increasing with variance, while for FSF it may be decreasing.

Finally, we consider the state space collapse (SSC) phenomenon which implies that under some assumptions, the system process is asymptotically equivalent to a lower dimensional process. We first formulate a general method to prove SSC results for a single pool with random servers under the Halfin-Whitt regime, by which we can prove results presented in \cite{Atar} in a different way. Then, adapting the results in \cite{Dai}, we show how SSC in multiclass queueing networks can be obtained under the Halfin-Whitt heavy traffic regime when service rates are i.i.d.\ random variables within pools. 

\viva{Our main contributions can be summarised as follows.}
\viva{\begin{itemize}
\item
Use a martingale method to prove the diffusion limit for many server queues with random service rates and abandonments.
\item
Establish the optimal staffing problem for many server queues with random service rates. Provide a continuous approximation of this optimisation problem and validate it. Tightness of the steady state is proved in order to show the interchangeable limit.
\item
Prove SSC results for parallel random server systems. Use a coupling method to prove the almost Lipschitz condition for departure processes.
\end{itemize}}
The thesis is organised as follows. In the rest of this chapter, we review the literature, comparing existing results with our new results. In Chapter 2, we first present \cite{Atar}'s results, and show the diffusion limit using the method developed by Atar. Then we include abandonments to Atar's model, and also show its diffusion limit. In Chapter 3, we formulate an optimal staffing problem for models in \cite{Atar}. Later we extend the problem to systems with abandonments. We also analyse how the variance of service rates influence abandonment rates. In Chapter 4, we talk about state space collapse results for many server queueing networks with server heterogeneity, and how it can be applied to system analysis. \viva{In Chapter 5, we conclude the thesis by summarising our contributions and point out future research directions.}

\section{Literature review}
\viva{Queueing models are used broadly in many service systems such as call centres, healthcare and computer science. For systems with arrivals, service and departures, it is convenient to model them as queueing systems and analyse their performance. For example, for call centres, there are incoming calls, agents who answer calls and call departures, and such call centres can be analysed using queueing theory. As for queueing analysis used in other areas, \cite{Mandelbaum} discuss fair routing between emergency departments and hospital wards under the QED regime. \cite{deo2011centralized} use a game-theoretic queueing model and find an equilibrium on the accepted diverted ambulance from emergency departments of other hospitals. \cite{tezcan2014routing} consider customer service chat systems where customers can receive real time service from agents using an instant messaging application over the internet. We will focus on call centres in this thesis using queueing modelling and analyse their performance.}\\
The research on call centres can be viewed under different headings. \cite{tutorial} and \cite{multiperspective} review research on call centres, and provide a survey of literature on call centre operations management. They also identify some promising directions for future research. The most famous and basic queueing model for call centres is the Erlang-C or Erlang delay model, which deals with only one type of call and server without abandonments; thus every customer waits until s/he reaches a server. \cite{Koole} gives a general idea of how the Erlang-C formula is used in call centre. More mathematical details on the Erlang-C formula can be found in \cite{cooper}. The Erlang-C formula is an important formula in the early stage of call centre research. It gives an explicit form of statistical-equilibrium distributions of the queueing process given the arrival rate, identical service rates, and number of servers. Thus, the probability of waiting can be calculated, to decide on the number of servers that are needed to make the system achieve a certain service level.

However, such direct analysis becomes impractical when the system grows large. Hence, the asymptotic analysis should be used. Diffusion approximations for stochastic processes in queueing models prove to be quite useful (see \cite{Iglehart}, \cite{Stone}, \cite{sqrt}); The work of \cite{sqrt} is the most relevant to our work. It considers a sequence of $GI/M/s$ systems in which the traffic intensities converge to one from below, which brings the Halfin-Whitt heavy traffic regime into the picture. With arrival rate $\lambda$, service rate $\mu$, and the Halfin-Whitt regime, under a certain scaling, the probability of waiting converges to a constant $\alpha$ which is strictly greater than $0$ and less than $1$. $\alpha$ can be used to indicate the service level of the system. The offered load is a measure of traffic in a queue and is defined to be $\frac{\lambda}{\mu}$. The staffing level of such a system will be offered load $\frac{\lambda}{\mu}$ plus the square root of the offered load $\sqrt{\frac{\lambda}{\mu}}$ multiplied by a constant $\beta$, where the coefficient $\beta$ depends on the service level $\alpha$. The quantity $\beta\sqrt{\frac{\lambda}{\mu}}$ is called the ``safety staffing" level against stochastic variability. Using this approach, staffing and waiting costs are well balanced. The Halfin-Whitt regime has been extended in several directions. \cite{Janssen} propose refinements of the celebrated square-root safety-staffing rule which have the appealing property that they are as simple as the conventional square-root safety-staffing rule. \cite{Atar} introduces a new square root staffing policy for many servers systems with random service rates. \cite{Puhalskii} extend the results to a system with multiple customer classes, priorities, and phase-type service distributions. \cite{Armony} establishes diffusion approximations and staffing levels for inverted-V systems. Our study also focuses on the Halfin-Whitt regime, and extends it to heterogeneous servers instead of identical ones.

Staffing has always been a central issue for call centre managers. There are many research papers on staffing problems for different models.  See \cite{mandelbaum2009staffing}, \cite{koccauga2015staffing}, \cite{whitt2006staffing}, \cite{mandelbaum2009staffing} and \cite{armony2011routing} for different discussions. Based on staffing level, systems can be in a QD, ED or QED regime. When the system is under a QD or QED regime, the abandonments are negligible in the limit, but when it is under an ED regime, the probability of abandonment will converge to a non-zero value. \cite{borst2004dimensioning} determine the asymptotically optimal staffing level for $M/M/N$ queues under different regimes. \cite{whitt2004efficiency} investigates the ED many server heavy traffic regime for queues with abandonments. Our staffing model is based on the framework of \cite{borst2004dimensioning}.

\viva{For heterogeneous systems, how to route arriving customers to servers and how to schedule servers to serve customers are crucial decisions to the system performance. Routing and scheduling have been studied extensively in the literature. \cite{tezcan2014routing} consider customer service chat systems where agents can serve multiple customers simultaneously. They propose routing policies for such system with impatient customers with the objective to minimise the probability of abandonment in steady state. \cite{gurvich2010staffing} consider the staffing problem for call centres with multi-class customers and different agent types operating under QD constraints and arrival rate uncertainty. They propose a two-step solution which contains two actions: the number of agents of each type, and a dynamic routing policy. \cite{Armony} shows that for the inverted-V model, the FSF policy is asymptotically optimal in the QED regime and no thresholds are needed. There is literature that carries out exact analysis and asymptotic analysis under conventional heavy traffic, such as \cite{rykov2004optimal} and \cite{kelly1993dynamic}.} Routing policies also play an important role in staffing optimisation. Under different policies, the steady state behaviour of the system changes and fairness among servers is also different. FSF policy is commonly used. \cite{Armony} establishes diffusion limits for the inverted-V systems under FSF policy and concludes they have a better performance than their corresponding homogeneous systems. \cite{Atar} provides diffusion limits for many server queues with random servers under LISF and FSF policies. \cite{tezcan2008optimal} develops limit theorems for inverted-V systems under minimum-expected-delay faster-server-first (MED-FSF) and minimum-expected-delay load-balancing (MED-LB) routing policies. Notice that LISF is a blind policy, i.e.\, it only needs to track the state of the process in order to make routing decisions, and information about service rates is not needed in this case. We will mainly use this policy in our model since our service rates are random variables and thus their realisations are unknown before the systems start to operate.

State space collapse is an important phenomenon when we analyse the system behaviour. \cite{harrison1997dynamic} explain the dimension reduction in general terms, using an orthogonal decomposition. For some examples of SSC one can check \cite{reiman1984some}. \cite{Puhalskii} prove SSC for a particular system which has phase type distributed service rates. \cite{Bramson} uses the hydrodynamic scaling to build up the state space collapse results for multi-class queueing networks under the conventional heavy traffic regime. He shows that we can use a lower dimensional process, the workload processes of each service station, to represent the system because the original system process, which is the number of each type of customers in every station, can be obtained through the workload processes and some lifting functions. The paper by \cite{Dai} uses the hydrodynamic scaling proposed by \cite{Bramson} in a many server setting. Their contribution is the definition of a SSC function, which is used in that paper to show the dimension reduction for many server networks under the Halfin-Whitt heavy traffic regime. Our SSC result is based on their framework. \cite{tezcan2008optimal} applies this method to a distributed parallel server system and does optimal control analysis.   

The key point in our research is heterogeneity and uncertainty in parameters. The uncertainty in arrival rates is investigated in some prior work. For example, \cite{Zan} analyses the staffing problems when the arrival rate is uncertain. However, for uncertainty in the service rates, there is still plenty of space for us to explore. For a general non-technical introduction to this topic, \cite{gans2010service} is an excellent reference. The heterogeneity in the servers is modelled in various ways. A commonly used one is the inverted-V system, which contains a single customer class and multiple server types. \cite{Armony} considers the asymptotic framework for such systems. She shows that the FSF policy is asymptotically optimal in the QED regime. Later in \cite{ArmonyWard}, an optimisation problem for inverted V systems is formulated. They minimise the steady-state expected customer waiting time subject to a ``fairness" constraint and propose a threshold routing policy which is asymptotically optimal in the Halfin-Whitt regime. \cite{Mandelbaum} introduce the randomised most-idle (RMI) routing policy for the inverted-V model and analyse it in the QED regime. \cite{Atar}'s results about random servers set the cornerstones for our work. The diffusion limit in \cite{Atar} contains a random drift, which comes from the heterogeneity of servers. From the diffusion, we derive its steady state distribution, then formulate the optimisation staffing problem using the distribution.

\chapter{Diffusion Limits for Single Server Pool Systems}

\viva{\section{Introduction}}
\viva{In this chapter we focus on many server queueing systems with random service rates under the Halfin-Whitt heavy traffic regime. Many server queueing models have been studied extensively. However, there are only a few papers about many server queues when service rates are random variables instead of identities. Models with identical servers are not sufficient when it comes to modelling human behaviours. Individual abilities are always influenced by environment thus they can not be constants. }To capture the feature more accurately, we assume our model has $N^{\systemindex}$ exponential servers with i.i.d.\ service rates $\mu_{\sindex}, \sindex = 1,\dots,N^{\systemindex}$. $N^{\systemindex}$ is also assumed to be a random variable. When customers arrive into the system they will either queue in a buffer with infinite room, or be routed to a server according to the LISF routing policy. Customers from the queue are routed to servers according to the FIFO rule. In this chapter, we first assume that the customers do not abandon and leave the system only after their service is completed. We relax this assumption later in the chapter. The routing policy is work conserving, in the sense that no server will be idle when there is at least one customer in the queue. The service policy is non-preemptive, i.e.\, once a customer is assigned to a server, it will continuously receive service until it is completed, i.e.\ the services will not be interrupted.  \viva{This model is considered in \cite{Atar}. \cite{Atar} also analyses the same systems under the FSF routing policy. We will not focus on FSF policy} because our decision is supposed to be made before the system starts running, and FSF policy requires the knowledge of each server's rate, which does not suit our case. This chapter is organised as follows: in Section 2.1, we give the detailed description of the mathematical model and notations used throughout this chapter; in Section 2.2, we rewrite the proof of the central theorem in \cite{Atar}, although it is already proven by Atar, we present it here for completeness of our discussion; in Section 2.3, we formulate the optimal staffing problem and prove the validity of its asymptotic version.

\section{Mathematical modelling and notation}
First we introduce the notation used throughout the thesis. All of the random variables and stochastic processes are defined on a complete probability space $(\Omega, \mathscr{F},\mathbb{P})$. For a positive integer $d$, we denote by $\mathbb{D}(\mathbb{R}^d)$ the space of functions from $\mathbb{R}^+$ to $\mathbb{R}^d$ that are right continuous and left limits exist (RCLL), endowed with the usual Skorohod $J_1$ topology. (See \cite{Billingsley} for the definition.) We use $\Rightarrow$ to denote weak convergence. And for $X \in \mathbb{D}(\mathbb{R})$, we write $||X||_{t}:= \sup_{0 \leq s \leq t}|X(s)|$.

The model is parameterised by $\systemindex \in \mathbb{R}^+$, where for each $\systemindex$, $N^{\systemindex} \in \mathbb{N}$ is a random variable, representing number of servers. Service times for customers served at server $\sindex$ are i.i.d.\ exponentially distributed with rate $\mu_{\sindex},\sindex = 1,\dots,N^{\systemindex}$. The $\mu_{\sindex}$s are assumed to be nonnegative and lie in an interval $[p,q]$. The distribution of $\mu_{\sindex}$ is denoted by $m$, and its expected value is 
\begin{equation}
\bar{\mu}:= \int_{[0,\infty)} x dm \in (0, \infty). \label{service-rate-expectation}
\end{equation}
It is also assumed that $N^{\systemindex}$ satisfies the following two assumptions
\begin{align*}
&\mathbb{P}(N^{\systemindex} \leq 2 \systemindex) = 1,\\
&\frac{N^{\systemindex}}{\systemindex} \Rightarrow 1, \text{ as } \systemindex \to \infty.
\end{align*}
The arrivals are assumed to be renewal processes with finite second moments for the interarrival times. Let the arrival rate be $\lambda^{\systemindex}$ such that $\lim_{\systemindex \to \infty} \frac{\lambda^{\systemindex}}{\systemindex} = \lambda >0$, and a sequence of strictly positive i.i.d.\ random variables $\{\check{U}(l), l \in \mathbb{N}\}$, with mean $\mathbb{E}\check{U}(1) = 1$ and variance $C^2_{\check{U}} = Var(\check{U}(1)) \in [0,\infty)$. The Halfin-Whitt heavy traffic condition, which makes the system critically loaded, is assumed to be
\begin{equation}
\lim_{\systemindex \to \infty} \diffufactor (\lambda^{\systemindex} - \systemindex \lambda) = \hat{\lambda}, \label{arrival-2moment-condition}
\end{equation}
where $\lambda =  \bar{\mu}$, and $\hat{\lambda} < 0$.

For the $\systemindex$th system, let $A^{\systemindex}(t)$ be the total number of arrivals into the system up to time $t$, $X^{\systemindex}(t)$ be the total number of customers in the $\systemindex$th system at $t$ (including customers both being served and waiting in the queue), $D^{\systemindex}_{\sindex}(t)$ be the number of jobs completed by server $\sindex$ up to time $t$, and $T^{\systemindex}_{\sindex}(t)$ be the accumulated busy time of server $\sindex$ by time $t$. Let $B^{\systemindex}_{\sindex}(t) = 1$ if server $\sindex$ is busy at time $t$, and it equals to zero if the server is idle. Accordingly, let $I^{\systemindex}_{\sindex}(t) = 1 - B^{\systemindex}_{\sindex}(t)$, indicating that server $\sindex$ is idle if $I^{\systemindex}_{\sindex}(t)$ equals $1$. 

\section{Atar's results on many server queues with random servers}
When there are no abandonments, \viva{it can be shown} the systems satisfy the equation
\begin{equation}
X^{\systemindex}(t) = X^{\systemindex}(0) + A^{\systemindex}(t) - \sum_{\sindex=1}^{N^{\systemindex}} D^{\systemindex}_{\sindex}(t). \label{Atar-system-equation0}
\end{equation}

We let $\diffuscal{X}^{\systemindex}(t)$ be a renormalized version of the process $X^{\systemindex}(t)$, which is defined as
\begin{equation}
\diffuscal{X}^{\systemindex}(t) = \diffufactor (X^{\systemindex}(t) - N^{\systemindex}). \label{Atar-diffusion-scaling}
\end{equation}
The initial value of $X(t)$ and the random variable $N^{\systemindex}$ are assumed to satisfy
\begin{equation}
(\diffuscal{X}^{\systemindex}(0), \diffuscal{N}^{\systemindex}):= \left(\diffufactor(X^{\systemindex}(0) - N^{\systemindex}), \diffufactor(N^{\systemindex} - \systemindex))\right) \Rightarrow (\xi(0) ,\nu), \label{initial-condition-convergence}
\end{equation}
where $(\xi(0), \nu)$ is an $\mathbb{R}^2$-valued random variable.

\viva{\cite{Atar}'s main result is the following theorem. It states that under the LISF policy, the sequence of the scaled processes of total number of customers weakly converges to a diffusion. What makes the diffusion distinctive from other diffusion limits is that it contains a random drift which arises from the randomness of service rates.}

\begin{theorem}[\cite{Atar}]
	Assume $\int x^2 d m < \infty$. Then, under the LISF policy, the processes $\diffuscal{X}^{\systemindex}(t)$ weakly converge to the solution of the following SDE
	\begin{equation}
	\xi(t) = \xi(0) + \sigma w(t) + \beta t + \gamma \int_0^t \xi(s)^- ds, t \geq 0,
	\end{equation}
	where $\sigma^2 = \lambda C^2_{\check{U}} + \bar{\mu} = \bar{\mu}(C^2_{\check{U}} + 1)$, and $\beta = \hat{\lambda} - \zeta - \bar{\mu} \nu$, $\zeta$ is a normal random variable with parameters $(0, \int (x - \bar{\mu})^2 d m)$, $\gamma = \frac{\int x^2 d m}{\int x d m}$, $w(t)$ is a standard Brownian motion, and the three random elements $(\xi(0),\nu), \zeta,$ and $w(t)$ are mutually independent. \label{Atar-theorem}
\end{theorem}

\viva{\cite{Atar} proves his result using a method particular to this model. We give an explanation of his proof here. The detailed mathematical proof is included in Appendix \ref{Appendix-Atar-proof} for completeness. Later we will use another approach inspired by state space collapse phenomenon to prove this theorem again. For more detailed discussion on the new proof, see Section 4.5.\\
The process of scaled total number of customers are
\begin{align*}
\diffuscal{X}^{\systemindex}(t) 
&= \diffufactor(X^{\systemindex}(0) - N^{\systemindex}) + \diffufactor A^{\systemindex}(t) - \diffufactor \sum_{\sindex=1}^{N^{\systemindex}} D^{\systemindex}_{\sindex}(t)\\
&= \diffuscal{X}^{\systemindex}(0) + \diffufactor \left(A^{\systemindex}(t) - \lambda^{\systemindex}t \right) + \diffufactor \lambda^{\systemindex}t - \diffufactor \sum_{\sindex=1}^{N^{\systemindex}} D^{\systemindex}_{\sindex}(t)\\
&= \diffuscal{X}^{\systemindex}(0) + \diffuscal{A}^{\systemindex}(t) + \diffufactor \lambda^{\systemindex}t - \diffufactor \sum_{\sindex=1}^{N^{\systemindex}} D^{\systemindex}_{\sindex}(t). \numberthis \label{explain-Atar-proof-1}
\end{align*}	

The convergence of arrival process $\diffuscal{A}^{\systemindex}(t)$ follows from the basic functional central limit theorem for renewal processes. Analysing the convergence of departure process $D^{\systemindex}_{\sindex}(t)$ is considerably harder. Each server needs to be considered individually because they have their unique service rate, and thus it is an $N^{\systemindex}$ dimensional process. However, as the system grows large, the number of servers tends to infinity; hence analysing each $D^{\systemindex}_{\sindex}(t)$ becomes intractable.

\cite{Atar} solves this problem by partitioning the servers into finite $I$ pools. Within each pool, the supremum of the difference between two service rates is bounded by some small positive number $\epsilon$. }

\viva{To see how this idea is used in the proof, first we need to obtain more insights about the departure processes. The departure process of each server is treated as a time changing Poisson process. Let $\{S_{\sindex}(t), \sindex \in \mathbb{N}\}$ be independent standard Poisson processes. Since $T^{\systemindex}_{\sindex}(t)$ is the accumulated busy time for server $\sindex$ by time $t$, then by random time change, $D^{\systemindex}_{\sindex}(t)$ satisfies
\begin{equation}
D^{\systemindex}_{\sindex}(t) = S_{\sindex}(\mu_{\sindex} T^{\systemindex}_{\sindex}(t)), \ \sindex = 1,\dots, N^{\systemindex}. \label{Atar-Dk}
\end{equation}
where
\begin{equation}
T^{\systemindex}_{\sindex}(t) = \int_{0}^{t}  B^{\systemindex}_{\sindex}(s) ds.
\end{equation}
}

\viva{Denote $\Sigma^{\systemindex} = (A^{\systemindex}, X^{\systemindex}, Q^{\systemindex}, I^{\systemindex}, \{B^{\systemindex}_{\sindex}, D^{\systemindex}_{\sindex}\}_{\sindex=1,\dots,N^{\systemindex}})$ which has a.s.\ piecewise constant and right-continuous sample paths. \cite{Atar} partitions servers into pools as mentioned above. Pools are indexed by $\{i, i=1,2,\dots, I\}$. Using such configuration, the system can be regarded approximately as an inverted-V system, hence departure processes can be considered aggregately in each pool. Denote the total departure process of pool $i$ as $D^{\systemindex,(i)}(t)$. He further defines the total service rate of pool $i$ to be the sum of $\mu_{\sindex} T^{\systemindex}_{\sindex}(t)$ (the sum is over all of the servers in pool $i$). Then $D^{\systemindex,(i)}(t)$ is also a time changing Poisson process and 
\begin{align*}
D^{\systemindex,(i)}(t) = S^{(i)}\left(\sum_{\sindex \in \mbox{ pool }_i} \mu_{\sindex} T^{\systemindex}_{\sindex}(t)\right),
\end{align*}
where $\{S^{(i)}(\cdot)\}$ are independent standard Poisson processes. For simplicity denote the total service rate in pool $i$ to be
\begin{align*}
T^{\systemindex,(i)}(t) = \sum_{\sindex \in \mbox{ pool }_i} \mu_{\sindex} T^{\systemindex}_{\sindex}(t).
\end{align*}
He shows in Proposition 3.1 (see Appendix\ref{Appendix-Atar-prop3.1}) that with this pooling method, the total departure process of the original system $\sum_{\sindex=1}^{N^{\systemindex}} D^{\systemindex}_{\sindex}(t)$ and the total departure process of the approximate inverted-V system $\sum_{i=1}^I D^{\systemindex, (i)}(t)$ are equal in distribution. }

\viva{Using this equivalence, system equation \eqref{explain-Atar-proof-1} is equal in distribution to
\begin{align*}
\diffuscal{X}^{\systemindex}(t) 
&= \diffuscal{X}^{\systemindex}(0) + \diffuscal{A}^{\systemindex}(t) + \diffufactor \lambda^{\systemindex}t - \diffufactor \sum_{i=1}^{I}D^{\systemindex,(i)}(t).
\end{align*}
With further manipulations, it can be expressed as
\begin{align*}
\diffuscal{X}^{\systemindex}(t) 
&= \diffuscal{X}^{\systemindex}(0) + \diffuscal{A}^{\systemindex}(t) + \diffufactor \lambda^{\systemindex}t - \diffufactor \sum_{i=1}^{I} \left( S^{(i)} \left( T^{\systemindex,(i)}(t) \right) - T^{\systemindex,(i)}(t)  \right) - \diffufactor \sum_{i=1}^{I} T^{\systemindex,(i)}(t)\\
&= \diffuscal{X}^{\systemindex}(0) + \diffuscal{A}^{\systemindex}(t) + \diffufactor (\lambda^{\systemindex}t - \systemindex \bar{\mu}t) - \diffufactor \left(\sum_{\sindex=1}^{N^{\systemindex}} \mu_{\sindex} t - \systemindex \bar{\mu}t\right)\\
& \quad - \diffufactor \sum_{i=1}^{I} \left( S^{(i)} \left( T^{\systemindex,(i)}(t) \right) - T^{\systemindex,(i)}(t)  \right) - \diffufactor \left( \sum_{i=1}^{I} T^{\systemindex,(i)}(t) - \sum_{\sindex=1}^{N^{\systemindex}} \mu_{\sindex} t \right).
\end{align*}
}  

\viva{
Convergence of the first four items are easy. Let us pay attention to the latter two. First, it is proved that $\frac{1}{\systemindex}T^{r,(i)}(t)$ converges to its fluid limit $\rho_i t$, where $\rho_i$ is the product of the expectation of $\mu_{\sindex}$ and the weight probability of servers in pool $i$. $\diffufactor \left( S^{(i)} \left( T^{\systemindex,(i)}(t) \right) - T^{\systemindex,(i)}(t)\right) $ is a martingale, then \cite{Atar} uses Functional Central Limit Theorem, and applies random time change to the fluid limit $\rho_i t$ to show that $\diffufactor \sum_{i=1}^{I} \left( S^{(i)} \left( T^{\systemindex,(i)}(t) \right) - T^{\systemindex,(i)}(t) \right)$ converges to some Brownian motion.}

\viva{$\diffufactor \left( \sum_{i=1}^{I} T^{\systemindex,(i)}(t) - \sum_{\sindex=1}^{N^{\systemindex}} \mu_{\sindex} t \right)$ is the most concerning part in the proof. It is actually equal to
\begin{align*}
\diffufactor  \sum_{\sindex=1}^{N^{\systemindex}} \left( \mu_{\sindex}T^{\systemindex}_{\sindex}(t) -  \mu_{\sindex} t \right)
&= \diffufactor  \sum_{\sindex=1}^{N^{\systemindex}} \left( \mu_{\sindex} \int_0^t B^{\systemindex}_{\sindex}(s) ds - \mu_{\sindex}t\right) = \diffufactor  \sum_{\sindex=1}^{N^{\systemindex}} \left( -\mu_{\sindex} \int_0^t I^{\systemindex}_{\sindex}(s) ds \right),
\end{align*}
which is the total amount of unused service capacities due to idleness. Denote this lost capacity as $F^{\systemindex}(t)$. }

\viva{$F^{\systemindex}(t)$ contains $N^{\systemindex}$ different idleness processes. It is later proved that there is a state space collapse (SSC) in such systems in the sense that, in the limit, the total lost capacities $F^{\systemindex}(t)$ can be represented as the total accumulated idle time multiplied by some coefficient $\gamma$ related to the service rate distribution and routing policy. Such a SSC result reduces the dimension of the original processes, which eventually helps to get a one dimensional diffusion limit. \\
To show such SSC result, \cite{Atar} considers the difference between total lost capacities $F^{\systemindex}(t)$ and the product of total accumulated idle time and $\gamma$. The difference consists of four $e_i(t)$s: $e_1(t),e_2(t),e_3(t),e_4(t)$. He proves that each $e_i(t)$ tends to zero in the limit. These four $e_i(t)$s are unique to this system, thus cannot be directly extended to other models. In Chapter 4, we explain the SSC phenomenon in detail. Then we use a more generic method to prove this SSC result again.
}

\section{Extension of Atar's results to include abandonments}
In this section, we assume customers may abandon the system prior to being served. Each customer has an associated patience time, and abandons the system without obtaining any service if the waiting time in the queue exceeds the customer's patience. Once his/her service starts, s/he cannot abandon the system. Assume each customer's patience time is exponentially distributed with rate $\nu$. \viva{We will not deal with the abandonment processes directly due to complications of analysing each customer's patience individually. Instead, we use a ``perturbed'' abandonment processes similar to the one described in Section 2.1 of \cite{Dai}. In perturbed systems, only the customer at the head of the queue will be able to abandon, and her/his abandonment rate is the sum of abandonment rates of all of the customers in the queue. Under the assumption of exponential service and patience time, the equivalence of systems with original abandonment processes and perturbed abandonment processes is proved in \cite{Dai}. Note that \cite{Dai} use perturbed system technique to analyse both abandonment processes and service processes, while we only use it for our abandonment processes since our service rates are no longer deterministic thus the equivalence to the perturbed systems is invalid.}

Let $Q(t)$ be the queue length at time $t$, and let $M(t)$ denotes the number of customers who have abandoned queue by time $t$. The systems are assumed to be under LISF policy again.

Let $S_Q(t)$ be a standard Poisson process. We define
\begin{equation}
G^r(t) = \int_0^t Q^r(s) ds, \ \ t \geq 0.
\end{equation}
Then for the perturbed abandonment process
\begin{equation}
R^r(t) = S_Q(\nu G^r(t)). \label{ABprocess}
\end{equation}
Using the same notations as in the previous section, we can write the system dynamic equations
\begin{align}
X^r(t) &= X^r(0) + A^r(t) - \sum_{\sindex = 1}^{N^r} D_{\sindex}^r(t) - R^r(t).
\end{align}
We can show similarly that diffusion limits exist in the presence of abandonments.
\begin{theorem}
	Assume $\int x^2 d m < \infty$. Then, under LISF policy, the diffusively scaled processes $\diffuscal{X}^{\systemindex}(t)$ weakly converge to the solution of the following SDE
	\begin{equation}
	\xi(t) = \xi(0) + \sigma w(t) + \beta t + \gamma \int_0^t \xi(s)^- ds - \nu \int_0^t \xi(s)^+ ds, t \geq 0, \label{diffusion-with-abandonments-equation}
	\end{equation}
	where $\sigma, \beta, \zeta, \gamma$ and $w(t)$ are as in Theorem \ref{Atar-theorem}. \label{diffusion-with-abandonments}
\end{theorem}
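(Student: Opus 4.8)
The plan is to reuse the proof of Theorem~\ref{Atar-theorem} unchanged for every term that does not involve abandonments, and to treat the new abandonment term as an additive feedback perturbation of the prelimit equation. Applying the scaling \eqref{Atar-diffusion-scaling} to the system dynamics with abandonments and rearranging exactly as in the derivation of \eqref{explain-Atar-proof-1}, one obtains
\begin{equation*}
\diffuscal{X}^{\systemindex}(t) = \diffuscal{X}^{\systemindex}(0) + \diffuscal{A}^{\systemindex}(t) + \diffufactor \lambda^{\systemindex}t - \diffufactor \sum_{\sindex=1}^{N^{\systemindex}} D^{\systemindex}_{\sindex}(t) - \diffufactor R^{\systemindex}(t).
\end{equation*}
The first four terms are identical to those treated by Atar, and their combined contribution, including the state space collapse of the lost-capacity term, converges to $\xi(0) + \sigma w(t) + \beta t + \gamma \int_0^t \xi(s)^{-}\, ds$. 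It therefore suffices to analyse the last term and to establish joint convergence.

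For the abandonment term I would first use that the routing is work conserving, so a customer waits only when every server is busy; hence $Q^{\systemindex}(t) = (X^{\systemindex}(t) - N^{\systemindex})^{+}$, which gives $\diffufactor Q^{\systemindex}(t) = (\diffuscal{X}^{\systemindex}(t))^{+}$. Centering the Poisson process in \eqref{ABprocess} yields
\begin{equation*}
\diffufactor R^{\systemindex}(t) = \diffufactor\left(S_Q(\nu G^{\systemindex}(t)) - \nu G^{\systemindex}(t)\right) + \nu \int_0^t (\diffuscal{X}^{\systemindex}(s))^{+}\, ds,
\end{equation*}
where I used $\diffufactor \nu G^{\systemindex}(t) = \nu \int_0^t \diffufactor Q^{\systemindex}(s)\, ds = \nu \int_0^t (\diffuscal{X}^{\systemindex}(s))^{+}\, ds$. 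The centred Poisson term is a martingale whose predictable quadratic variation, after scaling, equals $\tfrac{\nu}{\systemindex} G^{\systemindex}(t) = \diffufactor \nu \int_0^t (\diffuscal{X}^{\systemindex}(s))^{+}\, ds$; once $\diffuscal{X}^{\systemindex}$ is known to be stochastically bounded on compact intervals this tends to zero, so by Doob's inequality (or the martingale FCLT) the martingale part vanishes uniformly on compacts. Thus $\diffufactor R^{\systemindex}$ is asymptotically indistinguishable from $\nu \int_0^t (\diffuscal{X}^{\systemindex}(s))^{+}\, ds$.

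Collecting terms, the prelimit process satisfies the implicit integral equation
\begin{equation*}
\diffuscal{X}^{\systemindex}(t) = \diffuscal{W}^{\systemindex}(t) + \gamma \int_0^t (\diffuscal{X}^{\systemindex}(s))^{-}\, ds - \nu \int_0^t (\diffuscal{X}^{\systemindex}(s))^{+}\, ds + \varepsilon^{\systemindex}(t),
\end{equation*}
where $\diffuscal{W}^{\systemindex} \Rightarrow \xi(0) + \sigma w(t) + \beta t$ collects the free driving terms and $\varepsilon^{\systemindex} \Rightarrow 0$ uniformly on compacts. Since $x \mapsto x^{+}$ and $x \mapsto x^{-}$ are Lipschitz, a Picard/Gronwall estimate shows that the map sending a driving path to the solution of this equation is single valued and Lipschitz continuous in the topology of uniform convergence on compacts; in particular \eqref{diffusion-with-abandonments-equation} has a unique solution $\xi$. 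The continuous mapping theorem together with the converging-together lemma then gives $\diffuscal{X}^{\systemindex} \Rightarrow \xi$, as claimed.

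The main obstacle is the self-referential structure: both the inherited idle-time (lost-capacity) term and the new abandonment term are feedbacks depending on $\diffuscal{X}^{\systemindex}$ itself, so convergence cannot be read off term by term; I would resolve this by the solution-map continuity argument above, which requires an a priori tightness/stochastic-boundedness bound on $\diffuscal{X}^{\systemindex}$ (this is precisely what legitimises the vanishing of the martingale quadratic variation). A secondary point that must be \emph{checked} rather than assumed is that introducing abandonments does not disturb the departure-process and state space collapse analysis underlying Theorem~\ref{Atar-theorem}: because abandonments only remove queued customers while the routing remains work conserving, the idle-server structure producing the $\gamma \int_0^t \xi(s)^{-}\, ds$ term is preserved, but this equivalence should be verified explicitly, for instance through the perturbed-system coupling of \cite{Dai}.
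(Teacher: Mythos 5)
Your proposal is correct and follows essentially the same route as the paper's own proof: the paper likewise inherits Atar's treatment of all non-abandonment terms, writes the (perturbed) abandonment process as $S_Q\left(\nu \int_0^t (X^{\systemindex}(s)-N^{\systemindex})^+ ds\right)$, centres it into a martingale whose predictable quadratic variation $\frac{\nu}{\systemindex}G^{\systemindex}(t) \to 0$ by stochastic boundedness of $\diffuscal{X}^{\systemindex}$, and then closes the self-referential feedback loop using the Lipschitz property of $x^+, x^-$ and Gronwall's inequality (the paper invokes the Skorohod representation theorem where you invoke solution-map continuity plus the continuous mapping theorem, an equivalent device). The two caveats you flag at the end are precisely the points the paper disposes of by citation: stochastic boundedness and the FWLLN via \cite{pang2007martingale}, and the equivalence of the perturbed abandonment process via the coupling of \cite{Dai}, which the paper builds into the model before stating the theorem.
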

\begin{proof}
	The proof is quite similar to Theorem \ref{Atar-theorem}. We will focus on abandonment processes here since the proofs of other parts are the same.
	
	Following arguments in the previous section we again omit the symbol $\systemindex$, and thus we have
	\begin{align}
	\diffuscal{X}(t) 
	&= \diffuscal{X}(0) + \diffufactor A(t) - \diffufactor \sum_{i=1}^I S^{(i)}(T^{(i)}(t)) - \diffufactor R(t)\\
	&= \diffuscal{X}(0) + W(t) + b^r t + F(t) - \diffuscal{R}(t),\ \ \  \left(\text{let}\ \  \diffuscal{R}(t) = \diffufactor R(t)\right)
	\end{align}
	under the LISF policy, 
	\begin{equation}
	\diffuscal{X}(t) = \diffuscal{X}(0) + W(t) + b^r t + \gamma \int_0^t \diffuscal{X}(s)^- ds - \diffuscal{R}(t) + e(t), \ \ \left( \gamma = \frac{\int x^2 d m}{\int x d m} \right). \label{dynamicequ}
	\end{equation}
	
	We already showed in the previous section that $W(t) \Rightarrow \sigma w$, $b^r \Rightarrow \beta$, and $e(t) \to 0$ u.o.c. in probability. For the newly added term $\diffuscal{R}(t)$, note that
	\begin{align*}
	\diffuscal{R}(t) 
	&= \diffufactor S_Q(\nu G(t)) = \diffufactor S_Q\left( \nu \int_0^t Q(s) ds\right)\\
	&= \diffufactor \left( S_Q\left( \nu \int_0^t Q(s) ds \right) - \nu \int_0^t Q(s) ds + \nu \int_0^t Q(s) ds \right)\\
	&= \diffufactor \left( S_Q\left( \nu \int_0^t (X(s) - N)^+ ds \right) - \nu \int_0^t (X(s) - N)^+ ds + \nu \int_0^t (X(s) - N)^+ ds  \right)\\
	&= \diffufactor \left( S_Q\left( r \frac{\nu}{r} \int_0^t (X(s) - N)^+ ds \right) - r \frac{\nu}{r} \int_0^t  (X(s) - N^r)^+ ds \right) + \nu \int_0^t \diffuscal{X}(s)^+ ds.
	\end{align*}
	Denote $\diffuscal{M}(t) = \diffufactor \left( S_Q\left( \nu \int_0^t (X(s) - N)^+ ds \right) -  \nu \int_0^t (X(s) - N)^+ ds \right)$. Then by Theorem 7.2 in \cite{pang2007martingale}, $\diffuscal{M}(t)$ is a square-integrable martingale with respect to the filtrations $\mathbf{F}_r \equiv \{ \mathcal{F}_{r,t}: t \geq 0 \}$ defined by
	\begin{equation}
	\begin{split}
	\mathcal{F}_{r,t} \equiv \sigma  \left( \vphantom{\int_{0}^{s}} X(0), A(s), S^{(1)}(T^{(1)}(s)), \dots, S^{(q)}(T^{(q)}(s)), \right. \\
	\left. S_Q \left( \nu \int_0^s (X(u) - N)^+ du \right): 0 \leq s \leq t  \right), t \geq 0,
	\end{split}
	\end{equation}
	augmented by including all null sets. Its predictable quadratic variation is 
	\begin{equation}
	\langle \diffuscal{M} \rangle (t) = \frac{\nu}{r} \int_{0}^{t} \left( X(s) - N \right)^+ ds, t \geq 0.
	\end{equation}
	By the same reasoning in Section 7.1 in \cite{pang2007martingale}, we obtain the deterministic limits
	\begin{equation}
	\langle \diffuscal{M} \rangle (t) \Rightarrow 0. \label{zeroconvergence}
	\end{equation}
	
	We explain the proof briefly. For more details, see \cite{pang2007martingale}. We have that the sequence $\{ \diffuscal{X}^{\systemindex}\}$ is stochastically bounded in $\mathbb{D}(\mathbb{R})$. Then, by Lemma 5.9 and Section 6.1 in \cite{pang2007martingale}, we get the Functional Weak Law of Large Numbers (FWLLN) corresponding to Lemma 4.3, from which we can prove \eqref{zeroconvergence}. Recall the basic Functional Central Limit Theorem (FCLT): $\diffuscal{S}(t) = \frac{S(rt) - rt}{\sqrt{r}} \Rightarrow B$, where $S$ is a standard Poisson process, and $B$ is a standard Brownian motion. Then
	\begin{equation}
	\diffuscal{R}(t) = \hat{S} \left( \langle \diffuscal{M} \rangle (t) \right) + \nu \int_0^t \diffuscal{X}(s)^+ ds, \label{abandonconverge}
	\end{equation}
	and by the lemma on random change of time in \cite[p.~150]{Billingsley}, we have  $\hat{S} \left( \langle \diffuscal{M} \rangle (t) \right) \Rightarrow 0$, in the uniform topology on the compact set $[0,T]$ for any $T>0$. Thus,
	\begin{equation}
	\diffuscal{R}(t) \Rightarrow \nu \int_0^t \xi(s)^+ ds, \ as \ r \to \infty.
	\end{equation} 
	
	By Skorohod Representation Theorem, we can assume without loss of generality that the random variables $\diffuscal{X}(0), b, \xi(0),$ and $\beta$, and the processes $W, \diffuscal{S} \left( \langle \diffuscal{M} \rangle (t) \right)$, and $w$ are realized in such a way that 
	\begin{equation}
	\left( \diffuscal{X}(0), b, W, \diffuscal{S} \left( \nu \int_0^t \frac{1}{\systemindex} Q(s) ds \right) \right) \to \left(\xi(0), \beta, \sigma w, 0 \right) \ \ \  \text{in probability, as } \systemindex \to \infty. \label{skorohod}
	\end{equation}
	 
	Recall that $||X||_{t} := \sup_{0 \leq s \leq t}|X(s)|$. Combining \eqref{dynamicequ}, \eqref{abandonconverge}, and \eqref{diffusion-with-abandonments-equation}, the inequalities $|x^- - y^-| \leq |x - y|$, $|x^+ - y^+| \leq |x - y|$ and Gronwall's inequality ($u(t) \leq \alpha(t) \exp(\int_a^t c(s) ds)$ if $c$ is non-negative and $u$ satisfies $u(t) \leq \alpha(t) + \int_a^t c(s) u(s) ds$.) together show that
	\begin{align*}
	&||\diffuscal{X}(t) - \xi(t)||_{T} \\
	& = \left\Vert \diffuscal{X}^r(0) - \xi(0) + W(t) - \sigma w(t) + b^r t - \beta t + e(t) \right.\\
	& \left.\quad + \gamma \int_0^t \left( \diffuscal{X}(s)^- - \xi(s)^- \right) ds - \nu \int_0^t \left( \diffuscal{X}(s)^+ -  \xi(s)^+ \right) ds \right\Vert_{T}\\
	& \leq ||\diffuscal{X}(0) - \xi(0)||_{T} + || W(t) - \sigma w(t)||_{T} + ||b^r t - \beta t||_{T} + ||e(t)||_{T} \\
	& \quad + \gamma \left\Vert \int_0^t \diffuscal{X}(s)^- - \xi(s)^- ds\right\Vert_{T} + \nu \left\Vert \int_0^t \diffuscal{X}(s)^+ - \xi(s)^+ ds\right\Vert_{T}\\
	& \leq ||\diffuscal{X}(0) - \xi(0)||_{T} + || W(t) - \sigma w(t)||_{T} + ||b^r t - \beta t||_{T} + ||e(t)||_{T} \\
	& \quad + M \int_0^t \left\Vert \diffuscal{X}(s)^- - \xi(s)^- \right\Vert_{T} ds + M \int_0^t \left\Vert \diffuscal{X}(s)^+ - \xi(s)^+ \right\Vert_{T} ds\\
	& \leq ||\diffuscal{X}(0) - \xi(0)||_{T} + || W(t) - \sigma w(t)||_{T} + ||b^r t - \beta t||_{T} + ||e(t)||_{T} \\
	& \quad + 2M \int_0^t \left\Vert \diffuscal{X}(s) - \xi(s) \right\Vert_{T} ds,
	\end{align*}
	where $M=\max\{\gamma, \nu\}$. By \eqref{skorohod} and the uniform convergence of $e$ to zero, we have shown that $\diffuscal{X}(t)$ converges to $X(t)$ in probability, uniformly on $[0,T]$. Since $T$ is arbitrary, this shows that $\diffuscal{X}(t) \Rightarrow X(t)$.
\end{proof}

\section{Summary}
\viva{In this chapter, we show that diffusion limits are an effective way to approximate queuing processes because of its continuity feature. First we restate the limit theorem for many server queues with random service rates proved by \cite{Atar}. The key part of this theorem is its random drift $\beta$, which comes from the Central Limit Theorem applied on random service rates. Another thing which needs our attention is the coefficient $\gamma$ of the integral of negative part of the diffusion limit, i.e.\ $\int_0^t \xi(s)^- ds$. $\gamma \int_0^t \xi(s)^- ds$ approximates the capacities that are lost due to idleness. To some extent, $\gamma$ reflects fairness of the routing policy. We will talk about this fairness issue more in the end of next chapter.}

\viva{Then we extend the result of \cite{Atar} to systems with abandonments. This is an important extension as it ensures the stability of the diffusion limits. We use a martingale central limit theorem to prove weak convergence.}
\viva{In the next chapter, we show how the diffusion limits are applied to our optimal staffing problems.}

\chapter{Staffing and Routing for Single Server Pool Systems}

\viva{\section{Introduction}}
\viva{For call centre managers, how to decide on number of servers to be scheduled servers is one of the major problems. Overstaffing and understaffing will both cause immense unnecessary costs in the long term. Using queueing models, we can help managers make wise decisions on staffing levels. Particularly, the diffusion limits will be used in the analysis. In the last chapter, we proved diffusion limits for single server pool systems. Diffusion limits give us approximations of how the system processes behave. If steady states exist for diffusion limits, we can then have estimates for system steady states in the long run.} In this chapter, first we formulate the optimisation problem for staffing single server pools without abandonments. Then in the second section, we extend this result to systems with abandonments. In the third section, we focus on the variance of service rates and show how the variance influences the abandonment rate, and thus the total costs. Finally in the fourth section, we analyse the coefficient $\gamma$ in the diffusion limits and show how it reflects fairness among servers under different routing policies.

\section{Staffing many server queues with random servers}
We use a similar framework as in \cite{borst2004dimensioning} for asymptotic optimisation of many-server queuing systems with random service rates. Consider the many-server queueing model without abandonments. In the $\systemindex$th system, arrival rate $\lambda^{\systemindex}$, service rates $\mu_{\sindex}$ and its expectation $\bar{\mu}$ are as defined in Chapter 2.

Recall the second moment condition for arrival rate
\begin{equation}
\lim_{\systemindex \to \infty} \diffufactor (\lambda^{\systemindex} - \systemindex \lambda) = \hat{\lambda} < 0, 
\end{equation}
where $\lambda = \bar{\mu}$. It is easy to see that when $\sum_{\sindex=1}^{N^{\systemindex}} \mu_{\sindex} \leq \lambda^{\systemindex}$, the system is unstable, thus in the limit, every customer will have to wait before getting service. To this end, we assume there is a fixed waiting cost $C_{un}$ for unstable systems.

In this work, we will mainly focus on the scenario where the service rates satisfy $\sum_{\sindex=1}^{N^{\systemindex}} \mu_{\sindex} > \lambda^{\systemindex}$. For the diffusion limit in Theorem \ref{Atar-theorem}, this condition corresponds to $\beta <  0$. i.e.\ when the system is stable.  From \cite{cooper}, we know that, given the realisation of $\sum_{\sindex=1}^{N^{\systemindex}}\mu_{\sindex}$, the waiting time distribution is given by
\begin{equation}
\mathbb{P} \left( \mathrm{Wait}^{\systemindex} > t \middle|\ \sum_{\sindex=1}^{N^{\systemindex}} \mu_{\sindex} = H
\right) = \pi^{\systemindex} e^{-(H - \lambda^{\systemindex})t},
\end{equation} 
where $\pi^{\systemindex} = \mathbb{P} \left( \mathrm{Wait}^{\systemindex} > 0 \middle| \sum_{\sindex=1}^{N^{\systemindex}} \mu_{\sindex} = H \right)$ is the probability of waiting. Notice that $H$ has to be greater than $\lambda^{\systemindex}$ for stability.\par
Let $F(N^{\systemindex})$ be the staffing cost per unit time, and $D^{\systemindex}(t)$ be the waiting cost of a customer when s/he waits for $t$ time units. Without loss of generality we may take $D^{\systemindex}(0) = 0$. Then the conditional expected total cost per unit of time is given by
\begin{align*}
C\left(N^{\systemindex}, \lambda^{\systemindex} \middle|\ \sum_{\sindex=1}^{N^{\systemindex}} \mu_{\sindex} = H \right) 
&= F(N^{\systemindex}) + \lambda^{\systemindex} \mathbb{E}\left( D^{\systemindex}(\mathrm{Wait}) \middle| \sum_{\sindex=1}^{N^{\systemindex}} \mu_{\sindex} = H \right)\\
&= F(N^{\systemindex}) + \lambda^{\systemindex} \pi^{\systemindex} G\left(N^{\systemindex}, \lambda^{\systemindex} \middle| \sum_{\sindex=1}^{N^{\systemindex}} \mu_{\sindex} = H \right),
\end{align*}
where
\begin{align*}
G\left(N^{\systemindex}, \lambda^{\systemindex} \middle| \sum_{\sindex=1}^{N^{\systemindex}} \mu_{\sindex} = H \right) 
&= \mathbb{E} \left( D^{\systemindex}(\mathrm{Wait}) \middle| \mathrm{Wait}>0, \sum_{\sindex=1}^{N^{\systemindex}} \mu_{\sindex} = H \right)\\
&= (H - \lambda^{\systemindex}) \int_0^{\infty} D^{\systemindex}(t) e^{-(H - \lambda^{\systemindex})t} dt. \numberthis \label{G1}
\end{align*}
We are interested in determining the expected optimal staffing level
\begin{equation}
N^{\systemindex}_* := \argmin_{N^{\systemindex} > \lambda^{\systemindex}/\bar{\mu}} C(N^{\systemindex}, \lambda^{\systemindex}), \label{Ob1}
\end{equation}
where $C(N^{\systemindex}, \lambda^{\systemindex}) = \int_{\lambda^{\systemindex}}^{\infty} C\left(N^{\systemindex}, \lambda^{\systemindex} \middle|\ \sum_{\sindex=1}^{N^{\systemindex}} \mu_{\sindex} = H \right) f_{\systemindex}(H) dH \frac{1}{\mathbb{P}(\sum_{\sindex=1}^{N^{\systemindex}} \mu_{\sindex} > \lambda^{\systemindex})}$, and $f_{\systemindex}(\cdot)$ is the density function of $\sum_{\sindex=1}^{N^{\systemindex}} \mu_{\sindex}$.

\subsection{Framework of the asymptotic optimisation problem}
We use a similar framework as in \cite{borst2004dimensioning}, \viva{where, the cost function contains an expected waiting cost, which is the product of the arrival rate and the expected waiting time of a single customer. We use the same concept, but our expected waiting cost contains two recursive expectations instead of one. The first one is the normal expectation of the waiting cost when the sum of the service rate is given, then we take the second expectation over the sum.} As the first step, we translate the discrete optimisation problem \eqref{Ob1} to a continuous one, and approximate the latter problem by a related continuous version, which is easier to solve. Finally, we prove that the optimal solution to the approximating continuous problem provides an asymptotically optimal solution to the original discrete problem. \viva{Our main contribution is in the last step. To show the validity of the continuous approximation, we need to prove that limits are interchangeable as shown in Figure \ref{figure2}. And to show this, we prove tightness of the sequence of steady state distributions.}\par
We first translate the discrete problem into a continuous one. Let
\begin{equation}
N^{\systemindex}(x) = \offeredload + x \sqrt{\offeredload},
\end{equation}
so that the variable $x=(N^{\systemindex} - \offeredload)/\sqrt{\offeredload}$ is the normalized number of servers in excess of the minimum number $\offeredload$ required for stability. In terms of $x$, we define
\begin{align*}
&F^{\systemindex}(x):= F(N^{\systemindex}(x)) - F \left(\offeredload \right),\\
&G^{\systemindex} \left( x \middle| \sum_{\sindex=1}^{N^{\systemindex}(x)} \mu_{\sindex} = H \right):= \lambda^{\systemindex} G \left(N^{\systemindex}(x), \lambda^{\systemindex} \middle| \sum_{\sindex=1}^{N^{\systemindex}(x)} \mu_{\sindex} = H \right),\\
&C^{\systemindex} \left(x \middle| \sum_{\sindex=1}^{N^{\systemindex}(x)} \mu_{\sindex} = H \right):= C \left(N^{\systemindex}(x), \lambda^{\systemindex} \middle| \sum_{\sindex=1}^{N^{\systemindex}(x)} \mu_{\sindex} = H \right) - F \left(\offeredload \right),\\
&\pi^{\systemindex} \left(x \middle| \sum_{\sindex=1}^{N^{\systemindex}(x)} \mu_{\sindex} = H \right):= \text{ The probability of waiting in the } \systemindex\text{th} \text{ system}\\
& \qquad \qquad \qquad \qquad \qquad \qquad \text{given the sum of service rates to be } H.
\end{align*}
Then the total cost per unit of time can be rewritten as 
\begin{equation}
C^{\systemindex}(x) = F^{\systemindex}(x) + \int_{\lambda^{\systemindex}}^{\infty} \pi^{\systemindex} \left(x \middle| \sum_{\sindex=1}^{N^{\systemindex}(x)} \mu_{\sindex} = H \right) G^{\systemindex} \left(x \middle| \sum_{\sindex=1}^{N^{\systemindex}(x)} \mu_{\sindex} = H \right) f_{\systemindex}(H) dH \frac{1}{\mathbb{P}(\sum_{\sindex=1}^{N^{\systemindex}(x)} \mu_{\sindex} > \lambda^{\systemindex})}. \label{cost2}
\end{equation}
Denote 
\begin{equation}
x^{\systemindex}_*:= \argmin_{x>0} C^{\systemindex}(x).
\end{equation}

Next, we a use simpler version of continuous function to approximate the function $C^{\systemindex}(x)$.
If we can find simpler approximations for both $\pi^{\systemindex}$ and $G^{\systemindex}$, we will have an approximation for the new cost function \eqref{cost2}. In the next section, we will show the feasibility of such approximations.

\vspace{-4mm}
\subsection{Validity of the approximating model}
In this section, we provide lemmas with proofs to show that we can use a simpler function to estimate the continuous cost function \eqref{cost2}.

\subsubsection{Approximation of $\pi^r$}
First, we will find an approximating version for $\pi^{\systemindex}(x \mid \sum_{\sindex=1}^{N^{\systemindex}(x)} \mu_{\sindex} = H^{\systemindex})$. We achieve this by calculating the steady state of the diffusion limit in Theorem 2.1 in \cite{Atar}. As we discussed before, when $\beta > 0$, the diffusion $\xi(t)$ is not stable, so here we focus on the situation when $\beta<0$, under which the limiting diffusion $\xi(t)$ has following expressions
\begin{equation}
\xi(t) = \twopartdef{\xi(0) + \sigma w(t) + \beta t}{\xi(t) \geq 0}{\xi(0) + \sigma w(t) + \beta t -\gamma \int_0^t \xi(s) ds}{\xi(t) <0},
\end{equation}
where
\begin{equation}
\sigma^2 = \bar{\mu}(C^2_{\check{U}} + 1), \text{ and } \gamma = \frac{\int x^2 d m}{\int x d m}. 
\end{equation}
Then by Section 4 of \cite{browne}, when $\xi(t) \geq 0$, it is a reflected Brownian motion. Thus its steady state density function conditional on $\xi(\infty) \geq 0$ is exponential, and will be $g_e(x)= \frac{-2\beta}{\sigma^2} e^{\frac{2\beta}{\sigma^2}x}$. Similarly, the process $\xi(t)$ restricted to the negative half-line is an O-U process , thus its steady state conditional on $\xi(\infty) < 0$ is normally distributed with density function $g_{\systemindex}(x)= \frac{\sqrt{2\gamma}/\sigma \phi(\frac{\sqrt{2\gamma}}{\sigma}x -\frac{\sqrt{2}\beta}{\sqrt{\gamma}\sigma})}{\Phi(\frac{-\sqrt{2}\beta}{\sqrt{\gamma}\sigma})}$. Hence, let $\varrho$ be the probability of waiting when $\beta < 0$, the steady state distribution of $\xi(t)$ has density function
\begin{equation}
f(x) = \twopartdef{g_e(x) \varrho,}{x \geq 0}{g_{\systemindex}(x) (1 - \varrho),}{x < 0}.
\end{equation}
Since the variance of the limiting diffusion is a constant on the real line,  by \cite{browne}, the density function of its steady state should be continuous. $\varrho$ may be solved by equating the right-handed limit of $g_e(\cdot) \varrho$ and the left-handed limit of $g_{\systemindex}(\cdot) (1 - \varrho)$ at $0$. This gives us 
\begin{equation}
\varrho = P(y) = \mathbb{P}(\xi(\infty) \geq 0 \mid \beta=y) = \left( 1 - \frac{\frac{\sqrt{2}y}{\sqrt{\gamma} \sigma} \Phi(-\frac{\sqrt{2}y}{\sqrt{\gamma} \sigma})}{\phi(\frac{\sqrt{2}y}{\sqrt{\gamma} \sigma})} \right)^{-1}. \label{PoWlimit}
\end{equation}
Thus we will have an analogue of Proposition 1 in \cite{sqrt} and Lemma 5.1 in \cite{borst2004dimensioning}, \viva{which gives us an approximation of $\pi^{\systemindex}$.} 
\begin{lemma}
	For any function $x^{\systemindex}$ with $\limsup_{\systemindex \to \infty} x^{\systemindex} < \infty$,
	\begin{equation}
	\lim_{\systemindex \to \infty} \frac{ \pi^{\systemindex} \left( x^{\systemindex} \middle| \sum_{\sindex=1}^{N^{\systemindex}(x^{\systemindex})} \mu_{\sindex} = H \right) }{ P \left(y^{\systemindex} \right) } = 1, \ \forall H > N^{\systemindex}(x^{\systemindex})\bar{\mu} - x \bar{\mu} \sqrt{\systemindex}, \label{estimatePoW}
	\end{equation}
	where $y^{\systemindex} = -\left( H - N^{\systemindex}(x^{\systemindex}) \bar{\mu} \right) \diffufactor - x^{\systemindex} \bar{\mu}$.
	\label{lemmaestimatePoW}
\end{lemma}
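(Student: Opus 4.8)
The plan is to realise $\pi^{\systemindex}$ as a stationary functional of the conditioned system and to identify its limit with the mass that the limiting diffusion of Theorem~\ref{Atar-theorem} places on $[0,\infty)$. First I would note that, once we condition on $\sum_{\sindex=1}^{N^{\systemindex}(x^{\systemindex})}\mu_{\sindex}=H$, the probability of waiting is the stationary probability of finding every server occupied, i.e. $\pi^{\systemindex}=\mathbb{P}\big(X^{\systemindex}(\infty)\ge N^{\systemindex}(x^{\systemindex})\big)=\mathbb{P}\big(\diffuscal{X}^{\systemindex}(\infty)\ge 0\big)$, where the second equality is the diffusion scaling \eqref{Atar-diffusion-scaling} and, for renewal arrivals, the arrival-stationary occupancy entering $\pi^{\systemindex}$ shares the diffusion limit of the time-stationary one. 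Hence the statement reduces to showing $\mathbb{P}(\diffuscal{X}^{\systemindex}(\infty)\ge 0)\sim P(y^{\systemindex})$.

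Next I would pin down the conditioned drift. In Theorem~\ref{Atar-theorem} the randomness of $\beta=\hat{\lambda}-\zeta-\bar{\mu}\nu$ enters only through the Gaussian service-rate fluctuation $\zeta$, the weak limit of $\diffufactor\big(\sum_{\sindex=1}^{N^{\systemindex}}\mu_{\sindex}-N^{\systemindex}\bar{\mu}\big)$. Conditioning on $\sum_{\sindex=1}^{N^{\systemindex}(x^{\systemindex})}\mu_{\sindex}=H$ therefore freezes this fluctuation, and substituting $\zeta=\diffufactor(H-N^{\systemindex}(x^{\systemindex})\bar{\mu})$ together with the finite-$\systemindex$ forms of $\hat{\lambda}$ and $\nu$ yields a conditioned drift equal to $y^{\systemindex}=-(H-N^{\systemindex}(x^{\systemindex})\bar{\mu})\diffufactor-x^{\systemindex}\bar{\mu}$, which equals $\diffufactor(\lambda^{\systemindex}-H)$ up to a term that vanishes because $\diffufactor\sqrt{\offeredload}\to 1$. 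The hypothesis $H>N^{\systemindex}(x^{\systemindex})\bar{\mu}-x\bar{\mu}\sqrt{\systemindex}$ is exactly $y^{\systemindex}<0$ (asymptotically $H>\lambda^{\systemindex}$), i.e. the stable regime in which the steady-state density and the formula $P(\cdot)$ of \eqref{PoWlimit} were derived.

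With the drift identified, I would invoke the conditional version of Theorem~\ref{Atar-theorem}: given $\sum_{\sindex=1}^{N^{\systemindex}(x^{\systemindex})}\mu_{\sindex}=H$, the scaled process $\diffuscal{X}^{\systemindex}$ converges weakly to the diffusion $\xi$ with deterministic drift $y:=\lim_{\systemindex\to\infty}y^{\systemindex}$, diffusion coefficient $\sigma^{2}=\bar{\mu}(C^{2}_{\check{U}}+1)$ and reflecting term $\gamma\int_{0}^{\cdot}\xi(s)^{-}ds$. The stationary computation carried out above the lemma---reflected Brownian motion on $[0,\infty)$ glued to an Ornstein--Uhlenbeck process on $(-\infty,0)$ by continuity of the density---gives $\mathbb{P}(\xi(\infty)\ge 0)=P(y)$. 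Since $y\mapsto P(y)$ is continuous and $y^{\systemindex}\to y$, we have $P(y^{\systemindex})\to P(y)>0$, so it only remains to prove $\mathbb{P}(\diffuscal{X}^{\systemindex}(\infty)\ge 0)\to P(y)$.

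The main obstacle is the interchange of the limits $t\to\infty$ and $\systemindex\to\infty$: weak convergence of the \emph{processes} $\diffuscal{X}^{\systemindex}\Rightarrow\xi$ does not by itself transfer to the \emph{stationary} laws. I would close this by establishing tightness of the sequence $\{\diffuscal{X}^{\systemindex}(\infty)\}$ through a Lyapunov/drift argument---the strictly negative drift $\lambda^{\systemindex}-H$ above $N^{\systemindex}(x^{\systemindex})$ controls the right (queueing) tail, while the idleness-induced restoring force controls the left tail---so that every subsequential limit of $\{\diffuscal{X}^{\systemindex}(\infty)\}$ is a stationary law of $\xi$; uniqueness of the latter (the diffusion is positive recurrent) forces $\mathbb{P}(\diffuscal{X}^{\systemindex}(\infty)\ge 0)\to P(y)$. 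Dividing by $P(y^{\systemindex})\to P(y)$ then gives the claimed ratio limit $1$. This tightness-of-steady-state step, rather than the elementary asymptotics of $P(\cdot)$, is where the real work lies, and it is the same interchange argument flagged in the introduction.
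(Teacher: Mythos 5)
Your proposal's skeleton coincides with the paper's own proof: you identify $\pi^{\systemindex}$ with the stationary probability $\mathbb{P}\bigl(\diffuscal{X}^{\systemindex}(\infty) \geq 0\bigr)$ of the conditioned system, translate the conditioning on $\sum_{\sindex=1}^{N^{\systemindex}(x^{\systemindex})}\mu_{\sindex}=H$ into the frozen drift $y^{\systemindex}$, note that the hypothesis on $H$ is exactly $y^{\systemindex}<0$ (the stable regime in which $P(\cdot)$ was derived), and reduce the lemma to the interchange of limits $\diffuscal{X}^{\systemindex}(\infty) \Rightarrow \xi(\infty)$, closed by tightness of $\{\diffuscal{X}^{\systemindex}(\infty)\}$ plus identification of subsequential limits; your uniqueness-of-stationary-law step plays the role of the paper's double-subsequence CDF argument around Figure \ref{figure2}. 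Up to that point you and the paper are doing the same thing.

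The gap is the tightness step itself, which you dispense with in one clause (``a Lyapunov/drift argument'') while conceding it is where the real work lies. This is not a routine citation in the present setting: with i.i.d.\ heterogeneous rates, renewal (non-Poisson) arrivals and LISF routing, $X^{\systemindex}(t)$ alone is not a Markov process --- below full occupancy the instantaneous departure rate depends on \emph{which} servers are busy, which is determined by the routing history --- so there is no generator on which to run a drift criterion, and any Lyapunov function would have to be built on the full state (busy-server configuration plus residual interarrival time) with estimates uniform both in $\systemindex$ and in the unknown realisation of the rates, and controlling both tails (the left tail's ``restoring force'' is itself rate-configuration dependent). This is precisely the obstruction the paper flags and then circumvents by a different mechanism: it compares the heterogeneous system with a slightly slower homogeneous one with rate $\mu^{\systemindex}=\bar{\mu}-\systemindex^{-p}$, $p>\tfrac{1}{2}$ (Lemma \ref{homo-tightness}, whose tightness comes from the explicit Erlang-C steady-state formulas), shows in Lemma \ref{convergence-P-o-A_r} that the exceptional event $A_{\systemindex}$ on which stochastic domination could fail has vanishing probability (with the delicate choice $M^{\systemindex}=C_3^{\systemindex}N^{\systemindex}$ --- Lemma \ref{divergence-P-o-A_r} shows smaller $M^{\systemindex}$ fails), obtains $X^{\systemindex}(t)\leq_{st}X^{\systemindex}_{hom}(t)$ on $A_{\systemindex}^c$, and combines these through \eqref{tightness1}. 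Until you either actually construct the Lyapunov function and carry out these uniform estimates, or substitute a comparison argument of this type, your proof is incomplete at its decisive point.
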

\begin{figure}
	\centering{ \includegraphics[scale=1.4]{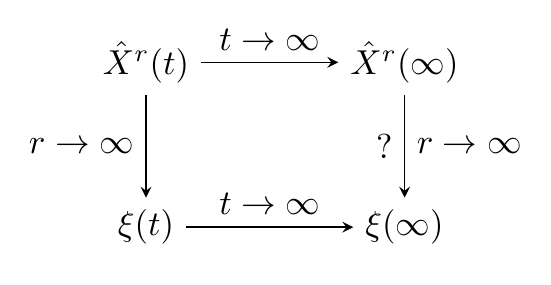} }
	\caption{The interchange-of-limit diagram} \label{figure2}
\end{figure}

\viva{Lemma 5.1 in \cite{borst2004dimensioning} is a direct application of Proposition 1 in \cite{sqrt}, which says that, under the Halfin-Whitt heavy traffic condition, the probability of waiting in many server queue (identical servers) converges to a constant between zero and one. For our systems, service rates are no longer identical, and thus we cannot use this result directly. We come up with another approach to prove the convergence, which involves an associated sequence of homogeneous systems.}

\viva{The idea of the proof lies in Figure \ref{figure2}. In order to show the convergence of the probability of waiting in \eqref{estimatePoW}, we need to show that the sequence of steady states $\hat{X}^{\systemindex}(\infty)$ weakly converges to the steady state $\xi(\infty)$ of the diffusion limit. To show weak convergence, the usual way is to first show tightness, then use the relations in Figure \ref{figure2} to show that tightness of $\hat{X}^{\systemindex}(\infty)$ actually reflects the convergence on the right hand side.}\par
\viva{The challenging part is to show the tightness. It is hard to come up with a direct way to show tightness if service rates are random and unknown. Instead, we compare our heterogeneous systems with a sequence of homogeneous systems which have identical service rates being less than $\bar{\mu}$, then use the properties of homogeneous systems to get tightness results.} \par
\viva{More specifically, consider homogeneous systems with $N^{\systemindex}$ servers and identical service rates $\mu^{\systemindex} < \bar{\mu}$. Assume there is a fixed number $M^{\systemindex} < N^{\systemindex}$ for each $\systemindex$, and denote $\Delta_{M^{\systemindex}}$ as the family of sets containing $M^{\systemindex}$ numbers out of $\{1,2,\dots,N^{\systemindex}\}$. Now our problem can be considered in the following two cases:
\begin{itemize}
	\item
	When $\inf_{\delta \in \Delta_{M^{\systemindex}}} \sum_{\delta} \mu_{\sindex} < M^{\systemindex} \mu^{\systemindex}$, denote this scenario as $A_{\systemindex}$. (This $A_r$ is \viva{independent from} the arrival process $A^{\systemindex}(t)$.) There is a possibility that the heterogeneous systems serve faster than their corresponding homogeneous ones. 
	\item
	When $\inf_{\delta \in \Delta_{M^{\systemindex}}} \sum_{\delta} \mu_{\sindex} \geq M^{\systemindex} \mu^{\systemindex}$, this scenario is marked as $A_{\systemindex}^c$. The heterogeneous systems always serve faster than their corresponding homogeneous ones. Thus under such situations, $X^{\systemindex}(t) \leq_{st} X_{hom}^{\systemindex}(t)$. Therefore to show tightness of heterogeneous systems for this scenario, we only need to show the tightness of their corresponding homogeneous systems, i.e.\ 
	\begin{equation}
	\forall \epsilon>0, \exists K^{\epsilon}>0, \text{ s.t. } \forall \systemindex, \mathbb{P}(X_{hom}^{\systemindex}(t) \geq K^{\epsilon}) \leq \epsilon. \label{homotight}
	\end{equation}
\end{itemize}
Combining these two scenarios, let $\bar{K^{\epsilon}} = K^{\epsilon} \vee M^{\systemindex}$. Then
\begin{align*}
\mathbb{P}(X^{\systemindex}(t) \geq \bar{K^{\epsilon}})
&= \mathbb{P}(X^{\systemindex}(t) \geq \bar{K^{\epsilon}}, A_{\systemindex}) + \mathbb{P}(X^{\systemindex}(t) \geq \bar{K^{\epsilon}}, A_{\systemindex}^c)\\
&\leq \mathbb{P}(A_{\systemindex}) + \mathbb{P}(X^{\systemindex}_{hom}(t) \geq \bar{K^{\epsilon}}). \numberthis \label{tightness1}
\end{align*}
If \eqref{homotight} is true, then after a reselection of $\epsilon$, we can easily show $\mathbb{P}(X^{\systemindex}_{hom}(t) \geq \bar{K^{\epsilon}}) < \epsilon/2$. Furthermore, if we can also show $\mathbb{P}(A_{\systemindex}) \to 0$ as $\systemindex \to \infty$, then $\mathbb{P}(X^{\systemindex}(t) \geq \bar{K^{\epsilon}}) \to 0$ as $\systemindex \to \infty, \forall t>0$, the tightness result will follow. Since the tightness result holds for every $t$, it should also hold as $t \to \infty$, i.e.\ $\mathbb{P}(X^{\systemindex}(\infty) \geq \bar{K^{\epsilon}}) < \epsilon$.}\par
\viva{From the discussion above, it is important to define proper $\mu^r$ and $M^r$ to get our convergence results. We have three lemmas for that.
\begin{lemma}
Let $\mu^r= \bar{\mu} - \frac{1}{r^p}$ for any $p>\frac{1}{2}$, then the sequence $\{X^r_{hom}(\infty)\}$ is tight. \label{homo-tightness}
\end{lemma}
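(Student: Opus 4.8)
The statement to establish is that the diffusion-scaled stationary number in system of the homogeneous comparison queue, $\hat{X}^r_{hom}(\infty) := \frac{1}{\sqrt{r}}(X^r_{hom}(\infty) - N^r)$, forms a tight sequence. The plan is to prove tightness by controlling the two tails separately: the upper tail, produced by the stationary queue (customers in excess of $N^r$), and the lower tail, produced by idle servers. Before either step I would record the role of the choice $\mu^r = \bar{\mu} - r^{-p}$ with $p>\tfrac12$. Writing $\rho^r = \lambda^r/(N^r\mu^r)$ for the traffic intensity and using $N^r = r + \hat{N}^r\sqrt{r}$, the expansion $\lambda^r = r\bar\mu + \hat\lambda\sqrt{r} + o(\sqrt{r})$ from \eqref{arrival-2moment-condition}, and $\mu^r=\bar\mu - r^{-p}$ gives $N^r\mu^r - \lambda^r = (\bar\mu\,\hat{N}^r - \hat\lambda)\sqrt{r} - r^{1-p} + o(\sqrt{r})$. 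Since $p>\tfrac12$ forces $r^{1-p}=o(\sqrt{r})$, the $r^{-p}$ perturbation is negligible at the $\sqrt{r}$ scale, so the homogeneous system stays critically loaded in the Halfin--Whitt sense with $\sqrt{N^r}(1-\rho^r)\to \nu - \hat\lambda/\bar\mu =: \beta_{hom}$. This is exactly why $p>\tfrac12$ is needed: for $p\le\tfrac12$ the capacity deficit would be of order $\sqrt{r}$ or larger and would eject the system from the QED regime into overload.

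For the upper tail I would dominate the queue by a single fast server. Conditionally on $N^r=n$ with $n\mu^r>\lambda^r$, as long as all servers are busy the departure rate is constant and equal to $n\mu^r$, so the stationary excess $Q^r=(X^r_{hom}(\infty)-N^r)^+$ is stochastically dominated by the stationary queue of a single-server system fed by the same arrival stream with service rate $n\mu^r$. Its law is geometric (with ratio $\omega^r$, the root in $(0,1)$ of $\omega=\tilde{A}(n\mu^r(1-\omega))$, where $\tilde A$ is the Laplace--Stieltjes transform of the interarrival distribution; the ratio is simply $\rho^r$ for Poisson arrivals), and in either case $1-\omega^r \ge c(1-\rho^r)$ for some $c>0$. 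Hence on the event $\{n\mu^r-\lambda^r\ge\delta\sqrt{r}\}$ one gets $\mathbb{P}(Q^r>k\sqrt{r}\mid N^r=n)\le (\omega^r)^{k\sqrt{r}} \le \exp(-c\delta k)$, which is uniform in $r$ and decays in $k$, controlling $\mathbb{P}(\hat{X}^r_{hom}(\infty)>K)$.

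For the lower tail I would compare the number of busy servers with an infinite-server system. Since a many-server queue departs no faster than the corresponding infinite-server queue once congested, $X^r_{hom}(\infty)\ge_{st}\Pi^r$, where $\Pi^r$ is the stationary number in the associated $\cdot/M/\infty$ system; consequently the idle count satisfies $N^r - X^r_{hom}(\infty)\le_{st}(N^r-\Pi^r)^+$. For Poisson arrivals $\Pi^r$ is $\mathrm{Poisson}(\lambda^r/\mu^r)$, with mean and variance of order $r$, so $(N^r-\Pi^r)/\sqrt{r}$ is tight by the central limit theorem together with the a.s.\ bound $N^r\le 2r$ and $N^r/r\Rightarrow 1$ (for renewal arrivals the same tightness follows from asymptotic normality of the scaled $GI/M/\infty$ occupancy). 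This bounds $\mathbb{P}(\hat{X}^r_{hom}(\infty)<-K)$, and combining the two estimates yields, for every $\epsilon>0$, a $K$ with $\sup_r\mathbb{P}(|\hat{X}^r_{hom}(\infty)|>K)<\epsilon$.

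The main obstacle is the randomness of $N^r$: the conditional heavy-traffic parameter $\sqrt{N^r}(1-\rho^r)$ is itself random, converging to $\beta_{hom}=\nu-\hat\lambda/\bar\mu$, so the geometric ratio in the upper-tail step is usefully bounded away from $1$ only on the stable event $\{N^r\mu^r-\lambda^r\ge\delta\sqrt{r}\}$. I would handle this by conditioning on $N^r$ throughout and integrating the conditional tail bounds against the law of $N^r$, using $N^r\le 2r$ and $N^r/r\Rightarrow 1$, while restricting to the stable regime $\beta<0$ under which we operate. The one delicate point is to make the contribution of the near-critical band $\{0\le N^r\mu^r-\lambda^r<\delta\sqrt{r}\}$ uniformly negligible by first choosing $\delta$ small and invoking the weak convergence $\hat{N}^r\Rightarrow\nu$ from \eqref{initial-condition-convergence}; this is where care is required, since realisations with $\hat{N}^r$ near or below the stability threshold $\hat\lambda/\bar\mu$ must be shown to carry vanishing mass in the regime considered.
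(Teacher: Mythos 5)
Your proposal is correct in outline, but it takes a genuinely different route from the thesis, and it actually proves a different (centred and scaled) formulation of the lemma. The thesis's proof stays with the unscaled sequence: it verifies, as you do, that $p>\tfrac12$ preserves the heavy-traffic condition and stability, but then invokes the exact Erlang-C stationary formulas ((1.1) and (1.3) of Halfin and Whitt) to write $\mathbb{P}(X^r_{hom}(\infty)\geq K)$ as a geometric sum in $\rho^r$, solves for a per-system threshold $K^{\epsilon}_r$ (a logarithm to base $\rho^r$) making this tail at most $\epsilon$, and concludes by setting $K^{\epsilon}=\sup_r K^{\epsilon}_r$. You instead work with $\hat{X}^r_{hom}(\infty)=(X^r_{hom}(\infty)-N^r)/\sqrt{r}$ and control the two tails by stochastic comparison: a single-server domination (rate $N^r\mu^r$) giving a geometric upper tail that is uniformly exponential on the event of a $\delta\sqrt{r}$ stability margin, and an infinite-server domination plus CLT for the lower tail, finally integrating over the law of $N^r$. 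Three things distinguish the approaches. First, your scaled statement is the one the thesis actually needs downstream (the proof of Lemma \ref{lemmaestimatePoW} requires tightness of the diffusively scaled steady states), and it avoids a real defect of the unscaled version: since the stationary mass of the $r$th system concentrates near $N^r\approx r$, any threshold achieving tail probability at most $\epsilon$ must exceed $N^r+c\sqrt{r}$, so the thresholds $K^{\epsilon}_r$ are unbounded in $r$ and their supremum is infinite; the uniform unscaled bound \eqref{homotight} cannot hold as $t\to\infty$, and the thesis's final step (taking the supremum) does not actually produce a finite constant. Second, your comparison arguments are insensitive to the arrival process being a general renewal process, whereas the Erlang formulas used in the thesis are exact only for Poisson input, although the model of \eqref{arrival-2moment-condition} allows general interarrival distributions. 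Third, you supply lower-tail (idle-server) control, which the scaled formulation requires and the thesis never addresses. The price of your route is reliance on standard facts not proved in the thesis — the geometric stationary excess (or pathwise single-server domination) for $GI/M/N$, the bound $1-\omega^r\geq c(1-\rho^r)$, and asymptotic normality of the $GI/M/\infty$ occupancy — together with the treatment of the random stability margin coming from $N^r$, which you correctly identify as the delicate step and for which choosing $\delta$ small and invoking $\hat{N}^r\Rightarrow\nu$ on the stable event is the right resolution.
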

}
\begin{proof}
		In order to show $\{X^r_{hom}(\infty)\}$ is tight, first we need to specify the heavy traffic condition for the homogeneous systems and the existence of their steady states.\par
		When
		\begin{equation}
		\mu^{\systemindex} = \bar{\mu} - \frac{1}{{\systemindex}^p},  p>\frac{1}{2},
		\label{IDrate}
		\end{equation} 
		the heavy traffic condition becomes
		\begin{align*}
		&\lim_{\systemindex \to \infty} \diffufactor (\systemindex \mu^{\systemindex} - \lambda^{\systemindex}) = \lim_{\systemindex \to \infty} \diffufactor \left(\systemindex \left(\bar{\mu} - \frac{1}{{\systemindex}^p}\right) - \lambda^{\systemindex} \right)\\
		& \quad = \lim_{\systemindex \to \infty} \diffufactor \left(\systemindex \bar{\mu} - \lambda^{\systemindex} - {\systemindex}^{1-p}\right) = \lim_{\systemindex \to \infty} \left( \diffufactor \left(\systemindex \bar{\mu} - \lambda^{\systemindex} \right) - {\systemindex}^{\frac{1}{2}-p} \right),\\
		& \quad = \lim_{\systemindex \to \infty} \diffufactor \left(\systemindex \bar{\mu} - \lambda^{\systemindex} \right) = \hat{\lambda}>0.
		\end{align*}
		This also means $\systemindex \bar{\mu} > \lambda^{\systemindex}, \forall \systemindex$.\par
		To guarantee the existence of their steady states, we need 
		\begin{equation*}
		\systemindex \mu^{\systemindex} = \systemindex \left(\bar{\mu} - \frac{1}{{\systemindex}^p} \right) > \lambda^{\systemindex},
		\end{equation*}
		i.e.\
		\begin{equation}
		\systemindex \bar{\mu} - {\systemindex}^{1-p} > \lambda^{\systemindex}, \label{homostable}
		\end{equation}
		and $p > \frac{1}{2}$ makes the inequality above hold. \par
		Since we already showed the existence of $X^r_{hom}(\infty)$, now we can prove the tightness of it, i.e.\ \eqref{homotight} is true when $t \to \infty$. Denote $\rho^{\systemindex} = \frac{\lambda^{\systemindex}}{ \systemindex \mu^{\systemindex}}$, then $\rho^{\systemindex} < 1, \forall \systemindex$. Fix $\systemindex$. We want to show
		\begin{equation}
		\forall \epsilon>0, \exists K^{\epsilon}_{\systemindex}>0, \text{ s.t. } \mathbb{P}(X_{hom}^{\systemindex}(\infty) \geq K^{\epsilon}_{\systemindex}) \leq \epsilon. \label{homotight1}
		\end{equation}
		Choose $K^{\epsilon}_{\systemindex} > N^{\systemindex}$. By (1.1) and (1.3) in \cite{sqrt}, we have
		\begin{align*}
		& \mathbb{P}(X^{\systemindex}_{hom}(\infty) \geq K^{\epsilon}_{\systemindex}) = \sum_{k = K^{\epsilon}_{\systemindex}}^{\infty} \frac{(N^{\systemindex})^{N^{\systemindex}} (\rho^{\systemindex})^k}{N^{\systemindex}!} \eta =  \eta \frac{(N^{\systemindex})^{N^{\systemindex}}}{N^{\systemindex}!} \sum_{k = K^{\epsilon}_{\systemindex}}^{\infty}(\rho^{\systemindex})^k ,
		\end{align*}
		where $\eta = \left( \frac{(N^{\systemindex} \rho^{\systemindex})^{N^{\systemindex}}}{N^{\systemindex}!(1 - \rho^{\systemindex})} + \sum_{k=0}^{N^{\systemindex}-1} \frac{(N^{\systemindex} \rho^{\systemindex})^k}{k!} \right)^{-1}$. Substitute $\eta$ into the equation above, and since $0 < \rho^{\systemindex} < 1$, it becomes
		\begin{align*}
		& \mathbb{P}(X^{\systemindex}_{hom}(\infty) \geq K^{\epsilon}_{\systemindex}) = \eta \frac{(N^{\systemindex})^{N^{\systemindex}}}{N^{\systemindex}!} \frac{(\rho^{\systemindex})^{K^{\epsilon}_{\systemindex}}(1 - (\rho^{\systemindex})^x)}{1 - \rho^{\systemindex}} \xrightarrow{x \to \infty} \eta \frac{(N^{\systemindex})^{N^{\systemindex}}}{N^{\systemindex}!} \frac{(\rho^{\systemindex})^{K^{\epsilon}_{\systemindex}}}{1 - \rho^{\systemindex}}\\
		& = \frac{(N^{\systemindex})^{N^{\systemindex}}}{N^{\systemindex}!} \left( \frac{(N^{\systemindex} \rho^{\systemindex})^{N^{\systemindex}}}{N^{\systemindex}!(1 - \rho^{\systemindex})} + \sum_{k=0}^{N^{\systemindex}-1} \frac{(N^{\systemindex} \rho^{\systemindex})^k}{k!} \right)^{-1} \frac{(\rho^{\systemindex})^{K^{\epsilon}_{\systemindex}}}{1 - \rho^{\systemindex}}\\
		& = \left( \frac{(\rho^{\systemindex})^{N^{\systemindex}}}{1 - \rho^{\systemindex}} + \frac{N^{\systemindex}!}{(N^{\systemindex})^{N^{\systemindex}}} \sum_{k=0}^{N^{\systemindex} - 1} \frac{(N^{\systemindex})^k}{k!} (\rho^{\systemindex})^k \right)^{-1} \frac{(\rho^{\systemindex})^{K^{\epsilon}_{\systemindex}}}{1 - \rho^{\systemindex}}\\
		& = \left( (\rho^{\systemindex})^{N^{\systemindex}} + (1 - \rho^{\systemindex}) \frac{N^{\systemindex}!}{(N^{\systemindex})^{N^{\systemindex}}} \sum_{k=0}^{N^{\systemindex} - 1} \frac{(N^{\systemindex})^k}{k!} (\rho^{\systemindex})^k \right)^{-1} (\rho^{\systemindex})^{K^{\epsilon}_{\systemindex}}.
		\end{align*}
		$\forall \epsilon > 0$, as long as 
		\begin{align*}
		K^{\epsilon}_{\systemindex} > \log \left( \epsilon \left( \frac{(\rho^{\systemindex})^{N^{\systemindex}}}{1 - \rho^{\systemindex}} + \frac{N^{\systemindex}!}{(N^{\systemindex})^{N^{\systemindex}}} \sum_{k=0}^{N^{\systemindex} - 1} \frac{(N^{\systemindex})^k}{k!} (\rho^{\systemindex})^k \right) \right),
		\end{align*}
		$\mathbb{P}(X^{\systemindex}_{hom}(\infty) \geq K^{\epsilon}_{\systemindex}) < \epsilon$ holds. Here $\log$ has base $\rho^{\systemindex}$. Thus \eqref{homotight1} is proved.\par
		For a sequence of homogeneous systems, take $K^{\epsilon} = K^{\epsilon}_{1} \vee K^{\epsilon}_{2} \vee \cdots \vee K^{\epsilon}_{n} \vee \cdots$, then \eqref{homotight} holds when $t \to \infty$.
\end{proof}

\viva{From Lemma \ref{homo-tightness}, in order to guarantee heavy traffic condition and stability of the homogeneous systems, service rate of homogeneous systems is chosen with  
\begin{equation}
p > \frac{1}{2}
\end{equation} 
in the rest of this chapter.
}

\viva{As for the choice of $M^r$, we will explain in the following two lemmas that when $M^r$ grows slower than $N^r$, the convergence does not hold. Only when $M^r$ grows with the same rate as $N^r$, the probability $\mathbb{P}(A_r)$ converges to zero. }

\viva{
\begin{lemma}
Let $\mu^r= \bar{\mu} - \frac{1}{r^p}$. If $M^{\systemindex}$ grows slower than $N^{\systemindex}$ in the sense that $\lim_{\systemindex \to \infty} \frac{M^{\systemindex}}{N^{\systemindex}} = 0$, then $\mathbb{P}(A_r) \to \infty$. \label{divergence-P-o-A_r}
\end{lemma}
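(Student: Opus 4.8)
The plan is to recognise that the combinatorial infimum defining $A_r$ is nothing but a sum of lower-tail order statistics, and then to show that this sum is typically pulled a fixed distance below $\bar\mu$ when $M^r$ is a vanishing fraction of $N^r$. (The stated conclusion $\mathbb{P}(A_r)\to\infty$ should be read, as the surrounding discussion makes clear, as $\mathbb{P}(A_r)\to 1$.) First I would rewrite $\inf_{\delta\in\Delta_{M^r}}\sum_{k\in\delta}\mu_k$ explicitly: minimising over all $M^r$-element subsets $\delta\subseteq\{1,\dots,N^r\}$ picks out the $M^r$ smallest rates, so with the order statistics $\mu_{(1)}\le\cdots\le\mu_{(N^r)}$ we have $\inf_{\delta\in\Delta_{M^r}}\sum_{k\in\delta}\mu_k=\sum_{j=1}^{M^r}\mu_{(j)}$. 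Hence $A_r=\{\tfrac{1}{M^r}\sum_{j=1}^{M^r}\mu_{(j)}<\bar\mu-r^{-p}\}$, i.e. $A_r$ asserts that the average of the $M^r$ smallest rates lies below $\bar\mu-r^{-p}$, and since $r^{-p}\to 0$ the content of the lemma is that these smallest rates sit a \emph{fixed} amount below the mean.

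Next I would fix a threshold strictly below the mean. In the heterogeneous case of interest the distribution $m$ is non-degenerate (otherwise all $\mu_k=\bar\mu$ and $A_r$ is empty), so the essential infimum $a$ of $\mu_1$ satisfies $a<\bar\mu$. I would choose $c$ with $a<c<\bar\mu$ and put $\theta:=\mathbb{P}(\mu_1\le c)$, which is strictly positive by definition of $a$. Letting $Z^r:=\#\{k\le N^r:\mu_k\le c\}$ count the slow servers, the key inclusion is that on $\{Z^r\ge M^r\}$ we have $\mu_{(M^r)}\le c$, whence $\sum_{j=1}^{M^r}\mu_{(j)}\le M^r c$. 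Because $c<\bar\mu$ is fixed while $\bar\mu-r^{-p}\uparrow\bar\mu$, for all $r$ large enough that $r^{-p}<\bar\mu-c$ we obtain $M^r c<M^r(\bar\mu-r^{-p})=M^r\mu^r$, so $\{Z^r\ge M^r\}\subseteq A_r$. It therefore suffices to prove $\mathbb{P}(Z^r\ge M^r)\to 1$.

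The remaining step is a concentration estimate for $Z^r$. Conditionally on $N^r=n$ the rates are i.i.d., so $Z^r$ is $\mathrm{Binomial}(n,\theta)$ with mean $n\theta$. The hypothesis $M^r/N^r\to 0$ together with $N^r/r\Rightarrow 1$ (which forces $N^r\ge r/2$ with probability tending to one) gives $M^r=o(n)$ on this high-probability event, so that $M^r$ lies far below the mean $n\theta$, which is of order $r\theta$. A Chebyshev bound then gives $\mathbb{P}(Z^r<M^r\mid N^r=n)\le \frac{n\theta(1-\theta)}{(n\theta-M^r)^2}\to 0$ uniformly over $n\ge r/2$; integrating over the law of $N^r$ and discarding the vanishing-probability event $\{N^r<r/2\}$ yields $\mathbb{P}(Z^r<M^r)\to 0$. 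With the inclusion above this proves $\mathbb{P}(A_r)\to 1$.

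The hard part will be the bookkeeping around the randomness of $N^r$: the hypothesis $M^r/N^r\to 0$ has a random denominator, so the clean move is to work on the event $\{N^r/r\in(1/2,2)\}$ (of probability tending to one), on which $M^r=o(r)$ may be used as a deterministic bound, and then to make the binomial concentration uniform over the relevant range of $n$. The only genuinely non-routine modelling point is the implicit non-degeneracy of $m$, which is exactly what guarantees a threshold $c<\bar\mu$ carrying positive lower-tail mass $\theta>0$; everything else reduces to the elementary order-statistic identity and a Chebyshev estimate.
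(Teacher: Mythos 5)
Your proposal is correct (granted the implicit non-degeneracy of $m$, which the paper's own argument also needs, since the variance $\sigma$ sits in denominators there), but it takes a genuinely different --- and logically stronger --- route than the paper. The paper never analyses $\mathbb{P}(A_r)$ directly: it carries over the union bound $\mathbb{P}(A_r) \leq \binom{N^r}{M^r}\mathbb{P}(B_\delta)$, approximates $\mathbb{P}(B_\delta)$ by a Gaussian tail via the CLT, lower-bounds the binomial coefficient with Stirling's formula, and shows that this \emph{upper bound} diverges; the literal claim ``$\mathbb{P}(A_r) \to \infty$'' in the statement is really the assertion that $\binom{N^r}{M^r}\mathbb{P}(B_\delta)$ blows up, and the proof concludes only that the choice $M^r = o(N^r)$ must be ``ruled out.'' Strictly speaking, divergence of an upper bound says nothing about $\mathbb{P}(A_r)$ itself, so the paper establishes merely that its own bounding technique is inconclusive for such $M^r$. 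Your argument instead identifies $\inf_{\delta}\sum_{k\in\delta}\mu_k$ with the sum of the $M^r$ smallest order statistics, fixes a threshold $c \in (a,\bar\mu)$ carrying lower-tail mass $\theta > 0$, and uses Chebyshev concentration of the binomial count $Z^r$ of slow servers to get $\{Z^r \geq M^r\} \subseteq A_r$ for large $r$, hence $\mathbb{P}(A_r) \to 1$. That is exactly the conclusion the surrounding development needs: the comparison with homogeneous systems requires $\mathbb{P}(A_r) \to 0$, and your version shows definitively that \emph{no} choice with $M^r/N^r \to 0$ can deliver this, whereas the paper only shows that its union-bound method fails there. Your bookkeeping for the random $N^r$ --- using $N^r \leq 2r$ a.s.\ to convert $M^r/N^r \to 0$ into the deterministic $M^r = o(r)$, then working on the event $\{N^r \geq r/2\}$ of probability tending to one and making the Chebyshev bound uniform over $n \in [r/2, 2r]$ --- is also sound, and is a point the paper's proof glosses over entirely.
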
	
}
\vspace{-5mm}
\begin{proof}
	Fix $\systemindex$. Since $\delta$ is one element in $\Delta_{M^{\systemindex}}$, denote $B_{\delta} = \{ \sum_{\delta} \mu_{\sindex} < M^{\systemindex} \mu^{\systemindex} \}$, we have
	\begin{equation}
	\mathbb{P}(A_{\systemindex}) = \mathbb{P}\left(\bigcup_{\delta} B_{\delta} \right) \leq \sum_{\delta \in \Delta_{M^{\systemindex}}} \mathbb{P}(B_{\delta}) = {N^{\systemindex} \choose M^{\systemindex}} \mathbb{P}(B_{\delta}).  \label{PA_rUB}
	\end{equation}
	Using stirling's approximation, we have a lower bound for the combination factor
	\begin{align*}
	{N^{\systemindex} \choose M^{\systemindex}} 
	& = \frac{N^{\systemindex}!}{M^{\systemindex}!(N^{\systemindex} - M^{\systemindex})!} \geq \frac{\sqrt{2 \pi} (N^{\systemindex})^{N^{\systemindex} + \frac{1}{2}} e^{-N^{\systemindex}}}{e (M^{\systemindex})^{M^{\systemindex} + \frac{1}{2}} e^{-M^{\systemindex}} e (N^{\systemindex} - M^{\systemindex})^{N^{\systemindex} - M^{\systemindex} + \frac{1}{2}} e^{-(N^{\systemindex} - M^{\systemindex})}}\\
	& = \frac{\sqrt{2 \pi}}{e^2} \frac{(N^{\systemindex})^{N^{\systemindex} + \frac{1}{2}}}{(M^{\systemindex})^{M^{\systemindex} + \frac{1}{2}} (N^{\systemindex} - M^{\systemindex})^{N^{\systemindex} - M^{\systemindex} + \frac{1}{2}}}. \numberthis \label{combLB}
	\end{align*}
	\vspace{-3mm}
	Thus as $\systemindex \to \infty$, 
	\begin{align*}
	{N^{\systemindex} \choose M^{\systemindex}} 
	&\mathbb{P} \left(\sum_{\delta} \frac{\mu_{\sindex}}{M^{\systemindex}} < \mu^{\systemindex} \right) \to {N^{\systemindex} \choose M^{\systemindex}} \mathbb{P}(Y^{\systemindex} < \mu^{\systemindex})\\
	& = \frac{N^{\systemindex}!}{M^{\systemindex}!(N^{\systemindex}-M^{\systemindex})!} \frac{1}{\sqrt{2 \pi}} \int_{-\infty}^{-\frac{\sqrt{M^{\systemindex}}}{\sigma {\systemindex}^p}} e^{-\frac{t^2}{2}} dt\\
	& \geq \frac{\sqrt{2 \pi}}{e^2} \frac{(N^{\systemindex})^{N^{\systemindex} + \frac{1}{2}}}{(M^{\systemindex})^{M^{\systemindex} + \frac{1}{2}} (N^{\systemindex} - M^{\systemindex})^{N^{\systemindex} - M^{\systemindex} + \frac{1}{2}}} \frac{1}{\sqrt{2 \pi}} \int_{-\infty}^{-\frac{\sqrt{M^{\systemindex}}}{\sigma {\systemindex}^p}} e^{-\frac{t^2}{2}} dt\\
	& = \frac{1}{e^2} \frac{(N^{\systemindex})^{M^{\systemindex}}}{(M^{\systemindex})^{M^{\systemindex}}} \frac{(N^{\systemindex})^{N^{\systemindex} - M^{\systemindex}}}{(N^{\systemindex} - M^{\systemindex})^{N^{\systemindex} - M^{\systemindex}}} \left( \frac{N^{\systemindex}}{M^{\systemindex}(N^{\systemindex} - M^{\systemindex})} \right)^{\frac{1}{2}} \int_{-\infty}^{-\frac{\sqrt{M^{\systemindex}}}{\sigma {\systemindex}^p}} e^{-\frac{t^2}{2}} dt. \numberthis \label{probLB}  
	\end{align*}
	Notice that the second item $(\frac{N^{\systemindex}}{M^{\systemindex}})^{M^{\systemindex}} \to \infty$ as $\systemindex \to \infty$, the third item can be considered as follows:
	\begin{align*}
	& \left( \frac{N^{\systemindex}}{N^{\systemindex} - M^{\systemindex}} \right)^{(N^{\systemindex} - M^{\systemindex})} = \left( 1 - 1 + \frac{N^{\systemindex}}{N^{\systemindex} - M^{\systemindex}} \right)^{(N^{\systemindex} - M^{\systemindex})} = \left( 1 + \frac{M^{\systemindex}}{N^{\systemindex} - M^{\systemindex}} \right)^{(N^{\systemindex} - M^{\systemindex})}\\
	& \quad = \left( 1 + \frac{1}{\frac{N^{\systemindex}}{M^{\systemindex}} - 1} \right)^{(\frac{N^{\systemindex}}{M^{\systemindex}} - 1) M^{\systemindex}} \to \exp(M^{\systemindex}) \text{ as } \systemindex \to \infty, 
	\end{align*}
	and the fourth item equals  $\sqrt{\frac{1}{M^{\systemindex} (1 - \frac{M^{\systemindex}}{N^{\systemindex}})}}$, where $\sqrt{\frac{1}{1 - \frac{M^{\systemindex}}{N^{\systemindex}}}}$ converges to $1$. As for the integral, the upper limit is equal to 
	\begin{align*}
	-\frac{\sqrt{M^{\systemindex}}}{\sqrt{N^{\systemindex}}} \frac{\sqrt{N^{\systemindex}}}{\sqrt{\systemindex}} \frac{\sqrt{\systemindex}}{{\systemindex}^p} = -\frac{\sqrt{M^{\systemindex}}}{\sqrt{N^{\systemindex}}} \frac{\sqrt{N^{\systemindex}}}{\sqrt{\systemindex}} \frac{1}{{\systemindex}^{p-\frac{1}{2}}} \to 0 
	\end{align*}
	as $\systemindex \to \infty$, because $p > \frac{1}{2}$. So as $\systemindex \to \infty$, \eqref{probLB} is equivalent to 
	\begin{align*}
	& \frac{1}{e^2} \left(\frac{N^{\systemindex}}{M^{\systemindex}}\right)^{M^{\systemindex}}  \left( 1 + \frac{1}{\frac{N^{\systemindex}}{M^{\systemindex}} - 1} \right)^{(\frac{N^{\systemindex}}{M^{\systemindex}} - 1) M^{\systemindex}} \sqrt{\frac{1}{M^{\systemindex}}} \sqrt{\frac{1}{1 - \frac{M^{\systemindex}}{N^{\systemindex}}}} \int_{-\infty}^{-\frac{\sqrt{M^{\systemindex}}}{\sqrt{N^{\systemindex}}} \frac{\sqrt{N^{\systemindex}}}{\sqrt{\systemindex}} \frac{1}{{\systemindex}^{p-\frac{1}{2}}}} e^{-\frac{t^2}{2}} dt\\
	& \to \frac{1}{e^2} \left( \frac{N^{\systemindex}}{M^{\systemindex}} \right)^{M^{\systemindex}} \exp(M^{\systemindex}) \frac{1}{\sqrt{M^{\systemindex}}} \int_{-\infty}^{0} e^{-\frac{t^2}{2}} dt\\
	& \to \frac{1}{e^2} \cdot \infty \cdot \infty \cdot \frac{1}{2} \to \infty. \numberthis \label{probLB2}
	\end{align*}
	Thus the original ${N^{\systemindex} \choose M^{\systemindex}} \mathbb{P} \left(\sum_{\delta} \frac{\mu_{\sindex}}{M^{\systemindex}} < \mu^{\systemindex} \right)$ will not converge to zero, and therefore we rule out this situation. 	
\end{proof}

\viva{Now we conclude that there must be $M^{\systemindex} = C^{\systemindex}_3$ for some $C^{\systemindex}_3$ such that $\mathbb{P}(A_{\systemindex}) \to 0$ holds.}
\begin{lemma}
Let $\mu^r= \bar{\mu} - \frac{1}{r^p},$ and $M^r = C_3^r N^r$ for some $C_3^r \in (0,1)$. Then $\mathbb{P}(A_r) \to 0$ as $\systemindex \to \infty$. \label{convergence-P-o-A_r}
\end{lemma}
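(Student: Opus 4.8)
The plan is to follow the same route as the proof of Lemma \ref{divergence-P-o-A_r}, but now to turn the union bound \eqref{PA_rUB} into an \emph{upper} estimate that vanishes. Starting from $\mathbb{P}(A_r) \le \binom{N^r}{M^r}\,\mathbb{P}(B_\delta)$, where $B_\delta = \{\sum_{\delta}\mu_{\sindex} < M^r\mu^r\}$ for a fixed $\delta$ of cardinality $M^r$, I would estimate the two factors separately: bound the binomial coefficient from above by Stirling's formula, and bound the single-set probability $\mathbb{P}(B_\delta)$ by a left-tail estimate for the sample mean $\frac{1}{M^r}\sum_{\delta}\mu_{\sindex}$ of the i.i.d.\ bounded rates. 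Writing $M^r = C_3^r N^r$, Stirling gives $\binom{N^r}{M^r} \le \exp\!\big(N^r H(C_3^r) + o(N^r)\big)$ with $H(c) = -c\ln c - (1-c)\ln(1-c)$. The decisive structural difference from Lemma \ref{divergence-P-o-A_r} is that here $N^r/M^r = 1/C_3^r$ stays bounded rather than diverging, so the factor $(N^r/M^r)^{M^r}$ no longer blows up; this is precisely why the proportional growth $M^r = C_3^r N^r$ is imposed.

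For the probability factor I would use a Chernoff/Hoeffding inequality for the boundedly supported $\mu_{\sindex}$: since the mean deviation required is $\mu^r - \bar{\mu} = -r^{-p}$ from \eqref{IDrate}, this yields $\mathbb{P}(B_\delta) \le \exp(-c\,M^r r^{-2p})$ for a constant $c$ depending on the width of the support and on $\sigma^2$. Combining the two estimates gives
\begin{equation*}
\mathbb{P}(A_r) \le \exp\!\big(N^r H(C_3^r) - c\,M^r r^{-2p} + o(N^r)\big),
\end{equation*}
and the goal becomes to show that the exponent tends to $-\infty$, i.e.\ that the combinatorial growth $N^r H(C_3^r)$ is overwhelmed by the tail decay $c\,M^r r^{-2p}$.

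The hard part will be exactly this balance, and it is where the interplay between the two preceding lemmas becomes delicate. Because the tightness Lemma \ref{homo-tightness} forces $p>\tfrac12$, the deviation $r^{-p}$ lies \emph{inside} the central-limit scale $r^{-1/2}$ of the sample mean; with $M^r$ of order $N^r \sim r$ the product $M^r r^{-2p}$ is of order $r^{1-2p}\to 0$, so the tail factor alone decays far too slowly to beat $N^r H(C_3^r)$ for a fixed fraction $C_3^r\equiv C_3\in(0,1)$. The way I would resolve this is to read $A_r$ through its order statistics: since the $M^r$ smallest $\mu_{\sindex}$ are the $M^r$ largest values of $\bar{\mu}-\mu_{\sindex}$, one has
\begin{equation*}
A_r = \Big\{\tfrac{1}{M^r}\textstyle\sum_{\text{largest }M^r}(\bar{\mu}-\mu_{\sindex}) > r^{-p}\Big\},
\end{equation*}
and this truncated average concentrates around a deterministic value of order $(N^r-M^r)/N^r = 1-C_3^r$. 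Thus the event is rendered rare only by taking $C_3^r\uparrow 1$ at a controlled rate, keeping the discarded top block $N^r-M^r$ small relative to $N^r r^{-p}$, i.e.\ $(N^r-M^r)/N^r = o(r^{-p})$; under such a choice $H(C_3^r)\to 0$ fast enough that $N^r H(C_3^r)=o(M^r r^{-2p})$ and the exponent above diverges to $-\infty$.

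The remaining, and genuinely subtle, point is to verify that a single sequence $C_3^r$ can simultaneously satisfy this rate requirement and still make the stochastic-domination step usable: on $A_r^c$ we need the heterogeneous system to dominate its homogeneous counterpart at the occupancy level that actually matters under the Halfin--Whitt scaling, which is $\lambda^r/\bar{\mu}\approx r$, i.e.\ close to $N^r$. Reconciling the threshold $M^r=C_3^r N^r$ used for the comparison \eqref{tightness1} with the rate $1-C_3^r=o(r^{-p})$ needed here is where I expect the main care to be required, and it is the step I would treat most carefully before declaring $\mathbb{P}(A_r)\to 0$.
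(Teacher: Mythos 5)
Your strategy coincides with the paper's own proof: the union bound \eqref{PA_rUB}, Stirling's formula for the binomial coefficient, a tail estimate for a single subset mean, and a choice of $C_3^r \uparrow 1$ (the paper's selection rule \eqref{choiceofM} forces exactly this). Your diagnosis that a fixed fraction $C_3^r \equiv C_3 \in (0,1)$ cannot work is also correct. The gap is in the final claim that the exponent $N^r H(C_3^r) - c\,M^r r^{-2p} + o(N^r)$ can be driven to $-\infty$. It cannot, for any choice of $C_3^r$: Lemma \ref{homo-tightness} forces $p > \tfrac{1}{2}$, hence $M^r r^{-2p} \leq N^r r^{-2p} \leq 2\,r^{1-2p} \to 0$ (recall $\mathbb{P}(N^r \leq 2r) = 1$), so the favourable term in your exponent vanishes on its own. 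Arranging $N^r H(C_3^r) = o(M^r r^{-2p})$ merely makes the entropy term negligible relative to a quantity that itself tends to zero; the exponent then tends to $0$, and your upper bound tends to one, not zero.

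This is not a defect of Hoeffding that a sharper inequality could repair; the obstruction appears before any union bound or any tuning of $C_3^r$. The average of the $M^r$ smallest rates is at most the overall empirical average, so the event $A_r$ contains the event that the full average falls below $\bar{\mu} - r^{-p}$, and therefore
\begin{equation*}
\mathbb{P}(A_r) \;\geq\; \mathbb{P}\left(\frac{1}{N^r}\sum_{k=1}^{N^r}\mu_{k} < \bar{\mu} - \frac{1}{r^p}\right) \;\longrightarrow\; \Phi(0) = \tfrac{1}{2},
\end{equation*}
by the central limit theorem, because $\sqrt{N^r}\,r^{-p} \leq \sqrt{2}\,r^{1/2-p} \to 0$: the deviation $r^{-p}$ lies strictly inside the CLT window. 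This is exactly the fluctuation term your order-statistics decomposition drops — you control the deterministic bias of order $1-C_3^r$, but the stochastic fluctuation of the (truncated or full) mean is of order $r^{-1/2} \gg r^{-p}$ and by itself triggers $A_r$ with asymptotic probability $\tfrac{1}{2}$. Hence $\mathbb{P}(A_r) \to 0$ fails for every choice of $M^r$ whenever the rate distribution is non-degenerate, and no bound built from $\binom{N^r}{M^r}\mathbb{P}(B_\delta)$ can vanish, since each $\mathbb{P}(B_\delta)$ converges to $\tfrac{1}{2}$. For what it is worth, the paper's own argument founders at the corresponding point: its Gaussian estimate \eqref{estprob1} is a bound larger than one (the subtracted term is negative since $C_2^r < 0$), and in \eqref{probUB2} the prefactor $(C_3^r C_4^r)^{-1/2}$ grows like $r^p\sqrt{\ln r}$ under \eqref{choiceofM} while the bracket decays only like $(N^r)^{-1/2}$, so the displayed quantity does not in fact converge to zero. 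The caution in your last paragraph is therefore the right conclusion: the step you flagged cannot be completed, by this or any refinement of this route.
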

\begin{proof}
		Assume $M^{\systemindex} \to \infty$ as $\systemindex \to \infty$. Then $\frac{1}{M^{\systemindex}}\sum_{\delta} \mu_{\sindex}$ weakly converges to a normal random variable $Y^{\systemindex}$ with mean $\bar{\mu}$, variance $\frac{\sigma^2}{M^{\systemindex}}$ as $\systemindex \to \infty, \forall \delta$. Using the left bound for normal distribution,
		\begin{align*}
		\frac{1}{\sqrt{2\pi}} \frac{x}{x^2 + 1} e^{-\frac{x^2}{2}} \leq \mathbb{P}(X>x) \leq \frac{1}{\sqrt{2\pi}} \frac{1}{x} e^{-\frac{x^2}{2}}.
		\end{align*}
		As $\systemindex \to \infty$, we have
		\begin{align*}
		\mathbb{P}(B_{\delta}) 
		&\to \mathbb{P}(Y^{\systemindex} < \mu^{\systemindex}) = \mathbb{P} \left( \frac{\sqrt{M^{\systemindex}}}{\sigma}(Y^{\systemindex} - \bar{\mu})< \frac{\sqrt{M^{\systemindex}}}{\sigma} (\mu^{\systemindex} - \bar{\mu}) \right)\\
		&\leq 1 - \frac{1}{\sqrt{2 \pi}} \frac{\frac{\sqrt{M^{\systemindex}}}{\sigma}(\mu^{\systemindex} - \bar{\mu} )}{\frac{M^{\systemindex}}{\sigma^2}(\mu^{\systemindex} - \bar{\mu} )^2 +1} \exp \left(-\frac{M^{\systemindex}(\mu^{\systemindex} - \bar{\mu})^2}{2\sigma^2}\right), \numberthis \label{estprob1}
		\end{align*}
		then applying stirling's approximation again on the combination factor in \eqref{PA_rUB} gives
		\begin{align*}
		{N^{\systemindex} \choose M^{\systemindex}} = \frac{N^{\systemindex}!}{M^{\systemindex}!(N^{\systemindex} - M^{\systemindex})!} \leq \frac{e (N^{\systemindex})^{(N^{\systemindex} + \frac{1}{2})}}{2 \pi (M^{\systemindex})^{(M^{\systemindex} + \frac{1}{2})} (N^{\systemindex} - M^{\systemindex})^{(N^{\systemindex} - M^{\systemindex} + \frac{1}{2})}}. \numberthis \label{combUB}
		\end{align*}
		Combine \eqref{estprob1} and \eqref{combUB} and we have, as $\systemindex \to \infty$,
		\begin{align*}
		\mathbb{P}(A_{\systemindex})
		& = {N^{\systemindex} \choose M^{\systemindex}} \mathbb{P} \left(\sum_{\gamma} \frac{\mu_{\sindex}}{M^{\systemindex}} < \mu^{\systemindex} \right) \to {N^{\systemindex} \choose M^{\systemindex}} \mathbb{P}(Y^{\systemindex} < \mu^{\systemindex})\\
		& = {N^{\systemindex} \choose M^{\systemindex}} \mathbb{P} \left( \frac{\sqrt{M^{\systemindex}}}{\sigma}(Y^{\systemindex} - \bar{\mu})< \frac{\sqrt{M^{\systemindex}}}{\sigma} (\mu^{\systemindex} - \bar{\mu}) \right)\\
		& \leq \frac{e (N^{\systemindex})^{(N^{\systemindex} + \frac{1}{2})}}{2 \pi (M^{\systemindex})^{(M^{\systemindex} + \frac{1}{2})} (N^{\systemindex} - M^{\systemindex})^{(N^{\systemindex} - M^{\systemindex} + \frac{1}{2})}} \\
		& \qquad \quad \Bigg( 1 - \frac{1}{\sqrt{2 \pi}} \frac{\frac{\sqrt{M^{\systemindex}}}{\sigma}(\mu^{\systemindex} - \bar{\mu} )}{\frac{M^{\systemindex}}{\sigma^2}(\mu^{\systemindex} - \bar{\mu} )^2 +1} \exp \left(-\frac{M^{\systemindex}(\mu^{\systemindex} - \bar{\mu})^2}{2\sigma^2}\right) \Bigg). \numberthis \label{probUB1}
		\end{align*}
		For notational simplicity let $C_2^{\systemindex} = \frac{\mu^{\systemindex} - \bar{\mu}}{ \sigma} = -\frac{1}{\sigma {\systemindex}^p} <0$. Since \eqref{probUB1} is an upper bound of $\mathbb{P} (A_{\systemindex})$, if we can demonstrate \eqref{probUB1} converges to zero as $\systemindex \to \infty$, then $\mathbb{P}(A_{\systemindex}) \to 0$ as $\systemindex \to \infty$. \\
        Assume $C_3^{\systemindex} + C_4^{\systemindex} = 1$ for some $0 < C_4^{\systemindex} < 1$ and $M^{\systemindex} = C_3^{\systemindex} N^{\systemindex}$, then the right hand side of \eqref{probUB1} can be rewritten as 
			\begin{align*}
			& \frac{e(N^{\systemindex})^{(N^{\systemindex} + \frac{1}{2})}}{2 \pi (C_3^{\systemindex} N^{\systemindex})^{(C_3^{\systemindex} N^{\systemindex} + \frac{1}{2} )} (N^{\systemindex} - C_3^{\systemindex} N^{\systemindex})^{(N^{\systemindex} - C_3^{\systemindex} N^{\systemindex} + \frac{1}{2})}}\\
			& \quad - \frac{e(N^{\systemindex})^{(N^{\systemindex} + \frac{1}{2})}}{2 \pi (C_3^{\systemindex} N^{\systemindex})^{(C_3^{\systemindex} N^{\systemindex} + \frac{1}{2} )} (N^{\systemindex} - C_3^{\systemindex} N^{\systemindex})^{(N^{\systemindex} - C_3^{\systemindex} N^{\systemindex} + \frac{1}{2})}} \frac{1}{\sqrt{2 \pi}} \frac{\sqrt{C_3^{\systemindex} N^{\systemindex}} C_2^{\systemindex}}{C_3^{\systemindex} N^{\systemindex} (C_2^{\systemindex})^2 + 1} \exp \left(- \frac{1}{2} C_3^{\systemindex} N^{\systemindex} (C_2^{\systemindex})^2 \right)\\
			& = \frac{e (N^{\systemindex})^{N^{\systemindex}} (N^{\systemindex})^{\frac{1}{2}} }{2 \pi \left((C_3^{\systemindex})^{C_3^{\systemindex}}\right)^{N^{\systemindex}} \left( (N^{\systemindex})^{N^{\systemindex}} \right)^{C_3^{\systemindex}} (C_3^{\systemindex})^{\frac{1}{2}} (N^{\systemindex})^{\frac{1}{2}} \left((C_4^{\systemindex})^{C_4^{\systemindex}}\right)^{N^{\systemindex}}  \left( (N^{\systemindex})^{N^{\systemindex}} \right)^{C_4^{\systemindex}} (N^{\systemindex})^{\frac{1}{2}}}\\
			& \quad -\frac{e}{2 \pi \sqrt{2 \pi}} \frac{(N^{\systemindex})^{N^{\systemindex}} (N^{\systemindex})^{\frac{1}{2}}}{\left((C_3^{\systemindex})^{C_3^{\systemindex}}\right)^{N^{\systemindex}} \left( (N^{\systemindex})^{N^{\systemindex}} \right)^{C_3^{\systemindex}} (C_3^{\systemindex})^{\frac{1}{2}} (N^{\systemindex})^{\frac{1}{2}} \left((C_4^{\systemindex})^{C_4^{\systemindex}}\right)^{N^{\systemindex}}  \left( (N^{\systemindex})^{N^{\systemindex}} \right)^{C_4^{\systemindex}} (N^{\systemindex})^{\frac{1}{2}} }  \\
			& \qquad \frac{(C_3^{\systemindex} N^{\systemindex})^{\frac{1}{2}} C_2^{\systemindex}}{C_3^{\systemindex} N^{\systemindex} (C_2^{\systemindex})^2 + 1} \exp \left( -\frac{1}{2} C_3^{\systemindex} N^{\systemindex} (C_2^{\systemindex})^2 \right)\\
			& = \frac{e}{2 \pi} \frac{1}{((C_3^{\systemindex})^{C_3^{\systemindex}} (C_4^{\systemindex})^{C_4^{\systemindex}})^{N^{\systemindex}} (C_3^{\systemindex} C_4^{\systemindex})^{\frac{1}{2}} (N^{\systemindex})^{\frac{1}{2}}}\\
			& \quad -\frac{e}{2 \pi \sqrt{2 \pi}} \frac{1}{((C_3^{\systemindex})^{C_3^{\systemindex}} (C_4^{\systemindex})^{C_4^{\systemindex}})^{N^{\systemindex}} (C_3^{\systemindex} C_4^{\systemindex})^{\frac{1}{2}} (N^{\systemindex})^{\frac{1}{2}}} \frac{(C_3^{\systemindex} N^{\systemindex})^{\frac{1}{2}} C_2^{\systemindex}}{C_3^{\systemindex} N^{\systemindex} (C_2^{\systemindex})^2 + 1} \exp \left( -\frac{1}{2} C_3^{\systemindex} N^{\systemindex} (C_2^{\systemindex})^2 \right)\\
			& = \frac{e}{\sqrt{2 \pi}} \frac{1}{((C_3^{\systemindex})^{C_3^{\systemindex}} (C_4^{\systemindex})^{C_4^{\systemindex}})^{N^{\systemindex}} (C_3^{\systemindex} C_4^{\systemindex})^{\frac{1}{2}}} \left( \frac{1}{\sqrt{N^{\systemindex}}} - \frac{1}{\sqrt{2 \pi}} \frac{(C_3^{\systemindex})^{\frac{1}{2}}}{(C_3^{\systemindex} N^{\systemindex} C_2^{\systemindex} + \frac{1}{C_2^{\systemindex}}) \exp (\frac{1}{2} C_3^{\systemindex} N^{\systemindex} (C_2^{\systemindex})^2)} \right)\\
			& = \frac{e}{\sqrt{2 \pi}} \frac{1}{((C_3^{\systemindex})^{C_3^{\systemindex}} (C_4^{\systemindex})^{C_4^{\systemindex}})^{N^{\systemindex}} (C_3^{\systemindex} C_4^{\systemindex})^{\frac{1}{2}}} \frac{1}{\exp (\frac{1}{2} C_3^{\systemindex} N^{\systemindex} (C_2^{\systemindex})^2)} \\
			& \quad \left( \frac{\exp (\frac{1}{2} C_3^{\systemindex} N^{\systemindex} (C_2^{\systemindex})^2)}{\sqrt{N^{\systemindex}}} - \frac{1}{\sqrt{2 \pi}} \frac{(C_3^{\systemindex})^{\frac{1}{2}}}{C_3^{\systemindex} N^{\systemindex} C_2^{\systemindex} + \frac{1}{C_2^{\systemindex}}} \right)\\
			& = \frac{e}{\sqrt{2 \pi}} \frac{1}{((C_3^{\systemindex})^{C_3^{\systemindex}} (C_4^{\systemindex})^{C_4^{\systemindex}} e^{\frac{1}{2} C_3^{\systemindex} (C_2^{\systemindex})^2})^{N^{\systemindex}}} \frac{1}{(C_3^{\systemindex} C_4^{\systemindex})^{\frac{1}{2}}} \left( \frac{\exp (\frac{1}{2} C_3^{\systemindex} N^{\systemindex} (C_2^{\systemindex})^2)}{\sqrt{N^{\systemindex}}} - \frac{1}{\sqrt{2 \pi}} \frac{(C_3^{\systemindex})^{\frac{1}{2}}}{C_3^{\systemindex} N^{\systemindex} C_2^{\systemindex} + \frac{1}{C_2^{\systemindex}}} \right). \numberthis \label{probUB2}
			\end{align*}
			First notice that two items in the parentheses both converge to zero, because of the following:
			\begin{align*}
			\frac{\exp (\frac{1}{2} C_3^{\systemindex} N^{\systemindex} (C_2^{\systemindex})^2)}{\sqrt{N^{\systemindex}}} = \frac{\exp({\frac{1}{2} C_3^{\systemindex} \frac{N^{\systemindex}}{\sigma^2 {\systemindex}^{2p}}})}{\sqrt{N^{\systemindex}}}. \numberthis \label{tightness2}
			\end{align*}
			Since $p > \frac{1}{2}$, $\frac{N^{\systemindex}}{\sigma^2 {\systemindex}^{2p}} \to 0$, \eqref{tightness2} converges to zero as $\systemindex \to \infty$; similarly,
			\begin{align*}
			\frac{(C_3^{\systemindex})^{\frac{1}{2}}}{C_3^{\systemindex} N^{\systemindex} C_2^{\systemindex} + \frac{1}{C_2^{\systemindex}}} = \frac{(C_3^{\systemindex})^{\frac{1}{2}}}{C_3^{\systemindex} N^{\systemindex} (-\frac{1}{\sigma {\systemindex}^p}) -\sigma {\systemindex}^p}. \numberthis \label{tightness3}
			\end{align*}
			Again, since $p > \frac{1}{2}$, $\frac{N^{\systemindex}}{{\systemindex}^p} > 0$, $\sigma {\systemindex}^p \to +\infty$ as $\systemindex \to \infty$,  \eqref{tightness3} converges to zero as $\systemindex \to \infty$. Therefore the difference in the parentheses converges to zero as $\systemindex \to \infty$, as long as we can show the left factor in \eqref{probUB2} converges to zero as $\systemindex \to \infty$, we can claim \eqref{probUB2} converges to zero as $\systemindex \to \infty$.\par
			Obviously, if $0 < (C_3^{\systemindex})^{-C_3^{\systemindex}} (C_4^{\systemindex})^{-C_4^{\systemindex}} e^{-\frac{1}{2} C_3^{\systemindex} (C_2^{\systemindex})^2} < 1$, then the factor in \eqref{probUB2} converges to zero as $\systemindex \to \infty$. The first $<$ is trivial, As for the second $<$, 
			\begin{align*}
			& (C_3^{\systemindex})^{-C_3^{\systemindex}} (C_4^{\systemindex})^{-C_4^{\systemindex}} e^{-\frac{1}{2} C_3^{\systemindex} (C_2^{\systemindex})^2} < 1 \Leftrightarrow  \frac{1}{ (e^{\frac{1}{2} (C_2^{\systemindex})^2})^{C_3^{\systemindex}}} < (C_3^{\systemindex})^{C_3^{\systemindex}} (C_4^{\systemindex})^{C_4^{\systemindex}}\\
			& \Leftrightarrow  \frac{1}{e^{\frac{1}{2} (C_2^{\systemindex})^2}} < C_3^{\systemindex} (C_4^{\systemindex})^{\frac{C_4^{\systemindex}}{C_3^{\systemindex}}} = C_3^{\systemindex} (1 - C_3^{\systemindex})^{\frac{1 - C_3^{\systemindex}}{C_3^{\systemindex}}} \\
			& \Leftrightarrow - \frac{1}{2} (C_2^{\systemindex})^2 < \ln C_3^{\systemindex} + \frac{1 - C_3^{\systemindex}}{C_3^{\systemindex}} \ln(1 - C_3^{\systemindex})\\
			& \Leftrightarrow - \frac{(\bar{\mu} - \mu^{\systemindex})^2}{2 \sigma^2} < \ln C_3^{\systemindex} + \frac{1 - C_3^{\systemindex}}{C_3^{\systemindex}} \ln(1 - C_3^{\systemindex})\\
			& \Leftrightarrow -\frac{1}{2 \sigma^2 {\systemindex}^{2p}} <  \ln C_3^{\systemindex} + \frac{1 - C_3^{\systemindex}}{C_3^{\systemindex}} \ln(1 - C_3^{\systemindex}). \numberthis \label{probUBcondition}
			\end{align*}
			The graph of function $f(C_3^{\systemindex}) = \ln C_3^{\systemindex} + \frac{1 - C_3^{\systemindex}}{C_3^{\systemindex}} \ln(1 - C_3^{\systemindex}), C_3^{\systemindex} \in (0,1)$ is shown in Figure \ref{figure1}.
			\begin{figure}
				\centering{\includegraphics[scale=0.5]{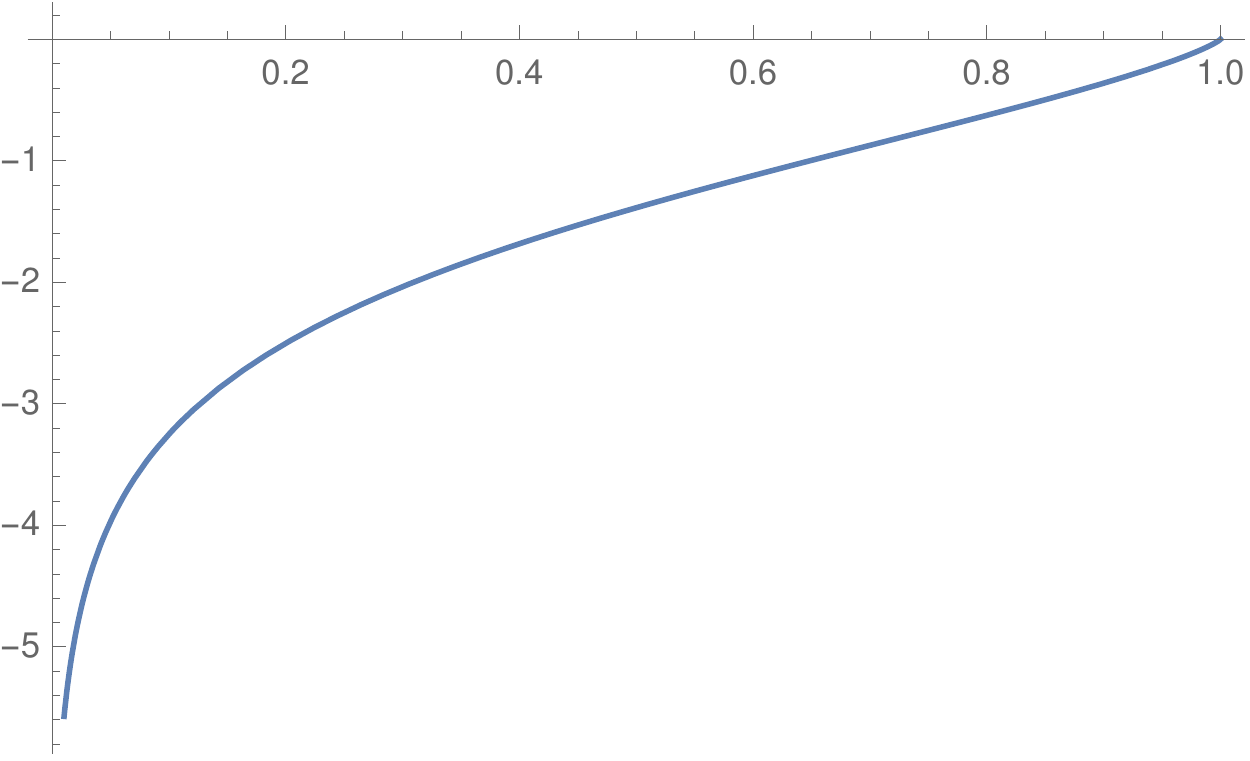}}  
				\caption{$f(C_3^{\systemindex})$ on interval $(0,1)$} \label{figure1}
			\end{figure}
			We can then choose $C_3^{\systemindex}$ as
			\begin{equation}
			C_3^{\systemindex} = \min_{0<x<1} \left\lbrace x: - \frac{1}{2 \sigma^2 {\systemindex}^{2p}} < \ln x + \frac{1 - x}{x} \ln(1 - x) \right\rbrace. \label{choiceofM}
			\end{equation}
			If both \eqref{IDrate} and \eqref{choiceofM} are satisfied, $\mathbb{P}(A_{\systemindex}) \to 0$ as $\systemindex \to \infty$ as required.
\end{proof}

Now we can prove Lemma \ref{lemmaestimatePoW}.
\begin{proof}
	Since our system is under the Halfin-Whitt heavy traffic regime, assume the staffing is determined by 
	\begin{equation}
	N^{\systemindex} =  \offeredload + \theta \sqrt{\offeredload}. \label{staffing1}
	\end{equation}
	Then from Theorem 2.1 in \cite{Atar}, the random drift is
	\begin{equation}
	\beta = \hat{\lambda} - \zeta - \bar{\mu} \nu \label{beta}.
	\end{equation}
	Here we take $\hat{\mu} = 0$ since the purpose of the term $\hat{\mu}_{\sindex}$ is just to make the random service rate more general. By substituting \eqref{initial-condition-convergence} and \eqref{staffing1} into \eqref{beta}, we have
	\begin{align*}
	\beta 
	&= \hat{\lambda} - \zeta - \lim_{\systemindex \to \infty} \bar{\mu} \diffufactor (N^{\systemindex} - \systemindex)  =  \hat{\lambda} - \zeta - \lim_{\systemindex \to \infty} \bar{\mu} \diffufactor \left(\offeredload + \theta \sqrt{\offeredload} - \systemindex \right)\\
	&=  \hat{\lambda} - \zeta - \lim_{\systemindex \to \infty} \bar{\mu} \diffufactor \frac{\lambda^{\systemindex} + \theta \sqrt{\lambda^{\systemindex} \bar{\mu}} - \systemindex \bar{\mu}}{\bar{\mu} }\\
	&=  \hat{\lambda} - \zeta - \lim_{\systemindex \to \infty} \left( \frac{\lambda^{\systemindex} - \systemindex \bar{\mu}}{\sqrt{\systemindex}} + \theta \sqrt{\bar{\mu}} \sqrt{\frac{\lambda^{\systemindex}}{\systemindex}} \right)\\
	& = \hat{\lambda} - \zeta - \hat{\lambda} - \theta \bar{\mu} = - \zeta - \theta \bar{\mu},
	\end{align*}
	by \eqref{arrival-2moment-condition} and the assumption $\lim_{\systemindex \to \infty} \frac{\lambda^{\systemindex}}{\systemindex} =  \lambda = \bar{\mu}$. Denote
	\begin{equation}
	\beta(\theta) = -\zeta - \theta \bar{\mu}. \label{beta2}
	\end{equation}
	Recall that $\zeta$ is a normal random variable with parameters $(0, \int (x- \bar{\mu})^2 d m)$, and $(\sum_{\sindex=1}^{N^{\systemindex}} \mu_{\sindex} - N^{\systemindex} \bar{\mu}) \diffufactor \Rightarrow \zeta$. For the $\systemindex$th system, let $\theta = x^{\systemindex}$ be its extra staffing which is required for stability, and 
	\begin{equation}
	\beta^{\systemindex}(x^{\systemindex}) = -\left(\sum_{\sindex=1}^{N^{\systemindex}(x^{\systemindex})} \mu_{\sindex} - N^{\systemindex}(x^{\systemindex}) \bar{\mu} \right) \diffufactor - x^{\systemindex} \bar{\mu}.\label{betan}
	\end{equation}
	Keep $x^{\systemindex}$ fixed for every $\systemindex$, then $\beta^{\systemindex}(x^{\systemindex}) \Rightarrow \beta(x^{\systemindex})$ as $\systemindex \to \infty$. When $\sum_{\sindex=1}^{N^{\systemindex}(x^{\systemindex})} \mu_{\sindex} = H$, denote $\beta^{\systemindex}(x^{\systemindex})$ as $\beta^{\systemindex}_H(x^{\systemindex})$.\par
	Notice that 
	\begin{equation}
	\pi^{\systemindex} \left( x^{\systemindex} \middle| \sum_{\sindex=1}^{N^{\systemindex}(x^{\systemindex})} \mu_{\sindex} = H \right) = \mathbb{P}\left( \mathrm{Wait}^{\systemindex} >0 \middle|  \beta^{\systemindex}(x^{\systemindex}) = - \left( H - N^{\systemindex}(x^{\systemindex}) \mu \right) \diffufactor - x^{\systemindex} \bar{\mu} \right).
	\end{equation}
	Choose $\omega_H$ such that 
	\begin{equation}
	\zeta_H = \zeta(\omega_H) = \left( H - N^{\systemindex}(x^{\systemindex}) \bar{\mu} \right) \diffufactor, \label{zetaH}
	\end{equation}
    then
	\begin{equation}
	P \left(y^{\systemindex} \right) = \mathbb{P}\left( \xi(\infty) > 0 \middle| \beta(x^{\systemindex}) = y^{\systemindex} = - \zeta_H - x^{\systemindex} \bar{\mu} \right).
	\end{equation}
	Then it remains to show that 
	\begin{equation}
	\lim_{\systemindex \to \infty} \frac{\mathbb{P}\left( \mathrm{Wait}^{\systemindex} >0 \middle|  \beta^{\systemindex}(x^{\systemindex}) = - \left( H - N^{\systemindex}(x^{\systemindex}) \bar{\mu} \right) \diffufactor - x^{\systemindex} \bar{\mu} \right)}{\mathbb{P}\left( \xi(\infty) > 0 \middle| \beta(x^{\systemindex}) = - \zeta_H - x^{\systemindex} \bar{\mu} \right)} = 1, \   \forall H > N^{\systemindex}(x^{\systemindex}) \bar{\mu} - x \bar{\mu} \sqrt{\systemindex}. \label{probofwaitingconve}
	\end{equation} 
	If we can show the family of random variables $\diffuscal{X}^{\systemindex}(\infty)$ is tight, then we can claim $\diffuscal{X}^{\systemindex}(\infty) \Rightarrow \xi(\infty)$ as $\systemindex \to \infty$ when $\beta^{\systemindex} < 0$, thus \eqref{probofwaitingconve} is satisfied.\par
	To show the tightness of $\{\diffuscal{X}^{\systemindex}(\infty)\}$, we only need to show the tightness of $\{X^{\systemindex}(\infty)\}$. (If $\{X^{\systemindex}(\infty)\}$ is tight, then $\forall \epsilon>0, \exists K^{\epsilon}>0$, such that $\mathbb{P}(X^{\systemindex}(\infty) > K^{\epsilon}) < \epsilon, \forall \systemindex$. Then $\mathbb{P}(\diffuscal{X}^{\systemindex}(\infty) > K^{\epsilon}) = \mathbb{P}((X^{\systemindex}(\infty) - \systemindex)/\sqrt{\systemindex} > K^{\epsilon}) = \mathbb{P}(X^{\systemindex}(\infty) > \sqrt{\systemindex} K^{\epsilon} + \systemindex) < \mathbb{P}(X^{\systemindex}(\infty) > K^{\epsilon}) < \epsilon, \forall \systemindex$.)\par
When $\mu^{\systemindex}$ and $M^{\systemindex}$ are defined as in Lemma \ref{homo-tightness} and \ref{convergence-P-o-A_r}, $\{X^{\systemindex}_{hom}(\infty)\}$ are tight, and $\mathbb{P}(A_{\systemindex}) \to 0$ as $\systemindex \to \infty$. Thus the right side of \eqref{tightness1} converges to zero. Let $t \to \infty$ on both sides of \eqref{tightness1}. We have that $\{X^{\systemindex}(\infty)\}$ is tight, hence $\{\diffuscal{X}^{\systemindex}(\infty)\}$ is tight.\par
		Now by the definition of tightness, for every sequence  $\{ \diffuscal{X}^{{\systemindex}_i}(\infty) \}$ in $\{ \diffuscal{X}^{\systemindex}(\infty) \}$, there exists a subsequence  $\{ \diffuscal{X}^{{\systemindex}_{i_j}}(\infty) \}$ weakly converging to a random variable $Y$. Figure \ref{figure2} shows our goal. We already know the left, top and bottom arrows are true, now we need to prove the right one. Assume $Y$ does not have the same distribution as $\xi(\infty)$ when $\beta < 0$. Take an infinite sequence $\{{\systemindex}_k\}$ and $\{t_l\}$, from the left and bottom arrows in Figure \ref{figure2}. The cumulative distribution functions (CDF) of random variables $\diffuscal{X}^{\systemindex_k}(t_l)$ will converge to the CDF of $\xi(\infty)$ as $k \to \infty$ and $l \to \infty$. Next take a subsequence of $\{\systemindex_k\}$ and $\{t_l\}$, i.e.\ $\{\systemindex_{k_h}\}$ and $\{t_{l_g}\}$, then the CDFs of $\diffuscal{X}^{\systemindex_{k_h}}(t_{l_g})$ will converge to the CDF of $Y$ as $h \to \infty$ and $g \to \infty$. If $\xi(\infty)$ and $Y$ are different random variables, then we find two subsequences of random variables' CDFs that converge to different limits, which means such a sequence does not converge, which furthermore contradicts the fact that it is convergent (see left and bottom arrows in Figure \ref{figure2}). Thus $Y$ should equal $\xi(\infty)$ in distribution, so we have proven that $\diffuscal{X}^{\systemindex}(\infty) \Rightarrow \xi(\infty)$ as $\systemindex \to \infty$. Therefore \eqref{probofwaitingconve} is proved, hence \eqref{estimatePoW} is true. The proof is completed.
	
\end{proof}

\subsubsection{Approximation of $G^r$}
Next, we find an approximation for $G^{\systemindex}(\cdot)$, which will be more straightforward.
\viva{\begin{lemma}
Denote $\beta_H(x^{\systemindex}) = -\zeta_H - x^{\systemindex} \mu \ \forall \systemindex$, where $\zeta_H$ is defined in \eqref{zetaH}, and let
\begin{equation}
\hat{G}\left(N^{\systemindex}, \lambda^{\systemindex}(x^{\systemindex}) \middle| \sum_{\sindex=1}^{N^{\systemindex}} \mu_{\sindex} = H \right) =   -\beta_H(x^{\systemindex}) \sqrt{\systemindex} \int_0^{\infty} D^{\systemindex}(s) e^{\beta_H(x^{\systemindex}) \sqrt{\systemindex} s}  ds,
\end{equation} 
then $\hat{G}\left(N^{\systemindex}, \lambda^{\systemindex}(x^{\systemindex}) \middle| \sum_{\sindex=1}^{N^{\systemindex}} \mu_{\sindex} = H \right)$ is a valid approximation of \eqref{G1}, in the sense that
\begin{equation*}
\lim_{\systemindex \to \infty} \frac{\hat{G}\left(N^{\systemindex}, \lambda^{\systemindex}(x^{\systemindex}) \middle| \sum_{\sindex=1}^{N^{\systemindex}} \mu_{\sindex} = H \right)}{G\left(N^{\systemindex}, \lambda^{\systemindex}(x^{\systemindex}) \middle| \sum_{\sindex=1}^{N^{\systemindex}} \mu_{\sindex} = H \right) } =1, \forall H > N^{\systemindex}(x^{\systemindex}) \bar{\mu} - x \bar{\mu} \sqrt{\systemindex}.
\end{equation*}
\end{lemma}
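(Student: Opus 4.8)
The plan is to observe that $G$ and $\hat{G}$ are Laplace--transform--type averages of the \emph{same} waiting-cost function $D^{\systemindex}$, differing only in the rate of the exponential weight, and to show that these two rates are asymptotically equal; the convergence of the ratio then reduces to a regularity statement for $D^{\systemindex}$ at the origin. Writing both quantities in the common form $\Psi(c):=c\int_0^{\infty}D^{\systemindex}(s)e^{-cs}\,ds$, expression \eqref{G1} shows that $G$ corresponds to the rate $c_1=H-\lambda^{\systemindex}$, while the definition of $\hat{G}$ corresponds to $c_2=-\beta_H(x^{\systemindex})\sqrt{\systemindex}$. Thus $\hat{G}/G=\Psi(c_2)/\Psi(c_1)$, and everything hinges on comparing $c_1$ and $c_2$. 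This mirrors the template used for homogeneous systems in \cite{borst2004dimensioning}.

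Next I would establish $c_2/c_1\to1$ by a direct heavy-traffic expansion. Substituting $\beta_H(x^{\systemindex})=-\zeta_H-x^{\systemindex}\bar{\mu}$ with $\zeta_H=(H-N^{\systemindex}(x^{\systemindex})\bar{\mu})\diffufactor$ from \eqref{zetaH}, and using $N^{\systemindex}(x^{\systemindex})\bar{\mu}=\lambda^{\systemindex}+x^{\systemindex}\sqrt{\bar{\mu}\lambda^{\systemindex}}$, one obtains
\begin{equation*}
c_2=c_1+x^{\systemindex}\bigl(\bar{\mu}\sqrt{\systemindex}-\sqrt{\bar{\mu}\lambda^{\systemindex}}\bigr).
\end{equation*}
Rationalising the bracket as $\sqrt{\bar{\mu}}\,(\bar{\mu}\systemindex-\lambda^{\systemindex})/(\sqrt{\bar{\mu}\systemindex}+\sqrt{\lambda^{\systemindex}})$ and invoking the Halfin--Whitt condition \eqref{arrival-2moment-condition}, under which $\bar{\mu}\systemindex-\lambda^{\systemindex}=-\hat{\lambda}\sqrt{\systemindex}+o(\sqrt{\systemindex})$, shows that this bracket converges to the finite constant $-\hat{\lambda}/2$. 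Hence $c_2-c_1$ stays bounded (here I use $\limsup_{\systemindex}x^{\systemindex}<\infty$), whereas $c_1=x^{\systemindex}\sqrt{\bar{\mu}\lambda^{\systemindex}}+\zeta_H\sqrt{\systemindex}$ grows like $(x\bar{\mu}+\zeta_H)\sqrt{\systemindex}\to\infty$, the coefficient being strictly positive because the admissible range $H>N^{\systemindex}(x^{\systemindex})\bar{\mu}-x\bar{\mu}\sqrt{\systemindex}$ forces $\zeta_H>-x\bar{\mu}$. Therefore $c_2/c_1\to1$ and, in particular, both rates are positive and diverge.

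Finally I would transfer the rate comparison to the integrals through the substitution $u=cs$, which yields $\Psi(c)=\int_0^{\infty}D^{\systemindex}(u/c)e^{-u}\,du$ and hence
\begin{equation*}
\frac{\hat{G}}{G}=\frac{\int_0^{\infty}D^{\systemindex}(u/c_2)e^{-u}\,du}{\int_0^{\infty}D^{\systemindex}(u/c_1)e^{-u}\,du}.
\end{equation*}
Since $c_1/c_2\to1$ with $c_1,c_2\to\infty$, the two arguments $u/c_1$ and $u/c_2$ are asymptotically proportional and both shrink to $0$. For a monomial test case $D^{\systemindex}(s)=s^{j}$ the ratio equals $(c_1/c_2)^{j}\to1$ exactly, and I would lift this to the general cost by a dominated-convergence argument after normalising $D^{\systemindex}$ by its leading behaviour near $0$, using the polynomial growth of $D^{\systemindex}$ to supply an integrable envelope.

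The main obstacle is exactly this last step: because $\Psi(c_1)$ and $\Psi(c_2)$ both tend to $0$, the convergence of their ratio does \emph{not} follow merely from $c_2/c_1\to1$ but must be extracted from a uniform (in $\systemindex$) regularity of the family $\{D^{\systemindex}\}$ at the origin, such as regular variation or polynomial growth with $D^{\systemindex}(0)=0$. The $\systemindex$-dependence of $D^{\systemindex}$ is what makes a naive interchange of limit and integral delicate; the cleanest route is the reduction to monomials indicated above, where the exact identity $\Psi(c_2)/\Psi(c_1)=(c_1/c_2)^{j}$ bypasses the vanishing of the individual integrals.
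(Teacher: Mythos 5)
Your structural idea is exactly the paper's: both $G$ and $\hat{G}$ are the functional $\Psi(c)=c\int_0^{\infty}D^{\systemindex}(s)e^{-cs}\,ds$ evaluated at the rates $c_1=H-\lambda^{\systemindex}$ and $c_2=-\beta_H(x^{\systemindex})\sqrt{\systemindex}$. The paper's own proof, however, stops at your first step: it substitutes \eqref{betan} into \eqref{G1} and treats the identity $H-\lambda^{\systemindex}=-\beta^{\systemindex}_H(x^{\systemindex})\sqrt{\systemindex}$ as exact, after which $G$ is literally the same expression as $\hat{G}$ --- note that by \eqref{zetaH} the quantities $\beta_H(x^{\systemindex})$ and $\beta^{\systemindex}_H(x^{\systemindex})$ are given by the identical formula $-(H-N^{\systemindex}(x^{\systemindex})\bar{\mu})/\sqrt{\systemindex}-x^{\systemindex}\bar{\mu}$ --- so the ratio is immediate (the paper words this as ``since $\beta^{\systemindex}(x^{\systemindex})\Rightarrow\beta(x^{\systemindex})$, the convergence is obtained''). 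Your finer computation shows that this identity is \emph{not} exact under the paper's definition $N^{\systemindex}(x)=\lambda^{\systemindex}/\bar{\mu}+x\sqrt{\lambda^{\systemindex}/\bar{\mu}}$: as you derive, $c_2-c_1=x^{\systemindex}(\bar{\mu}\sqrt{\systemindex}-\sqrt{\bar{\mu}\lambda^{\systemindex}})\to -x\hat{\lambda}/2\neq 0$ under \eqref{arrival-2moment-condition}. (It would be exact only if the safety staffing were $x\sqrt{\systemindex}$ rather than $x\sqrt{\lambda^{\systemindex}/\bar{\mu}}$.) So on this point you are more careful than the paper.

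The genuine gap is the one you yourself flag, and it is real: since $c_2-c_1$ tends to a nonzero constant $\delta=-x\hat{\lambda}/2$ while $c_1,c_2\to\infty$, the convergence $\Psi(c_2)/\Psi(c_1)\to 1$ does not hold for arbitrary nonnegative cost functions. Writing $\Psi(c_2)/\Psi(c_1)=(c_2/c_1)\,\mathbb{E}\bigl(e^{-(c_2-c_1)S_{c_1}}\bigr)$, where $S_{c_1}$ has density proportional to $D^{\systemindex}(s)e^{-c_1 s}$, the expectation tends to $1$ only if this tilted law concentrates at the origin; that holds when $D^{\systemindex}$ is positive and regularly varying at $0$, but fails, for instance, for a deductible-type cost $D^{\systemindex}(s)=\mathds{1}\{s\geq s_0\}$, for which $\Psi(c)=e^{-cs_0}$ and the ratio converges to $e^{-\delta s_0}\neq 1$. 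Hence your monomial-plus-dominated-convergence step cannot be completed without an extra hypothesis on the family $\{D^{\systemindex}\}$ (regular variation at $0$, or the Halfin--Whitt scaling $D^{\systemindex}(s)=D(\sqrt{\systemindex}s)$ of \cite{borst2004dimensioning}, under which $\Psi$ no longer depends on $\systemindex$ and continuity in the drift suffices). Be aware that the paper's proof escapes this obstacle only through the spurious exact identity above, which forces the two rates to coincide; your analysis shows that the lemma as stated needs either that identity repaired (redefine $N^{\systemindex}(x)$ with $\sqrt{\systemindex}$) or an explicit regularity condition on $D^{\systemindex}$.
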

\begin{proof}
Substituting $\beta^{\systemindex}(x^{\systemindex})$ defined in \eqref{betan} to \eqref{G1}, we have
\begin{align*}
G\left(N^{\systemindex}, \lambda^{\systemindex}(x^{\systemindex}) \middle| \sum_{\sindex=1}^{N^{\systemindex}} \mu_{\sindex} = H \right) 
&= (H - \lambda^{\systemindex}) \int_0^{\infty} D^{\systemindex}(s) e^{-(H - \lambda^{\systemindex})s} ds\\
&= -\beta^{\systemindex}_H(x^{\systemindex}) \sqrt{\systemindex} \int_0^{\infty} D^{\systemindex}(s) e^{\beta^{\systemindex}_H(x^{\systemindex}) \sqrt{\systemindex} s}  ds.
\end{align*}
Since $\beta^{\systemindex}(x^{\systemindex}) \Rightarrow \beta(x^{\systemindex})$, the convergence is obtained.
\end{proof}
}

\subsubsection{Verification of approximating function}
Now apply the change of variables \eqref{betan} into the original cost function \eqref{cost2}, it can be rewritten as
\begin{equation}
C^{\systemindex}(x) = F^{\systemindex}(x) + \lambda^{\systemindex} \frac{1}{\mathbb{P}(\beta^{\systemindex}(x) < 0)} L^{\systemindex}(x), \label{cost3}
\end{equation}
where
\begin{equation}
L^{\systemindex}(x) = \int_{-\infty}^{0} \pi^{\systemindex} \left(x \middle| \beta^{\systemindex}(x) = \beta^{\systemindex}_H(x) \right) \left( -\beta^{\systemindex}_H(x) \sqrt{\systemindex} \int_0^{\infty} D^{\systemindex} (s) e^{\beta^{\systemindex}_H s \sqrt{\systemindex}} ds \right) f_{\beta^{\systemindex}(x)}(\beta^{\systemindex}_H(x)) d\beta^{\systemindex}_H(x), \label{L1}
\end{equation}
and $f_{\beta^{\systemindex}(x)}(\cdot)$ is the probability density function of $\beta^{\systemindex}(x)$.\par
Now that we have approximations for both $\pi^{\systemindex}$ and $G^{\systemindex}$, we will have an effective approximation for cost function \eqref{cost3}. More specifically, with the new approximating function
\begin{equation}
\hat{C}^{\systemindex}(x) = F^{\systemindex}(x) + \lambda^{\systemindex} \frac{1}{ \mathbb{P}( \beta(x) < 0)} \hat{L}^{\systemindex}(x), \label{cost4}
\end{equation}
where
\begin{equation}
\hat{L}^{\systemindex}(x) = \int_{-\infty}^{0} P \left(y^{\systemindex} \middle| \beta(x) = \beta_H(x) \right) \left( -\beta_H(x) \sqrt{\systemindex} \int_0^{\infty} D^{\systemindex} (s) e^{\beta_H s \sqrt{\systemindex}} ds \right) f_{\beta(x)}(\beta_H(x)) d\beta_H(x). \label{L2}
\end{equation}
and $f_{\beta(x)}(\cdot)$ is the probability density function of $\beta(x)$, we have the following theorem
\begin{theorem}
	$\hat{C}^{\systemindex}(x)$ is a valid approximation for $C^{\systemindex}(x)$ in the sense that
	\begin{equation}
	\lim_{\systemindex \to \infty} \frac{C^{\systemindex}(x)}{\hat{C}^{\systemindex}(x)} = 1, \text{ for any fixed } x. 
	\end{equation} 
	\label{cost-approximation-theorem}
\end{theorem}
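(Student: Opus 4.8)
The plan is to exploit the fact that $C^{\systemindex}(x)$ and $\hat{C}^{\systemindex}(x)$ share the \emph{identical} staffing term $F^{\systemindex}(x)$, so that the whole problem reduces to comparing the two waiting-cost terms. Writing $C^{\systemindex}(x)=F^{\systemindex}(x)+W^{\systemindex}(x)$ and $\hat{C}^{\systemindex}(x)=F^{\systemindex}(x)+\hat{W}^{\systemindex}(x)$ with
\begin{equation*}
W^{\systemindex}(x)=\lambda^{\systemindex}\frac{L^{\systemindex}(x)}{\mathbb{P}(\beta^{\systemindex}(x)<0)},\qquad \hat{W}^{\systemindex}(x)=\lambda^{\systemindex}\frac{\hat{L}^{\systemindex}(x)}{\mathbb{P}(\beta(x)<0)},
\end{equation*}
and $F^{\systemindex},\hat{W}^{\systemindex}\ge 0$, the elementary identity $\frac{F^{\systemindex}+W^{\systemindex}}{F^{\systemindex}+\hat{W}^{\systemindex}}=1+\frac{W^{\systemindex}-\hat{W}^{\systemindex}}{F^{\systemindex}+\hat{W}^{\systemindex}}$ shows that it suffices to prove $W^{\systemindex}(x)/\hat{W}^{\systemindex}(x)\to1$: once this holds $W^{\systemindex}-\hat{W}^{\systemindex}=o(1)\hat{W}^{\systemindex}$, and the remainder is bounded by $o(1)$ because $\hat{W}^{\systemindex}/(F^{\systemindex}+\hat{W}^{\systemindex})\le1$. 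Since the common factor $\lambda^{\systemindex}$ cancels, the target becomes $\tfrac{L^{\systemindex}(x)}{\hat{L}^{\systemindex}(x)}\cdot\tfrac{\mathbb{P}(\beta(x)<0)}{\mathbb{P}(\beta^{\systemindex}(x)<0)}\to 1$.

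Next I would dispose of the normalisation ratio. The drift $\beta^{\systemindex}(x)$ of \eqref{betan} equals $-(\sum_{\sindex=1}^{N^{\systemindex}(x)}\mu_{\sindex}-N^{\systemindex}(x)\bar{\mu})\diffufactor-x\bar{\mu}$, and by the central limit theorem applied to the i.i.d.\ rates $\mu_{\sindex}$ together with $N^{\systemindex}/\systemindex\Rightarrow1$ one has $\beta^{\systemindex}(x)\Rightarrow\beta(x)$, where $\beta(x)=-\zeta-x\bar{\mu}$ is Gaussian and hence has a continuous distribution function. Consequently $\mathbb{P}(\beta^{\systemindex}(x)<0)\to\mathbb{P}(\beta(x)<0)\in(0,1)$ and the second factor tends to $1$; it then remains to show $L^{\systemindex}(x)/\hat{L}^{\systemindex}(x)\to1$.

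For the integrals themselves, the key structural observation is that, conditioned on $\sum_{\sindex}\mu_{\sindex}=H$, the two integration variables coincide: from \eqref{betan} and \eqref{zetaH} one has $\beta^{\systemindex}_H(x)=\beta_H(x)=-(H-N^{\systemindex}(x)\bar{\mu})\diffufactor-x\bar{\mu}$. Hence, writing the common value as $b$, both $L^{\systemindex}$ and $\hat{L}^{\systemindex}$ integrate over $b\in(-\infty,0)$ against the \emph{same} cost kernel $g_{\systemindex}(b):=-b\sqrt{\systemindex}\int_0^{\infty}D^{\systemindex}(s)e^{b s\sqrt{\systemindex}}\,ds$ (the preceding lemma approximating $G$ by $\hat{G}$ guarantees that any discrepancy in this kernel is asymptotically negligible). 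The only remaining differences between the two integrands are the waiting probability $\pi^{\systemindex}(x\mid b)$ versus its diffusion counterpart $P(b)$ of \eqref{PoWlimit}, and the density $f_{\beta^{\systemindex}(x)}(b)$ versus $f_{\beta(x)}(b)$. Recasting the integrals as expectations,
\begin{equation*}
L^{\systemindex}(x)=\mathbb{E}\big[\pi^{\systemindex}(\beta^{\systemindex})\,g_{\systemindex}(\beta^{\systemindex})\mathbf{1}\{\beta^{\systemindex}<0\}\big],\qquad \hat{L}^{\systemindex}(x)=\mathbb{E}\big[P(\beta)\,g_{\systemindex}(\beta)\mathbf{1}\{\beta<0\}\big],
\end{equation*}
I would replace $\pi^{\systemindex}$ by $P$ inside $L^{\systemindex}$ at the cost of a uniform $(1+o(1))$ factor supplied by Lemma \ref{lemmaestimatePoW}, and then pass from the $\beta^{\systemindex}$-expectation to the $\beta$-expectation using the weak convergence $\beta^{\systemindex}(x)\Rightarrow\beta(x)$.

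The hard part is precisely this last interchange of limit and integral, because the kernel $g_{\systemindex}$ carries its own $\systemindex$-dependence (a $\sqrt{\systemindex}$-scaled Laplace transform of $D^{\systemindex}$) and is neither bounded nor continuous uniformly in $b$: near $b=0$ the factor $P(b)\to1$ while $g_{\systemindex}$ degenerates, and in the left tail $g_{\systemindex}$ decays but is integrated against a spreading density. To control this I would establish a dominating bound — using $0\le P\le1$, the boundedness of $\pi^{\systemindex}$, the tightness of $\{\diffuscal{X}^{\systemindex}(\infty)\}$ proved within Lemma \ref{lemmaestimatePoW}, and an explicit integrable majorant for $g_{\systemindex}(b)f_{\beta^{\systemindex}(x)}(b)$ obtained from the Gaussian form of the limiting drift — and then invoke dominated convergence (equivalently, uniform integrability) to conclude $L^{\systemindex}(x)/\hat{L}^{\systemindex}(x)\to1$. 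Combining this with the normalisation step and the common-term reduction yields $\lim_{\systemindex\to\infty}C^{\systemindex}(x)/\hat{C}^{\systemindex}(x)=1$ for every fixed $x$.
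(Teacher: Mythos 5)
Your proposal follows essentially the same route as the paper's own proof: cancel the common staffing term $F^{r}(x)$, reduce the claim to $L^{r}(x)/\hat{L}^{r}(x) \to 1$, and establish this by combining the weak convergence $\beta^{r}(x) \Rightarrow \beta(x)$, the $\pi^{r} \to P$ approximation of Lemma \ref{lemmaestimatePoW}, and a uniform-integrability/domination argument to justify interchanging the limit with the expectation (the paper invokes the continuous mapping theorem together with Theorem 3.5 of Billingsley where you invoke dominated convergence, which is the same mechanism). Your handling of the reduction identity and of the normalising probabilities $\mathbb{P}(\beta^{r}(x)<0)$ versus $\mathbb{P}(\beta(x)<0)$ is in fact slightly more explicit than the paper's, but the substance and the key ingredients are identical.
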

\vspace{-10mm}
\begin{proof}
	Since we already have $\beta^{\systemindex}(x) \Rightarrow \beta(x)$ as $\systemindex \to \infty$ and $F^{\systemindex}(x)$ remains the same, we only need to show
	\begin{equation}
	\lim_{\systemindex \to \infty} \frac{L^{\systemindex}(x)}{\hat{L}^{\systemindex}(x)} = 1.
	\end{equation} 
	
	To make it more clear, let
	\begin{equation}
	Z(\beta^{\systemindex}(x)) = \pi^{\systemindex} \left(x \right) \left( -\beta^{\systemindex}(x) \sqrt{\systemindex} \int_0^{\infty} D^{\systemindex} (s) e^{\beta^{\systemindex} s \sqrt{\systemindex}} ds \right)
	\end{equation}
	\vspace{-5mm}
	and 
	\begin{equation}
	Z(\beta(x)) = P \left(y \right) \left( -\beta(x) \sqrt{\systemindex} \int_0^{\infty} D^{\systemindex} (s) e^{\beta s \sqrt{\systemindex}} ds \right),
	\end{equation}
	where $y = -\left( \sum_{\sindex=1}^{N^{\systemindex}(x)} \mu_{\sindex} - N^{\systemindex}(x) \mu \right) \diffufactor - x \mu$. Then $\lim_{\systemindex \to \infty}\frac{L^{\systemindex}(x)}{\hat{L}^{\systemindex}(x)} = 1$ is actually 
	\begin{equation}
	\lim_{\systemindex \to \infty}\frac{\mathbb{E}\left(Z(\beta^{\systemindex}(x)) \middle| \beta^{\systemindex} < 0\right)}{\mathbb{E}\left(Z(\beta(x)) \middle| \beta < 0\right)} = 1. \label{estimateL}
	\end{equation}
	
	Since $\beta^{\systemindex} \Rightarrow \beta$ is proved, we only need to show $Z(\beta^{\systemindex}(x))$ is uniformly integrable when $\beta^{\systemindex}(x) < 0$, and $Z(\beta^{\systemindex}(x))$ is a continuous function of $\beta^{\systemindex}(x)$. Then by the Continuous Mapping Theorem and Theorem 3.5 in \cite{Billingsley}, one can conclude \eqref{estimateL} is true.\par
	To show $Z(\beta^{\systemindex}(x))$ is uniformly integrable, we only need to show 
	$$g^{\systemindex}(\beta^{\systemindex}(x)) : = -\beta^{\systemindex}(x) \sqrt{\systemindex} \int_0^{\infty} D^{\systemindex} (s) e^{\beta^{\systemindex} s \sqrt{\systemindex}} ds$$
    is integrable when $\beta^{\systemindex}(x) < 0$, since $\pi^{\systemindex}(x)$ are the probability of waiting and thus are bounded by $1$.\par
	It is easy to see the uniform integrability of $g^{\systemindex}(\beta^{\systemindex}(x))$. $D^{\systemindex}(s)$ is assumed to be a function such that $g^{\systemindex}(\beta^{\systemindex}(x))$ is finite. Since $\beta^{\systemindex} < 0$, given the realisation of $\beta^{\systemindex}(x)$, $-\beta^{\systemindex} s \sqrt{\systemindex} \to \infty$, and $e^{\beta^{\systemindex} s \sqrt{\systemindex}} \to 0$, as $\systemindex \to \infty$. Thus, $g^{\systemindex} \to 0$ as $\systemindex \to \infty$. Hence $\exists M > 0$, such that $g^{\systemindex} < M, \forall n$. Therefore using the Dominated Convergence Theorem, 
	\begin{align*}
	\lim_{\systemindex \to \infty} \mathbb{E} \left( g^{\systemindex}(\beta^{\systemindex}(x)) \middle| \beta^{\systemindex}<0 \right) = \mathbb{E}\left(\lim_{\systemindex \to \infty} \left( g^{\systemindex}(\beta^{\systemindex}(x)) \middle| \beta^{\systemindex}<0 \right) \right) = 0,
	\end{align*}
	and the uniform integrability is proved. The continuity of $g^{\systemindex}(\beta^{\systemindex}(x))$ is obvious. We show continuity of $\pi^{\systemindex}(\beta^{\systemindex}(x))$ when $x$ is fixed, ignoring $x$ in the proof for simplicity.\par
	For every $a < 0$, we want to show, for every $\systemindex$, $\lim_{\beta^{\systemindex} \to a} \pi^{\systemindex}(\beta^{\systemindex}) = \pi^{\systemindex}(a)$. From the proof of Lemma \ref{lemmaestimatePoW} we know that for every realisation of $\beta^{\systemindex}$, there always exists a $\omega$ such that $\beta$ has the same value as $\beta^{\systemindex}$. We choose $\beta$ in such manner in the following proof. As 
	$\beta^{\systemindex}\to a (\beta \to a) $,
	\vspace{-4mm}
	\begin{align*}
	\left| \pi^{\systemindex}(\beta^{\systemindex}) - \pi^{\systemindex}(a) \right| 
	& = \left| \pi^{\systemindex}(\beta^{\systemindex}) - \mathbb{P}(\beta) + \mathbb{P}(\beta) - \mathbb{P}(a) + \mathbb{P}(a) - \pi^{\systemindex}(a) \right|\\
	& \leq \left| \pi^{\systemindex}(\beta^{\systemindex}) - \mathbb{P}(\beta) \right| + \left| \mathbb{P}(\beta) - \mathbb{P}(a) \right| + \left| \mathbb{P}(a) - \pi^{\systemindex}(a) \right|\\
	& \leq \epsilon + \epsilon + \epsilon \leq 3\epsilon.
	\end{align*}
	The first and third $\epsilon$s come from convergence of $\pi^{\systemindex}(\beta^{\systemindex})$ to $\mathbb{P}(\beta)$ when $\beta^{\systemindex}$ is given, the second $\epsilon$ is because of the continuity of $\mathbb{P}(\cdot)$. Now we proved continuity of $\pi^{\systemindex}(\beta^{\systemindex})$, hence the function $Z(\beta^{\systemindex}(x))$ is continuous with respect to $\beta^{\systemindex}(x)$.\par
	To this end we showed \eqref{estimateL} and thus \eqref{cost4} is a valid approximation of the cost function \eqref{cost3}.
\end{proof}
\label{staffing-without-abandonment}

\vspace{-4mm}
\section{Staffing many server queues with random service rates and abandonments}
Following the result we obtained in Section 3.1, we add abandonments to the model and design a cost function which we will optimise later.\par
All of the basic settings and notations are the same, except there are abandonments in the queue. Each customer has an associated patience time which are i.i.d.\ exponential random variables with rate $\nu$. A customer abandons the system without getting any service if the waiting time in the queue exceeds the customer's patience. Once her/his service starts, s/he cannot abandon the system.\par
Adapting notations from Section 4.1, let $\diffuscal{X}^r(t)$ be the scaled process of the number of customers in the $r$th system. We have the cost function
\begin{align*}
C^{\systemindex}(x):
&= F^{\systemindex}(x) + d \nu \mathbb{E}_{\beta^{\systemindex}} \left( \mathbb{E}_{\diffuscal{X}^{\systemindex}(\infty)} \left(\diffuscal{X}^{\systemindex}(\infty)^+ \middle| \diffuscal{X}^{\systemindex}(\infty) \geq 0 \right) \mathbb{P}\left(\diffuscal{X}^{\systemindex}(\infty) \geq 0 \right) \right)\\
&= F^{\systemindex}(x) + d \nu \mathbb{E}_{\beta^{\systemindex}} \left( \mathbb{E}_{\diffuscal{X}^{\systemindex}(\infty)} \left(\diffuscal{X}^{\systemindex}(\infty)^+ , \diffuscal{X}^{\systemindex}(\infty) \geq 0 \right) \right)
\numberthis \label{cost-function-abandonment1}
\end{align*}
where $d$ is the cost of every customer abandonment, and $f_{\beta^{\systemindex}}(\cdot)$ is the density function of $\beta^{\systemindex}$. Notice that in this total cost function, we ignore the holding cost in the queue because \viva{ abandonment and holding costs are both linear functions of expected queue length, and there is no need to consider expected queue length twice.}\par
As proved in Section 3.3, $\diffuscal{X}^r(t) \Rightarrow \xi(t)$ where $\xi(t)$ satisfies the equation
\begin{equation}
\xi(t) = \xi(0) + \sigma w(t) + \beta t + \gamma \int_0^t \xi(s)^- ds - \nu \int_0^t \xi(s)^+ ds. \label{limit-diffusion-abandonment}
\end{equation}  
More specifically, \eqref{limit-diffusion-abandonment} can be seen as 
\begin{equation}
\xi(t) = \twopartdef{\xi(0) + \sigma w(t) + \beta t - \nu \int_0^t \xi(s) ds,}{\xi(t) \geq 0}{\xi(0) + \sigma w(t) + \beta t - \gamma \int_0^t \xi(s) ds,}{\xi(t) < 0}.
\end{equation}
Then by Section 4 of \cite{browne}, when $\xi(t) \geq 0$, it is an Ornstein-Uhlenbeck process, thus its steady state conditional on $\xi(\infty) \geq 0$ is normal distributed with density function $f_1(x) = \frac{\frac{\sqrt{2 \nu}}{\sigma} \phi \left( \frac{\sqrt{2 \nu}}{\sigma} \left( x - \frac{\beta}{\nu}\right) \right)}{\Phi \left(\frac{\sqrt{2} \beta}{\sqrt{\nu} \sigma} \right)}$. Similarly, when $\xi(t) < 0$, it is also an O-U process, thus its steady state conditional on $\xi(\infty) < 0$ is a normal random variable with density function $f_2(t) = \frac{\frac{\sqrt{2 \gamma}}{\sigma} \phi \left( \frac{\sqrt{2 \gamma}}{\sigma} \left( x - \frac{\beta}{\gamma}\right) \right)}{\Phi \left(-\frac{\sqrt{2} \beta}{\sqrt{\gamma} \sigma} \right)}$. Let $\varrho =  \mathbb{P}(\xi(\infty) \geq 0)$. Then $\xi(\infty)$ has density function
\begin{equation}
f(x) = \twopartdef{f_1(x) \varrho, }{x \geq 0}{f_2(x) \left(1 - \varrho\right),}{x < 0}.
\end{equation}
To find out $\varrho$, notice that $f(\cdot)$ is continuous because the infinitesimal variance of $\xi(t)$ is constant on the real line. Thus by equating the limits of $f(\cdot)$ from both left and right we get 
\begin{equation}
\varrho = \mathbb{P}(\xi(\infty) \geq 0) = \left( 1 + \sqrt{\frac{\nu}{\gamma}} \frac{\phi \left( - \frac{\sqrt{2} \beta}{\sqrt{\nu} \sigma} \right)}{\phi \left( - \frac{\sqrt{2} \beta}{\sqrt{\gamma} \sigma} \right)} \frac{\Phi \left( - \frac{\sqrt{2} \beta}{\sqrt{\gamma} \sigma} \right)}{\Phi \left( \frac{\sqrt{2} \beta}{\sqrt{\nu} \sigma} \right)} \right)^{-1}.
\end{equation}

To this end, it is intuitive to use the following function as an approximation for the original cost function \eqref{cost-function-abandonment1}:
\begin{align*}
\hat{C}^{\systemindex}(x) 
&= F^{\systemindex}(x) + d \nu \mathbb{E}_{\beta} \left(  \mathbb{E}_{\xi (\infty)} \left( \xi(\infty)^{+} \middle| \xi(\infty) \geq 0 \right) \mathbb{P}\left( \xi(\infty ) \geq 0 \right) \right)\\
&= F^{\systemindex}(x) + d \nu  \mathbb{E}_{\beta} \left( \mathbb{E}_{\xi (\infty)} \left( \xi(\infty)^{+} , \xi(\infty) \geq 0 \right) \right) . \numberthis \label{cost-function-abandonment2}
\end{align*}

We have a similar theorem as \ref{cost-approximation-theorem}.
\begin{theorem}
	\eqref{cost-function-abandonment2} is a valid approximation of \eqref{cost-function-abandonment1} in the sense that
	\begin{equation}
	\lim_{\systemindex \to \infty} \frac{C^{\systemindex}(x)}{\hat{C}^{\systemindex}(x)} = 1, \text{ for any fixed } x. 
	\label{cost-approximation-abandonment-convergence}
	\end{equation}
	\label{cost-approximation-abandonment-theorem}
\end{theorem}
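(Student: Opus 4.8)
The plan is to reduce the claim to an interchange-of-limits argument combined with a uniform integrability estimate, paralleling the proof of Theorem~\ref{cost-approximation-theorem} but now exploiting that the abandonment term makes every limiting diffusion $\xi(t)$ stable, so no conditioning on the sign of $\beta$ is needed. Since the staffing cost $F^{\systemindex}(x)$ appears identically in $C^{\systemindex}(x)$ and $\hat{C}^{\systemindex}(x)$, and since $\diffuscal{X}^{\systemindex}(\infty)^{+}\mathds{1}_{\{\diffuscal{X}^{\systemindex}(\infty)\geq 0\}}=\diffuscal{X}^{\systemindex}(\infty)^{+}$, it suffices to prove
\[
\mathbb{E}_{\beta^{\systemindex}}\!\left(\mathbb{E}_{\diffuscal{X}^{\systemindex}(\infty)}\!\left(\diffuscal{X}^{\systemindex}(\infty)^{+}\right)\right)\longrightarrow \mathbb{E}_{\beta}\!\left(\mathbb{E}_{\xi(\infty)}\!\left(\xi(\infty)^{+}\right)\right),\qquad \systemindex\to\infty .
\]

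First I would establish the interchange of limits $\diffuscal{X}^{\systemindex}(\infty)\Rightarrow\xi(\infty)$ along the lines of Figure~\ref{figure2} and Lemma~\ref{lemmaestimatePoW}. The top and bottom arrows are supplied by Theorem~\ref{diffusion-with-abandonments} and by positive recurrence of the abandonment systems, so the only extra input is tightness of $\{\diffuscal{X}^{\systemindex}(\infty)\}$. This is in fact easier than in the no-abandonment case: because each waiting customer abandons at rate $\nu>0$, the systems are stable for every realisation of the service rates and for either sign of $\beta^{\systemindex}$, so the conditioning on $\beta<0$ is no longer required. On the event $A_{\systemindex}^{c}$ of Lemma~\ref{convergence-P-o-A_r} one can dominate $X^{\systemindex}(t)$ stochastically by the homogeneous system of Lemma~\ref{homo-tightness} with rate $\mu^{\systemindex}=\bar{\mu}-\systemindex^{-p}$, $p>\tfrac12$; here it is crucial that introducing abandonments only decreases the number in system, so the no-abandonment homogeneous bound remains valid. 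Combining the tightness of $\{X^{\systemindex}_{hom}(\infty)\}$ with $\mathbb{P}(A_{\systemindex})\to 0$ yields tightness of $\{\diffuscal{X}^{\systemindex}(\infty)\}$ exactly as in \eqref{tightness1}, and the subsequence-and-contradiction argument of Lemma~\ref{lemmaestimatePoW} upgrades this to $\diffuscal{X}^{\systemindex}(\infty)\Rightarrow\xi(\infty)$. The continuous mapping theorem then gives $\diffuscal{X}^{\systemindex}(\infty)^{+}\Rightarrow\xi(\infty)^{+}$.

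Since $x\mapsto x^{+}$ is unbounded, weak convergence alone does not yield convergence of the means, and the crux is uniform integrability of $\{\diffuscal{X}^{\systemindex}(\infty)^{+}\}$. I would obtain it from a uniform second-moment bound $\sup_{\systemindex}\mathbb{E}\big[(\diffuscal{X}^{\systemindex}(\infty)^{+})^{2}\big]<\infty$: the dominating homogeneous system has scaled positive part with an exponentially tailed (reflected Brownian) steady-state limit of finite, uniformly bounded second moment, while the exceptional event $A_{\systemindex}$ has vanishing probability. (Alternatively, a Lyapunov estimate on $(\diffuscal{X}^{\systemindex})^{2}$ using the negative feedback $-\nu\int_{0}^{t}\diffuscal{X}(s)^{+}ds$ for large positive states gives the same bound directly.) A uniform second moment implies uniform integrability of the first moments, so by the criterion in Theorem~3.5 of \cite{Billingsley} we may pass from weak convergence to $\mathbb{E}[\diffuscal{X}^{\systemindex}(\infty)^{+}]\to\mathbb{E}[\xi(\infty)^{+}]$ for each fixed limiting drift.

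Finally I would handle the outer expectation over the random drift. Conditionally on the service rates, $\beta^{\systemindex}(x)\Rightarrow\beta(x)$ was shown in the proof of Lemma~\ref{lemmaestimatePoW}, and the inner means converge by the previous step; realising $\beta^{\systemindex}\to\beta$ almost surely via the Skorohod representation theorem and using the uniform integrability in $\beta$ furnished by the same square-integrable majorant (note $\mathbb{E}_{\beta}[\xi(\infty)^{+}]$ grows only linearly in $\beta^{+}$, while $\beta^{\systemindex}$ has Gaussian-type tails), the iterated expectation passes to the limit. This establishes convergence of the abandonment-cost terms, and since $F^{\systemindex}(x)$ cancels in the ratio, \eqref{cost-approximation-abandonment-convergence} follows. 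I expect the uniform integrability step to be the main obstacle, as it requires controlling the tail of $\diffuscal{X}^{\systemindex}(\infty)^{+}$ uniformly both in $\systemindex$ and over the random realisations of $\sum_{\sindex=1}^{N^{\systemindex}}\mu_{\sindex}$, which is precisely where the stochastic domination by the homogeneous system and the estimate $\mathbb{P}(A_{\systemindex})\to 0$ do the essential work.
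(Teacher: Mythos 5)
Your overall architecture---cancel $F^{\systemindex}(x)$, prove $\diffuscal{X}^{\systemindex}(\infty) \Rightarrow \xi(\infty)$ by the Figure~\ref{figure2} interchange argument, then upgrade weak convergence to convergence of means via uniform integrability and Theorem~3.5 of \cite{Billingsley}---is the same as the paper's. The gap is in the step you yourself flag as the crux: uniform integrability. Your moment bound is available only on the event $A_{\systemindex}^{c}$, where the heterogeneous system is dominated by the homogeneous system of Lemma~\ref{homo-tightness}; on $A_{\systemindex}$ you invoke only $\mathbb{P}(A_{\systemindex}) \to 0$. But vanishing probability of the exceptional event says nothing about $\mathbb{E}\big[\diffuscal{X}^{\systemindex}(\infty)^{+}\mathds{1}_{A_{\systemindex}}\big]$ or its second-moment analogue: to control that term you would need, say, Cauchy--Schwarz, which requires a moment bound for the heterogeneous system itself---precisely the quantity you are trying to establish. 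The $A_{\systemindex}$/$A_{\systemindex}^{c}$ decomposition is adequate for tightness, because tightness only involves probabilities (inequality~\eqref{tightness1} in Lemma~\ref{lemmaestimatePoW}), but it does not transfer to expectations. So, as written, the uniform second-moment claim is unsupported, and with it the passage from weak convergence to convergence of the abandonment-cost term.

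The paper closes exactly this hole with a different comparison system and a new coupling lemma. Instead of the rate-$(\bar{\mu}-\systemindex^{-p})$ system, it compares against a homogeneous system \emph{with abandonments} in which every server has rate $p$, the lower bound of the support of $\mu_{\sindex}$. Since $\mu_{\sindex} \geq p$ almost surely, Lemma~\ref{hom-het-stochastic-ordering} (proved by splitting a rate-$N^{\systemindex}q$ Poisson process and a minimal-counterexample argument) gives $D^{\systemindex,p}_{hom}(t) \leq_{st} D^{\systemindex}(t)$ for \emph{every} realisation of the service rates, with no exceptional event at all; hence $X^{\systemindex}(\infty) \leq_{st} X^{\systemindex,p}_{hom}(\infty)$ surely. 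The rate-$p$ system is badly overloaded, but the abandonments keep it stable, and the explicit $M/M/N+M$ steady-state formulas of \cite{mandelbaum2004palm} yield the bound~\eqref{hom-het-expected-queue-length-comparison} on the expected scaled queue length, which holds uniformly over the realisations of $\sum_{\sindex}\mu_{\sindex}$, i.e.\ uniformly in $\beta^{\systemindex}$. That single majorant does double duty: it supplies the uniform integrability for the inner expectation, and, because it does not depend on $\beta^{\systemindex}$, it also handles the outer expectation over the random drift without your separate Skorohod-representation step. If you want to rescue your route, you need a domination that remains valid on $A_{\systemindex}$, and the natural way to obtain one is precisely this worst-case-rate coupling.
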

\vspace{-4mm}
To prove this convergence, we will need uniform integrability of the steady state. To achieve this, for each heterogeneous system, we compare it with a homogeneous system with the same number of servers and the service rate being the lower bound of $\mu_{\sindex}$, i.e.\ $p$.   

\begin{lemma}
	Let $D^{\systemindex,p}_{hom}(t)$ be the departure process of the $\systemindex$th homogeneous system described above where all the service rates take their minimum value $p$. Denote the departure process in the $\systemindex$th heterogeneous system as $D^{\systemindex}(t)$. Then $D^{\systemindex,p}_{hom}(t) \leq_{st} D^{\systemindex}(t)$, where `st' means the inequality holds stochastically.  \label{hom-het-stochastic-ordering}
\end{lemma}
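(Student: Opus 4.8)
The plan is to prove the pathwise domination $D^{r,p}_{hom}(t)\le D^{r}(t)$ for all $t\ge 0$ on a single, carefully coupled probability space; since a pathwise inequality implies the stochastic ordering $\le_{st}$, this suffices. I would couple the two systems so that they share the same initial number of customers $X^{r}(0)=X^{r,p}_{hom}(0)$ and the same arrival process $A^{r}$. Under this coupling the balance relation $X^{r}(t)=X^{r}(0)+A^{r}(t)-D^{r}(t)$, together with its homogeneous analogue, shows that $D^{r}(t)\ge D^{r,p}_{hom}(t)$ is \emph{equivalent} to the invariant $X^{r}(t)\le X^{r,p}_{hom}(t)$. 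The whole argument thus reduces to constructing a coupling of the service-completion clocks under which this head-count invariant is preserved through every event epoch.

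The key structural facts I would exploit are, first, that both routing policies are work-conserving, so the number of busy servers in either system is the deterministic function $\min(X,N^{r})$ of its own head-count; and second, that every heterogeneous rate satisfies $\mu_{k}\ge p$. Consequently, whenever the two systems momentarily have equal head-counts $X^{r}=X^{r,p}_{hom}=x$, they have the same number $b=\min(x,N^{r})$ of busy servers, and the aggregate heterogeneous completion rate $\sum_{k\ \mathrm{busy}}\mu_{k}\ge pb$ dominates the homogeneous aggregate rate $pb$. I would use this inequality to couple the two aggregate completion point processes so that, during any interval in which $X^{r}=X^{r,p}_{hom}$, every homogeneous completion epoch coincides with a heterogeneous completion epoch, realized as a thinning/superposition coupling of the time-changed Poisson representations through a common master clock with state-dependent marks.

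With this coupling in hand I would verify the invariant $X^{r}(t)\le X^{r,p}_{hom}(t)$ by induction over the ordered sequence of event epochs. At an arrival both counts rise by one, so the inequality is preserved. At a heterogeneous-only completion $X^{r}$ can only decrease, which cannot break $X^{r}\le X^{r,p}_{hom}$. The only delicate case is a homogeneous completion at the boundary $X^{r}=X^{r,p}_{hom}>0$: here the coupling forces a simultaneous heterogeneous completion, so both counts drop by one and equality is maintained; when the inequality is already strict, a homogeneous completion merely lowers the larger term and again preserves the ordering. No limit in $t$ is required: the pathwise ordering for every finite $t$ yields $D^{r,p}_{hom}(t)\le_{st}D^{r}(t)$.

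I expect the main obstacle to be the construction of the completion-clock coupling itself. A naive server-by-server thinning, obtaining each homogeneous rate-$p$ clock by thinning the corresponding heterogeneous rate-$\mu_{k}$ clock, fails, because under LISF the identity of the busy servers differs between the two systems, so the slow clock of a busy homogeneous server need not sit inside the clock of a busy heterogeneous server. The resolution, and the technical heart of the proof, is to couple only at the aggregate level: work-conservation makes the busy count a function of $X$ alone, so I never need to match server identities, only the total rates, and the bound $\mu_{k}\ge p$ guarantees the required domination of totals exactly on the boundary set $\{X^{r}=X^{r,p}_{hom}\}$ where matching is needed.
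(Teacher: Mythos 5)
Your proposal is correct and is essentially the paper's own argument: the paper also builds a pathwise coupling by thinning a single master Poisson clock of rate $N^{\systemindex}q$ with shared uniforms (and shared arrivals/abandonments), uses work-conservation together with $\mu_{\sindex}\geq p$ to compare the aggregate completion rates $\sum_{\sindex}\mu_{\sindex}\breve{B}_{\sindex}(\tau-)$ and $(\breve{X}^{\systemindex,p}_{hom}(\tau-)\wedge N^{\systemindex})p$ exactly on the set where the head-counts coincide, and derives the pathwise ordering via a first-violation (minimal counterexample) argument, which is your induction over event epochs in contrapositive form. The only cosmetic difference is that you drop the abandonment term $R^{\systemindex}(t)$ from the balance equations; the paper couples it identically across the two systems, so this changes nothing in the argument.
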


\begin{proof}
	Denote the number of total customers in the $\systemindex$th homogeneous system as $X^{\systemindex,p}_{hom}(t)$. We have the system dynamic equations
	\begin{align}
	X^{\systemindex}(t) & = X^{\systemindex}(0) + A^{\systemindex}(t) - D^{\systemindex}(t) - R^{\systemindex}(t) \label{stochastic-ordering-equation4}\\
	X^{\systemindex,p}_{hom}(t) & = X^{\systemindex,p}_{hom}(0) + A^{\systemindex}(t) - D^{\systemindex,p}_{hom}(t) - R^{\systemindex}(t) \label{stochastic-ordering-equation5}
	\end{align} 
	For simplicity let $X^{\systemindex}(0) = X^{\systemindex,p}_{hom}(0)=0$. The arrival process and abandonment process are independent of the service rates and departure process, thus we can take them to be the same for these two processes. Let $S^{\systemindex,q}(t)$ be a generated poisson process with rate $N^{\systemindex} q$, where $q$ is the upper bound of $\mu_{\sindex}$, and $0<\tau_1<\tau_2<\cdots$ be the sequence of its occurrence times, i.e.\ $S^{\systemindex,q}(t) = \sum_{i=1}^{\infty} I(\tau_i < t)$. Let $\{ U_l, l \in \mathbb{N} \}$ be a sequence of independent uniform$(0,1)$ random variables, and $I(A)$ be the indicator function for event $A$ which takes the value $1$ if $A$ occurs and $0$ otherwise. Assume all of the processes equal zero at $t=0$. By splitting the process $S^{\systemindex,q}(t)$, we define the following processes
	\begin{align}
	&\breve{D}^{\systemindex,p,l}_{hom}=\sum_{i=1}^{l} I\left(U_i \leq \frac{(\breve{X}^{\systemindex,p}_{hom}(\tau_i-) \wedge N^{\systemindex})p}{N^{\systemindex}q}\right),\\	
	&\breve{D}^{\systemindex,l}=\sum_{i=1}^{l} I\left(U_i \leq \frac{\sum_{\sindex=1}^{N^{\systemindex}} \mu_{\sindex} \breve{B}_{\sindex}(\tau_i-)}{N^{\systemindex} q}\right),\\
	& S^{\systemindex,q}(t) = l = \sum_{i=1}^{\infty}I(\tau_i < t), \\
	&\breve{D}^{\systemindex,p}_{hom}(t)= \breve{D}^{\systemindex,p,l}_{hom}, \breve{D}^{\systemindex}(t)= \breve{D}^{\systemindex,l}, \forall t \in [\tau_{l}, \tau_{l+1}),\\
	&\breve{X}^{\systemindex,p}_{hom}(t) = \breve{X}^{\systemindex,p}_{hom}(0) + A^{\systemindex}(t) - \breve{D}^{\systemindex,p}_{hom}(t) - R^{\systemindex}(t),\\
	&\breve{X}^{\systemindex}(t) = \breve{X}^{\systemindex}(0) + A^{\systemindex}(t) - \breve{D}^{\systemindex}(t) - R^{\systemindex}(t),
	\end{align}
	where $\breve{B}_{\sindex}(t)$ is determined by the process $A^{\systemindex}(t)$, the sequence $\tau_1, \tau_2, \dots$, and the selection scheme defined as follows: if, for some $i \in \{1,2,\dots, S^{\systemindex,q}(t)\}$, $I\left(U_i \leq \frac{\sum_{\sindex = 1}^{N^{\systemindex}} \mu_{\sindex} \breve{B}_{\sindex}(\tau_i-)}{N^{\systemindex} q}\right)=1$, then we have that the potential departure occurring at time $\tau_i$ in process $S^{r,q}(t)$ is accepted as the real departure for process $\breve{D}^{\systemindex}(t)$. Assume there are $m$ busy servers just before this departure occurs (time $\tau_i-$). Then after it is accepted as the real departure, one of the $m$ servers will be freed, which leads to our selection scheme. Let $\eta_i$ be a uniformly distributed random variable on $(0,1)$. If $\frac{\sum_{a=0}^{j} \mu_{\sindex}}{\sum_{\sindex=1}^{m} \mu_{\sindex}} \leq \eta_i < \frac{\sum_{\sindex=0}^{j+1} \mu_{\sindex}}{\sum_{\sindex=1}^{m} \mu_{\sindex}},\ j=0,1,\dots,m-1$, then server $j+1$ will be freed at time $\tau_i$. Here we let $\mu_0=0$.\par
	Under such definition, the process $(\mu^{(i)}{D}^{\systemindex,p}_{hom}(t),\breve{X}^{\systemindex,p}_{hom}(t))$ and $(\breve{D}^{\systemindex}(t),\breve{X}^{\systemindex}(t))$ are stochastically equivalent to $(D^{\systemindex,p}_{hom}(t),X^{\systemindex,p}_{hom}(t))$ and $(D^{r}(t),X^{r}(t))$ respectively.\par
	We prove this by contradiction. Define
	\begin{equation}
	l^* = \min \{l: \breve{D}^{\systemindex,p,l}_{hom} > \breve{D}^{\systemindex,l} \}. \label{stochastic-ordering-equation3}
	\end{equation}
	Then we have 
	$$I\left(U_{l^*} \leq \frac{\sum_{\sindex=1}^{N^{\systemindex}} \mu_{\sindex} \breve{B}_{\sindex}(\tau_{l^*}-)}{N^{\systemindex} q}\right)=0 \text{ and } I\left(U_{l^*} \leq \frac{(\breve{X}^{\systemindex,p}_{hom}(\tau_{l^*}-) \wedge N^{\systemindex})p}{N^{\systemindex}q}\right) = 1 ,$$
	i.e.\
	$$\frac{\sum_{\sindex=1}^{N^{\systemindex}} \mu_{\sindex} \breve{B}_{\sindex}(\tau_{l^*}-)}{N^{\systemindex} q} < U_{l^*} \leq \frac{(\breve{X}^{\systemindex,p}_{hom}(\tau_{l^*}-) \wedge N^{\systemindex})p}{N^{\systemindex}q}.$$
	Thus
	\begin{equation}
	\sum_{\sindex=1}^{N^{\systemindex}} \mu_{\sindex} \breve{B}_{\sindex}(\tau_{l^*}-) < (\breve{X}^{\systemindex,p}_{hom}(\tau_{l^*}-) \wedge N^{\systemindex})p.
	\label{stochastic-ordering-equation1}
	\end{equation}
	Notice that $\breve{D}^{\systemindex,p,l^*}_{hom} > \breve{D}^{\systemindex,l^*}$ implies $\breve{D}^{\systemindex,p,l^*-1}_{hom} = \breve{D}^{\systemindex,l^*-1}$. Since arrival processes and abandonment processes are identical, we have $\breve{X}^{\systemindex,p}_{hom}(\tau_{l^*-1}) = \breve{X}^{\systemindex}(\tau_{l^*-1})$. Also, during the time $[\tau_{l^*-1} , \tau_{l^*})$, there are no departures, thus, by equations \eqref{stochastic-ordering-equation4} and \eqref{stochastic-ordering-equation5},
	\begin{align*}
	&\breve{X}^{\systemindex,p}_{hom}(\tau_{l^*}-) = \breve{X}^{\systemindex,p}_{hom}(\tau_{l^*-1}) + A^{\systemindex}(\tau_{l^*}-) - A^{\systemindex}(\tau_{l^*-1}) - (R^{\systemindex}(\tau_{l^*}-) - R^{\systemindex}(\tau_{l^* - 1})), \mbox{ and} \\
	&\breve{X}^{\systemindex}(\tau_{l^*}-) = \breve{X}^{\systemindex}(\tau_{l^*-1}) + A^{\systemindex}(\tau_{l^*}-) - A^{\systemindex}(\tau_{l^*-1}) - (R^{\systemindex}(\tau_{l^*}-) - R^{\systemindex}(\tau_{l^* - 1})).
	\end{align*}
	This shows us that $\breve{X}^{\systemindex,p}_{hom}(\tau_{l^*}-) = \breve{X}^{\systemindex}(\tau_{l^*}-).$ Substituting this into \eqref{stochastic-ordering-equation1} gives us 
	\begin{equation}
	\sum_{\sindex=1}^{N^{\systemindex}} \mu_{\sindex} \breve{B}_{\sindex}(\tau_{l^*}-) < (\breve{X}^{\systemindex}(\tau_{l^*}-) \wedge N^{\systemindex})p.
	\label{stochastic-ordering-equation2}
	\end{equation} 
	If $\breve{X}^{\systemindex}(\tau_{l^*}-) < N^{\systemindex}$, then $\sum_{\sindex=1}^{N^{\systemindex}} \breve{B}_{\sindex}(\tau_{l^*}-) = \breve{X}^{\systemindex}(\tau_{l^*}-)$, and \eqref{stochastic-ordering-equation2} becomes $\sum_{\sindex=1}^{N^{\systemindex}} \mu_{\sindex} \breve{B}_{\sindex}(\tau_{l^*}-) < \breve{X}^{\systemindex}(\tau_{l^*}-)p$, but since $\mu_{\sindex} \geq p$, this doesn't hold. If $\breve{X}^{\systemindex}(\tau_{l^*}-) \geq N^{\systemindex}$, then $\sum_{\sindex=1}^{N^{\systemindex}} \breve{B}_{\sindex}(\tau_{l^*}-) = N^{\systemindex}$, and \eqref{stochastic-ordering-equation2} is $\sum_{\sindex=1}^{N^{\systemindex}} \mu_{\sindex} \breve{B}_{\sindex}(\tau_{l^*}-) < N^{\systemindex}p$, which is also not true because of $\mu_{\sindex} \geq p$. \par
	Thereby we have found a contradiction to the assumption \eqref{stochastic-ordering-equation3}. Hence, we conclude that such an $n^*$ does not exist and $\breve{D}^{\systemindex,p}_{hom}(t) \leq \breve{D}^{\systemindex}(t)$ for every $t \geq 0$. That means $D^{\systemindex,p}_{hom}(t) \leq_{st} D^{\systemindex}(t)$.
\end{proof}
\vspace{-3mm}
Now we are ready to prove the theorem. 
\vspace{-3mm}
\begin{proof}[Proof of Theorem \ref{cost-approximation-abandonment-theorem}]
	Since $F^{\systemindex}$ does not change in both cost functions, we only need to show 
	\begin{align*}
	& \mathbb{E}_{\beta^{\systemindex}} \left( \mathbb{E}_{\diffuscal{X}^{\systemindex}(\infty)} \left(\diffuscal{X}^{\systemindex}(\infty)^+ , \diffuscal{X}^{\systemindex}(\infty) \geq 0 \right) \right)\\
	& \quad \to \mathbb{E}_{\beta} \left( \mathbb{E}_{\xi (\infty)} \left( \xi(\infty)^{+} , \xi(\infty) \geq 0 \right) \right), \mbox{ as } \systemindex \to \infty. \numberthis \label{cost-approximation-abandonment-equivalent-convergence}
	\end{align*}
	
	Consider a sequence of homogeneous systems with abandonment and service rates being the lower bound of $\mu_{\sindex}$, i.e.\ $p$. All other settings are the same as in the heterogeneous systems.  By Lemma \ref{hom-het-stochastic-ordering} and equations \eqref{stochastic-ordering-equation4} and \eqref{stochastic-ordering-equation5}, we know that $X^{\systemindex}(t) \leq_{st} X^{\systemindex,p}_{hom}(t)$, $\forall t \geq 0$. Since $X^{\systemindex}(t) \Rightarrow X^{\systemindex}(\infty)$ and $X^{\systemindex,p}_{hom}(t) \Rightarrow X^{\systemindex,p}_{hom}(\infty)$ as $t \to \infty$, it is also true that $X^{\systemindex}(\infty) \leq_{st} X^{\systemindex,p}_{hom}(\infty)$. 
	And by equations (3.3) and (3.4) in \cite{mandelbaum2004palm}, 
	\begin{align*}
	\mathbb{P}(X^{\systemindex,p}_{hom}(\infty) = j) = \frac{(\lambda^{\systemindex}/p)^{N^{\systemindex}}}{N^{\systemindex}!} \pi_0 \prod_{k=N^{\systemindex} + 1}^{j} \left( \frac{\lambda^{\systemindex}}{N^{\systemindex} p + (k - N^{\systemindex}) \nu} \right), j \geq N^{\systemindex} + 1,
	\end{align*}
	where
	\begin{equation}
	\pi_0 = \left( \sum_{j=0}^{N^{\systemindex}} \frac{(\lambda^{\systemindex}/p)^{j}}{j!} + \sum_{j = N^{\systemindex} + 1}^{\infty} \prod_{k=N^{\systemindex} + 1}^{j} \left( \frac{\lambda^{\systemindex}}{N^{\systemindex} p + (k - N^{\systemindex}) \nu} \right) \frac{(\lambda^{\systemindex}/p)^{N^{\systemindex}}}{N^{\systemindex}!} \right)^{-1}.
	\end{equation}
	Thus $\forall \systemindex$,
	\begin{align*}
	&\mathbb{E} \left( (X^{\systemindex,p}_{hom}(\infty) - N^{\systemindex}), X^{\systemindex}_{hom}(\infty) \geq N^{\systemindex} \right) \\
	& = \sum_{j = N^{\systemindex} + 1}^{\infty} \left(\frac{(\lambda^{\systemindex}/p)^{N^{\systemindex}}}{N^{\systemindex}!} \pi_0 \prod_{k=N^{\systemindex} + 1}^{j} \left( \frac{\lambda^{\systemindex}}{N^{\systemindex} p + (k - N^{\systemindex}) \nu} \right) \right) (j - N^{\systemindex})\\
	& = \frac{(\lambda^{\systemindex}/p)^{N^{\systemindex}}}{N^{\systemindex}!} \pi_0 \sum_{j = N^{\systemindex} + 1}^{\infty} (j - N^{\systemindex}) \prod_{k=N^{\systemindex} + 1}^{j} \left( \frac{\lambda^{\systemindex}}{N^{\systemindex} p + (k - N^{\systemindex}) \nu} \right)\\
	& = \frac{(\lambda^{\systemindex}/p)^{N^{\systemindex}}}{N^{\systemindex}!} \pi_0 \sum_{j = N^{\systemindex} + 1}^{\infty} (j - N^{\systemindex}) \frac{(\lambda^{\systemindex})^{j-N^{\systemindex}}}{(N^{\systemindex}p + \nu)(N^{\systemindex}p + 2\nu) \cdots (N^{\systemindex}p + (j-N^{\systemindex})\nu)}.
	\end{align*}
Let $m_1 = \min(p, \nu)$. Then the equation above can be enlarged
\begin{align*}
	&\mathbb{E} \left( (X^{\systemindex,p}_{hom}(\infty) - N^{\systemindex}), X^{\systemindex}_{hom}(\infty) \geq N^{\systemindex} \right) \\
	& \leq \frac{(\lambda^{\systemindex}/p)^{N^{\systemindex}}}{N^{\systemindex}!} \pi_0 \sum_{j = N^{\systemindex} + 1}^{\infty} (j - N^{\systemindex}) \frac{(\lambda^{\systemindex})^{j-N^{\systemindex}}}{m_1^{j-N^{\systemindex}} (N^{\systemindex} + 1)(N^{\systemindex} + 2) \cdots j }\\
	& = \frac{(\lambda^{\systemindex}/p)^{N^{\systemindex}}}{N^{\systemindex}!} \pi_0 \sum_{i = 1}^{\infty} \left( \frac{\lambda^{\systemindex}}{m_1} \right)^{i} \frac{i}{(N^{\systemindex} + 1)(N^{\systemindex} + 2) \cdots (N^{\systemindex} + i)}\\
	& = \frac{(\lambda^{\systemindex}/p)^{N^{\systemindex}}}{N^{\systemindex}!} \pi_0 \sum_{i = 1}^{\infty} \left( \frac{\lambda^{\systemindex}}{m_1} \right)^{i} \frac{1}{i!} \frac{i! N^{\systemindex}!}{(N^{\systemindex} + i)!} i \leq \frac{(\lambda^{\systemindex}/p)^{N^{\systemindex}}}{N^{\systemindex}!} \pi_0 \sum_{i = 1}^{\infty} \left( \frac{\lambda^{\systemindex}}{m_1} \right)^{i} \frac{1}{i!} i\\
	& = \frac{(\lambda^{\systemindex}/p)^{N^{\systemindex}}}{N^{\systemindex}!} \pi_0 \frac{\lambda^{\systemindex}}{m_1} \sum_{i = 1}^{\infty} \left( \frac{\lambda^{\systemindex}}{m_1} \right)^{i-1} \frac{1}{(i-1)!} \\
	& = \frac{(\lambda^{\systemindex}/p)^{N^{\systemindex}}}{N^{\systemindex}!} \pi_0 \frac{\lambda^{\systemindex}}{m_1} \sum_{i = 0}^{\infty} \left( \frac{\lambda^{\systemindex}}{m_1} \right)^{i} \frac{1}{i!} = \frac{(\lambda^{\systemindex}/p)^{N^{\systemindex}}}{N^{\systemindex}!} \pi_0 \frac{\lambda^{\systemindex}}{m_1} e^{-\frac{\lambda^{\systemindex}}{m_1}}. \numberthis \label{hom-expected-queue-length}
	\end{align*}
	From \cite{mandelbaum2004palm}, we know $\pi_0$ converges, thus $\eqref{hom-expected-queue-length} < \infty$, which means the expected queue length of the $\systemindex$th homogeneous system is bounded. Since $X^{\systemindex}(\infty) \leq_{st} X^{\systemindex,p}_{hom}(\infty)$, we have 
	\begin{equation}
	\mathbb{E}((X^{\systemindex}(\infty) - N^{\systemindex}), X^{\systemindex}(\infty) \geq N^{\systemindex}) \leq \mathbb{E} ((X^{\systemindex,p}_{hom}(\infty) - N^{\systemindex}), X^{\systemindex,p}_{hom}(\infty) \geq N^{\systemindex}) < \infty. 
	\end{equation}
	Scaling the inequality on both sides, we have
	\begin{equation}
	\mathbb{E}(\diffufactor (X^{\systemindex}(\infty) - N^{\systemindex}), X^{\systemindex}(\infty) \geq N^{\systemindex}) \leq \mathbb{E} (\diffufactor(X^{\systemindex,p}_{hom}(\infty) - N^{\systemindex}), X^{\systemindex,p}_{hom}(\infty) \geq N^{\systemindex}) < \infty,
	\end{equation}
	i.e.\
	\begin{equation}
	\mathbb{E}(\diffuscal{X}^{\systemindex}(\infty), \diffuscal{X}^{\systemindex}(\infty) \geq 0) \leq \mathbb{E} (\diffuscal{X}^{\systemindex,p}_{hom}(\infty), \diffuscal{X}^{\systemindex,p}_{hom}(\infty) \geq 0) < \infty.
	\label{hom-het-expected-queue-length-comparison}
	\end{equation}
	Notice that the expected value on the left is a function of the random variable $\beta^{\systemindex}$, thus it itself is also a random variable. \eqref{hom-het-expected-queue-length-comparison} implies that in the $\systemindex$th heterogeneous systems, the (scaled) expected queue length is always bounded no matter what values the service rates take, which further implies that the (scaled) queue length is uniformly integrable. \par
	Using the same reasoning process for Figure \ref{figure2}, but with abandonment in the systems, we can get $\diffuscal{X}^{\systemindex}(\infty) \Rightarrow \xi(\infty)$. Hence, by Theorem 3.5 in \cite{Billingsley}, \eqref{cost-approximation-abandonment-equivalent-convergence} is proved.
\end{proof}
To this end, we have proven the validity of the approximation \eqref{cost-function-abandonment2} of the cost function \eqref{cost-function-abandonment1}.

\section{Impact of service rate variation on abandonment rate}
In this section we want to see how the variance of the service rate influence the queue length, and thus the abandonment cost. We analyse this by considering the expected (scaled) queue length. The analytical result seems rather intractable, so instead we show their numerical results and explain how it reflects such influence. We mainly focus on systems under the LISF policy. To have a better idea of how variance plays its role in a system, we also include the numerical results for systems under the FSF policy.\par
According to \eqref{cost-function-abandonment2}, the approximating cost function for systems with abandonment is
\begin{align*}
\hat{C}^{\systemindex}(x) 
=& F^{\systemindex}(x) + d \nu \mathbb{E}_{\beta} \left(  \mathbb{E}_{\xi (\infty)} \left( \xi(\infty)^{+} \middle| \xi(\infty) \geq 0 \right) \mathbb{P}\left( \xi(\infty ) \geq 0 \right) \right)\\
=& F^{\systemindex}(x) + d \nu  \mathbb{E}_{\beta} \left( \mathbb{E}_{\xi (\infty)} \left( \xi(\infty)^{+} , \xi(\infty) \geq 0 \right) \right)\\
= & F^{\systemindex}(x) + d \nu \int_{\mathbb{R}} \int_{0}^{\infty} x f_1(x) \varrho dx d \mathbb{P}(\beta) \\
= & F^{\systemindex}(x) + d \nu \int_{-\infty}^{\infty} \int_{0}^{\infty} x \frac{\frac{\sqrt{2 \nu}}{\sigma} \phi \left( \frac{\sqrt{2 \nu}}{\sigma} \left( x - \frac{\beta}{\nu}\right) \right)}{\Phi \left(\frac{\sqrt{2} \beta}{\sqrt{\nu} \sigma} \right)} f_{\beta}(\beta) \\
& \qquad \qquad \qquad \qquad \quad \left( 1 + \sqrt{\frac{\nu}{\gamma}} \frac{\phi \left( - \frac{\sqrt{2} \beta}{\sqrt{\nu} \sigma} \right)}{\phi \left( - \frac{\sqrt{2} \beta}{\sqrt{\gamma} \sigma} \right)} \frac{\Phi \left( - \frac{\sqrt{2} \beta}{\sqrt{\gamma} \sigma} \right)}{\Phi \left( \frac{\sqrt{2} \beta}{\sqrt{\nu} \sigma} \right)} \right)^{-1} dx d\beta. \numberthis \label{cost-function-abandonment-detail-1}
\end{align*}
We want to see how the variance of the service rate influences the steady state and, thus by \eqref{cost-function-abandonment-detail-1}, the abandonment cost. \eqref{cost-function-abandonment-detail-1} contains a complicated integral which may not have a closed form. To simplify the problem, we consider a special distribution of service rates. Let the random service rates be uniformly distributed on $(\bar{\mu}-\epsilon, \bar{\mu}+\epsilon)$, $\epsilon>0$. Then $\xi$ has a random drift $\beta$, where $\beta = -\zeta-\theta \bar{\mu}$, and $\zeta \sim N(0,\frac{\epsilon^2}{3})$, and $\gamma = \bar{\mu} + \frac{\epsilon^2}{3 \bar{\mu}}$. Then, \eqref{cost-function-abandonment-detail-1} becomes
\begin{align*}
& F^{\systemindex}(x) + d \nu \int_{-\infty}^{\infty} \int_{0}^{\infty} x \frac{\frac{\sqrt{2 \nu}}{\sigma} \phi \left( \frac{\sqrt{2 \nu}}{\sigma} \left( x - \frac{\beta}{\nu}\right) \right)}{\Phi \left(\frac{\sqrt{2} \beta}{\sqrt{\nu} \sigma} \right)} \frac{\sqrt{3}}{\epsilon} \phi \left( \frac{\sqrt{3}(\beta+\theta \bar{\mu})}{\epsilon} \right) \\ 
& \qquad \qquad \qquad \qquad \quad  \left( 1 + \sqrt{\frac{\nu}{\gamma}} \frac{\phi \left( - \frac{\sqrt{2} \beta}{\sqrt{\nu} \sigma} \right)}{\phi \left( - \frac{\sqrt{2} \beta}{\sqrt{\gamma} \sigma} \right)} \frac{\Phi \left( - \frac{\sqrt{2} \beta}{\sqrt{\gamma} \sigma} \right)}{\Phi \left( \frac{\sqrt{2} \beta}{\sqrt{\nu} \sigma} \right)} \right)^{-1} dx d\beta\\
&= F^{\systemindex}(x) + d \nu \int_{-\infty}^{\infty} \int_{0}^{\infty} x \frac{\frac{\sqrt{2 \nu}}{\sigma} \phi \left( \frac{\sqrt{2 \nu}}{\sigma} \left( x - \frac{\beta}{\nu}\right) \right)}{\Phi \left(\frac{\sqrt{2} \beta}{\sqrt{\nu} \sigma} \right)} \frac{1}{\epsilon \sqrt{\frac{2\pi}{3}}} \exp\left(-\frac{3(\beta+\theta \bar{\mu})^2}{2 \epsilon^2}\right) \\ 
& \qquad \qquad \qquad \qquad \quad \left( 1 + \sqrt{\frac{\nu}{\bar{\mu} + \frac{\epsilon^2}{3 \bar{\mu}}}} \frac{\phi \left( - \frac{\sqrt{2} \beta}{\sqrt{\nu} \sigma} \right)}{\phi \left( - \frac{\sqrt{2} \beta}{\sqrt{\bar{\mu} + \frac{\epsilon^2}{3 \bar{\mu}}} \sigma} \right)} \frac{\Phi \left( - \frac{\sqrt{2} \beta}{\sqrt{\bar{\mu} + \frac{\epsilon^2}{3 \bar{\mu}}} \sigma} \right)}{\Phi \left( \frac{\sqrt{2} \beta}{\sqrt{\nu} \sigma} \right)} \right)^{-1} dx d\beta.
\numberthis \label{cost-function-abandonment-detail-2}
\end{align*}
Since the only part depending on the service rate variance is the double integral in \eqref{cost-function-abandonment-detail-2}, which is actually the expected (scaled) queue length, we let
\begin{align*}
& QL(\epsilon) = \int_{-\infty}^{\infty} \int_{0}^{\infty} x \frac{\frac{\sqrt{2 \nu}}{\sigma} \phi \left( \frac{\sqrt{2 \nu}}{\sigma} \left( x - \frac{\beta}{\nu}\right) \right)}{\Phi \left(\frac{\sqrt{2} \beta}{\sqrt{\nu} \sigma} \right)} \frac{1}{\epsilon \sqrt{\frac{2\pi}{3}}} \exp\left(-\frac{3(\beta+\theta \bar{\mu})^2}{2 \epsilon^2}\right)\\ 
& \qquad \qquad \qquad \qquad \quad \left( 1 + \sqrt{\frac{\nu}{\bar{\mu} + \frac{\epsilon^2}{3 \bar{\mu}}}} \frac{\phi \left( - \frac{\sqrt{2} \beta}{\sqrt{\nu} \sigma} \right)}{\phi \left( - \frac{\sqrt{2} \beta}{\sqrt{\bar{\mu} + \frac{\epsilon^2}{3 \bar{\mu}}} \sigma} \right)} \frac{\Phi \left( - \frac{\sqrt{2} \beta}{\sqrt{\bar{\mu} + \frac{\epsilon^2}{3 \bar{\mu}}} \sigma} \right)}{\Phi \left( \frac{\sqrt{2} \beta}{\sqrt{\nu} \sigma} \right)} \right)^{-1} dx d\beta.
\numberthis
\label{Q-length-expectation}
\end{align*}
After simplification, \eqref{Q-length-expectation} is still hard to tackle, thus we employ a numerical integral. In Figure \ref{LISFQL}, we show the function $QL(\epsilon)$ vs $\epsilon$. From the graph, $QL(\epsilon)$ is increasing. This implies that when all the other conditions remain the same, the total cost will grow as the service rate variance grows.\par
In contrast, we consider the same function for the FSF policy. The only difference in the limiting diffusion for FSF is $\gamma=\mu_{\min}$, which is $\bar{\mu}-\epsilon$ in the above-mentioned uniform distribution. Keeping other parameters unchanged, we plot its graph in Figure \ref{FSFQL}. The result is surprisingly counter-intuitive. It shows that the expected (scaled) queue length will decrease as the variance grows. One can explain such a situation as follows: the FSF policy always routes customers to the fastest available servers, thus in the long run, only the slowest server will have the chance to be idle. When the service rate variance increase, the minimum service rate will decrease, thus when the arrival rate remains the same, the `lost' capacities due to idleness will also decrease. This implies that the total service rates that are indeed utilised will increase, thus the queue length decreases. 
\begin{figure}
	\centering{\includegraphics[scale=0.6]{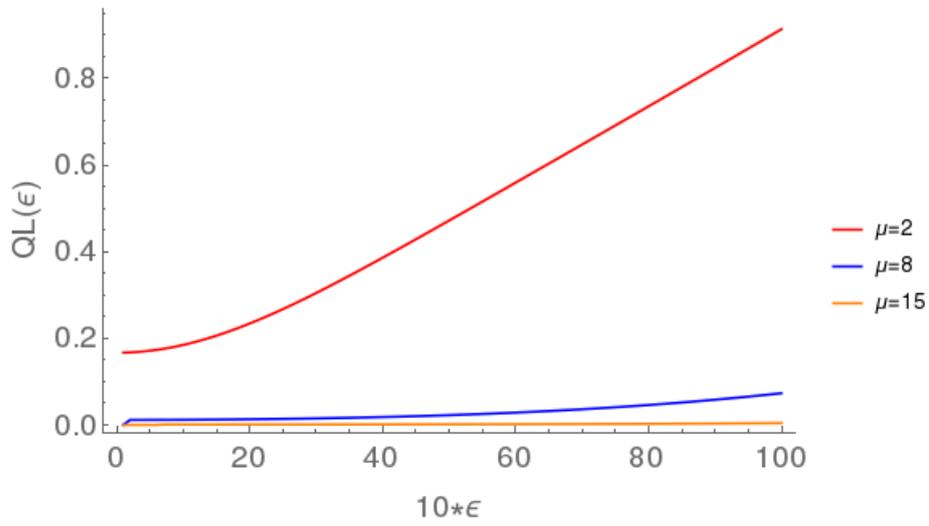}}  
	\caption{$QL(\epsilon)$ vs $\epsilon$ for the LISF policy, when $\sigma=4, \theta=2, \nu =2$.} \label{LISFQL}
\end{figure}
\begin{figure}
	\centering{\includegraphics[scale=0.6]{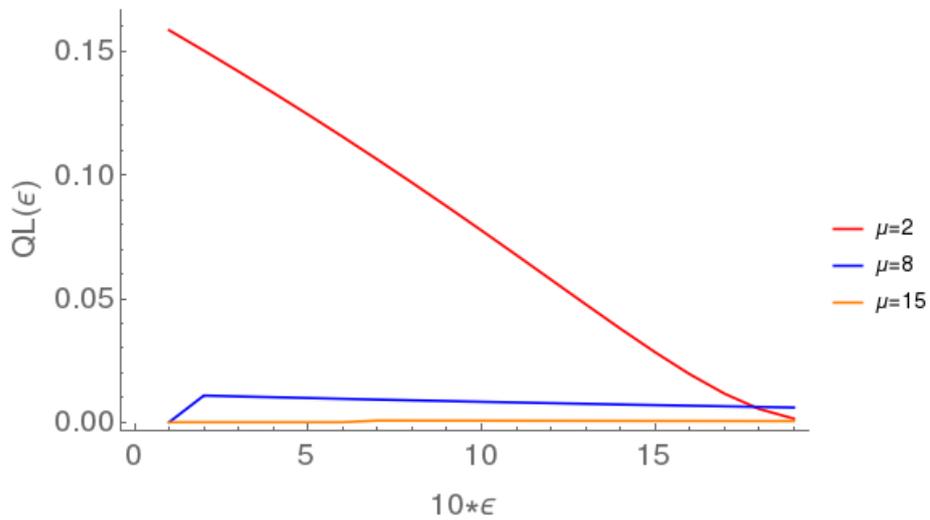}}  
	\caption{$QL(\epsilon)$ vs $\epsilon$ for the FSF policy, when $\sigma=4, \theta=2, \nu =2$.} \label{FSFQL}
\end{figure}

\section{Fairness among severs under different routing policies}
From \cite{Atar} and our analysis above, for a heterogeneous system, the longest idle server first (LISF) policy expresses a form of fairness, because when several servers are free, the one selected for the next incoming job is the one that has been idle for the longest time. On the other hand, policy faster server first (FSF) always routes customer to the fastest idle servers. Such difference in the routing scheme shows that LISF is more fair for servers than FSF, which provokes a question: can we quantify the fairness level for different routing policies? In this section we will answer this question, and demonstrate how it can be used in real systems.
\vspace{-5mm}
\subsection{Fairness measure}
We introduce a new concept called ``fairness measure''. We still consider a sequence of many server queues with i.i.d.\ servers. For simplicity, we remove the randomness on the number of servers and assume the $N$th system has exactly $N$ servers. Denote $\Upsilon$ as the support of random variables $\{\mu_{\sindex}\}$. To analyse fairness among servers, it is intuitive to consider their idle times. As before, denote $I_{\sindex}(t)$ as the idleness process for server $\sindex$, i.e.\ $I_{\sindex}(t)$ equals $1$ if server $\sindex$ is idle at $t$, and is equal to $0$ if it is busy. For a routing policy $\pi$, its fairness measure is defined as $\eta_{\pi}: \Upsilon \to [0,1]$, such that $\forall A \subset \Upsilon$, and $\forall \ T>0$, 
\begin{equation}
\sup_{0 \leq t \leq T} \frac{1}{\sqrt{N}}\left\vert \sum_{\sindex=1}^{N} \mathds{1}_{\mu_{\sindex}}(A) I_{\sindex}(t) - \eta_{\pi}(A)\sum_{\sindex=1}^{N} I_{\sindex}(t)  \right\vert \to 0 \mbox{ in probability, } \label{fairness-measure}
\end{equation}
\vspace{3mm}
as $N \to \infty$, where $\mathds{1}_{\mu_{\sindex}}(\cdot)$ is the indicator function
\begin{equation}
\mathds{1}_{\mu_{\sindex}}(A) = \twopartdef{1}{\mu_{\sindex} \in A}{0}{\mu_{\sindex} \notin A}.
\end{equation}
From the definition, it is easy to see that $\eta_{\pi}$ is a probability measure on $\Upsilon$. One can understand \eqref{fairness-measure} that the number of idle servers whose service rates are in $A$ converges u.o.c to the product of the fairness measure of set $A$ and the total number of idle servers. \par
We need to be aware that although \eqref{fairness-measure} reflects fairness of a policy to some degree, it does not hold in every situation. There should be some limitations on the policies such that \eqref{fairness-measure} is true, e.g. the policy should not depend on the total number of idle servers. Denote the set of all the eligible policies as $\Pi$. We have the following assumption,
\begin{assumption}
	For any $\pi \in \Pi$, \eqref{fairness-measure} holds. \label{assumption-fairness-measure}
\end{assumption}

\subsection{Application of fairness measure on LISF and FSF policies}
To see how Assumption \ref{assumption-fairness-measure} is used, we consider the diffusion limits proved in \cite{Atar}. In Section 2 of Chapter 2, we showed that after some manipulations on the process $\diffuscal{X}(t)$ (diffusively scaled process of total number of customers in the system), it has expression
\begin{equation}
\diffuscal{X}(t) = \diffuscal{X}(0) + W(t) + bt + F(t),
\end{equation}
with $F(t) = \frac{1}{\sqrt{N}} \int_{0}^{t} \sum_{\sindex = 1}^{N} \mu_{\sindex} I_{\sindex}(s)ds$ being the only item that is troublesome to deal with and also the only item that depends on policies.\par
Now we show the convergence of $F(t)$ under different policies. By Assumption \ref{assumption-fairness-measure}, 
\begin{align*}
F(t) &= \frac{1}{\sqrt{N}} \int_{0}^{t} \sum_{\sindex = 1}^{N} \mu_{\sindex} I_{\sindex}(s)ds \to \frac{1}{\sqrt{N}} \int_{0}^{t} \sum_{\sindex = 1}^{N} \mu_{\sindex} \mathds{1}_{\mu_{\sindex}}(\Upsilon) I_{\sindex}(s) ds\\
& \to \frac{1}{\sqrt{N}} \int_{0}^{t} \left( \int_{\Upsilon} \mu_{1}d\eta_{\pi}(\mu_1)\right) \sum_{\sindex = 1}^{N} I_{\sindex}(s) ds,   \numberthis \label{fairness-convergence1}
\end{align*} 
in probability u.o.c as $N \to \infty$.
\begin{itemize}
	\item
	LISF\\
	For any $A \in \Upsilon$, define $\eta_{LISF}(A) = \frac{\int_{A} \mu_1 dm}{\int \mu_1 dm}$, then  
	\begin{equation}
	\int_{\Upsilon} \mu_{1}d\eta_{LISF}(\mu_1) = \int_{\Upsilon} \mu_{1} \frac{\mu_1 dm}{\int \mu_1 dm} = \frac{\int \mu_1^2 dm}{\int \mu_1 dm},
	\end{equation}
	and \eqref{fairness-convergence1} becomes
	\begin{align}
	F(t) \to \frac{\int \mu_1^2 dm}{\int \mu_1 dm} \frac{1}{\sqrt{N}} \int_{0}^{t} \sum_{\sindex = 1}^{N} I_{\sindex}(s) ds, \label{fairness-convergence2}
	\end{align}
	in probability u.o.c as $N \to \infty$. $\frac{\int \mu_1^2 dm}{\int \mu_1 dm}$ is actually $\gamma$ in Theorem 2.1 in \cite{Atar}, and \eqref{fairness-convergence2} matches with the result in that paper that $F(t) \to \gamma \int_{0}^{t} \diffuscal{I}(s) ds$ in probability u.o.c as $N \to \infty$.
	\item
	FSF\\
	Similarly, for any $A \in \Upsilon$, define $\eta_{FSF}(A) = \mathds{1}_{\mu_{\min}}(A)$. Then
	\begin{equation}
	\int_{\Upsilon} \mu_{1}d\eta_{FSF}(\mu_1) = \int_{\Upsilon} \mu_{1} d\mathds{1}_{\mu_{\min}}(\mu_1) = \mu_{\min},
	\end{equation}
	and \eqref{fairness-convergence1} becomes
	\begin{equation}
	F(t) \to \mu_{\min} \frac{1}{\sqrt{N}} \int_{0}^{t} \sum_{\sindex = 1}^{N} I_{\sindex}(s) ds, \label{fairness-convergence3}
	\end{equation}
	in probability u.o.c. as $N \to \infty$.
	Such a form also matches with the result of Theorem 2.2 in \cite{Atar}.
\end{itemize}

\viva{With fairness measure, we rephrase results in \cite{Atar} in a more general way. The problem of proving diffusion limit of a particular system is reduced to finding the fairness measure of the policy that is used in the system. Thus it is possible to invent a standard method for proving diffusion limits, which is more insightful than the proof in \cite{Atar}.}

\chapter{State Space Collapse for Many Server Queues and Queueing Networks with Parameter Uncertainty}

\section{Introduction}

In this chapter, we discuss the state space collapse phenomenon. Throughout this chapter, we adapt the framework developed by \cite{Dai}. They consider a queueing network with multi-class customers and several server pools. In each pool, servers have the same capacities and capabilities. Customer arrivals are exogenous and independent from service processes. For such systems, exact analysis provides limited insight into the general properties of performances. One general way to overcome this is to use diffusion approximations. Similar to Chapter 3, the central part of the diffusion approximation is some heavy traffic limit theorems that state that a certain diffusively scaled performance processes converges to a diffusion in heavy traffic. Since the system processes in such networks are multidimensional, it becomes difficult to deal with when the system size grows large. Here is where state space collapse (SSC) plays a role. The SSC result reveals that under some conditions, the dimensions of system processes can be significantly reduced in the heavy traffic limit, while the essential information of the systems is still maintained. \cite{Dai} gain the SSC result by using what they call an SSC function. They show that under some assumptions on the networks, the SSC function evaluated at diffusively scaled processes converges to zero as the system grows large.\par
For administrative and economic reasons, servers can be categorized and allocated such that, within pools, servers have the same capabilities, i.e.\ the set of customer classes that one server in a pool is able to serve is the same as the set of customer class that any other server in the same pool can serve. This way of pooling reflects a kind of heterogeneity among servers. \cite{Dai} also assume that within each pool, servers not only have the same capabilities, but also have the same capacities, i.e.\ when two servers in the same pool serve the same class of customers, their service rates are the same. However, in reality, it is more common that some differences are present among servers who are capable of doing the same tasks. Inside each pool, even though servers have the same capabilities, their skill levels can still be different, thus it will make more sense if we consider their rates to be different and random rather than identical.\par
In our work, service rates within pools are assumed to be i.i.d.\ random variables, and we also restrict our service times to being exponentially distributed. We analyse such networks, and demonstrate that with randomness within pools the SSC result still holds. We will show that the SSC function in such systems still converges to zero in the limit.\par
This chapter is organised as follows. In Section 4.2, we introduce our model and define \textit{parallel random server systems}. In Section 4.3, we give our main result, the SSC for networks with random service rates. Then in Section 4.4, we provide essential proofs that are unique to our results. Proofs that are the same as in \cite{Dai} are relegated to Appendix \ref{Appendix-HDSSCfunction}. In Section 4.5, to show how the SSC can be used in queueing system analysis, we use the SSC method to show the diffusion limits proved in Chapter 3.

\section{Notation and model descriptions}
Our basic settings are similar to those in \cite{Dai},but slightly different. Besides the randomness among servers, we do not include abandonments in our systems, and we also do not differentiate between the arrival streams and customer classes. Every arrival stream forms one class of customers. More specifically, we consider a system with parallel server pools and several customer classes. A server pool consists of several servers whose capabilities are the same, and their capacities are i.i.d.\ random variables (see more detailed definitions below). Customers of one class arrive in the system at a certain rate. Each class of customers have their own queue. Upon their arrival they will be routed to a capable server \viva{(idle and possesses the skill to serve this class of customer)} if there is at least one; if all the capable servers are occupied, they will wait in their queues. Each customer is served by one of the servers. Once the service of a customer is completed by one of the servers, the customer leaves the system. And once a customer starts his service, he cannot abandon the service. For convenience, we refer to these systems as \textit{parallel random server systems}.
\subsection{Notation}
We need to define some notation for convenience in presentation. Throughout this chapter, unless stated otherwise, for a vector $x=(x_1, \dots, x_n) \in \mathbb{R}^n$, its norm is defined as $|x|=\max_{\{i=1,\dots,n\}}|x_i|$. For an $m \times n$ matrix $M$, its norm is $|M|= \max_{\{ i = 1,\dots,m \}}|M_i|$, where $\{M_i, i =1,\dots,m\}$ are the row vectors of $M$.\par
In Chapter 2, we defined that for any function $x(t) \in \mathbb{D}^d$ and any $T>0$, $||x(t)||_T = \sup_{0 \leq t \leq T}|x(t)|$. Now consider a sequence $x^r(t)$, we say $x^r \to x$ uniformly on a compact set (u.o.c) if $||x^r(t) - x(t)||_T \to 0$ as $r \to \infty$ for any $T>0$.
\subsection{Dynamics of the queueing networks}	
We use $I$ to denote the number of server pools, and $J$ to denote the number of customer classes. For notational convenience, we define $\mathcal{I}=\{1,\dots,I\}, \mathcal{J} = \{1,\dots,J\}$. And denote the number of servers in pool $i$ by $N_i$ for $i \in \mathcal{I}$ and set $N = (N_1, \dots, N_I)$. The total number of servers in the system is denoted by $|N|$. Class $j$ customers arrive the system according to a Poisson process with rate $\lambda_j$. We assume that the set of pools that can handle class $j$ customers is fixed and denoted by $\mathcal{I}(j)$. Similarly, the set of customer classes that pool $i$ can handle is fixed and denoted by $\mathcal{J}(i)$. \par
Upon arrival, each customer of class $j$ is routed to a server if there is an available server in one of the pools in $\mathcal{I}(j)$. Otherwise, the customer joins the queue of class $j$, waiting to be served later. Assume the service time of a class $j$ customer by the $\sindex$th server in pool $i$ is exponentially distributed with rate $\mu_{ij\sindex}$, where $j \in \mathcal{J}(i), \sindex = 1,2, \dots, N_i$. Then the service rates $\{\mu_{ij \sindex}, \sindex = 1,2,\dots,N_i\}$ are i.i.d.\ random variables. Denote their expectation value as $\bar{\mu}_{ij}=E \mu_{ij \sindex}$. We also assume that $\mu_{ij \sindex} \in [p_{ij},q_{ij}]$, $0<p_{ij}<q_{ij}$. \par
The object of study in this paper is a stochastic process $\mathbb{X} = (A, A_q, A_s, C, Q, Z, T, D)$. Assume all of the components are right continuous with left limits. We provide definitions of individual processes below.
\begin{itemize}
	\item  $A=(A_j; j \in \mathcal{J})$, $A_j(t)$ is the total number of class $j$ arrivals by time $t$.
	\item  $A_q=(A_{qj};j \in \mathcal{J})$, $A_{qj}(t)$ denotes the total number of class $j$ customers who are delayed and have to wait in the queue before their service starts.
	\item  $A_s = (A_{sij}; i \in \mathcal{I}, j \in \mathcal{J})$, $A_{sij}(t)$ is the total number of class $j$ customers who are routed to a server and start service in pool $i$ immediately after their arrival by time $t$.
	\item  $C=(C_{ij}; i \in \mathcal{I}, j \in \mathcal{J})$,  $C_{ij}(t)$ is the total number of class $j$ customers who are delayed in the queue and whose service started in pool $i$ before time $t$.
	\item  $Q=(Q_j; j \in \mathcal{J})$, $Q_j(t)$ is the total number of class $j$ customers in queue at time $t$.
	\item  $Z=(Z_{ij}; i \in \mathcal{I}, j \in \mathcal{J})$, $Z_{ij}(t)$ is the total number of servers in pool $i$ who are busy with serving class $j$ customers at time $t$.
	\item   $T=(T_{ij} ; i \in \mathcal{I}, j \in \mathcal{J})$, $T_{ij}(t)$ denotes the total time spent by servers in pool $i$ in serving class $j$ customers by time $t$.
	\item  $D= (D_{ij} ; i \in \mathcal{I}, j \in \mathcal{J})$, $D_{ij}(t)$ denotes the total number of class $j$ customers whose service are completed by servers in pool $i$ by time $t$.
\end{itemize}
Since there is heterogeneity among servers, we need to deal with each server individually, thus we also need the following notations: 
\begin{itemize}
	\item  $B_{ij \sindex}(t), i \in \mathcal{I}, j \in \mathcal{J}(i), \sindex = 1,2,\dots,N_i$: busy server indicator function. If the $\sindex$th server in pool $i$ is busy with a class $j$ customer at time $t$, $B_{ij \sindex}(t)=1$, otherwise $B_{ij \sindex}(t)=0$.
	\item  $T_{ij \sindex}(t),  i \in \mathcal{I}, j \in \mathcal{J}(i), \sindex = 1,2,\dots,N_i$: total time spent by the $\sindex$th server in pool $i$ in serving class $j$ customers by time $t$.
	\item  $D_{ij \sindex}(t), i \in \mathcal{I}, j \in \mathcal{J}(i), \sindex = 1,2,\dots,N_i$: total number of class $j$ customers whose service are completed by the $\sindex$th server in pool $i$ by time $t$.
\end{itemize}
Notice $B_{ij\sindex}, Z_{ij}, T_{ij\sindex}, T_{ij}, D_{ij}, D_{ij \sindex}$ have such relation: 
$$Z_{ij}(t) = \sum_{\sindex=1}^{N_i} B_{ij \sindex}(t), D_{ij}(t) = \sum_{\sindex=1}^{N_i} D_{ij \sindex}(t), \text{ and } T_{ij}(t) = \sum_{\sindex = 1}^{N_i} T_{ij \sindex}(t)$$
for all $i \in \mathcal{I}, j \in \mathcal{J}$.\par
The main goal of this chapter is to study the SSC results of the above-mentioned queueing networks in the diffusion limit manner. Therefore, we analyse a sequence of systems indexed by $r$ such that the arrival rate grows to infinity as $r \to \infty$. The number of servers also grows to infinity to meet the growing demand. We append $``r"$ to the processes that are associated with the $r$th system, e.g. $Q^r_j(t)$ is used to denote the number of class $j$ customers in the queue in the $r$th system at time $t$. The arrival rate in the $r$th system is given by $\lambda^r=(\lambda^r_j, j \in \mathcal{J})$, and we assume that 
\begin{equation}
\lambda^r_j \to \infty,
\end{equation}
as $r \to \infty$.\par
Let $\{S_{ij \sindex}, i \in \mathcal{I}, j \in \mathcal{J}, \sindex = 1,2,\dots, N_i\}$ be i.i.d.\ standard Poisson processes, each having right-continuous sample paths. Combining the settings in \cite{Atar} and \cite{Dai}, the processes $D_{ij\sindex}$ are assumed to satisfy
\begin{equation}
D^r_{ij\sindex}(t) = S_{ij \sindex}(\mu_{ij \sindex} T^r_{ij \sindex}(t)),\ \ \ i \in \mathcal{I}, j \in \mathcal{J}, \sindex = 1,2, \dots, N_i, \label{depadef}
\end{equation}
where
\begin{equation}
T^r_{ij \sindex}(t) = \int_0^t B^r_{ij \sindex}(s) ds,\ \ \  i \in \mathcal{I}, j \in \mathcal{J}, \sindex = 1,2, \dots, N_i. \label{timeshift}
\end{equation}
The process $\mathbb{X}^r$ depends on the control policy used in the system. To emphasize the dependence on the control policy $\pi$ used, we use $\mathbb{X}_{\pi}$ to denote the process. Clearly, each element of $A^r, A^r_q, A^r_s, C^r, T^r, D^r$ is a nondecreasing process, and each element of $Q^r$ and $Z^r$ is nonnegative. Furthermore, the process $\mathbb{X}^r_{\pi}$ satisfies the following dynamic equations for all $t \geq 0$.
\begin{equation}
A^r_j(t) = A^r_{qj}(t) + \sum_{i \in \mathcal{I}(j)}A^r_{sij}(t), \mbox{ for all } j \in \mathcal{J}, \label{originprocess1}
\end{equation}
\begin{equation}
Q^r_j(t) = Q^r_j(0) + A^r_{qj}(t) - \sum_{i \in \mathcal{I}(j)} C^r_{ij}(t), \mbox{ for all } j \in \mathcal{J}, \label{originprocess2}
\end{equation}
\begin{equation}
Z^r_{ij}(t) = Z^r_{ij}(0) + A^r_{sij}(t) + C^r_{ij}(t) -\sum_{\sindex=1}^{N^r_i} D^r_{ij\sindex}(t), \mbox{ for all } i \in \mathcal{I}, j \in \mathcal{J}, \label{originprocess3}
\end{equation}
\begin{equation}
\sum_{j \in \mathcal{J}(i)} Z^r_{ij}(t) \leq N^r_i, \mbox{ for all } i \in \mathcal{I}, \label{originprocess4}
\end{equation}
\begin{equation}
Q^r_j(t) \left( \sum_{i \in \mathcal{I}(j)} \left( N^r_i - \sum_{j^{\prime} \in \mathcal{J}(i)} Z^r_{ij^{\prime}}(t) \right) \right) = 0, \mbox{ for all } j \in \mathcal{J}, \label{originprocess5}
\end{equation}
\begin{equation}
\int_0^t \sum_{i \in \mathcal{I}(j)} \left( N^r_i - \sum_{j^{\prime} \in \mathcal{J}(i)} Z^r_{ij^{\prime}}(s-)\right) dA^r_{qj}(s) = 0, \mbox{ for all } j \in \mathcal{J}, \label{originprocess6}
\end{equation}
\begin{equation}
\text{Equations} \text{ associated with the control policy } \pi. \label{originprocess7}
\end{equation}
Equations \eqref{originprocess5} and \eqref{originprocess6} are based on the assumed non-idling property of a control policy. Equation \eqref{originprocess5} implies that there can be customers in the queue only when all of the servers that can serve that class of customers are busy. Equation \eqref{originprocess6} implies that an arriving customer is delayed in the queue only if there is no idle server that can serve that customer at the time of his arrival. Equation \eqref{originprocess7} indicates the scheduling decisions to be made according to the selected scheduling policies.
\vspace{-2mm} \par
Let $\{S_{ij \sindex}, i \in \mathcal{I}, j \in \mathcal{J} \}$ be the Poisson processes defined before, and $\{v_{ij \sindex}(l); l=1,2,\dots\}$ be the corresponding sequence of i.i.d.\ exponential random variables. Since $S_{ij \sindex}$ is a Poisson process, $v_{ij \sindex}(l)$ has exponential distribution with rate $\mu_{ij \sindex}$. We define $V_{ij \sindex}: \mathbb{N} \to \mathbb{R}$ by
\begin{equation}
V_{ij\sindex}(m) = \sum_{l=1}^m \frac{v_{ij\sindex}(l)}{\mu_{ij\sindex}} ,\ \ m \in \mathbb{N},
\end{equation}
where, by convention, empty sums are set to be zero. The term $V_{ij\sindex}(m)$ is the total service requirement of the first $m$ class $j$ customers who are served by the $\sindex$th server in pool $i$, and $V_{ij \sindex}$ is known as the cumulative service time process. By the duality of $S_{ij \sindex}$ and $V_{ij \sindex}$, we have
\begin{equation}
S_{ij \sindex}(\mu_{ij \sindex} t) = \max \left\lbrace m: V_{ij \sindex}(m) \leq t \right\rbrace, \ \ t \geq 0.
\end{equation}
It follows from \eqref{depadef} that
\begin{equation}
V_{ij \sindex}(D^r_{ij \sindex}(t)) \leq T^r_{ij \sindex}(t) \leq V_{ij \sindex}(D^r_{ij \sindex}(t) + 1). \label{srvtinequa}
\end{equation}

Next, we give the details of the arrival processes. Let $\chi(t)$ be a delayed renewal process with rate $1$. Let
\begin{equation}
A^r(t) = \chi(\lambda^r t)
\end{equation}
Let $\{ u(l): l=1,2,\dots \}$ be the sequence of interarrival times that are associated with the process $\chi$. Note that they are independent and identically distributed. We define $U: \mathbb{N} \to \mathbb{R}$ by
\begin{equation}
U(m) = \sum_{l=1}^r u(l),\ \ m \in \mathbb{N},
\end{equation}
and so
\begin{equation}
\chi(t) = \max\{ m: U(m) \leq t \}.
\end{equation}
We require that the interarrival times of the arrival processes satisfy the following condition, which is similar to condition (3.4) in \cite{Bramson}:
\begin{equation}
\mathbb{E}\left(u(2)^{2+\epsilon}\right) < \infty,\ \ \text{for some } \epsilon > 0.\label{momentcon}
\end{equation}
Condition \eqref{momentcon} is automatically satisfied by the service times because they are assumed to be exponentially distributed. For the rest of the paper, we assume that the primitive processes of the system satisfy \eqref{momentcon}. We also assume that $Q^r(0),Z^r(0),\chi$ and $S$ are independent.

We require that the number of servers in the $r$th system is selected so that
\begin{align}
&\lim_{r \to \infty} \frac{|N^r|}{r} = 1,  \label{No.servers} \\ 
&\lim_{r \to \infty} \frac{N^r_i}{|N^r|} = \beta_i, \mbox{ for all } i \in \mathcal{I} \mbox{ and for some } \beta_i \in (0,1), \label{server-pool-percertage}\\
&\lim_{r \to \infty} \frac{\lambda^r_j}{|N^r|} = \lambda_j, \ \ \text{for some } 0<\lambda_j<\infty. \label{arriallimits}
\end{align}
We shall denote $\lambda=(\lambda_j, j \in \mathcal{J})$.

\section{Main results}

In this section, we state our main result as Theorem \ref{SSC1}, \viva{which is an SSC result for \textit{parallel random server systems}. We extend the SSC result for systems from identical servers in each pool which is proved by \cite{Dai}, to include random service rates in each pool. The proof framework of our results is similar to the one in \cite{Dai}. Our main contribution is when showing almost Lipschitz condition for hydrodynamically scaled departure processes, the direct way in \cite{Dai} is no longer valid, so instead we come up with a new coupling method. For more detailed differences between proofs of \cite{Dai} and our results, see discussion at the beginning of Section 4.4.} \par
Before stating the theorem, we need some preliminary definitions and assumptions. \par
\viva{We only consider systems under the heavy traffic condition. For a multi-class network with several server pools, it is not trivial to define the heavy traffic condition. Instead, we use the static planning problem (SPP) for the networks as a bridge to heavy traffic conditions.}

\subsection{The static planning problem}
\viva{The SPP is introduced in \cite{Dai}. We will modify the original problem such that it suits our models. The objective of an SPP is to minimise server utilisations in the network. \cite{Dai} use identical service rates to define utilisations, while we consider their expected value $\bar{\mu}_{ij}$ instead.}\par
Let $x=(x_{ij}, i \in \mathcal{I}, j \in \mathcal{J}(i))$, where $x_{ij}$ is the long term proportion of pool $i$ servers' working time in serving class $j$ customers. We define the static planning problem 
\begin{equation}
\begin{aligned}
\min & \quad \rho \\
\mbox{s.t.} & \sum_{i \in \mathcal{I}(j)} \beta_i \bar{\mu}_{ij} x_{ij} = \lambda_j, \mbox{ for all } j \in \mathcal{J},\\
& \sum_{j \in \mathcal{J}(i)}x_{ij} \leq \rho, \mbox{ for all } i \in \mathcal{I},\\
& x_{ij} \geq 0 \mbox{ for all } i \in \mathcal{I}, j \in \mathcal{J}.
\end{aligned} \label{static-planning-optimisation1}
\end{equation}
Denote $(\rho^*, x^*)$ as the optimal solution of the above static planning problem. This means the average utilisation of the busiest pool will reach its minimum value $\rho^*$ with the allocation $x^*$. If $\rho^* > 1$, it can easily be shown that the queue length grows without bound; thus in our analysis it is always assumed that $\rho^* \leq 1$.\par
Since we need a heavy traffic condition, we now consider a sequence of the following optimisation problem
\begin{equation}
\begin{aligned}
\min & \quad \rho^r\\
\mbox{s.t.} & \sum_{i \in \mathcal{I}(j)} N^r_i \bar{\mu}_{ij} x^r_{ij} = \lambda^r_j, \mbox{ for all } j \in \mathcal{J},\\
& \sum_{j \in \mathcal{J}(i)} x^r_{ij} \leq \rho^r, \mbox{ for all } i \in \mathcal{I},\\
& x^r_{ij} \geq 0 \mbox{ for all } i \in \mathcal{I}, j \in \mathcal{J}.
\end{aligned} \label{static-planning-optimisation2}
\end{equation}
Let $(\rho^r, x^{r,*})$ be an optimal solution of \eqref{static-planning-optimisation2}. These optimisation problems will be an important part in the assumptions of heavy traffic below.
\vspace{-2mm}
\subsection{Assumptions}
\viva{Before starting the analysis of the state space collapse phenomenon, we need to clarify that such phenomenon will not happen in all networks. We need some constraints on the systems to guarantee SSC. We have the following two assumptions for that reason.}\par
Using the definitions in the static planning problem, we have the assumption about heavy traffic conditions and control policies.
\vspace{-2mm}
\begin{assumption}
	For each static optimal solution $(\rho^*, x^*)$ of the SPP \eqref{static-planning-optimisation1}, we have $\rho^*=1$ and $\sum_{j \in \mathcal{J}(i)} x^*_{ij} = 1$. Moreover, for any sequence of optimal solutions $\{ x^{r,*}\}$ of \eqref{static-planning-optimisation2}, we have
	\begin{equation*}
	x^{r,*} \to x^*,
	\end{equation*}
	as $r \to \infty$ for some optimal solution of \eqref{static-planning-optimisation1}.
	\label{heavytraffic}
\end{assumption}

In this work, we only consider control policies that will not cause the system to explode, i.e.\ the queue length does not grow to infinity. Thus we need the following assumption.

\begin{assumption}
	For a control policy $\pi$,
	\begin{equation}
	\frac{Z^r_{ij}(t)}{|N^r|} \to z \ \ \ \text{u.o.c.  a.s.}
	\end{equation}
	as $r \to \infty$ if $\ Z^r_{ij}(0)/|N^r| \to (0,z)$ a.s. as $r \to \infty$, where $z=(z_{ij}, i \in \mathcal{I}, j \in \mathcal{J})$, and $z_{ij} = \beta_i x^*_{ij}$ for an optimal solution $(\rho^*,x^*)$ of the static planning problem \eqref{static-planning-optimisation1}. \label{steadycondition}
\end{assumption}

We do not include the constraints on $Q^r(\cdot)$ because, unlike in \cite{Dai}, we do not have abandonments in our system. 

\subsection{Fluid limit and verification of the control policy assumption}
Under a control policy, when Assumption \ref{steadycondition} is satisfied, the fluid limits exist and do not explode, even though they are critically loaded. We assume that
\begin{equation}
Z^r(0)/|N^r| \to z \text{  a.s.} \label{diffusioninitialcondition}
\end{equation}
as $r \to \infty$, where $z$ is given as in Assumption \ref{steadycondition}. Under Assumption \ref{steadycondition}, condition \eqref{diffusioninitialcondition} implies that 
\begin{equation*}
Z^r(\cdot)/|N^r| \to z \text{  u.o.c.  a.s.}
\end{equation*}
as $r \to \infty$, for $t \geq 0$.\par
To make Assumption \ref{steadycondition} easier to check, we use the fluid limit concept. The fluid scaling is defined as
\begin{equation}
\fluidscal{\mathbb{X}}^r(t) = \frac{\mathbb{X}^r(t)}{|N^r|}.
\end{equation}
The following definitions and notations are from \cite{Dai}, but we repeat them here for completeness. \par
Let $\mathscr{A} \in \Omega$ be such that $\{ \fluidscal{Q}^{\systemindex}(0) \}$ is bounded and the following Functional Strong Law of Large Numbers holds:
\begin{align*}
\frac{A^{\systemindex}_j(|N^{\systemindex}| \cdot)}{|N^{\systemindex}|} \to a_j(\cdot), \frac{\sum_{\sindex=1}^{N^{\systemindex}_i}S_{ij\sindex}(|N^{\systemindex}|\cdot)}{|N^{\systemindex}|} \to \alpha_{ij}(\cdot), \mbox{ u.o.c }  \numberthis \label{FSLLN}
\end{align*}
as $\systemindex \to \infty$, where $\alpha_{ij}(t) = \bar{\mu}_{ij}t, a_j(t)=t$. Note that we can take $\mathbb{P}(\mathscr{A})=1$.\par
We call $\fluidscal{\mathbb{X}}^r(t)$ the fluid scaled process. $\fluidscal{\mathbb{X}}$ is called a fluid limit of $\{\mathbb{X}^r\}$ if there exists an $\omega \in \mathscr{A}$ and a sequence $\{r_l\}$ with $r_l \to \infty$ as $l \to \infty$, such that $\fluidscal{\mathbb{X}}^{r_l}(\cdot, \omega)$ converges u.o.c. to $\fluidscal{\mathbb{X}}$ as $l \to \infty$, where $\mathscr{A}$ is taken from Theorem B.1 in \cite{Dai}. The following theorem is analogous to Theorem B.1 in \cite{Dai}, with the main difference being in $\fluidscal{T}$ and $\fluidscal{I}$. These two processes are considered individually rather than aggregately due to the servers' heterogeneity.
\begin{theorem}
	Let $\{\mathbb{X}^r_{\pi}\}$ be a sequence of $\pi$-parallel random server systems processes. Assume that \eqref{No.servers} and \eqref{arriallimits} hold and $\{\fluidscal{Q}^r(0)\}$ is bounded a.s. as $r \to \infty$. Then $\{\fluidscal{\mathbb{X}}^r_{\pi}\}$ is a.s.  precompact(i.e.\, every sequence has a convergent subsequence) in the Skorohod space $\mathbb{D}^d[0, \infty)$ endowed with the u.o.c. topology. Thus, the fluid limits exist, and each fluid limit, $\fluidscal{\mathbb{X}}_{\pi}$, of $\{\fluidscal{\mathbb{X}}^r_{\pi}\}$ satisfies the following equations for all $t \geq 0$:
	\begin{equation}
	\lambda_j t = \fluidscal{A}_{qj}(t) + \sum_{i \in \mathcal{I}(j)}\fluidscal{A}_{sij}(t), \mbox{ for all } j \in \mathcal{J}, \label{fluidequ1}
	\end{equation}
	\begin{equation}
	\fluidscal{Q}_j(t) = \fluidscal{Q}_j(0) + \fluidscal{A}_{qj}(t) - \sum_{i \in \mathcal{I}(j)}\fluidscal{C}_{ij}(t), \mbox{ for all } j \in \mathcal{J}, \label{fluidequ2}
	\end{equation}
	\begin{equation}
	\fluidscal{Z}_{ij}(t) = \fluidscal{Z}_{ij}(0) + \fluidscal{A}_{sij}(t) + \fluidscal{C}_{ij}(t) - \bar{\mu}_{ij} \fluidscal{T}_{ij}(t), \mbox{ for all } i \in \mathcal{I}, \mbox{and } j \in \mathcal{I}(i),\label{fluidequ3}
	\end{equation}
	\begin{equation}
	\fluidscal{T}_{ij}(t) = \int_0^t \fluidscal{Z}_{ij}(s) ds, \mbox{ for all } i \in \mathcal{I}, \mbox{and } j \in \mathcal{I}(i),\label{fluidequ4}
	\end{equation}
	\begin{equation}
	\fluidscal{I}_{i}(t) = \beta_i t - \sum_{j \in \mathcal{J}(i)}\fluidscal{T}_{ij}(t),  \mbox{ for all } i \in \mathcal{I}, \label{fluidequ6}
	\end{equation}
	\begin{equation}
	\fluidscal{Q}_j(t) \left( \beta_i - \sum_{j^{\prime} \in \mathcal{J}(i)}\fluidscal{Z}_{ij^{\prime}}(t) \right) = 0, \mbox{ for all } j \in \mathcal{J}, \label{fluidequ7}
	\end{equation}
	\begin{equation}
	\int_0^t \sum_{j \in \mathcal{J}(i)}\fluidscal{Q}_{j}(s) d\fluidscal{I}_i(s) = 0, \mbox{ for all } i \in \mathcal{I}, \label{fluidequ8}
	\end{equation}
	\begin{equation}
	\int_0^t \sum_{i \in \mathcal{I}(j)}\left( \beta_i - \sum_{j^{\prime} \in \mathcal{J}(i)}\fluidscal{Z}_{ij^{\prime}}(s) \right) d\fluidscal{A}_{qj}(s) = 0, \mbox{ for all } j \in \mathcal{J}, \label{fluidequ9}
	\end{equation}
	\begin{equation}
	\fluidscal{A}, \fluidscal{A}_q, \fluidscal{A}_s, \fluidscal{T}, \text{ and } \fluidscal{C} \text{ are nondecreasing}, \label{fluidequ10}
	\end{equation}
	\begin{equation}
	\fluidscal{Q}(t) \geq 0,\ \fluidscal{Z}_{ij}(t) \geq 0, \mbox{ and } \sum_{j \in \mathcal{J}(i)}\fluidscal{Z}_{ij}(t) \leq 1 \mbox{ for all } i \in \mathcal{I}, \mbox{and } j \in \mathcal{J}(i).\label{fluidequ11}
	\end{equation}
	\label{fluidlimit}
\end{theorem}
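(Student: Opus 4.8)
The plan is to follow the pathwise fluid-limit programme of \cite{Dai} and \cite{Bramson}, working throughout on the probability-one set $\mathscr{A}$ on which the Functional Strong Law of Large Numbers \eqref{FSLLN} holds, and to treat the per-server busy-time and idleness processes explicitly so as to accommodate the heterogeneous rates. First I would establish precompactness of $\{\fluidscal{\mathbb{X}}^r_{\pi}\}$ via the Arzel\`a--Ascoli theorem. The scaled cumulative busy-time process $\fluidscal{T}^r_{ij}(t) = \frac{1}{|N^r|}\sum_{\sindex=1}^{N^r_i}\int_0^t B^r_{ij\sindex}(s)\,ds$ is Lipschitz with constant $N^r_i/|N^r|$, which by \eqref{server-pool-percertage} is uniformly bounded; the scaled counting processes $\fluidscal{A}^r_j,\fluidscal{A}^r_{qj},\fluidscal{A}^r_{sij},\fluidscal{C}^r_{ij}$ and $\frac{1}{|N^r|}\sum_{\sindex}D^r_{ij\sindex}$ have increments dominated by the scaled arrival and service counts, whose oscillation is controlled uniformly on $\mathscr{A}$ by the FSLLN. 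Together with the bounds $\fluidscal{Z}^r_{ij}\le 1$ from \eqref{originprocess4} and the assumed boundedness of $\{\fluidscal{Q}^r(0)\}$, this gives asymptotic equicontinuity and uniform boundedness on compact sets, hence precompactness in $\mathbb{D}^d[0,\infty)$ under the u.o.c.\ topology, with every subsequential limit Lipschitz-continuous.

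Next I would fix a convergent subsequence $\fluidscal{\mathbb{X}}^{r_l}\to\fluidscal{\mathbb{X}}$ and pass to the limit in the scaled versions of the dynamic equations \eqref{originprocess1}--\eqref{originprocess6}. The additive balance relations \eqref{fluidequ1} and \eqref{fluidequ2} and the idleness identity \eqref{fluidequ6} follow by continuity of addition together with $\fluidscal{A}^r_j(t)\to\lambda_j t$ (from \eqref{arriallimits} and the renewal FSLLN for arrivals), while \eqref{fluidequ4} follows from $T^r_{ij}(t)=\int_0^t Z^r_{ij}(s)\,ds$ and the u.o.c.\ convergence of the bounded integrands $\fluidscal{Z}^r_{ij}$. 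The sign and monotonicity constraints \eqref{fluidequ10} and \eqref{fluidequ11} are inherited from their prelimit counterparts, and the pointwise complementarity \eqref{fluidequ7} is the limit of \eqref{originprocess5}, using that the limit of a product of u.o.c.-convergent sequences is the product of the limits. For the integral complementarity relations \eqref{fluidequ8} and \eqref{fluidequ9} I would invoke the standard Riemann--Stieltjes continuity argument of \cite{Dai}: the integrands converge u.o.c., the integrators are nondecreasing and converge u.o.c.\ to continuous limits, and the prelimit integrals vanish, so the limiting integrals vanish as well.

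The main obstacle is the departure equation \eqref{fluidequ3}, i.e.\ showing
\begin{equation*}
\frac{1}{|N^r|}\sum_{\sindex=1}^{N^r_i} D^r_{ij\sindex}(t) = \frac{1}{|N^r|}\sum_{\sindex=1}^{N^r_i} S_{ij\sindex}\!\left(\mu_{ij\sindex}T^r_{ij\sindex}(t)\right) \to \bar{\mu}_{ij}\,\fluidscal{T}_{ij}(t).
\end{equation*}
I would split each summand into the centred Poisson term $S_{ij\sindex}(\mu_{ij\sindex}T^r_{ij\sindex}(t))-\mu_{ij\sindex}T^r_{ij\sindex}(t)$, whose scaled aggregate vanishes u.o.c.\ on $\mathscr{A}$ by the FSLLN \eqref{FSLLN} and a random-time-change argument (the arguments are bounded by $q_{ij}t$ since $T^r_{ij\sindex}(t)\le t$), and the mean term $\frac{1}{|N^r|}\sum_{\sindex}\mu_{ij\sindex}T^r_{ij\sindex}(t)$. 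The difficulty, absent in \cite{Dai}, is that the i.i.d.\ rates $\mu_{ij\sindex}$ and the busy times $T^r_{ij\sindex}$ need not be independent, since the policy $\pi$ may preferentially route work to faster servers, so one cannot simply factor out $\bar{\mu}_{ij}=\mathbb{E}\mu_{ij\sindex}$. To overcome this I would borrow the partitioning device of Atar used in Chapter 2: partition the servers of pool $i$ serving class $j$ into finitely many sub-groups whose rates lie in intervals of width at most $\epsilon$, so that within each sub-group the partial mean term is squeezed between $(\mathrm{rate}\mp\epsilon)$ times the sub-group busy time; applying the law of large numbers to the i.i.d.\ rates within each sub-group and letting $\epsilon\downarrow 0$ recovers the exact coefficient $\bar{\mu}_{ij}$ and yields $\bar{\mu}_{ij}\fluidscal{T}_{ij}(t)$. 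Collecting these limits in the scaled form of \eqref{originprocess3} gives \eqref{fluidequ3} and completes the characterization of the fluid limit.
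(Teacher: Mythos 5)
Your overall architecture is the same as the paper's: precompactness of the Lipschitz components ($\fluidscal{T}^r$, hence $\fluidscal{I}^r$) via an Arzel\`a--Ascoli argument, delegation of the remaining components and of \eqref{fluidequ1}, \eqref{fluidequ2}, \eqref{fluidequ7}--\eqref{fluidequ9} to Dai's Theorem B.1, and concentration of effort on the departure term in \eqref{fluidequ3}. Where you part ways with the paper is in that last step. The paper represents the pool's total departures, via superposition/random time change, as a single standard Poisson process evaluated at $|N^r|\sum_{\sindex}\mu_{ij\sindex}\fluidscal{T}^r_{ij\sindex}(t)$, kills the centred term by the FSLLN, and then passes $\sum_{\sindex}\mu_{ij\sindex}\fluidscal{T}^r_{ij\sindex}(t)\to\bar{\mu}_{ij}\fluidscal{T}_{ij}(t)$ by bounding $\frac{1}{|N^r|}\left\vert\sum_{\sindex}(\mu_{ij\sindex}-\bar{\mu}_{ij})T^r_{ij\sindex}(t)\right\vert$ by $\frac{1}{|N^r|}\left\vert\sum_{\sindex}(\mu_{ij\sindex}-\bar{\mu}_{ij})t\right\vert$ and applying the LLN. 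You instead flag the possible dependence between the rates $\mu_{ij\sindex}$ and the busy times $T^r_{ij\sindex}$ and propose Atar's $\epsilon$-partitioning into near-constant-rate groups to neutralise it.

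The problem is that the partitioning does not neutralise it. Within a group $G$ with rates in $[a_G,a_G+\epsilon]$ you do get $\frac{1}{|N^r|}\sum_{\sindex\in G}\mu_{ij\sindex}T^r_{ij\sindex}(t)=a_G\fluidscal{T}^r_G(t)+O(\epsilon t)$, where $\fluidscal{T}^r_G$ is the scaled group busy time; summing over groups and passing to a further subsequence leaves you with $\sum_G a_G\fluidscal{T}_G(t)$, where $\sum_G\fluidscal{T}_G=\fluidscal{T}_{ij}$. To conclude that this equals $\bar{\mu}_{ij}\fluidscal{T}_{ij}(t)$ you must know that each $\fluidscal{T}_G$ is proportional to the group's share of the server population --- and that is exactly the rate/busy-time dependence you set out to handle: the within-group LLN controls the empirical distribution of the rates, not the allocation of work across groups, which is the object the policy can skew. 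The claim is genuinely false without further input: for a single pool under FSF with subcritical load, only the fast servers work in the fluid limit, and one checks directly that $\fluidscal{D}(t)\neq\bar{\mu}\,\fluidscal{T}(t)$. So \eqref{fluidequ3} with coefficient $\bar{\mu}_{ij}$ cannot follow from a policy-free argument; one needs either rate-blindness of $\pi$ (e.g.\ LISF-type routing, under which idleness, and hence busy time, spreads evenly across rate classes) or the non-idling plus critically-loaded structure of Assumptions \ref{heavytraffic} and \ref{steadycondition}, under which per-server idleness vanishes at fluid scale and $\fluidscal{T}_G$ is forced to be the proportional share. To be fair, the paper's own inequality --- replacing each $T^r_{ij\sindex}(t)$ by $t$ inside the signed sum --- silently makes the same assumption, since that replacement is legitimate only when busy times are asymptotically equal across servers; you diagnosed the weak point correctly, but your repair leaves the same hole open.
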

The proof of this theorem is in Appendix \ref{Appendix-fluidlimit}.

The vector $(q,z)$ is called a steady state of the fluid limits if for any fluid limit $\fluidscal{\mathbb{X}}$, $\fluidscal{Q}(0) = q$ and $\fluidscal{Z}(0) = z$ implies $\fluidscal{Q}(t) = q$ and $\fluidscal{Z}(t) = z$ for all $t \geq 0$.\par
We denote the set of all of the steady states of the fluid limits of $\{\mathbb{X}^r\}$ by $\mathscr{M}$. The following result is analogous to Lemma B.1 in \cite{Dai}, and it is an equivalent condition to Assumption \ref{steadycondition}.
\begin{lemma}
	Let $\{\mathbb{X}^r_{\pi}\}$ be a sequence of $\pi$-parallel random server systems processes that satisfies conditions of Theorem \ref{fluidlimit} and Assumption \ref{heavytraffic}. A control policy $\pi$ satisfies Assumption \ref{steadycondition} if $(0,z) \in \mathscr{M}$, where $z_{ij}=\beta_i x^*_{ij}$.
\end{lemma}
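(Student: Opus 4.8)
The plan is to deduce Assumption \ref{steadycondition} from the membership $(0,z)\in\mathscr{M}$ by the standard fluid-limit argument: work pathwise on a full-measure event, use precompactness from Theorem \ref{fluidlimit} to extract subsequential fluid limits, pin down their initial data from the hypothesis $\bar{Z}^r(0)\to z$, invoke the steady-state property to force each such limit to be constant, and finally promote ``unique subsequential limit'' to ``convergence of the whole sequence''.

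First I would fix $\omega$ in the intersection of the full-measure set $\mathscr{A}$ on which the FSLLN \eqref{FSLLN} holds with the probability-one event on which $\bar{Q}^r(0)\to 0$ and $\bar{Z}^r(0)\to z$; this intersection still has probability one, and since $\bar{Q}^r(0)\to 0$ the family $\{\bar{Q}^r(0)\}$ is a.s.\ bounded, so the hypotheses of Theorem \ref{fluidlimit} hold. For such $\omega$, Theorem \ref{fluidlimit} guarantees that $\{\bar{\mathbb{X}}^r_{\pi}(\cdot,\omega)\}$ is precompact in $\mathbb{D}^d[0,\infty)$ under the u.o.c.\ topology, so every subsequence admits a further subsequence converging u.o.c.\ to some fluid limit $\bar{\mathbb{X}}$, which necessarily satisfies the fluid equations \eqref{fluidequ1}--\eqref{fluidequ11}.

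Next I would identify the initial data of any such subsequential limit. Because u.o.c.\ convergence forces convergence at $t=0$, and because we have assumed $\bar{Q}^r(0)\to 0$ and $\bar{Z}^r(0)\to z$, every subsequential fluid limit satisfies $\bar{Q}(0)=0$ and $\bar{Z}(0)=z$. Since $(0,z)\in\mathscr{M}$, the definition of a steady state of the fluid limits applies verbatim: $\bar{Q}(0)=0$ and $\bar{Z}(0)=z$ imply $\bar{Q}(t)=0$ and $\bar{Z}(t)=z$ for all $t\ge 0$. Hence every u.o.c.\ subsequential limit of $\{\bar{Z}^r(\cdot,\omega)\}$ is the single constant path $t\mapsto z$.

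The final step is the routine topological fact that a precompact sequence all of whose subsequential limits coincide with a common element converges to that element; applying it here yields $\bar{Z}^r(\cdot,\omega)\to z$ u.o.c. As $\omega$ ranges over a probability-one set, this is exactly the conclusion $Z^r_{ij}(t)/|N^r|\to z$ u.o.c.\ a.s.\ of Assumption \ref{steadycondition}. The argument is essentially the transcription of Lemma B.1 of \cite{Dai} to the present setting, the only structural change being that the identical service rate there is replaced by the expected rate $\bar{\mu}_{ij}$ appearing in the fluid equation \eqref{fluidequ3}, which is already accounted for in Theorem \ref{fluidlimit}. The only point requiring genuine care, rather than being mechanical, is verifying that the extraction of subsequential limits and the promotion to full convergence are legitimate in the u.o.c.\ topology and that the various null sets combine into a single exceptional set; I expect this to be the main, though minor, obstacle, since the substantive work has already been carried out in establishing Theorem \ref{fluidlimit}.
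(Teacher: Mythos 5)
Your proposal is correct and is precisely the standard argument that the paper invokes by reference: the paper omits the proof, stating it is the same as Lemma B.1 of \cite{Dai}, and that proof is exactly your chain of precompactness (Theorem \ref{fluidlimit}), identification of initial data of subsequential limits, the steady-state property of $(0,z)\in\mathscr{M}$, and the uniqueness-of-subsequential-limits argument in the metrizable u.o.c.\ topology. No gaps; the only care needed, as you note, is intersecting the null sets, which is immediate.
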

The proof is the same as Lemma B.1 in \cite{Dai} and is trivial so we omit it here.

In general, diffusion limits are introduced as refinements of the fluid limits. Under condition \eqref{diffusioninitialcondition} and Assumption \ref{steadycondition}, we define the diffusive scaling as follows:

\begin{equation}
\diffuscal{Q}^r(t) = \frac{Q^r(t)}{\sqrt{|N^r|}} \ \ \text{  and  }\ \  \diffuscal{B}_{ij \sindex}^r(t) = \frac{B_{ij \sindex}^r(t)}{\sqrt{|N^r|}}, \ \ \text{ for }\ \  t \geq 0. \label{diffusionscaling1}
\end{equation}
and denote
\begin{equation}
\diffuscal{Z}^r_{ij}(t)  = \sum_{\sindex=1}^{N^r_i} \left( \diffuscal{B}_{ij \sindex}^r(t) - \frac{x^*_{ij}}{\sqrt{N^r}} \right) = \frac{Z^r_{ij}(t) - x^*_{ij}N^r_i}{\sqrt{|N^r|}}. \label{diffusionscaling2}
\end{equation}

Now we introduce the hydrodynamic model equations, and we borrow the SSC function from \cite{Dai}. This function is the key point in our main theorem.
\subsection{Hydrodynamic model equations}
Consider the process $\tilde{\mathbb{X}}_{\pi} = (\tilde{A}, \tilde{A}_q, \tilde{A}_s, \tilde{Q}, \tilde{B}, \tilde{Z}, \tilde{C})$ and the following set of equations:
\begin{equation}
\lambda_j t = \tilde{A}_{qj}(t) + \sum_{i \in \mathcal{I}(j)}\tilde{A}_{sij}(t), \mbox{ for all } j \in \mathcal{J}, \label{HDfirst}
\end{equation}
\begin{equation}
\tilde{Q}_j(t) = \tilde{Q}_j(0) + \tilde{A}_{qj}(t) - \sum_{i \in \mathcal{I}(j)}\tilde{C}_{ij}(t), \mbox{ for all } j \in \mathcal{J}, \label{HDmodel2}
\end{equation}
\begin{equation}
\tilde{A}_{qj}, \tilde{A}_{sij}, \tilde{C}_{ij} \text{ are nondecreasing for all } i \in \mathcal{I}, j \in \mathcal{J},\label{HDmodel3}
\end{equation}
\begin{equation}
\tilde{Z}_{ij}(t) = \tilde{Z}_{ij}(0) + \tilde{A}_{sij}(t) + \tilde{C}_{ij}(t) - \bar{\mu}_{ij}\tilde{T}_{ij}(t), \mbox{ for all } i \in \mathcal{I}, \mbox{ and } j \in \mathcal{J}(i), \label{HDmodel4}
\end{equation}
\begin{equation}
\tilde{T}_{ij}(t) = \int_0^t z_{ij} ds = z_{ij}t, \mbox{ for all } i \in \mathcal{I}, \mbox{ and } j \in \mathcal{J}(i), \label{HDmodel8}
\end{equation}
\begin{equation}
\tilde{Q}_j(t) \geq 0, \mbox{ for all } j \in \mathcal{J}, \text{ and } \sum_{j \in \mathcal{J}(i)} \tilde{Z}_{ij}(t) \leq 0, \mbox{ for all } i \in \mathcal{I}, \label{HDmodel5}
\end{equation}
\begin{equation}
\tilde{Q}_j(t) \left( \sum_{i \in \mathcal{I}(j)} \sum_{j^{\prime} \in \mathcal{J}(i)}\tilde{Z}_{ij^{\prime}}(t) \right) = 0, \mbox{ for all } j \in \mathcal{J}, \label{HDmodel6}
\end{equation}
\begin{equation}
\int_0^t \left( \sum_{i \in \mathcal{I}(j)} \sum_{j^{\prime} \in \mathcal{J}(i)} \tilde{Z}_{ij^{\prime}}(s)\right) d\tilde{A}_{qj}(s) = 0, \mbox{ for all } j \in \mathcal{J}, \label{HDmodel7}
\end{equation}
\begin{equation}
\text{Additional equations associated with the control policy } \pi, \label{HDlast}
\end{equation}
where $\lambda_j$ is defined as in \eqref{arriallimits}. Equations \eqref{HDfirst}-\eqref{HDlast} are called the \textit{hydrodynamic model equations}, and they define the \textit{hydrodynamic model} of the system under policy $\pi$. Any process $\tilde{\mathbb{X}}_{\pi}$ satisfying \eqref{HDfirst}-\eqref{HDlast} for all $t \geq 0$ is called a hydrodynamic model solution.

Hydrodynamic model solutions are deterministic and absolutely continuous, hence almost everywhere differentiable. Absolute continuity follows from the following result.

\begin{proposition}
	Any process $\tilde{\mathbb{X}}_{\pi}$ satisfying \eqref{HDfirst}-\eqref{HDlast} for all $t \geq 0$ is Lipschitz continuous.
\end{proposition}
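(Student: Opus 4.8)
The plan is to establish Lipschitz continuity one family of coordinates at a time, first disposing of the processes that are Lipschitz for structural reasons and then reducing the whole statement to a single rate estimate. I would begin with the trivially linear pieces: by \eqref{HDmodel8}, $\tilde{T}_{ij}(t)=z_{ij}t$ is linear, and the arrival process $\tilde{A}_j(t)=\lambda_j t$ is linear, so both are Lipschitz. Next I would invoke the elementary fact that if finitely many nondecreasing functions sum to a Lipschitz function then each summand is Lipschitz, since for $s\le t$ one has $0\le f_k(t)-f_k(s)\le\sum_m(f_m(t)-f_m(s))\le K(t-s)$. Applying this to \eqref{HDfirst} together with the monotonicity in \eqref{HDmodel3}, the identity $\tilde{A}_{qj}(t)+\sum_{i\in\mathcal{I}(j)}\tilde{A}_{sij}(t)=\lambda_j t$ forces each of $\tilde{A}_{qj}$ and $\tilde{A}_{sij}$ to be Lipschitz with constant at most $\lambda_j$ (and continuous).

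The second step is a reduction. Equations \eqref{HDmodel4} and \eqref{HDmodel2} express $\tilde{Z}_{ij}$ and $\tilde{Q}_j$ as algebraic combinations of the already-Lipschitz processes $\tilde{A}_{sij},\tilde{T}_{ij},\tilde{A}_{qj}$ and of the join-from-queue processes $\tilde{C}_{ij}$. Hence, as soon as every $\tilde{C}_{ij}$ is shown to be Lipschitz, all remaining coordinates are Lipschitz, and a common constant is obtained as the maximum over the finitely many pools, classes, and derived constants. The entire proposition therefore reduces to a uniform increment bound $\tilde{C}_{ij}(t)-\tilde{C}_{ij}(s)\le K(t-s)$.

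For this bound I would work at the pool level. Summing \eqref{HDmodel4} over $j\in\mathcal{J}(i)$ and writing $H_i(t)=\sum_{j\in\mathcal{J}(i)}\tilde{Z}_{ij}(t)$ yields $\sum_j\tilde{A}_{sij}(t)+\sum_j\tilde{C}_{ij}(t)=H_i(t)-H_i(0)+\sum_j\bar{\mu}_{ij}z_{ij}t$. The left-hand side is nondecreasing, so the capacity constraint \eqref{HDmodel5}, $H_i(t)\le 0$, keeps it below a linear function while non-negativity of the summands bounds $H_i$ below by a linear function; thus each $\tilde{Z}_{ij}$ is sandwiched between two linear functions. The decisive point, however, is not boundedness of the levels (a Cantor-type staircase is bounded and nondecreasing yet not Lipschitz) but control of the increments. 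The physical content is work conservation: once a pool is saturated, a queued customer can enter service only at the instant a busy server completes, so the queue-to-service initiations in pool $i$ are paced by the completion process, whose rate $\sum_j\bar{\mu}_{ij}z_{ij}$ is constant. I would extract this pacing from the non-idling and complementarity relations \eqref{HDmodel6}--\eqref{HDmodel7} together with the policy equations \eqref{HDlast}, proving that for all $s\le t$ the increment $\sum_j(\tilde{C}_{ij}(t)-\tilde{C}_{ij}(s))$ does not exceed the completions plus the idle capacity released over $[s,t]$, both of which grow at most linearly. Feeding this back through $\Psi_i:=\sum_j(\tilde{A}_{sij}+\tilde{C}_{ij})$ shows $\Psi_i$, hence $H_i$ and, by the summand lemma applied once more, each individual $\tilde{C}_{ij}$, is Lipschitz.

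The main obstacle is precisely this last rate estimate for $\tilde{C}_{ij}$: converting the discrete work-conserving statement that a queued customer starts service only at a completion epoch once all eligible servers are busy into an increment inequality for the continuous hydrodynamic solution. Monotonicity and boundedness alone are insufficient, so the argument must genuinely use the complementarity conditions and the structure of the admissible policy $\pi$ encoded in \eqref{HDlast}. This is also where the heterogeneity of the service rates demands care, since the completion process is assembled from the individual server rates $\mu_{ij\sindex}$ and must be controlled through its Lipschitz fluid aggregate $\bar{\mu}_{ij}z_{ij}$ rather than through a single common rate as in \cite{Dai}.
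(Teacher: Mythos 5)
Your opening moves are correct, and they are the standard ones: $\tilde{T}_{ij}$ is linear by \eqref{HDmodel8}; since $\tilde{A}_{qj}$ and the $\tilde{A}_{sij}$ are nondecreasing by \eqref{HDmodel3} and sum to $\lambda_j t$ by \eqref{HDfirst}, each is Lipschitz with constant at most $\lambda_j$; and \eqref{HDmodel2}, \eqref{HDmodel4} reduce the whole proposition to a Lipschitz bound for each $\tilde{C}_{ij}$. (For the record, the thesis states this proposition without any proof --- it is carried over from the framework of \cite{Dai} --- so the comparison here is with the argument that framework supplies.) The genuine gap is that the decisive estimate is never derived: you name it as the main obstacle and promise to ``extract'' it from \eqref{HDmodel6}--\eqref{HDmodel7} and \eqref{HDlast}, but the only concrete inequality you assert --- that $\sum_{j}(\tilde{C}_{ij}(t)-\tilde{C}_{ij}(s))$ is at most ``the completions plus the idle capacity released over $[s,t]$, both of which grow at most linearly'' --- does not close the argument. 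Summing \eqref{HDmodel4} over $j \in \mathcal{J}(i)$ and using \eqref{HDmodel5} gives precisely $\sum_{j}\bigl(\tilde{C}_{ij}(t)-\tilde{C}_{ij}(s)\bigr) \le -H_i(s) + K_i(t-s)$, where $H_i = \sum_j \tilde{Z}_{ij} \le 0$ and $K_i = \sum_j\bar{\mu}_{ij}z_{ij}$; but the idle-capacity term $-H_i(s)$ is a \emph{level}, not an increment --- it does not vanish as $t-s\to 0$ --- so this is not a Lipschitz bound. Ruling out that queued customers are instantaneously absorbed by the pre-existing idle stock is exactly where \eqref{HDmodel6} must enter, and when it does, the bound it yields in that regime is by arrivals, not by completions. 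Note also that \eqref{HDlast} is an unspecified placeholder, so a proof valid for every admissible $\pi$ cannot lean on it; fortunately none is needed.

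Here is the missing estimate; observe that it is organised per customer class (which is what \eqref{HDmodel6} constrains), not per pool. Fix $j$ and $s<t$, write $\Delta f = f(t)-f(s)$, and set $W_j = \sum_{i\in\mathcal{I}(j)}\sum_{j'\in\mathcal{J}(i)}\tilde{Z}_{ij'}$ and $K_j = \sum_{i\in\mathcal{I}(j)}\sum_{j'\in\mathcal{J}(i)}\bar{\mu}_{ij'}z_{ij'}$; by \eqref{HDmodel5}, $W_j \le 0$ everywhere. If $\tilde{Q}_j(s)>0$, then \eqref{HDmodel6} forces $W_j(s)=0$, and summing \eqref{HDmodel4} over $i\in\mathcal{I}(j)$, $j'\in\mathcal{J}(i)$, then using $W_j(t)\le 0$ and $\Delta\tilde{A}_{sij'}\ge 0$,
\[
\sum_{i\in\mathcal{I}(j)}\Delta\tilde{C}_{ij} \;\le\; \sum_{i\in\mathcal{I}(j)}\sum_{j'\in\mathcal{J}(i)}\Delta\tilde{C}_{ij'} \;\le\; W_j(t)-W_j(s)+K_j(t-s) \;\le\; K_j(t-s).
\]
If instead $\tilde{Q}_j(s)=0$, then \eqref{HDmodel2}, nonnegativity $\tilde{Q}_j(t)\ge 0$, and the Lipschitz bound on $\tilde{A}_{qj}$ give $\sum_{i\in\mathcal{I}(j)}\Delta\tilde{C}_{ij} = \Delta\tilde{A}_{qj}-\tilde{Q}_j(t) \le \lambda_j(t-s)$: with an empty queue, queue-to-service transitions are paced by arrivals, and this is the correct treatment of your ``idle capacity'' case. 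In both cases $0 \le \sum_{i\in\mathcal{I}(j)}\Delta\tilde{C}_{ij} \le \max(K_j,\lambda_j)(t-s)$ for every $s<t$; your own monotone-summand lemma then makes each $\tilde{C}_{ij}$ Lipschitz, and \eqref{HDmodel2}, \eqref{HDmodel4} finish the proof. So your reduction and your physical intuition (pacing by completions under saturation) are right, but the two-case argument at the left endpoint is the entire content of the proposition, and your plan stops exactly where it begins.
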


It will be proved in Proposition \ref{mainbridge} that the hydrodynamic model equations are satisfied by hydrodynamic limits under certain general assumptions; these limits are obtained from the hydrodynamically scaled sequences.

\subsection{SSC in the diffusion limits}
Similar to Section 4.2 in \cite{Dai}, we define the state space collapse function. Let $g: \mathbb{R}^{J+d_z} \to \mathbb{R}^+$, where $d_z= \sum_{j \in \mathcal{J}} |\mathcal{I}(j)|$, be a nonnegative function that satisfies the following \textit{homogeneity} condition:
\begin{equation}
g(\alpha x) =  \alpha^c g(x),\label{homogeneity}
\end{equation}
for some $c>0$, for all $x \in \mathbb{R}^{J+d_z}$, and for all $0 \leq \alpha \leq 1$. We call $g$ a SSC-function. Nonnegativity assumption is made for notational convenience, and one can always consider $|g|$ in order to have a nonnegative function if $g$ can take negative values. We make the following assumption about the SSC function.

\begin{assumption}
	The function $g: \mathbb{R}^{J+d_z} \to \mathbb{R}^+$ satisfies \eqref{homogeneity} and is continuous on $\mathbb{R}^{J+d_z}$. \label{continuity}
\end{assumption}

As the machinery to state an SSC result has been set, we are ready to state the conditions on the hydrodynamic model solutions that imply that an SSC result holds in the diffusion limit. The following assumption is analogous to \cite[Assumption 4.2]{Dai}.

\begin{assumption}
	Let $g$ be a function that satisfies Assumption \ref{continuity}. There exists a function $H(t)$ with $H(t) \to 0$ as $t \to \infty$ such that
	\begin{equation}
	g(\tilde{Q}(t),\tilde{Z}(t)) \leq H(t) \text{ for all } t \geq 0 \label{HDSSCconditionequ}
	\end{equation}
	for each hydrodynamic model solution $\tilde{\mathbb{X}}_{\pi}$ satisfying $|(\tilde{Q}(0), \tilde{Z}(0))| \leq 1$. Furthermore, for each hydrodynamic model solution $\tilde{\mathbb{X}}_{\pi}$ with $g(\tilde{Q}(0), \tilde{Z}(0))=0$ and $|(\tilde{Q}(0), \tilde{Z}(0))| \leq 1$, $g(\tilde{Q}(t),\tilde{Z}(t))=0$ for $t \geq 0$.
	\label{SSCcondition}
\end{assumption}

We are ready to state the main result of this chapter.

\begin{theorem}
	Let $\{\mathbb{X}^r_{\pi}\}$ be a sequence of $\pi$-parallel random server systems processes. Suppose that Assumption \ref{heavytraffic} and Assumption \ref{steadycondition} hold, $g$ satisfies Assumption \ref{continuity}, the hydrodynamic model of the system satisfies Assumption \ref{SSCcondition}, and 
	\begin{equation}
	g(\diffuscal{Q}^r(0), \diffuscal{Z}^r(0)) \to 0 \text{  in probability} \label{initialcondition}
	\end{equation}
	as $r \to \infty$. Then, for each $T>0$,
	\begin{equation}
	\frac{||g(\diffuscal{Q}^r(t), \diffuscal{Z}^r(t))||_T}{( ||\diffuscal{Z}^r(t)||_T \vee 1)^c} \to 0 \text{  in probability} 
	\label{multiplicativeSSC}
	\end{equation}
	as $r \to \infty$, where $c>0$ is given as in \eqref{homogeneity}.
\label{SSC1}
\end{theorem}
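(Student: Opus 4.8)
The plan is to follow Bramson's hydrodynamic-limit method, in the form adapted to many-server systems by \cite{Dai}. The guiding principle is that, viewed on time intervals that are long on the hydrodynamic time scale but short relative to the diffusion scale $\sqrt{|N^{\systemindex}|}$, the diffusively scaled process---once renormalised by its own magnitude $\|\diffuscal{Z}^{\systemindex}\|_T \vee 1$---evolves like a hydrodynamic model solution. Because hydrodynamic model solutions that begin on the zero set of $g$ remain there (Assumption \ref{SSCcondition}), and the renormalised process begins near that set by \eqref{initialcondition} together with the homogeneity \eqref{homogeneity} (which gives $g(\alpha x)=\alpha^c g(x)\le g(x)$ for $0\le\alpha\le 1$), the renormalised process must stay near the zero set over all of $[0,T]$, and this is exactly the multiplicative statement \eqref{multiplicativeSSC}.

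First I would set up the hydrodynamically scaled processes $\tilde{\mathbb{X}}^{\systemindex,m}$, obtained by shifting the system process to a grid of time points and rescaling space by the running magnitude and time correspondingly, and then establish the ``main bridge'' Proposition \ref{mainbridge}: almost surely, every u.o.c.\ subsequential limit of the hydrodynamically scaled sequence is a hydrodynamic model solution, i.e.\ satisfies \eqref{HDfirst}--\eqref{HDlast}. This step reduces to precompactness of the scaled family, which in turn reduces to showing that each coordinate of the hydrodynamically scaled process is asymptotically (``almost'') Lipschitz; for the arrival, allocation and queue coordinates this proceeds as in \cite{Dai}.

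The main obstacle, and the step that genuinely departs from \cite{Dai}, is the almost-Lipschitz bound for the departure coordinate. Because $D^{\systemindex}_{ij}(t)=\sum_{\sindex=1}^{N^{\systemindex}_i} S_{ij\sindex}(\mu_{ij\sindex}T^{\systemindex}_{ij\sindex}(t))$ has per-server random and distinct rates $\mu_{ij\sindex}$, the pooled departure process is no longer a single Poisson process run at a deterministic time change, so the direct functional law of large numbers estimate used by \cite{Dai} fails. I would instead bound the heterogeneous departure process between two homogeneous departure processes whose common rates are the endpoints $p_{ij}$ and $q_{ij}$ of the support, and transfer the oscillation control available for single-rate (time-changed Poisson) departures to the heterogeneous process through this coupling; this is the stochastic-ordering construction in the spirit of Lemma \ref{hom-het-stochastic-ordering}, and it is where the argument is new and where I expect the real work to lie.

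Finally, with Proposition \ref{mainbridge} in hand, I would close the proof by a covering argument over $[0,T]$ combined with Assumption \ref{SSCcondition}. Arguing by contradiction, suppose \eqref{multiplicativeSSC} fails along some subsequence with probability bounded below; renormalise by $x^{\systemindex}=\|\diffuscal{Z}^{\systemindex}\|_T\vee 1$ so that the state has norm at most one. Partition $[0,T]$ into finitely many windows of fixed hydrodynamic length $L$. On the first window the hydrodynamic limit has $g$ vanishing at the left endpoint (by \eqref{initialcondition} and homogeneity), so the second part of Assumption \ref{SSCcondition} forces $g\equiv 0$ there; the first part, $g(\tilde{Q}(t),\tilde{Z}(t))\le H(t)$ with $H(t)\to 0$, then guarantees that at the end of the window $g$ is at most $H(L)$, which is small for large $L$. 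Iterating window by window---using the small terminal value of one window as the (still norm-bounded) initial value of the next---propagates smallness of $g$ across all of $[0,T]$, so that after transferring back to the diffusion scale $\|g(\diffuscal{Q}^{\systemindex},\diffuscal{Z}^{\systemindex})\|_T/(x^{\systemindex})^c$ can be made smaller than any prescribed threshold, contradicting the assumed failure and establishing \eqref{multiplicativeSSC}.
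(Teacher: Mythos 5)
Your skeleton up through Proposition \ref{mainbridge} matches the paper: hydrodynamic scaling on a $1/\sqrt{|N^r|}$ grid, almost-Lipschitz estimates, and, crucially, handling the heterogeneous departure processes by a coupling with a homogeneous Poisson process run at the maximal rate $q_{ij}$ per server. That is indeed the paper's genuinely new ingredient (the thinning construction of $\breve{D}^r_{ij}$ used to prove \eqref{prop1equ2}). One caveat there: marginal stochastic ordering of the processes, which is what a comparison ``between two homogeneous departure processes with rates $p_{ij}$ and $q_{ij}$'' literally gives, does not control increments $D(t_2)-D(t_1)$; what is needed, and what the paper constructs, is a pathwise coupling in which every jump epoch of the heterogeneous departure process is a jump epoch of the dominating rate-$N^r_i q_{ij}$ Poisson process, so oscillations are dominated sample path by sample path. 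The lower ($p_{ij}$) comparison is superfluous, since departure increments are automatically nonnegative.

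The genuine gap is in your closing covering-and-iteration argument. First, you cannot cover the diffusion-time interval $[0,T]$ with finitely many windows of fixed hydrodynamic length $L$: one such window occupies diffusion time of order $y_{r,m}L/\sqrt{|N^r|}$, so the number of windows grows like $\sqrt{|N^r|}$, and a scheme that feeds the terminal value of one window into the next must be uniform over a number of steps diverging with $r$. Second, and more fundamentally, the propagation step itself is not supported by Assumption \ref{SSCcondition}. After the first window, $g$ at the start of the next window is only \emph{small}, not zero; the second part of the assumption applies only to hydrodynamic model solutions with $g$ \emph{exactly} zero at time $0$, and the first part gives the bound $H(t)$, which is useless for small elapsed time within a window since $H(0)$ may be large. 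So smallness of $g$ does not carry through the early portion of each subsequent window, and the induction stalls. The paper's argument (following Bramson and \cite{Dai}) is deliberately non-iterative: the windows indexed by $m$ overlap heavily, and for each diffusion time $t$ beyond an initial boundary layer one selects the \emph{smallest} admissible index $m_r(t)$ in \eqref{diffusiontrange}; Lemmas \ref{diffusionlemma2} and \ref{diffusionlemma3}, with $L \geq 6Ns^{\ast}(\epsilon)$ as in \eqref{diffusion3}, guarantee that the elapsed hydrodynamic time $(\sqrt{|N^r|}t - m_r(t))/y_{r,m_r(t)}$ exceeds $s^{\ast}(\epsilon)$, so the \emph{first} part of Assumption \ref{SSCcondition} alone---which holds for every hydrodynamic model solution with $|(\tilde{Q}(0),\tilde{Z}(0))| \leq 1$, with no smallness of $g$ at the window's start required---yields $g(\diffuscal{Q}^r(t),\diffuscal{Z}^r(t)) \leq 2\epsilon\, y_{r,m_r(t)}^c$ uniformly over such $t$ (Propositions \ref{HDSSCfunction} and \ref{diffusionlemma1}). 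The initial condition \eqref{initialcondition} and the exact-zero invariance in the second part of the assumption are used only on the initial layer $t \in [0, L y_{r,0}/\sqrt{|N^r|}]$, via \eqref{diffusion2}. Replacing your window-by-window induction with this ``oldest overlapping window'' selection is what closes the proof of \eqref{multiplicativeSSC}.
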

\viva{Theorem \ref{SSC1} tells us that, with a well-defined SSC function $g$ for parallel random server systems under a specific control policy, one can show that function $g$ evaluated at $\diffuscal{Q}^{\systemindex}$ and $\diffuscal{Z}^{\systemindex}$ over a compact set converges to zero in some way. This implies $\diffuscal{Q}^{\systemindex}$ and $\diffuscal{Z}^{\systemindex}$ can be represented by a lower dimensional process using the relation between $\diffuscal{Q}^{\systemindex}$ and $\diffuscal{Z}^{\systemindex}$ in function $g$, which means the process states collapse from a higher dimension to to lower dimension. This SSC result can be then applied to analysing diffusion limits of such systems, which is much simpler now since the dimensions of diffusions are reduced.}
\label{Section-SSC-in-diffusion-limit}

\section{SSC framework}
\viva{In this section we explain in detail how the SSC result is obtained. Since we use the framework developed by \cite{Dai}, some of the steps remain the same as in that paper. We will explain the process with an emphasis in our contributions.}\par
\viva{The steps are as follows:
\begin{enumerate}
\item
Define the \textit{hydrodynamic scaling} $\mathbb{X}^{r,m}(\cdot)$ for the original system process $\mathbb{X}^r(\cdot)$.
\item
Show that the SSC function $g(\cdot)$ evaluated at the hydrodynamically scaled process $\mathbb{X}^{r,m}(t)$ is bounded by some function $H(t)$ plus an arbitrary $\epsilon$. The function $H(t)$ has the property that $H(t) \to 0$ as $t \to \infty$. This step works as a bridge to prove the final SSC result in step 3.
\item
Using the mathematical relation between the hydrodynamically scaled process $\mathbb{X}^{r,m}(\cdot)$ and diffusively scaled process $\diffuscal{\mathbb{X}}$, together with the boundness result in step 2, we can show that the SSC function $g(\cdot)$ evaluated at the diffusively scaled process converges to zero in probability, i.e.\ \eqref{multiplicativeSSC} is true.
\end{enumerate}
}
\viva{In Step 1, the hydrodynamic scaling is slightly different from \cite{Dai}. They consider servers in each pool aggregately, thus they define the scaling for the process of the number of busy servers in each pool. We need to treat each server individually since their service rates are random, therefore, apart from the process scaled above, we also need to define hydrodynamic scaling for the process $B_{ijk}(t)$ of each server.}\par
\viva{Step 3 is the same as in \cite{Dai}.}\par
\viva{Our main contributions are in Step 2. To get a better idea of this, we need to look at Step 2 in more detail. Step 2 can be further decomposed into 5 smaller steps:
\begin{enumerate}[label=\roman*)]
\item
Show that hydrodynamically scaled process $\mathbb{X}^{r,m}(\cdot)$ is almost Lipschitz.
\item
Define the \textit{hydrodynamic limit} $\tilde{\mathbb{X}}(\cdot)$ such that it is right continuous with left limits, and has the Lipschitz condition.
\item
Using \romannum{1}) and \romannum{2}), one can show that $\mathbb{X}^{r,m}(\cdot)$ converges to $\tilde{\mathbb{X}}(\cdot)$ u.o.c. as $r \to \infty$.
\item
Show that $\tilde{\mathbb{X}}(\cdot)$ satisfies the hydrodynamic model equations \eqref{HDfirst} to \eqref{HDlast}.
\item
Finally, by Assumptions \ref{continuity} and \ref{SSCcondition}, and using results in \romannum{3}) and \romannum{4}), show that the boundness result of step 2. 
\end{enumerate}
	}
\viva{The difficult part is in Step \romannum{1}) - more specifically, the almost Lipschitz condition of departure processes. In \cite{Dai}, to show this they use Proposition 4.3 of \cite{Bramson}, which is an application of Chebyshev's inequality on renewal processes. They use this proposition on departure processes of each pool when the service rates are given. However, in our systems, service rates within pools are random and unknown, thus we can not use this proposition directly. We formulate a coupling method to achieve this. We assume the random service rates in each pool for all kind of customers are bounded, then generate a Poisson process with rate being the product of the number of servers in each pool and the upper bound of the service rates. Then we couple our real departure processes by splitting this Poisson process. We only need to show the almost Lipschitz condition for the generated Poisson process, then the same condition also holds for the coupled real departure processes since the time points when departures happen in the coupled processes are subsets of the generated Poisson process.}\par
\viva{Another main difference in our proof is in Step \romannum{4}). When we show that the hydrodynamic limits $\tilde{\mathbb{X}}(\cdot)$ satisfy the hydrodynamic model equations, we need to use the Law of Large Numbers because of the randomness among servers.}\par
\viva{The rest of this section is organized as follows: in Section \ref{Section-hydro-scaling}, we define the hydrodynamic scaling and prove \romannum{1}) of step 2. Then in Section \ref{Section-hydro-limits}, we use the hydrodynamic scaling to define the hydrodynamic limits, and show \romannum{3}) and \romannum{4}), hence provd step 2. In Section \ref{Section-proof-SSC-in-diffusion-limit}, we translate the boundness inequality in step 2 to a similar inequality for the diffusion-scaled processes. We finally show that this latter inequality implies the desired SSC result in the diffusion limit.}

\subsection{Hydrodynamic scaling and bounds}
We begin by defining the \textit{hydrodynamic scaling}. We divide the interval $[0,T]$ into $T\sqrt{|N^r|}$ intervals of length $\frac{1}{\sqrt{|N^r|}}$ and analyse the processes in each intervals. We index the intervals by $m$. For a nonnegative integer $m$, let
\begin{equation}
x_{r,m} = \left\vert  Z^r\left(\frac{m}{\sqrt{|N^r|}}\right)  - |N^r| \right\vert^2 \vee |N^r|,
\end{equation}
Note that the square root of the first two terms of $x_{r,m}$ gives the deviations of these processes from their fluid limits.

We define the hydrodynamic scaling by shifting and scaling the processes of $\mathbb{X}^r$ as follows. For a process $X^r$ associated with the $r$th process, we denote the hydrodynamic scaled version by $X^{r,m}$. For $A^r,A^r_s, A^r_q,C^r, D^r, T^r$, the hydrodynamic scaling is defined for $t \in [0,L]$, for some $L>0$ by
\begin{equation}
X^{r,m}(t) = \frac{1}{\sqrt{x_{r,m}}} \left( X^r \left( \frac{\sqrt{x_{r,m}}t}{|N^r|} + \frac{m}{\sqrt{|N^r|}} \right) - X^r \left( \frac{m}{\sqrt{|N^r|}} \right) \right).\label{generalscaling}
\end{equation} 
The hydrodynamic scaled versions of $Q^r$ and $B^r$ and $Z^r$ are defined as follows:
\begin{align}
Q^{r,m}(t) &= \frac{1}{\sqrt{x_{r,m}}} \left( Q^r \left( \frac{\sqrt{x_{r,m}}t}{|N^r|} + \frac{m}{\sqrt{|N^r|}} \right) \right), \label{Qscaling}\\ 
B^{r,m}_{ij \sindex}(t) &= \frac{1}{\sqrt{x_{r,m}}} \left( B^r_{ij \sindex} \left( \frac{\sqrt{x_{r,m}}t}{|N^r|} + \frac{m}{\sqrt{|N^r|}} \right) - x^*_{ij}\right), \label{Bscaling}\\
Z^{r,m}_{ij}(t) & = \frac{1}{\sqrt{x_{r,m}}} \left( Z^r_{ij}\left( \frac{\sqrt{x_{r,m}}t}{|N^r|} + \frac{m}{\sqrt{|N^r|}} \right) - N^r_j x^*_{ij} \right). \label{Zscaling}
\end{align}
Note that $Z^{r,m}_{ij}(t) = \sum_{\sindex=1}^{N^r_i}  B_{ij \sindex}^{r,m}(t) $ and $D^{r,m}_{ij}(t) = \sum_{\sindex=1}^{N^r_i} D_{ij \sindex}^{r,m}(t)$.\par
Observe that $x_{r,m}$ must be in the order of $|N^r|$ for $Q^r$ and $Z_{ij}^r$ to have meaningful diffusion limits. Also, if $x_{r,m}$ is in the order of $|N^r|$, then $Q^{r,m}(\cdot)$ and $Z_{ij}^{r,m}(\cdot)$ are very similar to the diffusion scaling. This reveals the relationship between the hydrodynamic and diffusion scaling that will be used to translate a SSC result from hydrodynamic limits to diffusion limits.

For notational convenience, with a slight abuse of notation, we set
\begin{align*}
V^{r,m}_{ij \sindex}(D^{r,m}_{ij \sindex}(t),b) 
& = \frac{1}{\sqrt{x_{r,m}}} \Bigg( V_{ij \sindex} \left( D^r_{ij \sindex} \left( \frac{\sqrt{x_{r,m}}t}{|N^r|} + \frac{m}{\sqrt{|N^r|}} \right) + b_1 \right)\\
& \qquad \qquad \quad - V_{ij \sindex} \left(D^r_{ij \sindex} \left( \frac{m}{\sqrt{|N^r|}} \right) +b_2  \right) \Bigg),\numberthis \label{srvscaling}
\end{align*}
and for $b = (b_1, b_2) \in \mathbb{R}^2$. By \eqref{srvtinequa},
\begin{equation}
V^{r,m}_{ij \sindex} (D^{r,m}_{ij \sindex}(t), (0,1)) \leq T_{ij \sindex}^{r,m} \leq V^{r,m}_{ij \sindex} (D^{r,m}_{ij \sindex}(t), (1,0)).
\end{equation}
Let $\mathbb{X}^{r,m} = (A^{r,m},A_s^{r,m},A_q^{r,m},Q^{r,m},B^{r,m},Z^{r,m},C^{r,m},T^{r,m},D^{r,m})$. We refer to $\mathbb{X}^{r,m}$ as the hydrodynamic scaled process. From the definition of $x_{r,m}$ we have that 
$$|\mathbb{X}^{r,m}(0)| \leq 1.$$

It can easily be checked that $\mathbb{X}^{r,m}$ satisfies the following equations for all $t \geq 0$:  
\begin{equation}
A^{r,m}_j(t) = \sum_{i \in \mathcal{I}(j)}A^{r,m}_{sij}(t) + A^{r,m}_{qj}(t), \mbox{ for all } j \in \mathcal{J}, \label{process1}
\end{equation}
\begin{equation}
Q^{r,m}_j(t) = Q^{r,m}_j(0) + A^{r,m}_{qj}(t) - \sum_{i \in \mathcal{I}(j)}C^{r,m}_{ij}(t), \mbox{ for all } j \in \mathcal{J}, \label{process2}
\end{equation}
\begin{equation}
Z^{r,m}_{ij}(t) = Z^{r,m}_{ij}(0) + A^{r,m}_{sij}(t) + C^{r,m}_{ij}(t) - D^{r,m}_{ij}(t) , \mbox{ for all } i \in \mathcal{I}, \mbox{ and } j \in \mathcal{J}(i), \label{process3}
\end{equation}
\begin{equation}
D^{r,m}_{ij}(t) = \sum_{\sindex=1}^{N^r_i} D^{r,m}_{ij \sindex}(t), \mbox{ for all } i \in \mathcal{I}, \mbox{ and } j \in \mathcal{J}(i), \label{process8}
\end{equation}
\begin{align*}
D^{r,m}_{ij \sindex}(t) = &\frac{S_{ij \sindex}(\mu_{ij \sindex} (\sqrt{x_{r,m}} T^{r,m}_{ij \sindex} (t) + T^r_{ij \sindex}(m/\sqrt{|N^r|}))) - S_{ij \sindex}(\mu_{ij \sindex} ( T^r_{ij \sindex}(m/\sqrt{|N^r|})))}{\sqrt{x_{r,m}}}, \\
& \mbox{ for all } i \in \mathcal{I}, \mbox{ and } j \in \mathcal{J}(i), \mbox{ and } \sindex = 1,2,\dots, N^r_i, \numberthis \label{process4}
\end{align*}
\begin{equation}
T^{r,m}_{ij \sindex} (t) = \frac{x^*_{ij}}{|N^r|}t + \frac{\sqrt{x_{r,m}}}{|N^r|} \int_0^t B^{r,m}_{ij \sindex}(s) ds, \mbox{ for all } i \in \mathcal{I}, \mbox{ and } j \in \mathcal{J}(i), \mbox{ and } \sindex = 1,2,\dots, N^r_i, \label{process5}
\end{equation}
\begin{equation}
Q^{r,m}_j(t) \left( \sum_{i \in \mathcal{I}(j)} \sum_{j^{\prime} \in \mathcal{J}(i)}Z^{r,m}_{ij^{\prime}}(t) \right) = 0, \mbox{ for all } j \in \mathcal{J}, \label{process6}
\end{equation}
\begin{equation}
\int_0^t \left( \sum_{i \in \mathcal{I}(j)} \sum_{j^{\prime} \in \mathcal{J}(i)} Z^{r,m}_{ij^{\prime}}(s-) \right) dA^{r,m}_{qj}(s) = 0, \mbox{ for all } j \in \mathcal{J}.\label{process7}
\end{equation}

Now we have a similar result to Proposition 5.1 in \cite{Dai} for random service rates systems. 

\begin{proposition}
	Let $\{\mathbb{X}^r_{\pi}\}$ be a sequence of $\pi$-parallel random server systems processes. Assume \eqref{server-pool-percertage} and \eqref{arriallimits} hold, and $\pi$ satisfies Assumption \ref{steadycondition}. Fix $\epsilon>0, L>0$ and $T>0$. Then, for large enough $r$, there exists $N>0$ such that
	\begin{equation}
	P\left\lbrace \max_{m<\sqrt{|N^r|}T} \left\Vert A^{r,m}(t) - \frac{\lambda^r}{|N^r|}t \right\Vert_{L} > \epsilon \right\rbrace \leq \epsilon, \label{prop1equ1}
	\end{equation}
	\begin{equation}
	P\left\lbrace \max_{m<\sqrt{|N^r|}T} \sup_{0 \leq t_1, t_2 \leq L} |D^{r,m}(t_2) - D^{r,m}(t_1)| > N|t_2 - t_1| + \epsilon \right\rbrace \leq \epsilon, \text{  and } \label{prop1equ2}
	\end{equation}
	\begin{align*}
	&P\left\lbrace \max_{m<\sqrt{|N^r|}T} \left\Vert V_{ij \sindex}^{r,m}(D_{ij \sindex}^{r,m}(t),b) - \frac{1}{\mu_{ij \sindex}}D^{r,m}_{ij \sindex}(t) \right\Vert_{L} > \epsilon \right\rbrace \leq \epsilon \\
	&\qquad \text{  for all } i \in \mathcal{I}, j \in \mathcal{J}(i), \text{ and } k =1,2,\dots,N^{\systemindex}_i, \numberthis \label{prop1equ3}
	\end{align*}
	where $b = (1,0) \text{ or } (0,1)$. \label{prop5.1}
\end{proposition}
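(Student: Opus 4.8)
The plan is to establish the three estimates separately, handling \eqref{prop1equ1} and \eqref{prop1equ3} essentially as in \cite{Dai} and reserving the genuinely new argument for the departure bound \eqref{prop1equ2}. For the arrival estimate \eqref{prop1equ1}, recall that $A^r(t) = \chi(\lambda^r t)$ with $\chi$ a rate-one delayed renewal process whose interarrival times satisfy the moment condition \eqref{momentcon}. Under the hydrodynamic scaling \eqref{generalscaling}, the increment of $A^{r,m}$ over $[0,t]$ has mean $\tfrac{\lambda^r}{|N^r|}t$ up to a vanishing correction, and its oscillation around this mean is controlled uniformly in $m < \sqrt{|N^r|}T$ by the maximal renewal estimate of \cite[Proposition 4.3]{Bramson}. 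Since this part does not involve the service rates, the argument is identical to \cite{Dai}, so I would only record the scaling bookkeeping.

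The heart of the matter is \eqref{prop1equ2}. The difficulty is that $D^r_{ij}(t) = \sum_{\sindex=1}^{N^r_i} S_{ij\sindex}(\mu_{ij\sindex} T^r_{ij\sindex}(t))$ is driven by the random rates $\mu_{ij\sindex}$, so the pooled departure stream is not a time-changed Poisson process of deterministic rate and the direct estimate used in \cite{Dai} is unavailable. I would instead dominate it by a tractable process via coupling. For each $(i,j)$ with $j \in \mathcal{J}(i)$, generate a Poisson process $\Lambda^r_{ij}$ of constant rate $N^r_i q_{ij}$, written as $\Lambda^r_{ij}(t) = \Pi_{ij}(N^r_i q_{ij} t)$ for a unit-rate Poisson process $\Pi_{ij}$, where $q_{ij}$ is the upper bound on $\mu_{ij\sindex}$; then realise the true departures by thinning the points of $\Lambda^r_{ij}$ exactly as in the construction of Lemma \ref{hom-het-stochastic-ordering}, accepting a class-$j$ departure from pool $i$ at the $l$th point $\tau_l$ of $\Lambda^r_{ij}$ with probability $\big(\sum_{\sindex} \mu_{ij\sindex} B^r_{ij\sindex}(\tau_l-)\big)/(N^r_i q_{ij}) \le 1$. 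This yields a process equal in law to $D^r_{ij}$ whose jump epochs form a subset of those of $\Lambda^r_{ij}$, so that for every interval the hydrodynamically scaled increments obey $|D^{r,m}_{ij}(t_2) - D^{r,m}_{ij}(t_1)| \le \Lambda^{r,m}_{ij}(t_2) - \Lambda^{r,m}_{ij}(t_1)$. Because $\Lambda^r_{ij}$ has \emph{deterministic} rate, its scaled increment over $[t_1,t_2]$ has mean $\tfrac{N^r_i}{|N^r|}q_{ij}(t_2-t_1) \to \beta_i q_{ij}(t_2-t_1)$, and \cite[Proposition 4.3]{Bramson} applies to $\Pi_{ij}$ to bound its oscillation uniformly over $m < \sqrt{|N^r|}T$. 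Taking the maximum of the dominating bounds over the finitely many pairs $(i,j)$ then gives \eqref{prop1equ2} with any $N > \max_{i,j}\beta_i q_{ij}$, the residual oscillation being absorbed into the additive $\epsilon$.

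For the service-time estimate \eqref{prop1equ3} I would condition on the realisation of $\mu_{ij\sindex}$. Given $\mu_{ij\sindex} = \mu$, the cumulative service-time process $V_{ij\sindex}$ is an ordinary renewal process with i.i.d.\ increments of mean $1/\mu$, so \cite[Proposition 4.3]{Bramson} yields the functional law of large numbers $V^{r,m}_{ij\sindex}(D^{r,m}_{ij\sindex}(t),b) \approx \tfrac{1}{\mu} D^{r,m}_{ij\sindex}(t)$ uniformly in $m < \sqrt{|N^r|}T$, the boundary term coming from $b \in \{(1,0),(0,1)\}$ being a single increment that is negligible after division by $\sqrt{x_{r,m}}$. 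Since $\mu_{ij\sindex} \in [p_{ij},q_{ij}]$ is bounded away from $0$ and $\infty$, the resulting probability bound is uniform over the rate realisation and therefore holds unconditionally; this is the only place in this part where boundedness of the rates enters.

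The main obstacle is the departure bound \eqref{prop1equ2}: one must verify that the thinning construction simultaneously reproduces the correct joint law of the departure process and genuinely embeds its jump epochs inside those of the dominating Poisson process, so that the pathwise domination holds for all intervals and all blocks $m$ at once. This requires care in specifying the acceptance probabilities together with the server-selection rule that keeps the indicators $B^r_{ij\sindex}$ consistent with the coupled dynamics, much as in Lemma \ref{hom-het-stochastic-ordering}. Once the domination is in place the problem reduces to a deterministic-rate renewal estimate, but it is precisely this reduction—replacing the unavailable direct renewal argument of \cite{Dai} by a coupling to a Poisson process of known rate—that constitutes the delicate and novel step.
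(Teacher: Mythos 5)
Your proposal is correct and takes essentially the same approach as the paper: \eqref{prop1equ1} is deferred to \cite{Dai}, \eqref{prop1equ2} is proved by exactly the thinning coupling you describe (a dominating Poisson process of rate $N^r_i q_{ij}$ whose jump epochs contain those of a process equal in law to $D^r_{ij}$, Bramson's Proposition 4.3 applied to the unit-rate process, and the constant $N = \max_{i,j}\beta_i q_{ij}$), and \eqref{prop1equ3} by renewal estimates for $V_{ij\sindex}$ conditional on the rate, with uniformity from $\mu_{ij\sindex} \in [p_{ij},q_{ij}]$. The only details you gloss are that the paper first uses the departure bound from \eqref{prop1equ2} to fix the renewal time scale before invoking Bramson's estimate, and that the boundary term from $b$ is controlled uniformly over $m$ by a dedicated maximal-service-time lemma (Lemma \ref{lemma-first-residual-time}) rather than by a pointwise negligibility remark.
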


The proof of \eqref{prop1equ1} is the same as in \cite{Dai}, so we only need to show \eqref{prop1equ2} and \eqref{prop1equ3}.

\begin{proof}[Proof of \eqref{prop1equ2}]
	\viva{For \eqref{prop1equ2}, \cite{Dai} proved the analogous equation (77) in that paper when servers are identical inside each pool, thus they can show it directly using properties of Poisson processes.} However, in our case, there is randomness among servers for each $D^r_{ij}$, which will cause troubles for a direct proof. To resolve this, we make a detour by comparing the departure processes of such systems to the ones of a homogeneous system. \viva{The departure processes of homogeneous systems can be proved to exhibit the almost Lipschitz condition in the direct way as shown in \cite{Dai}. Then we use a coupling method to construct stochastically equivalent departure processes of the heterogeneous systems. Finally we compare the equivalent departure processes with the homogeneous departure processes, and obtain the almost Lipschitz condition for the former processes, and we conclude the  original departure processes also have the almost Lipschitz property.}\par
	To this end, let $S^{r,q}_{ij}(t)$ be a Poisson process with rate $N_i q_{ij}$ for all $i \in \mathcal{I}$ and $j \in \mathcal{J}(i)$. \viva{First we show the almost Lipschitz condition for the sequence of processes $S^{r,q}_{ij}(t), i \in \mathcal{I}, j \in \mathcal{J}(i).$}\\ 
	Notice that we only need to investigate the process when $m=0$, then multiply the error bounds. To see this, first define the hydrodynamic scaling for $S^{r,q}_{ij}(t)$: 
	\begin{equation}
	S^{r,q,m}_{ij}(t) = \frac{1}{\sqrt{x_{r,m}}} \left(S^{r,q}_{ij} \left( \frac{\sqrt{x_{r,m}}t}{|N^r|} + \frac{m}{\sqrt{|N^r|}} \right) - S^{r,q}_{ij} \left( \frac{m}{\sqrt{|N^r|}}\right) \right)
	\end{equation}
	where
	\begin{equation}
	x_{r,m} = \left\vert Z^r\left(\frac{m}{\sqrt{|N^r|}}\right) - |N^r| \right\vert^2 \vee |N^r|. \label{scalingparm}
	\end{equation}
	Here $Q^r(\cdot), \mbox{ and } Z^r(\cdot)$ are the queue length and number of busy servers respectively.
	Then, similar to the proof in C.2.1 in \cite{Dai}, we have
	\begin{align*}
	&P \Bigg\lbrace \max_{m < \sqrt{|N^r|}T} \Bigg\Vert S^{r,q,m}_{ij}(t) - \frac{N^r_i q_{ij}}{|N^r|}t \Bigg\Vert_L >  \frac{N^r_i}{|N^r|}q_{ij} \epsilon L \Bigg\rbrace\\
	&= P \Bigg\lbrace \max_{m < \sqrt{|N^r|}T} \Bigg\Vert \frac{1}{\sqrt{x_{r,m}}} \left(S^{r,q}_{ij} \left( \frac{\sqrt{x_{r,m}}t}{|N^r|} + \frac{m}{\sqrt{|N^r|}} \right) - S^{r,q}_{ij} \left( \frac{m}{\sqrt{|N^r|}}\right) \right) \\
	& \qquad \qquad \qquad \qquad - \frac{N^r_i q_{ij}}{|N^r|}t \Bigg\Vert_L  >  \frac{N^r_i}{|N^r|} q_{ij} \epsilon L \Bigg\rbrace\\
	&= P \Bigg\lbrace \max_{m < \sqrt{|N^r|}T} \Bigg\Vert \frac{1}{\sqrt{x_{r,m}}} \left(S \left( \frac{N^r_i}{|N^r|} q_{ij}  \sqrt{x_{r,m}}t + \frac{N^r_i}{\sqrt{|N^r|}} q_{ij}  m \right) - S \left( \frac{N^r_i}{\sqrt{|N^r|}} q_{ij} m\right) \right) \\
	& \qquad \qquad \qquad \qquad - \frac{N^r_i q_{ij}}{|N^r|}t \Bigg\Vert_L >  \frac{N^r_iq_{ij} }{|N^r|} \epsilon L \Bigg\rbrace\\
	&= P \Bigg\lbrace \max_{m < \sqrt{|N^r|}T} \Bigg\Vert \left(S \left( t + \frac{N^r_i}{|N^r|} q_{ij}\sqrt{|N^r|}m \right) - S \left( \frac{N^r_i}{|N^r|} q_{ij}\sqrt{|N^r|}m\right) \right) - t \Bigg\Vert_{\frac{N^r_i}{|N^r|} q_{ij}\sqrt{x_{r,m}}L} \\
	& \qquad \quad >   \frac{N^r_iq_{ij} }{|N^r|} \epsilon L \sqrt{x_{r,m}} \Bigg\rbrace.
	\end{align*}
	\vspace{-5mm}
	Since $S(t)$ is a homogeneous Poisson process, the probability above is equal to
	\begin{align*}
	&P \left\lbrace \max_{m < \sqrt{|N^r|}T} \left\Vert \left(S \left( t \right) - S \left( 0 \right) \right) - t \right\Vert_{\frac{N^r_i}{|N^r|} q_{ij}\sqrt{x_{r,m}}L}  >  \frac{N^r_i }{|N^r|} q_{ij} \epsilon L \sqrt{x_{r,m}} \right\rbrace\\
	& \leq \sum_{m<\sqrt{|N^r|}T} P \left\lbrace \left\Vert S \left( t \right) - t \right\Vert_{\frac{N^r_i}{|N^r|} q_{ij}\sqrt{x_{r,m}}L}  >  \frac{N^r_i }{|N^r|} q_{ij} \epsilon L \sqrt{x_{r,m}} \right\rbrace, \numberthis \label{PPNqequ1}
	\end{align*}
	because $x_{r,m} \geq |N^r|$ by \eqref{scalingparm}, and $\frac{N^r_i}{N^r} \to \beta_i \in (0,1)$, and by Proposition 4.3 of \cite{Bramson}, for given $\epsilon$, any $m$ and large enough $r$,
	\begin{equation}
	P \left\lbrace \left\Vert S \left( t \right) - t \right\Vert_{\frac{N^r_i}{|N^r|} q_{ij}\sqrt{x_{r,m}}L}  >  \frac{N^r_i }{|N^r|}q_{ij} \epsilon L \sqrt{x_{r,m}} \right\rbrace \leq \frac{\epsilon}{\frac{N^r_i }{|N^r|}q_{ij}\sqrt{x_{r,m}}L} \leq \frac{\epsilon}{2 \beta_i q_{ij}\sqrt{|N^r|}L}. \label{convergetomean}
	\end{equation}
	Thus we only need to consider the process when $m=0$, then multiply by the error bound $\lceil \sqrt{|N^r|}T \rceil$. Using this result
	\begin{align*}
	&P\left\lbrace \sup_{0 \leq t_1 \leq t_2 \leq L} \left\vert \left( S^{r,q,0}_{ij}\left(t_2\right) - \frac{N^r_i q_{ij}}{|N^r|}t_2 \right) - \left(S^{r,q,0}_{ij}\left(t_1\right) - \frac{N^r_i q_{ij}}{|N^r|}t_1\right) \right\vert > \epsilon \right\rbrace\\
	&\leq P \left\lbrace  \left\Vert S^{r,q,0}_{ij}(t_2) - \frac{N^r_i q_{ij}}{|N^r|}t_2 \right\Vert_L + \left\Vert S^{r,q,0}_{ij}(t_1) - \frac{N^r_i q_{ij}}{|N^r|}t_1 \right\Vert_L  > \epsilon \right\rbrace\\
	&\leq P \left\lbrace \left\Vert S^{r,q,0}_{ij}(t_2) - \frac{N^r_i q_{ij}}{|N^r|}t_2 \right\Vert_L > \frac{\epsilon}{2} \right\rbrace + P \left\lbrace \left\Vert S^{r,q,0}_{ij}(t_1) - \frac{N^r_i q_{ij}}{|N^r|}t_1 \right\Vert_L > \frac{\epsilon}{2} \right\rbrace\\
	&=P \left\lbrace \left\Vert S \left( t_2 \right) - t_2 \right\Vert_{\frac{N^r_i }{|N^r|} q_{ij}\sqrt{x_{r,0}}L}  > \frac{N^r_i }{|N^r|} q_{ij} \epsilon L \sqrt{x_{r,0}} \right\rbrace \\
	& \quad + P \left\lbrace \left\Vert S \left( t_1 \right) - t_1 \right\Vert_{\frac{N^r_i }{|N^r|}q_{ij}\sqrt{x_{r,0}}L}  > \frac{N^r_i }{|N^r|} q_{ij} \epsilon L \sqrt{x_{r,0}} \right\rbrace\\
	&\leq \frac{\epsilon}{\frac{N^r_i }{|N^r|} q_{ij}\sqrt{x_{r,0}}L} \leq \frac{\epsilon}{2 \beta_i q_{ij}\sqrt{|N^r|}L}.
	\end{align*}
	Therefore, reselecting $\epsilon$, we get
	\begin{equation}
	P\left\lbrace \sup_{0 \leq t_1 \leq t_2 \leq L} \left\vert S^{r,q,0}_{ij}\left(t_2\right)  - S^{r,q,0}_{ij}\left(t_1\right) \right\vert > \frac{N^r_i }{|N^r|}q_{ij}|t_2 - t_1| + \epsilon \right\rbrace \leq \frac{\epsilon}{\sqrt{|N^r|}}.
	\end{equation}
	Notice that $\frac{N^r_i }{|N^r|}$ depends on $r$ in the Lipschitz coefficient. To get rid of this one can simply use the fact that, for large enough $r$, $\beta_i \in \left[\frac{N^r_i }{|N^r|}-\delta, \frac{N^r_i }{|N^r|}+\delta \right]$ for some really small $\delta > 0$, thus
	\begin{align*}
	P & \left\lbrace \sup_{0 \leq t_1 \leq t_2 \leq L} \left\vert S^{r,q,0}_{ij}\left(t_2\right)  - S^{r,q,0}_{ij}\left(t_1\right) \right\vert > \beta_i q_{ij}|t_2 - t_1| + \epsilon \right\rbrace\\
	& \leq \left\lbrace \sup_{0 \leq t_1 \leq t_2 \leq L} \left\vert S^{r,q,0}_{ij}\left(t_2\right)  - S^{r,q,0}_{ij}\left(t_1\right) \right\vert > \left(\frac{N^r_i }{|N^r|}- \delta \right) q_{ij}|t_2 - t_1| + \epsilon \right\rbrace \\
	& \leq \left\lbrace \sup_{0 \leq t_1 \leq t_2 \leq L} \left\vert S^{r,q,0}_{ij}\left(t_2\right)  - S^{r,q,0}_{ij}\left(t_1\right) \right\vert > \frac{N^r_i }{|N^r|} q_{ij}|t_2 - t_1| - \delta q_{ij}|t_2 - t_1| + \epsilon \right\rbrace.
	\end{align*} 
	Since $|t_2 - t_1| < L$, the term $\delta q_{ij}|t_2 - t_1|$ can be chosen to be less than $\epsilon$; therefore, after a reselection of $\epsilon$, we have
	\begin{equation}
	P \left\lbrace \sup_{0 \leq t_1 \leq t_2 \leq L} \left\vert S^{r,q,0}_{ij}\left(t_2\right)  - S^{r,q,0}_{ij}\left(t_1\right) \right\vert > \beta_i q_{ij}|t_2 - t_1| + \epsilon \right\rbrace \leq \frac{\epsilon}{\sqrt{|N^r|}}. \label{Lipschitz-potential-departure}
	\end{equation}
	
	\viva{Inequality \eqref{Lipschitz-potential-departure} is sufficient for our comparison. }Now we consider the departure processes with random service rates. We want to show \eqref{prop1equ2}. To overcome the difficulty in proving it directly, we compare $D^r_{ij}$ with $S^{r,q}_{ij}$. In order to do this, we find a stochastically equivalent process $\breve{D}^r_{ij}$ of $D^r_{ij}$, which is the main difference between our proof and the one of \cite{Dai}. We will do this in the following way.\par
	For pool $i$, let $J(i)$ be the cardinality of $\mathcal{J}(i)$, i.e.\ total number of customer classes that servers in pool $i$ are capable to serve. Then let $\{S^{r,q}_{ij_a}(t), a = 1,2,\dots, J(i)\}$ be a sequence of generated Poisson processes with rate $N^r_i q_{ij_a}$ respectively. And let $0<\tau^a_1<\tau^a_2<\cdots$ be the sequence of occurrence times of the process $S^{r,q}_{ij_a}$, i.e.\ $S^{r,q}_{ij_a}(t) = \sum_{n=1}^{\infty}I(\tau^a_n < t)$. Let $\{ U_n, n \in \mathbb{N} \}$ be a sequence of independent uniform $(0,1)$ random variables, and $I(A)$ be the indicator function for event $A$ which takes value $1$ if $A$ occurs and $0$ otherwise. Assume all of the processes are equal to zero at $t=0$. By splitting the processes $\{S^{r,q}_{ij_a}(t)\}$, we define the following processes
	\begin{align*}
	&\breve{D}^{r,n}_{ij_1}=\sum_{l=1}^{n_1} I\left( U_l \leq \frac{ \sum_{\sindex=1}^{N^r_i} \mu_{ij_1 \sindex} \breve{B}_{ij_1 \sindex}(\tau^1_l-)}{\sum_{a=1}^{J(i)}N^r_i q_{ij_a}} \right), \numberthis\\
	&\breve{D}^{r,n}_{ij_a}=\sum_{l=1}^{n_a} I\left( \frac{\sum_{c=1}^{a-1} N^r_i q_{ij_c}}{\sum_{a=1}^{J(i)}N^r_i q_{ij_a}} \leq U_l \leq \frac{\sum_{c=1}^{a-1} N^r_i q_{ij_c} + \sum_{\sindex=1}^{N^r_i} \mu_{ij_a \sindex} \breve{B}_{ij_a \sindex}(\tau^a_l-)}{\sum_{a=1}^{J(i)}N^r_i q_{ij_a}} \right),\\
	& \qquad a=2,3,\dots,J(i), \numberthis \label{stochastic-equivalent-departure}\\
	&S^{r,q}_{ij_a}(t) = n_a = \sum_{l=1}^{\infty}I(\tau^a_l < t), \ a=1,2,\dots,J(i), \numberthis \label{potential-departure}\\
	&\breve{D}^{r}_{ij_a}(t)= \breve{D}^{r,n}_{ij_a}, \forall t \in [\tau^a_n, \tau^a_{n+1}), \ a=1,2,\dots,J(i), \numberthis
	\end{align*}
	where $\breve{B}_{ij \sindex}(t)$ is determined by the arrival processes $A^r_i(t)$, the routing policies, the sequence $\tau^a_1, \tau^a_2, \dots$, and the selection scheme defined as follows: we use class $j_1$ customer as an example,  and other types of customer departure processes will be similar. If, for some $l \in \{1,2,\dots, S^{r,q}_{ij_1}(t)\}$, $I\left(U_l \leq \frac{\sum_{\sindex=1}^{N^r_i} \mu_{ij_1 \sindex} \breve{B}_{ij_1 \sindex}(\tau^1_l-)}{N^r_i q_{ij_1}}\right)=1$, then the potential departure occurring at time $\tau^1_l$ in process $S^{r,q}_{ij_1}(t)$ is accepted as the real departure for process $\breve{D}^{r}_{ij_1}(t)$. Assume there are $m$ busy servers just before this departure occurs (time $\tau^1_l-$). Then after it is accepted as a real departure, one of the $m$ servers will be freed, which leads to our selection scheme. Generate a uniformly distributed random variable $\varsigma$ on $(0,1)$. If $\frac{\sum_{\sindex=0}^{h} \mu_{ij_1 \sindex}}{\sum_{\sindex=1}^{m} \mu_{ij_1 \sindex}} \leq \varsigma < \frac{\sum_{\sindex=0}^{h+1} \mu_{ij_1 \sindex}}{\sum_{\sindex=1}^{m} \mu_{ij_1 \sindex}},\ h=0,1,\dots,m-1$, then the $h+1$th server will be freed at time $\tau^1_l$. Here we let $\mu_{ij_10}=0$.
	\vspace{-3mm}\par
 	Under such a definition, the process $\breve{D}^{r}_{ij_a}(t)$ is stochastically equivalent to $D^{r}_{ij_a}(t)$ for all $a = 1,\dots, J(i)$, thus in order to show
	\begin{equation*}
	P\left\lbrace \max_{m<\sqrt{|N^r|}T} \sup_{0 \leq t_1, t_2 \leq L} |D^{r,m}_{ij_a}(t_2) - D^{r,m}_{ij_a}(t_1)| > N|t_2 - t_1| + \epsilon \right\rbrace \leq \epsilon,
	\end{equation*}
	it is equivalent to show
	\begin{equation}
	P\left\lbrace \max_{m<\sqrt{|N^r|}T} \sup_{0 \leq t_1, t_2 \leq L} |\breve{D}^{r,m}_{ij_a}(t_2) - \breve{D}^{r,m}_{ij_a}(t_1)| > N|t_2 - t_1| + \epsilon \right\rbrace \leq \epsilon. \label{depainequ}
	\end{equation}
	From the construction of process $\breve{D}^r_{ij_a}$, it is easy to see that for every $m$,
	\begin{align*}
	&\left\vert \breve{D}^{r,m}_{ij_a}(t_2) - \breve{D}^{r,m}_{ij_a}(t_1) \right\vert \\
	&= \frac{1}{\sqrt{x_{r,m}}} \left\vert \left( \breve{D}^r_{ij_a} \left(\frac{\sqrt{x_{r,m}}t_2}{|N^r|} + \frac{m}{\sqrt{|N^r|}} \right) - \breve{D}^r_{ij_a} \left( \frac{m}{\sqrt{|N^r|}}\right) \right) \right. \\ 
	&\quad \left.- \left( \breve{D}^r_{ij_a} \left(\frac{\sqrt{x_{r,m}}t_1}{|N^r|} + \frac{m}{\sqrt{|N^r|}} \right) - \breve{D}^r_{ij_a} \left( \frac{m}{\sqrt{|N^r|}}\right) \right) \right\vert\\
	&= \frac{1}{\sqrt{x_{r,m}}} \left\vert \breve{D}^r_{ij_a} \left(\frac{\sqrt{x_{r,m}}t_2}{|N^r|} + \frac{m}{\sqrt{|N^r|}} \right) - \breve{D}^r_{ij_a} \left(\frac{\sqrt{x_{r,m}}t_1}{|N^r|} + \frac{m}{\sqrt{|N^r|}} \right) \right\vert\\
	&\leq \frac{1}{\sqrt{x_{r,m}}} \left\vert S^{r,q}_{ij_a} \left(\frac{\sqrt{x_{r,m}}t_2}{|N^r|} + \frac{m}{\sqrt{|N^r|}} \right) - S^{r,q}_{ij_a} \left(\frac{\sqrt{x_{r,m}}t_1}{|N^r|} + \frac{m}{\sqrt{|N^r|}} \right) \right\vert\\
	&= \left\vert S^{r,q,m}_{ij_a}(t_2) - S^{r,q,m}_{ij_a}(t_1) \right\vert. \numberthis \label{comparison-real-potential-departure}
	\end{align*}
	The inequality is from the fact that the event points of $\breve{D}^r_{ij_a}(t)$ are chosen from the event points of $S^{r,q}_{ij_a}(t)$ (equations \eqref{stochastic-equivalent-departure} and \eqref{potential-departure}), thus the departure difference between time $t_2$ and $t_1$ is a subset of the difference of $S^{r,q}_{ij_a}(t)$ during the same time. Then we also only need to consider the process when $m=0$. From \eqref{Lipschitz-potential-departure} and \eqref{comparison-real-potential-departure}, we have
	\begin{equation}
	P\left\lbrace \sup_{0 \leq t_1 \leq t_2 \leq L} \left\vert \breve{D}^{r,0}_{ij_a}\left(t_2\right)  - \breve{D}^{r,0}_{ij_a}\left(t_1\right) \right\vert > \beta_i q_{ij_a}|t_2 - t_1| + \epsilon \right\rbrace \leq \frac{\epsilon}{\sqrt{|N^r|}}.
	\end{equation}
	\vspace{1mm}
	Note that we can conduct the same analysis for each pool, thus we can get rid of the $a$ here. \\
	\vspace{6mm}
	Let $N = \max_{i \in \mathcal{I}, j \in \mathcal{J}}\{ \beta_i q_{ij} \}$. Then, for all $i \in \mathcal{I}$ and $j \in \mathcal{J}$,
	\begin{equation}
	P\left\lbrace \sup_{0 \leq t_1 \leq t_2 \leq L} \left\vert \breve{D}^{r,0}_{ij}\left(t_2\right)  - \breve{D}^{r,0}_{ij}\left(t_1\right) \right\vert > N |t_2 - t_1| + \epsilon \right\rbrace \leq \frac{\epsilon}{\sqrt{|N^r|}}.
	\label{mequ0}
	\end{equation}
	Multiplying the error bound $\lceil \sqrt{|N^r|}T \rceil$ and enlarging $\epsilon$ appropriately we obtain \eqref{prop1equ2}.
\end{proof}

\viva{Before proving \eqref{prop1equ3}, we need a lemma taken from \cite[Lemma 5.1]{Bramson}.}
\begin{lemma}
	Let $v^{r,T,\max}_{ij\sindex} = \max\{ |v_{ij\sindex}(l)|:= V_{ij \sindex}(l-1) \leq |N^{\systemindex}| T \}$ for all $i \in \mathcal{I}, j \in \mathcal{J}(i), \sindex=1,2,\dots,N^{\systemindex}_i$. Then, for given $T$,
	\begin{equation}
	v^{r,T,\max}_{ij \sindex}/\sqrt{|N^{\systemindex}|} \to 0 \mbox{ in probability as } \systemindex \to \infty, \mbox{ for all } i \in \mathcal{I}, j \in \mathcal{J}(i), \sindex =1,2,\dots, N^{\systemindex}_i.
	\end{equation}
	\label{lemma-first-residual-time}
\end{lemma}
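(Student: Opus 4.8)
The plan is to reduce the statement to a standard maximal-term estimate for i.i.d.\ summands with a finite $(2+\epsilon)$-th moment, exactly in the spirit of Bramson's Lemma 5.1, the only extra ingredient being that I must control the \emph{random} number of service completions that enter the maximum. I would fix $i\in\mathcal{I}$, $j\in\mathcal{J}(i)$ and a server index $\sindex$, and abbreviate $v(l):=v_{ij\sindex}(l)$ and $\mu:=\mu_{ij\sindex}$. Conditional on the realized rate $\mu\in[p_{ij},q_{ij}]$, the $v(l)$ are i.i.d.\ exponential, so they satisfy the moment condition \eqref{momentcon}, i.e.\ $\mathbb{E}\,|v(1)|^{2+\epsilon}<\infty$ for every $\epsilon>0$ (in fact all moments are finite), and the bounds $0<p_{ij}\leq\mu\leq q_{ij}<\infty$ keep every constant uniform.

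First I would replace the random index set $\{l:V_{ij\sindex}(l-1)\leq|N^r|T\}$ by a deterministic one. Writing $n^r:=\max\{l:V_{ij\sindex}(l-1)\leq|N^r|T\}$ and recalling that $V_{ij\sindex}$ is a sum of i.i.d.\ service requirements, the strong law of large numbers shows that $V_{ij\sindex}$ is a renewal process with a finite positive mean gap, so the elementary renewal theorem gives $n^r/|N^r|\to\kappa$ a.s.\ for a finite positive constant $\kappa$. Hence, for any $\eta>0$, the event $\{n^r\leq\bar{n}^r\}$ with $\bar{n}^r:=\lceil(1+\eta)\kappa|N^r|\rceil$ has probability tending to one as $r\to\infty$. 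On this event $v^{r,T,\max}_{ij\sindex}=\max_{1\leq l\leq n^r}|v(l)|\leq\max_{1\leq l\leq\bar{n}^r}|v(l)|$, so it suffices to bound a maximum over a deterministic range $\bar{n}^r=O(|N^r|)$.

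Next I would apply a union bound followed by Markov's inequality against the $(2+\epsilon)$-th moment. For fixed $\delta>0$,
\begin{equation}
P\Big(\max_{1\leq l\leq\bar{n}^r}|v(l)|>\delta\sqrt{|N^r|}\Big)\leq\bar{n}^r\,P\big(|v(1)|>\delta\sqrt{|N^r|}\big)\leq\bar{n}^r\,\frac{\mathbb{E}\,|v(1)|^{2+\epsilon}}{\delta^{2+\epsilon}\,|N^r|^{(2+\epsilon)/2}}.
\end{equation}
Since $\bar{n}^r=O(|N^r|)$, the right-hand side is of order $|N^r|^{1-(2+\epsilon)/2}=|N^r|^{-\epsilon/2}\to0$. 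Combining this with the bound $P(n^r>\bar{n}^r)\to0$ from the previous step yields $P(v^{r,T,\max}_{ij\sindex}>\delta\sqrt{|N^r|})\to0$ for every $\delta>0$, which is the asserted convergence in probability; since all constants are uniform over $\mu\in[p_{ij},q_{ij}]$, integrating out the random rate causes no difficulty.

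The argument is essentially routine, and the single point requiring care is the decoupling step: the upper limit of the maximum is itself a random function of the very summands being maximized, so I separate it off by first trapping $n^r$ between deterministic multiples of $|N^r|$ through the renewal law of large numbers, and only then invoke the moment estimate on a fixed range. I would finally remark that, because the service times are exponential and the rates lie in a fixed compact interval, the estimate is uniform over the finitely many pairs $(i,j)$ and can be promoted to a maximum over all servers $\sindex\leq N^r_i$ by one further union bound, the extra factor $N^r_i=O(|N^r|)$ being absorbed by choosing $\epsilon$ large enough (permissible since the exponential distribution has moments of every order), should that uniform version be needed in the subsequent proof of \eqref{prop1equ3}.
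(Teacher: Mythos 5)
Your proof is correct. Note, however, that the thesis itself never proves this lemma: it is imported from Bramson's Lemma 5.1 by citation (the appendix referenced right after the statement contains the proof of \eqref{prop1equ3}, which \emph{uses} the lemma, not a proof of the lemma itself). So your argument is a self-contained reconstruction of what the paper delegates to the reference, and it follows the standard route behind that cited lemma: trap the random number of relevant summands, $n^r=\max\{l: V_{ij\sindex}(l-1)\le |N^r|T\}$, below a deterministic multiple of $|N^r|$ via the renewal law of large numbers, then kill the maximum over the deterministic range by a union bound plus Markov's inequality against the $(2+\epsilon)$-th moment. The decoupling step you single out is indeed the only point where this statement differs from a textbook maximal-term estimate, and your inclusion of events $\{v^{r,T,\max}_{ij\sindex}>\delta\sqrt{|N^r|}\}\subseteq\{n^r>\bar n^r\}\cup\{\max_{l\le \bar n^r}|v(l)|>\delta\sqrt{|N^r|}\}$ handles the dependence between the random index range and the summands correctly, with no independence needed. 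Two refinements: the constant $\kappa$ in your renewal step depends on the realized rate $\mu_{ij\sindex}$, which is itself random, so strictly you should take $\bar n^r$ proportional to the uniform bound of $T$ over the mean gap for $\mu\in[p_{ij},q_{ij}]$ (your closing uniformity remark already points at this) rather than to a single limit $\kappa$; and since the service times here are exponential, you could bypass moment bounds entirely and use the tail estimate $P(|v(1)|>\delta\sqrt{|N^r|})\le e^{-c\delta\sqrt{|N^r|}}$ with $c>0$ depending only on $p_{ij}$, which decays fast enough that your final union bound over all $N^r_i$ servers in a pool is immediate, without having to choose $\epsilon$ large.
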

\vspace{-3mm}
The proof is in Appendix \ref{Appendix-Proof-of-prop1equ3}.

Using this proposition, one can show that $\mathbb{X}^{r,m}$ is almost Lipschitz \viva{in probability}, as described in the next proposition. In this section and for the remainder of this paper, $N$ without a superscript is reused to denote a general constant.

\vspace{2mm}

\begin{proposition}
	Let $\{\mathbb{X}^r_{\pi}\}$ be a sequence of $\pi$-parallel random server systems processes. Assume that Assumption \ref{heavytraffic} and Assumption \ref{steadycondition} hold. Fix $\epsilon>0$, $L>0$, and $T>0$. Then for large enough $r$,
	\begin{equation}
	P \left\lbrace \max_{m<\sqrt{|N^r|}T} \sup_{0 \leq t_1 \leq t_2 \leq L} \left\vert \mathbb{X}^{r,m}(t_2) - \mathbb{X}^{r,m}(t_1) \right\vert > N |t_2 - t_1| + \epsilon \right\rbrace \leq \epsilon,
	\end{equation}
	where $N<\infty$ and only depends on $\lambda$. \label{prop5.3}
\end{proposition}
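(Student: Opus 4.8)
The plan is to treat Proposition \ref{prop5.3} as an essentially deterministic consequence of Proposition \ref{prop5.1}, once we condition on the high-probability events furnished there. Proposition \ref{prop5.1} already gives, uniformly over the blocks $m<\sqrt{|N^r|}T$, that the arrival process $A^{r,m}$ stays within $\epsilon$ of the line $t\mapsto \frac{\lambda^r}{|N^r|}t$ and that the departure process $D^{r,m}$ is almost Lipschitz with a constant $N$ fixed by the coupling argument. I would fix the finitely many exceptional events of \eqref{prop1equ1}--\eqref{prop1equ3}, each of probability at most $\epsilon$, and argue pathwise on their complement; a final union bound and a reselection of $\epsilon$ then yield the stated estimate. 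On this good set the remaining task is to show that the almost Lipschitz property propagates from the two primitive processes $A^{r,m}$ and $D^{r,m}$ to every coordinate of $\mathbb{X}^{r,m}$ through the balance equations \eqref{process1}--\eqref{process7}.

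First, from \eqref{prop1equ1} the increments of $A^{r,m}_j$ are bounded by $\frac{\lambda^r_j}{|N^r|}|t_2-t_1|+2\epsilon$, so $A^{r,m}_j$ is almost Lipschitz with constant converging to $\lambda_j$. Because \eqref{process1} writes $A^{r,m}_j$ as the sum of the nondecreasing processes $A^{r,m}_{qj}$ and $\sum_{i}A^{r,m}_{sij}$, each of these (and each $A^{r,m}_{sij}$) inherits the bound, its increment being dominated by that of $A^{r,m}_j$. The departure bound \eqref{prop1equ2} handles $D^{r,m}$ directly. The delicate coordinate is the nondecreasing process $C^{r,m}_{ij}$ of queue-to-service transitions. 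Here I would invoke the non-idling relation \eqref{originprocess5}: while a class $j$ customer waits, every server in the pools $\mathcal{I}(j)$ is busy, so a waiting customer can enter service in pool $i$ only when a server there is freed by a service completion. This yields, for each pool, the pathwise domination $\sum_{j\in\mathcal{J}(i)}\bigl(C^r_{ij}(s_2)-C^r_{ij}(s_1)\bigr)\le \sum_{j\in\mathcal{J}(i)}\bigl(D^r_{ij}(s_2)-D^r_{ij}(s_1)\bigr)+1$, the additive constant accounting for one possibly incomplete transition at the boundary. After hydrodynamic scaling this constant becomes $O(1/\sqrt{|N^r|})$ and is absorbed into $\epsilon$, so $\sum_{j}C^{r,m}_{ij}$ is almost Lipschitz; monotonicity of each $C^{r,m}_{ij}$ then lifts this to the individual coordinates.

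With $A^{r,m}_{sij}$, $C^{r,m}_{ij}$ and $D^{r,m}_{ij}$ controlled, equation \eqref{process3} makes $Z^{r,m}_{ij}$ almost Lipschitz, and \eqref{process2} does the same for $Q^{r,m}_j$, both with constants built only from those of the inputs. For the time coordinate I would use \eqref{process5}: since $|B^{r,m}_{ij\sindex}|\le 1/\sqrt{x_{r,m}}$, each individual $T^{r,m}_{ij\sindex}$ has increments at most $\frac{2}{|N^r|}|t_2-t_1|$, and summing over the $N^r_i$ servers gives an increment for $T^{r,m}_{ij}$ bounded by $\frac{2N^r_i}{|N^r|}|t_2-t_1|\to 2\beta_i|t_2-t_1|$. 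The busy indicators $B^{r,m}_{ij\sindex}$ themselves have increments at most $1/\sqrt{x_{r,m}}\le 1/\sqrt{|N^r|}$, which falls below $\epsilon$ for large $r$. Taking $N$ to be the maximum of the finitely many resulting constants (each expressible through $\lambda$, the $\beta_i$ and the $q_{ij}$) and adjusting $\epsilon$ completes the argument.

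The main obstacle is the separation of the queue content $Q^{r,m}$ from the server content $Z^{r,m}$: the balance equations only control the total class count $Q^{r,m}_j+\sum_i Z^{r,m}_{ij}$ (arrivals minus departures) for free, and splitting it requires the structural, work-conserving bound on $C^{r,m}$ described above. The secondary care is uniformity: every estimate must hold simultaneously for all $m<\sqrt{|N^r|}T$, which is precisely the form in which Proposition \ref{prop5.1} is stated, so the pathwise propagation preserves the single maximum over blocks without introducing union bounds beyond the three already present in \eqref{prop1equ1}--\eqref{prop1equ3}.
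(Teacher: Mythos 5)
Your proposal follows essentially the same route as the paper's own proof: work pathwise on the intersection of the complements of the events in Proposition \ref{prop5.1}, bound the increments of $C^{r,m}$ by those of $D^{r,m}$ via non-idling, and then propagate the almost Lipschitz property through \eqref{process1}--\eqref{process3} and \eqref{process5} to $Z^{r,m}$, $Q^{r,m}$ and $T^{r,m}$, taking $N$ as the maximum of the finitely many resulting constants. The one genuine difference is your treatment of $C^{r,m}$: the paper asserts the coordinate-wise domination $C^{r,m}_{ij}(t_2)-C^{r,m}_{ij}(t_1)\le D^{r,m}_{ij}(t_2)-D^{r,m}_{ij}(t_1)$, which is questionable since a server freed by a class-$j'$ departure may admit a queued class-$j$ customer, whereas your pool-wise bound $\sum_{j\in\mathcal{J}(i)}\bigl(C^r_{ij}(s_2)-C^r_{ij}(s_1)\bigr)\le \sum_{j\in\mathcal{J}(i)}\bigl(D^r_{ij}(s_2)-D^r_{ij}(s_1)\bigr)+1$ is the correct work-conservation statement and, combined with monotonicity of each $C^{r,m}_{ij}$, still yields a finite Lipschitz constant, so your version is if anything a more careful rendering of the same argument.
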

The proof is similar to that of Proposition 5.3 in \cite{Dai}. We include it into Appendix \ref{Appendix-prop5.3}.

For convenience, we assume for the rest of the paper that $N \geq 1$ and $L \geq 1$.
Let
\begin{equation}
\mathscr{K}^r_0 =  \left\lbrace \max_{m < \sqrt{|N^r|}T} \sup_{0 \leq t_1 \leq t_2 \leq L} |\mathbb{X}^{r,m}(t_1) - \mathbb{X}^{r,m}(t_2)| \leq N|t_1-t_2| + \epsilon(r) \right\rbrace, \label{almostLP}
\end{equation}
where $N,L,$ and $T$ are fixed as before and $\epsilon(r)$ with $\epsilon \to 0$ as $r \to \infty$ is a sequence of real numbers. Similarly, we can replace $\epsilon$ in \eqref{prop1equ1}, \eqref{prop1equ2}, and \eqref{prop1equ3} by $\epsilon(r)$. We denote these new inequalities obtained from \eqref{prop1equ1}, \eqref{prop1equ2}, and \eqref{prop1equ3} by \eqref{prop1equ1}$'$, \eqref{prop1equ2}$'$, and \eqref{prop1equ3}$'$. Let $\mathscr{K}^r$ denote the intersection of $\mathscr{K}^r_0$ with the complements of the events in \eqref{prop1equ1}$'$, \eqref{prop1equ2}$'$, and \eqref{prop1equ3}$'$. As in \cite{Dai}, when $\epsilon(r) \to 0$ sufficiently slowly as $r \to \infty$, one can show that $P(\mathscr{K}^r) \to 1$ as $r \to \infty$.
\vspace{2mm} \par
We summarize the above discussion in the following corollary for future reference, which is similar to \cite[Corollary 5.1]{Dai}.

\begin{corollary}
	Let $\{\mathbb{X}^r_{\pi}\}$ be a sequence of $\pi$-parallel random server system processes. Assume that Assumption \ref{heavytraffic} and Assumption \ref{steadycondition} hold. Fix $L>0$, and $T>0$ and choose $\epsilon(r)$ as above. Then for $\mathscr{K}^r$ defined as above
	\begin{equation}
	\lim_{r \to \infty} P(\mathscr{K}^r) = 1. \label{goodpoints1}
	\end{equation}
\end{corollary}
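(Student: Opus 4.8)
The plan is to obtain \eqref{goodpoints1} as a routine consequence of Propositions \ref{prop5.1} and \ref{prop5.3}; the only genuine content lies in choosing a tolerance sequence $\epsilon(r)$ that decreases to zero slowly enough. First I would pass to complements. Since $\mathscr{K}^r$ is the intersection of the almost-Lipschitz event $\mathscr{K}^r_0$ with the complements of the three bad events recorded in \eqref{prop1equ1}$'$, \eqref{prop1equ2}$'$ and \eqref{prop1equ3}$'$, subadditivity gives
\begin{equation*}
P\big((\mathscr{K}^r)^c\big) \;\leq\; P\big((\mathscr{K}^r_0)^c\big) + p_1^r + p_2^r + p_3^r,
\end{equation*}
where $p_1^r,p_2^r,p_3^r$ are the probabilities of the bad events in \eqref{prop1equ1}$'$, \eqref{prop1equ2}$'$ and \eqref{prop1equ3}$'$. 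Each of these four probabilities is exactly of the type bounded in Proposition \ref{prop5.3} (for $\mathscr{K}^r_0$) and Proposition \ref{prop5.1} (for the other three), except that the fixed tolerance $\epsilon$ has been replaced by the $r$-dependent tolerance $\epsilon(r)$.

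The key observation is that both propositions have the form: for every fixed $\epsilon>0$ there is a threshold $r_0(\epsilon)$ such that the corresponding bad event has probability at most $\epsilon$ for all $r \geq r_0(\epsilon)$. Letting $r_0(\epsilon)$ denote the largest of the four such thresholds, I would build $\epsilon(r)$ by a step-function (diagonal) construction: fix any sequence $\epsilon_n \downarrow 0$, set $r_n = \max\{r_{n-1}+1,\, r_0(\epsilon_n)\}$ so that $\{r_n\}$ is strictly increasing, and put $\epsilon(r)=\epsilon_n$ whenever $r_n \leq r < r_{n+1}$ (with $\epsilon(r)=\epsilon_1$ for $r<r_1$). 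Then $\epsilon(r)\downarrow 0$, and for each $r$ the defining index $n$ satisfies $r \geq r_n \geq r_0(\epsilon_n) = r_0(\epsilon(r))$, so every one of the four bounds applies with tolerance $\epsilon(r)$. Hence
\begin{equation*}
P\big((\mathscr{K}^r)^c\big) \;\leq\; 4\,\epsilon(r) \;\longrightarrow\; 0 \qquad (r\to\infty),
\end{equation*}
which is precisely \eqref{goodpoints1}.

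I expect the main obstacle to be exactly the phenomenon behind the phrase ``$\epsilon(r)\to 0$ sufficiently slowly'': substituting $\epsilon(r)$ for $\epsilon$ in \eqref{prop1equ1}$'$--\eqref{prop1equ3}$'$ and in the definition of $\mathscr{K}^r_0$ is legitimate only after $r$ has surpassed the ($\epsilon$-dependent, and generally growing) thresholds $r_0(\epsilon)$ supplied by the two propositions, so $\epsilon(r)$ may not decay faster than those thresholds permit; the step-function construction is designed precisely to enforce this coordination. A secondary point I would verify is that $(\mathscr{K}^r)^c$ really is a union of only four events, so that subadditivity costs the constant factor $4$ rather than a union bound growing with the number of servers $N^r_i$: this requires reading \eqref{prop1equ3}$'$ as a single event — a maximum over the finitely many pool-and-class pairs, with the server index $\sindex$ controlled uniformly through the residual-service estimate of Lemma \ref{lemma-first-residual-time} — which is how the per-server terms are aggregated in the proof of Proposition \ref{prop5.1}.
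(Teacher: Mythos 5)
Your core argument---passing to complements, bounding $P((\mathscr{K}^r)^c)$ by subadditivity, and choosing $\epsilon(r)$ by the step-function (diagonal) construction so that $r \geq r_0(\epsilon(r))$ holds for every $r$---is correct, and it is precisely the argument the paper leaves implicit: the text merely asserts, citing \cite{Dai}, that $P(\mathscr{K}^r) \to 1$ when ``$\epsilon(r) \to 0$ sufficiently slowly'', and your construction is the standard way to make that precise. For the event $\mathscr{K}^r_0$ and the events coming from \eqref{prop1equ1}$'$ and \eqref{prop1equ2}$'$ this is all that is needed, since those are indexed by the fixed finite sets $\mathcal{I}$ and $\mathcal{J}$.

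However, the point you flag as ``secondary'' is in fact the crux, and your resolution of it is not supported by the paper. The events in \eqref{prop1equ3}$'$ are indexed by individual servers $\sindex = 1,\dots,N^r_i$, and the proof of \eqref{prop1equ3} in Appendix \ref{Appendix-Proof-of-prop1equ3} is genuinely per-server: it applies Bramson's Proposition 4.2---a Chebyshev-type bound of the form $\epsilon/n$ with $n \asymp \sqrt{x_{r,0}} \geq \sqrt{|N^r|}$---and Lemma \ref{lemma-first-residual-time}, which is likewise stated for each fixed $\sindex$; nowhere does that proof produce a bound on the maximum over $\sindex$. If $\mathscr{K}^r$ is read, as the paper's wording suggests, as intersecting the complements over all $\sindex$, the union bound costs a factor $N^r_i \asymp \beta_i |N^r|$, giving $P((\mathscr{K}^r)^c) \lesssim |N^r|\,\epsilon(r)$, and the step construction cannot rescue this: killing the factor $|N^r|$ forces $\epsilon(r) = o(1/|N^r|)$, while respecting the (non-quantitative) thresholds $r_0(\epsilon_n)$ forces $\epsilon(r)$ to decay slowly, and the two requirements cannot be reconciled in general. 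Two honest fixes exist: (a) define $\mathscr{K}^r$ using only the finitely many pool-level aggregated events, which is all the paper actually uses downstream---e.g.\ the proof of Proposition \ref{mainbridge} controls $\sum_{\sindex}\bigl(T^{r,m}_{ij\sindex} - D^{r,m}_{ij\sindex}/\mu_{ij\sindex}\bigr)$ by the separate superposition argument \eqref{sumofindividual}, not by summing \eqref{prop1equ3}$'$ over $\sindex$; or (b) upgrade \eqref{prop1equ3} to a statement uniform in $\sindex$, which requires exponential (Chernoff-type) concentration for the renewal deviations---available here since service times are exponential---in place of the $\epsilon/n$ bound. To be fair, the same imprecision appears in the paper itself (the event $\mathscr{V}^r$ in the proof of Proposition \ref{prop5.3} is formed the same way), so your reconstruction is faithful to the intended argument; but as written, your justification of the four-event count attributes to the proof of Proposition \ref{prop5.1} an aggregation over servers that it does not contain, so the bound $P((\mathscr{K}^r)^c) \leq 4\epsilon(r)$ is not established under the paper's literal definition of $\mathscr{K}^r$.
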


\label{Section-hydro-scaling}

\subsection{Hydrodynamic limits}
In this section, we define the hydrodynamic limits. First, we define a set of functions that contains all of the hydrodynamic limits. The following definitions are similar to those in \cite{Dai}, and the notation is adapted from that paper.

Fix $L>0$. Let $\tilde{E}$ be the set of right continuous functions with left limits, $x:[0,L] \to \mathbb{R}^d$. Let $E'$ denote those $x \in \tilde{E}$ that satisfy
\begin{equation}
|x(0)| \leq 1
\end{equation}
and
\begin{equation}
|x(t_2) - x(t_1)| \leq N|t_1 - t_2| \text{ for all } t_1, t_2 \in [0,L],
\end{equation}
where the constant $N$ is chosen as in Proposition \ref{prop5.3}. We set
\begin{equation}
E^r = \{ \mathbb{X}^{r,m}, m < \sqrt{|N^r|}T, \omega \in \mathscr{K}^r \}
\end{equation}
and
\begin{equation}
\mathscr{E} =  \{ E^r: r \in \mathbb{N} \}
\end{equation}
where $T$ is fixed, and $\mathscr{K}^r$ is defined as in the previous section.

We define a \textit{hydrodynamic limit} $x$ of $\mathscr{E}$ to be a point $ x \in \tilde{E}$ such that for all $\epsilon > 0$ and $r_0 \in \mathbb{N}$, there exist $r \geq r_0$ and $y \in E^r$, with $\left\Vert x(\cdot) - y(\cdot) \right\Vert_L < \epsilon$.

Because
\begin{equation}
|\mathbb{X}^{r,m}(0)| \leq 1
\end{equation}
for all $m < \sqrt{|N^r|}T$ and $r \in \mathbb{N}$, the following result is a corollary in \cite{Dai}, and is similar to Corollary 5.2 in that paper. It shows that the hydrodynamic limits are ``rich'' in the sense that, for $r$ large enough, every hydrodynamic scaled process is close to a hydrodynamic limit. One can use the almost Lipschitz property of processes $\mathbb{X}^{r,m}$ to show this-see \cite[Lemma 4.2]{Bramson}.

\begin{corollary}
	Let $\{\mathbb{X}^r_{\pi}\}$ be a sequence of $\pi$-parallel random server systems processes. Assume that Assumption \ref{heavytraffic} holds and $\pi$ satisfies \ref{steadycondition}. Let $\tilde{E}, E^r$, and $\mathscr{E}$ be as specified above. Fix $\epsilon > 0, L >0$ and $T>0$, and choose $r$ large enough. Then, for $\omega \in \mathscr{K}^r$ and any $m < \sqrt{|N^r|}T$, there exists a hydrodynamic limit $\tilde{\mathbb{X}}(\cdot) \in E'$, such that
	\begin{equation}
	\left\Vert \mathbb{X}^{r,m}(\cdot) - \tilde{\mathbb{X}}(\cdot) \right\Vert_L \leq \epsilon.
	\end{equation}
	\label{convergencecorollary}
\end{corollary}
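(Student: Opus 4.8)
The corollary to prove (Corollary \ref{convergencecorollary}) asserts that, on the good event $\mathscr{K}^r$, for $r$ large enough and any interval index $m < \sqrt{|N^r|}T$, the hydrodynamically scaled process $\mathbb{X}^{r,m}(\cdot)$ is within $\epsilon$ (in the $\|\cdot\|_L$ norm) of some hydrodynamic limit $\tilde{\mathbb{X}}(\cdot) \in E'$. This is a compactness / "richness of limit points" statement: it says the family of hydrodynamic limits is dense among the hydrodynamically scaled trajectories. The author has signposted the proof by citing \cite[Lemma 4.2]{Bramson} and by noting the analogy to Corollary 5.2 in \cite{Dai}.

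**The plan.** The key observation is that on $\mathscr{K}^r$ every $\mathbb{X}^{r,m}$ belongs to $E^r$, and by definition of $\mathscr{K}^r_0$ every such path satisfies the almost-Lipschitz bound $|\mathbb{X}^{r,m}(t_2)-\mathbb{X}^{r,m}(t_1)| \leq N|t_2-t_1|+\epsilon(r)$ together with $|\mathbb{X}^{r,m}(0)| \leq 1$. First I would argue by contradiction: suppose the conclusion fails, i.e.\ there is an $\epsilon>0$ such that for arbitrarily large $r$ one can find $m < \sqrt{|N^r|}T$ with $\mathbb{X}^{r,m}$ at distance $>\epsilon$ from every point of $E'$. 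This produces a sequence of indices $r_k \to \infty$ and corresponding paths $y_k := \mathbb{X}^{r_k,m_k} \in E^{r_k}$, each of which stays at least $\epsilon$ away from $E'$ in $\|\cdot\|_L$. Second, I would apply an Arzel\`a--Ascoli-type argument: the paths $\{y_k\}$ are uniformly bounded (since $|y_k(0)|\leq 1$ and the increments are controlled by $N L + \epsilon(r_k)$) and asymptotically equicontinuous because the Lipschitz-type modulus $N|t_2-t_1|+\epsilon(r_k)$ has $\epsilon(r_k)\to 0$. Hence some subsequence converges uniformly on $[0,L]$ to a limit $x^\ast$.

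**Identifying the limit as a hydrodynamic limit in $E'$.** Third, the uniform limit $x^\ast$ inherits the genuine Lipschitz bound $|x^\ast(t_2)-x^\ast(t_1)| \leq N|t_2-t_1|$ (the $\epsilon(r_k)$ term vanishes in the limit) and $|x^\ast(0)|\leq 1$, so $x^\ast \in E'$. Moreover $x^\ast$ is a hydrodynamic limit of $\mathscr{E}$ in the precise sense defined just above the corollary: for every $\epsilon'>0$ and $r_0$, by picking $r_k \geq r_0$ with $\|y_k - x^\ast\|_L < \epsilon'$ and noting $y_k \in E^{r_k}$, the defining density condition is met. But then $\|y_k - x^\ast\|_L \to 0$ contradicts the standing assumption that each $y_k$ is at distance $>\epsilon$ from every element of $E'$, in particular from $x^\ast$. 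This contradiction establishes the corollary.

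**Expected main obstacle.** The routine part is the contradiction bookkeeping; the genuinely delicate step is the equicontinuity/precompactness argument on the Skorohod space, where the paths are only RCLL rather than continuous. I would need to be careful that the almost-Lipschitz bound with vanishing $\epsilon(r)$ slack genuinely upgrades to equicontinuity of the subsequential limit, so that a continuous (indeed Lipschitz) limit exists despite each prelimit path having jumps. This is exactly the content abstracted in \cite[Lemma 4.2]{Bramson}, so I would invoke that lemma to handle the passage from the almost-Lipschitz family to the existence of a uniformly close Lipschitz limit, rather than reproving the compactness by hand. The remaining care is ensuring the bound $N$ and the event $\mathscr{K}^r$ are uniform in $m$, which is already guaranteed since Proposition \ref{prop5.3} and Corollary with \eqref{goodpoints1} control all $m < \sqrt{|N^r|}T$ simultaneously on $\mathscr{K}^r$.
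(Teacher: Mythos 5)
Your proposal is correct in substance, but note that the paper itself offers no proof of this corollary: it simply points to the analogous Corollary 5.2 of \cite{Dai} and to \cite[Lemma 4.2]{Bramson}, remarking that the almost-Lipschitz property of the processes $\mathbb{X}^{r,m}$ is what drives the result. What you have written is essentially the standard argument hiding behind those citations --- a contradiction combined with an Arzel\`a--Ascoli-type extraction for families satisfying $|y_k(t_2)-y_k(t_1)|\le N|t_2-t_1|+\epsilon(r_k)$ with $\epsilon(r_k)\to 0$, which upgrades in the limit to a genuine $N$-Lipschitz (hence continuous) element of $E'$ --- so you are supplying the proof the paper outsources rather than taking a different route; the delicate step you flag (passing from almost-Lipschitz RCLL paths to a Lipschitz limit) is exactly the content of Bramson's lemma, and invoking it there is legitimate. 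One slip is worth fixing: the negation of the corollary is that $\mathbb{X}^{r_k,m_k}$ stays at distance $>\epsilon$ from every \emph{hydrodynamic limit} lying in $E'$, not from every point of $E'$; the latter is a stronger hypothesis, and refuting it by contradiction would only establish a weaker conclusion (closeness to some point of $E'$, not necessarily to a hydrodynamic limit). The slip is harmless in your execution, because the limit $x^{\ast}$ you construct is verified both to lie in $E'$ and to satisfy the density condition defining a hydrodynamic limit of $\mathscr{E}$, so the uniform convergence $\Vert y_{k_j} - x^{\ast} \Vert_L \to 0$ contradicts the correct negation as well --- but you should state the contradiction hypothesis accordingly.
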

The next result is mainly needed to translate the condition on the hydrodynamic model solutions to hydrodynamic limits given in Assumption \ref{SSCcondition}. It also reveals the origin of hydrodynamic model equations.

\begin{proposition}
	Let $\{\mathbb{X}^r_{\pi}\}$ be a sequence of $\pi$-parallel random server system processes. Assume that Assumption \ref{heavytraffic} holds and $\pi$ satisfies Assumption \ref{steadycondition}. Choose $L > 0$ and let $\tilde{\mathbb{X}}_{\pi}$ be a hydrodynamic limit of $\mathscr{E}$ over $[0,L]$. $\tilde{\mathbb{X}}_{\pi}$ satisfies the hydrodynamic model equations \eqref{HDfirst}-\eqref{HDlast} on $[0,L]$. \label{mainbridge}
\end{proposition}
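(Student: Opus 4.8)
The plan is to show that each hydrodynamic limit $\tilde{\mathbb{X}}_{\pi}$ inherits the dynamic equations \eqref{HDfirst}--\eqref{HDlast} as the $r \to \infty$ limit of the hydrodynamically scaled relations \eqref{process1}--\eqref{process7}. By Corollary \ref{convergencecorollary}, for any $\epsilon$ and large $r$ there exist $m < \sqrt{|N^r|}T$ and $\omega \in \mathscr{K}^r$ with $\|\mathbb{X}^{r,m}(\cdot) - \tilde{\mathbb{X}}(\cdot)\|_L \leq \epsilon$, so it suffices to pass each scaled equation to the limit along such a subsequence. The equations \eqref{HDfirst}, \eqref{HDmodel2}, and \eqref{HDmodel3} follow almost immediately from the linear relations \eqref{process1}, \eqref{process2}, and the monotonicity of the scaled processes, using \eqref{prop1equ1}$'$ to identify the limit of $A^{r,m}_j$ with $\lambda_j t$ (recall $\lambda^r_j/|N^r| \to \lambda_j$). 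The nonidling complementarity relations \eqref{HDmodel6} and \eqref{HDmodel7} transfer from \eqref{process6} and \eqref{process7} by the standard argument that a product of a convergent nonnegative process with a convergent nondecreasing process, vanishing along the prelimit, vanishes in the limit; here the continuity guaranteed by the almost-Lipschitz bound on $\mathscr{K}^r$ is what makes the integral term \eqref{process7} converge.

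The core of the argument is establishing the service relation \eqref{HDmodel4} together with the linear time law \eqref{HDmodel8}, and this is where our randomness among servers forces a departure from \cite{Dai}. First I would handle \eqref{HDmodel8}: from the busy-time representation \eqref{process5}, since $x_{r,m}/|N^r|^2 \to 0$ and $B^{r,m}_{ij\sindex}$ is bounded on $\mathscr{K}^r$, the integral term is negligible and $T^{r,m}_{ij\sindex}(t) \to (x^*_{ij}/|N^r|)\,t$ in the appropriate scaling, so the aggregate $\tilde{T}_{ij}(t) = z_{ij}t$ with $z_{ij} = \beta_i x^*_{ij}$. For the departure relation, I would start from \eqref{process4}, which expresses each $D^{r,m}_{ij\sindex}$ as a centered-and-scaled increment of the Poisson clock $S_{ij\sindex}$ evaluated at $\mu_{ij\sindex}T^r_{ij\sindex}$. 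Summing over $\sindex = 1,\dots,N^r_i$ and invoking the FSLLN \eqref{FSLLN} for $\sum_{\sindex} S_{ij\sindex}(|N^r|\,\cdot)$ together with the just-established behaviour of $T^{r,m}_{ij\sindex}$, the fluctuation term washes out and the drift is governed by the \emph{average} rate $\bar{\mu}_{ij}$ rather than any single $\mu_{ij\sindex}$. This yields $D^{r,m}_{ij}(t) \to \bar{\mu}_{ij}\tilde{T}_{ij}(t)$, which substituted into the limit of \eqref{process3} gives exactly \eqref{HDmodel4}.

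The main obstacle I anticipate is precisely this aggregation step for the departures: because the $\mu_{ij\sindex}$ are i.i.d.\ random and each server must in principle be tracked individually, I cannot simply read off a deterministic rate from a single renewal/Poisson clock as \cite{Dai} do. The route around this is to combine the almost-Lipschitz bound \eqref{prop1equ2} (proved via the coupling construction \eqref{stochastic-equivalent-departure}) with the law-of-large-numbers averaging over the $N^r_i \to \infty$ servers in each pool, so that $\frac{1}{\sqrt{x_{r,m}}}\sum_{\sindex} \mu_{ij\sindex}(\cdots)$ converges to $\bar{\mu}_{ij}\,\tilde{T}_{ij}$ almost surely on the good set $\mathscr{K}^r$. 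One must be careful that the convergence in \eqref{prop1equ3}$'$ controls the discrepancy between $T^{r,m}_{ij\sindex}$ and $\frac{1}{\mu_{ij\sindex}}D^{r,m}_{ij\sindex}$ uniformly, so that the duality \eqref{srvtinequa} between the cumulative service process $V_{ij\sindex}$ and the counting process is not lost in the limit. The remaining state-constraints \eqref{HDmodel5} pass to the limit by continuity and the prelimit bounds \eqref{originprocess4}, and the policy equations \eqref{HDlast} transfer by the assumed structure of $\pi$, completing the verification.
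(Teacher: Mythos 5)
Your overall skeleton matches the paper's proof: approximate the hydrodynamic limit by $\mathbb{X}^{r,m}$ via Corollary \ref{convergencecorollary}, pass the prelimit equations \eqref{process1}--\eqref{process7} to the limit, identify the arrival drift through \eqref{prop1equ1}$'$, use Lemma \ref{lemma-bounds} for the time-allocation law \eqref{HDmodel8}, and argue \eqref{HDmodel6}--\eqref{HDmodel7} by noting that a strictly positive limiting queue forces the prelimit idleness terms to vanish. However, there is a genuine gap at exactly the step you yourself flag as the main obstacle: the identification $D^{r,m}_{ij} \to \bar{\mu}_{ij}\tilde{T}_{ij}$, which in the paper rests on the estimate \eqref{HDconvergence13}, namely $\bigl\Vert \sum_{\sindex}\bigl(T^{r,m}_{ij\sindex}(t) - D^{r,m}_{ij\sindex}(t)/\mu_{ij\sindex}\bigr)\bigr\Vert_L < \epsilon(r)$ uniformly over $m<\sqrt{|N^r|}T$. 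Neither of the tools you propose delivers this. The FSLLN \eqref{FSLLN} is a fluid-scale statement: it controls errors of order $o(|N^r|)$, whereas the hydrodynamic scaling divides by $\sqrt{x_{r,m}}\approx\sqrt{|N^r|}$ and requires a bound holding simultaneously over roughly $\sqrt{|N^r|}T$ shifted time windows; an almost-sure fluid limit says nothing about Poisson fluctuations at that finer scale. Your fallback, applying \eqref{prop1equ3}$'$ ``uniformly'' across servers, fails on aggregation: \eqref{prop1equ3}$'$ is a per-server bound with error $\epsilon(r)$, so summing it over $\sindex = 1,\dots,N^r_i$ produces a total error $N^r_i\,\epsilon(r)\to\infty$; and the coupling-based almost-Lipschitz bound \eqref{prop1equ2} only gives modulus-of-continuity control, not the value of the limiting drift.

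What the paper does at this point is estimate the aggregate centered quantity in one shot rather than server by server: it writes $\sum_{\sindex}\bigl(T^{r,m}_{ij\sindex}-D^{r,m}_{ij\sindex}/\mu_{ij\sindex}\bigr)$ as a sum of centered Poisson increments, uses stationarity to shift each increment back to the origin, bounds the random busy times by $t$, and then invokes the superposition property so that $\sum_{\sindex}S_{ij\sindex}(\mu_{ij\sindex}\,\cdot)$ becomes a \emph{single} standard Poisson process run at an aggregate rate bounded by $N^r_i q$ (this is the chain of relations in \eqref{sumofindividual}). A single application of the maximal estimate \eqref{convergetomean} (Proposition 4.3 of \cite{Bramson}) to that one process, with failure probability of order $\epsilon/\sqrt{|N^r|}$ and hence summable over the $m$ windows, then yields \eqref{HDconvergence13}. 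The Law of Large Numbers enters only afterwards, to replace the empirical drift $\sum_{\sindex}\mu_{ij\sindex}x^*_{ij}/|N^r|$ by $\bar{\mu}_{ij}z_{ij}$ as in \eqref{HDconvergence8}, after which \eqref{HDmodel4} follows from \eqref{process3} exactly as you describe. Your plan needs this aggregate-before-estimating step inserted; without it, the fluctuation term in \eqref{process4} is simply not controlled at the hydrodynamic scale.
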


To prove \ref{mainbridge}, we need a lemma which appears to be the same as Lemma C.1 in \cite{Dai}, but it is for random rates systems.
\begin{lemma}
	Let $\{ \mathbb{X}^r \}$ be a sequence of $\pi$-parallel random server systems processes. Assume that Assumption \ref{heavytraffic} holds and $\pi$ satisfies Assumption \ref{steadycondition}. Fix $\epsilon >0, L>0, $ and $T>0$. Then, for large enough $r$ and $\omega \in \mathscr{A}$,
	\begin{equation}
	\max_{m < \sqrt{|N^r|}T} \frac{\sqrt{x_{r,m}}}{|N^r|} \int_0^L |Z^{r,m}_{ij}(s)| ds < \epsilon, \forall \ i \in \mathcal{I}, \mbox{ and } j \in \mathcal{J}(i).
	\end{equation}
	\label{lemma-bounds}
\end{lemma}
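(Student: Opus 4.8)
The plan is to undo the hydrodynamic scaling by a change of variables, thereby reducing the claim to a statement about the deviation of $Z^r_{ij}$ from its fluid value $N^r_i x^*_{ij}$ over a \emph{fixed} compact time window, and then to make that deviation uniformly small by invoking the steady-state fluid limit guaranteed by Assumption \ref{steadycondition}. The factors of $\sqrt{x_{r,m}}$ are designed to cancel, so the apparent dependence on $m$ through $x_{r,m}$ will disappear once the substitution is carried out.

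First I would insert the definition \eqref{Zscaling} of $Z^{r,m}_{ij}$ and perform the change of variable $u = \frac{\sqrt{x_{r,m}}\,s}{|N^r|} + \frac{m}{\sqrt{|N^r|}}$, whose Jacobian is $\frac{\sqrt{x_{r,m}}}{|N^r|}$. A direct computation then gives
\begin{equation*}
\frac{\sqrt{x_{r,m}}}{|N^r|}\int_0^L |Z^{r,m}_{ij}(s)|\,ds = \frac{1}{\sqrt{x_{r,m}}}\int_{m/\sqrt{|N^r|}}^{\frac{\sqrt{x_{r,m}}\,L}{|N^r|}+\frac{m}{\sqrt{|N^r|}}} \left| Z^r_{ij}(u) - N^r_i x^*_{ij}\right|\,du,
\end{equation*}
so that we are left with an ordinary, unscaled time integral of the deviation process $Z^r_{ij}(u) - N^r_i x^*_{ij}$.

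Next I would control the window of integration. Since the total number of busy servers never exceeds $|N^r|$, the scalar quantity appearing in \eqref{scalingparm} satisfies $\big| Z^r(m/\sqrt{|N^r|}) - |N^r| \big| \le |N^r|$, whence $x_{r,m} \le |N^r|^2$; together with the lower bound $x_{r,m} \ge |N^r|$ this yields $\frac{\sqrt{x_{r,m}}}{|N^r|} \le 1$. As $m < \sqrt{|N^r|}T$ forces $m/\sqrt{|N^r|} < T$, the upper endpoint of integration is at most $L+T$, so for every admissible $m$ the variable $u$ stays inside the fixed compact interval $[0,L+T]$. The crucial step is then the uniform smallness of the integrand: on $\mathscr{A}$ the FSLLN \eqref{FSLLN} holds, and combining Assumption \ref{steadycondition} with Theorem \ref{fluidlimit} (every fluid limit started at the steady state $z$ remains at $z$, and the initial data converge to $z$) gives $Z^r_{ij}(\cdot)/|N^r| \to \beta_i x^*_{ij}$ u.o.c. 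Using also $N^r_i/|N^r| \to \beta_i$ from \eqref{server-pool-percertage}, for any $\delta>0$ and all large $r$ I would obtain $\sup_{u\in[0,L+T]}\big| Z^r_{ij}(u) - N^r_i x^*_{ij}\big| \le \delta\,|N^r|$. Plugging this into the integral, whose interval has length $\frac{\sqrt{x_{r,m}}\,L}{|N^r|}$, bounds the whole expression by $\frac{1}{\sqrt{x_{r,m}}}\cdot \delta|N^r|\cdot \frac{\sqrt{x_{r,m}}\,L}{|N^r|}=\delta L$, independently of $m$; choosing $\delta=\epsilon/L$ and then taking the maximum over the finitely many pairs $(i,j)$ completes the argument.

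The main obstacle is exactly the uniform bound on the deviation over the whole window $[0,L+T]$ and simultaneously over all $m < \sqrt{|N^r|}T$, which is where the randomness of the service rates enters. The fluid limit in Theorem \ref{fluidlimit} now carries the averaged rates $\bar{\mu}_{ij}$ in place of deterministic ones, so I must make sure that, for the fixed sample point $\omega\in\mathscr{A}$, the convergence $Z^r_{ij}/|N^r|\to z_{ij}$ is genuinely uniform on the compact interval, so that a single large-$r$ threshold serves every window position $m$ at once. Once $\frac{\sqrt{x_{r,m}}}{|N^r|}\le 1$ and this uniform fluid convergence are in hand, the change of variables and the final estimate are routine.
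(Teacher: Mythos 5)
Your proof is correct and is essentially the argument the paper relies on: the paper defers to Lemma C.1 of \cite{Dai}, whose proof likewise cancels the $\sqrt{x_{r,m}}$ factors so that the integrand becomes the fluid-scaled deviation $\left\vert Z^r_{ij}(\cdot) - N^r_i x^*_{ij}\right\vert/|N^r|$, observes that all time arguments stay in the compact window $[0,L+T]$ because $\sqrt{x_{r,m}}/|N^r| \leq 1$ and $m/\sqrt{|N^r|} < T$, and then invokes the u.o.c.\ convergence furnished by Assumption \ref{steadycondition} together with the initial condition \eqref{diffusioninitialcondition}. Your change of variables is a cosmetic repackaging of that same cancellation, so the two proofs coincide in substance.
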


\begin{proof}
	The proof of this lemma is the same as in \cite{Dai}, except we do not consider $Q^{r,m}$. Recalling that $Z^{r,m}_{ij}(t) = \sum_{\sindex=1}^{N^r_i} B^{r,m}_{ij \sindex}(t)$, the proof becomes obvious.
\end{proof}
Now we prove the proposition.
\begin{proof}[The proof of Proposition \ref{mainbridge}]
	Proof is similar to that in \cite[Proposition 5.4]{Dai}. Assume that Assumption \ref{heavytraffic} holds, and  $\pi$ satisfies Assumption \ref{steadycondition}. $g$ satisfies Assumption \ref{continuity}. Fix $\omega \in \mathscr{K}^r$ and let $\mathbb{X}^{r,m}$ be given as in \eqref{generalscaling}-\eqref{Bscaling}. By \eqref{prop1equ1}$'$, we have, for large enough $r$, that
	\begin{equation}
	\left\Vert A^{r,m}(t) - \frac{\lambda^r}{|N^r|}t \right\Vert_L \leq \epsilon(r). \label{HDconvergence1}
	\end{equation}
	Using \eqref{process5} and Lemma \ref{lemma-bounds} gives
	\begin{equation}
	\left\Vert \sum_{\sindex=1}^{N^r_i} \left( T^{r,m}_{ij \sindex} (t) - \frac{x^*_{ij}}{|N^r|}t \right) \right\Vert_L \leq \epsilon(r). \label{HDconvergence3}
	\end{equation}
	Now select any hydrodynamic limit $\tilde{\mathbb{X}}$ of $\mathscr{E}$. By Corollary \ref{convergencecorollary}, for given $\delta > 0$, choose $(r,m)$ so that, $\epsilon(r) \leq \delta$,
	\begin{equation}
	\left\Vert \tilde{\mathbb{X}}(t) - \mathbb{X}^{r,m}(t,\omega) \right\Vert_L \leq \delta,\label{HDconvergence4}
	\end{equation}
	and\\
	\begin{equation}
	\left\vert \frac{\lambda^r}{|N^r|} - \lambda \right\vert \leq \delta.\label{HDconvergence5}
	\end{equation}
	It follows from \eqref{HDconvergence1} and \eqref{HDconvergence5} that
	\begin{equation}
	\left\Vert \tilde{A}(t) - \lambda t \right\Vert_L \leq (2+L)\delta.\label{HDconvergence6}
	\end{equation}
	
	\viva{Until now the proof remains the same as in \cite[Proposition 5.4]{Dai}. The next inequality is an analogy of inequality (C25) in \cite{Dai}. However, we can not derive (C25) directly in our case because we do not have a constant $\mu_{jk}$ here. Since we have i.i.d.\ random service rates $\{\mu_{ij \sindex}\}, i \in \mathcal{I}, j \in \mathcal{J}(i), \sindex=1,2,\dots,N_i$, using the Law of Large Numbers and properties of Poisson processes, we have} 
	\vspace{3mm}
	\begin{align*}
	&\left\Vert \sum_{\sindex=1}^{N^r_i} \left( T^{r,m}_{ij \sindex}(t) - \frac{D^{r,m}_{ij \sindex}(t)}{\mu_{ij \sindex}} \right) \right\Vert_L\\
	&= \frac{1}{\sqrt{x_{r,m}}} \left\Vert \sum_{\sindex=1}^{N^r_i}  \frac{1}{\mu_{ij \sindex}} \left( S_{ij \sindex} \left( \mu_{ij \sindex} T^r_{ij \sindex} \left( \frac{m}{\sqrt{|N^r|}} \right) \right)  - S_{ij \sindex}\left(\mu_{ij \sindex} T^r_{ij \sindex} \left( \frac{\sqrt{x_{r,m}}t}{|N^r|} + \frac{m}{\sqrt{|N^r|}} \right) \right) \right) \right.\\
	& \quad \left. - \sum_{\sindex=1}^{N^r_i} \frac{1}{\mu_{ij \sindex}} \left( \mu_{ij \sindex} T^r_{ij \sindex} \left( \frac{m}{\sqrt{|N^r|}} \right) - \mu_{ij \sindex} T^r_{ij \sindex} \left( \frac{\sqrt{x_{r,m}}t}{|N^r|} + \frac{m}{\sqrt{|N^r|}} \right) \right) \right\Vert_L\\
	& \leq \frac{1}{\sqrt{x_{r,m}}} \left\Vert \sum_{\sindex=1}^{N^r_i} \left( S_{ij \sindex} \left( \mu_{ij \sindex} T^r_{ij \sindex} \left( \frac{m}{\sqrt{|N^r|}} \right) \right)  - S_{ij \sindex}\left(\mu_{ij \sindex} T^r_{ij \sindex} \left( \frac{\sqrt{x_{r,m}}t}{|N^r|} + \frac{m}{\sqrt{|N^r|}} \right) \right) \right) \right.\\
	& \quad \left. - \sum_{\sindex=1}^{N^r_i} \left( \mu_{ij \sindex} T^r_{ij \sindex} \left( \frac{m}{\sqrt{|N^r|}} \right) - \mu_{ij \sindex} T^r_{ij \sindex} \left( \frac{\sqrt{x_{r,m}}t}{|N^r|} + \frac{m}{\sqrt{|N^r|}} \right) \right) \right\Vert_L\\
	& = \frac{1}{\sqrt{x_{r,m}}} \left\Vert \sum_{\sindex=1}^{N^r_i} \left( S_{ij \sindex} \left( \mu_{ij \sindex} \int_0^{\frac{m}{\sqrt{|N^r|}}} B_{ij \sindex}(s) ds \right)  - S_{ij \sindex}\left(\mu_{ij \sindex} \int_0^{\frac{\sqrt{x_{r,m}}t}{|N^r|} + \frac{m}{\sqrt{|N^r|}}} B_{ij \sindex}(s) ds \right) \right) \right.\\
	& \quad \left. - \sum_{\sindex=1}^{N^r_i} \left( \mu_{ij \sindex} \int_0^{\frac{m}{\sqrt{|N^r|}}} B_{ij \sindex}(s) ds - \mu_{ij \sindex} \int_0^{\frac{\sqrt{x_{r,m}}t}{|N^r|} + \frac{m}{\sqrt{|N^r|}}} B_{ij \sindex}(s) ds \right) \right\Vert_L\\
	& \overset{d}{=} \frac{1}{\sqrt{x_{r,m}}} \Bigg\Vert \sum_{\sindex=1}^{N^r_i} \left( S_{ij \sindex} \left( \mu_{ij \sindex} \int_{\frac{m}{\sqrt{|N^r|}}}^{\frac{\sqrt{x_{r,m}}t}{|N^r|} + \frac{m}{\sqrt{|N^r|}}} B_{ij \sindex}(s) ds \right) \right)\\
	& \quad - \sum_{\sindex=1}^{N^r_i} \left( \mu_{ij \sindex} \int_{\frac{m}{\sqrt{|N^r|}}}^{\frac{\sqrt{x_{r,m}}t}{|N^r|} + \frac{m}{\sqrt{|N^r|}}} B_{ij \sindex}(s) ds \right) \Bigg\Vert_L\\
	& \leq \frac{1}{\sqrt{x_{r,m}}} \left\Vert \sum_{\sindex=1}^{N^r_i} \left( S_{ij \sindex} \left( \mu_{ij \sindex} \frac{\sqrt{x_{r,m}}t}{|N^r|} \right) \right) - \sum_{\sindex=1}^{N^r_i} \left( \mu_{ij \sindex} \frac{\sqrt{x_{r,m}}t}{|N^r|} \right) \right\Vert_L\\
	& \overset{d}{=} \frac{1}{\sqrt{x_{r,m}}} \left\Vert S \left( \sum_{\sindex=1}^{N^r_i} \mu_{ij \sindex} \frac{\sqrt{x_{r,m}}t}{|N^r|} \right) - \left( \sum_{\sindex=1}^{N^r_i} \mu_{ij \sindex} \frac{\sqrt{x_{r,m}}t}{|N^r|} \right) \right\Vert_L\\
	& \leq \frac{1}{\sqrt{x_{r,m}}} \left\Vert S \left( \sum_{\sindex=1}^{N^r_i} q \frac{\sqrt{x_{r,m}}t}{|N^r|} \right) - \left( \sum_{\sindex=1}^{N^r_i} q \frac{\sqrt{x_{r,m}}t}{|N^r|} \right) \right\Vert_L\\
	& = \frac{1}{\sqrt{x_{r,m}}} \left\Vert S \left( q \sqrt{x_{r,m}}t \right) - \left( q \sqrt{x_{r,m}}t \right) \right\Vert_L \numberthis \label{sumofindividual}
	\end{align*}
	where $\overset{d}{=}$ means equal in distribution and $S$ is a standard Poisson process. The first $\overset{d}{=}$ is because every $S_{ij \sindex}$ is a stationary process. The second $\overset{d}{=}$ comes from the fact that the superposition of several Poisson processes is also a Poisson process with rate being the sum of the rates of the original processes. From \eqref{convergetomean}, for $r$ large enough, $ \eqref{sumofindividual} < \epsilon(r) $, thus
	\begin{align}
	\left\Vert \sum_{\sindex=1}^{N^r_i} \left( T^{r,m}_{ij \sindex}(t) - \frac{D^{r,m}_{ij \sindex}(t)}{\mu_{ij \sindex}} \right) \right\Vert_L < \epsilon(r). \label{HDconvergence13}
	\end{align}
	
	Therefore, from \eqref{HDconvergence13}, \eqref{HDconvergence3}, and \eqref{HDconvergence4}, 
	\begin{align*}
	\left\Vert  \left( \tilde{D}_{ij}(t) - \sum_{\sindex=1}^{N^r_i} \frac{\mu_{ij \sindex} t}{|N^r|} \right) \right\Vert_L 
	&\leq \left\Vert  \left( \tilde{D}_{ij}(t) - D^{r,m}_{ij}(t) \right) \right\Vert_L + \left\Vert  \sum_{\sindex=1}^{N^r_i} \left( D_{ij \sindex}^{r,m}(t) - \mu_{ij \sindex} T^{r,m}_{ij \sindex}(t) \right) \right\Vert_L\\
	& \quad + \left\Vert \sum_{\sindex=1}^{N^r_i} \left( \mu_{ij \sindex} T^{r,m}_{ij \sindex}(t) - \frac{\mu_{ij \sindex} x^*_{ij}}{|N^r|} t \right) \right\Vert_L\\
	&\leq \delta(1+2q_{ij}), \numberthis \label{HDconvergence7}
	\end{align*}
	and from the Law of Large Numbers
	\begin{equation}
	\left\Vert \sum_{\sindex = 1}^{N^r_i} \frac{\mu_{ij \sindex} x^*_{ij}}{|N^r|} t - \bar{\mu}_{ij}z_{ij}t \right\Vert_L \leq \delta, \label{HDconvergence8}
	\end{equation}
	for large enough $r$.
	Thus, from \eqref{HDconvergence7} and \eqref{HDconvergence8}, and reselecting $\delta$, one has
	\begin{equation}
	\left\Vert \tilde{D}_{ij}(t) - \bar{\mu}_{ij}z_{ij}t \right\Vert_L \leq \delta. \label{HDconvergence9}
	\end{equation}
	By combining \eqref{HDconvergence4}, \eqref{HDconvergence6}, \eqref{process1}, and \eqref{process2}, we get
	\begin{equation}
	\left\Vert \lambda_j t - \tilde{A}_{qj}(t) - \sum_{i \in \mathcal{I}(j)}\tilde{A}_{sij}(t) \right\Vert_L \leq (2+L)\delta, \label{HDconvergence10} \text{ and}
	\end{equation}
	\begin{equation}
	\left\Vert \tilde{Q}_j(t) - \tilde{Q}_j(0) -\tilde{A}_{aj}(t) + \sum_{i \in \mathcal{I}(j)}\tilde{C}_{ij}(t) \right\Vert_L \leq 4\delta.\label{HDconvergence11}
	\end{equation}
	By combining \eqref{HDconvergence4} with \eqref{HDconvergence9} and \eqref{process3}, we get
	\begin{equation}
	\left\Vert \tilde{Z}_{ij}(t) -\tilde{Z}_{ij}(0) - \tilde{A}_{sij}(t) - \tilde{C}_{ij}(t) + \bar{\mu}_{ij}z_{ij}t \right\Vert_L \leq 5\delta.\label{HDconvergence12}
	\end{equation}
	Equations \eqref{HDconvergence10}-\eqref{HDconvergence12} show that the hydrodynamic limits satisfy \eqref{HDfirst},\eqref{HDmodel2}, and \eqref{HDmodel4}. 
	
	That the hydrodynamic limits satisfy \eqref{HDmodel6} and \eqref{HDmodel7} is proved similarly to the fact that the fluid limits satisfy the fluid analogs of those equations. Hence, we only illustrate the proof of \eqref{HDmodel6}.
	
	Fix a hydrodynamic limit $\tilde{\mathbb{X}}$. By the definition of a hydrodynamic limit, there exists a sequence $(r_l, m_l, \omega_l)$, with $\omega_l \in \mathcal{K}$ for all $l \geq 0$, such that 
	\begin{equation}
	\mathbb{X}^{r_l, m_l} (\cdot, \omega_l) \to \tilde{\mathbb{X}}(\cdot) \text{  u.o.c. as } l \to \infty. \label{HDconvergence14}
	\end{equation}
	Fix $t>0$. If for any $j \in \mathcal{J}$, $\tilde{Q}_j(t) = 0$, \eqref{HDmodel6} holds trivially. Now we assume that $\tilde{Q}_j(t) > a$ for some $a>0$. By \eqref{HDconvergence14}, there exists an $l_0$ such that
	\begin{equation*}
	Q^{r_l,m_l}_j(t,\omega_l) > a/2  \text{  for all  } l>l_0.
	\end{equation*}
	This implies, by \eqref{process6}, that
	\begin{equation*}
	\sum_{\sindex=1}^{N^r_i} B^{r_l,m_l}_{ij \sindex}(t,\omega_l) = 0,
	\end{equation*}
	hence,
	\begin{equation}
	Q^{r_l,m_l}_j(t,\omega_l) \sum_{\sindex=1}^{N^r_i} B^{r_l,m_l}_{ij \sindex}(t,\omega_l) = 0. \label{HDconvergence15}
	\end{equation}
	Convergence in \eqref{HDconvergence14} implies that
	\begin{equation*}
	Q^{r_l,m_l}_j(t,\omega_l) \sum_{\sindex=1}^{N^r_i} B^{r_l,m_l}_{ij \sindex}(t,\omega_l) \to \tilde{Q}_j(t) \left( \sum_{\sindex=1}^{N^r_i}\tilde{B}_{ij \sindex}(t) \right) \text{  as  } l \to \infty.
	\end{equation*}
	This gives \eqref{HDmodel6} by \eqref{HDconvergence15}.
\end{proof}

Observe that, by \eqref{homogeneity} and the definitions of hydrodynamic and diffusion scalings,
\begin{equation}
|g(Q^{r,0}(0),Z^{r,0}(0))| \leq |g(\diffuscal{Q}^{r}(0),\diffuscal{Z}^{r}(0))|. \label{initialbridge}
\end{equation} 
If condition \eqref{initialcondition} holds, then \eqref{initialbridge} implies that $g(Q^{r,0}(0),Z^{r,0}(0)) \to 0$ in probability as $r \to \infty$. Therefore, we can choose $\epsilon(r)$ with $\epsilon(r) \to 0$ as $r \to \infty$ such that, for $\mathscr{L}^r = \mathscr{K}^r \cap \mathscr{G}^r$, where
\begin{equation}
\mathscr{G}^r = \{ |g(Q^{r,0}(0),Z^{r,0}(0))| \leq \epsilon(r)\},
\end{equation}
we have
\begin{equation}
\lim_{r \to \infty} P(\mathscr{L}^r) = 1. \label{goodpoints2}
\end{equation}
We set
\begin{equation}
E^r_g = \{ \mathbb{X}^{r,0}(\cdot, \omega), \omega \in \mathscr{L}^r \},
\end{equation}
and 
\begin{equation}
\mathscr{E}_g = \{ E^r_g, r \in \mathbb{N} \}.
\end{equation}

The following proposition is similar to \cite[Proposition 5.5]{Dai}. It connects Assumption \ref{SSCcondition} with Corollary \ref{convergencecorollary}, and shows an inequality similar to \eqref{HDSSCconditionequ} holds for the hydrodynamically scaled process $\mathbb{X}^{r,m}(\cdot)$.

\begin{proposition}
	Let $\{\mathbb{X}^r_{\pi}\}$ be a sequence of $\pi$-parallel random server systems processes. Assume that Assumption \ref{heavytraffic} and \ref{steadycondition} hold, $g$ satisfies Assumption \ref{continuity}, and the hydrodynamic model of the system satisfies Assumption \ref{SSCcondition}. Fix $\epsilon > 0, L>0, $ and $T>0$, and assume that $r$ is large. Then, for $\omega \in \mathscr{K}^r$,
	\begin{equation}
	g(Q^{r,m}(t), Z^{r,m}(t)) \leq H(t) + \epsilon \label{HDfuncconverge1}
	\end{equation}
	for all $t \in [0,L]$, and $m < \sqrt{|N^r|}T$, with $H(\cdot)$ as given in Assumption \ref{SSCcondition}.
	Furthermore, for $\omega \in \mathscr{L}^r$
	\begin{equation}
	||g(Q^{r,0}(t), Z^{r,0}(t))||_L \leq \epsilon.\label{HDfuncconverge2}
	\end{equation}
	If, in addition, condition \eqref{initialcondition} holds, then \eqref{goodpoints2} holds. 
	\label{HDSSCfunction}
\end{proposition}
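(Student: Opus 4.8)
The plan is to transfer the bound that Assumption \ref{SSCcondition} supplies for hydrodynamic model solutions back to the hydrodynamically scaled processes, using Corollary \ref{convergencecorollary} to approximate the latter by hydrodynamic limits, Proposition \ref{mainbridge} to certify that these limits solve the hydrodynamic model equations, and the continuity of $g$ (Assumption \ref{continuity}) to absorb the approximation error. A preliminary observation drives every estimate: on $\mathscr{K}^r$ the almost Lipschitz property \eqref{almostLP} together with $|\mathbb{X}^{r,m}(0)| \le 1$ gives $|(Q^{r,m}(t),Z^{r,m}(t))| \le 1 + NL + \epsilon(r) \le 2 + NL$ for large $r$, uniformly in $t \in [0,L]$ and $m < \sqrt{|N^r|}T$. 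Hence all these pairs lie in a fixed compact set $\mathcal{C} \subset \mathbb{R}^{J+d_z}$ independent of $r,m,t$, on which the continuous function $g$ is uniformly continuous.

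To establish \eqref{HDfuncconverge1}, I would first fix the target $\epsilon$ and, by uniform continuity of $g$ on $\mathcal{C}$, pick $\delta>0$ with $|g(x)-g(y)|<\epsilon$ whenever $x,y\in\mathcal{C}$ and $|x-y|\le\delta$. Fix $\omega\in\mathscr{K}^r$ and $m<\sqrt{|N^r|}T$. By Corollary \ref{convergencecorollary}, for $r$ large there is a hydrodynamic limit $\tilde{\mathbb{X}}\in E'$ with $\|\mathbb{X}^{r,m}(\cdot)-\tilde{\mathbb{X}}(\cdot)\|_L\le\delta$; since $\tilde{\mathbb{X}}\in E'$ we have $|(\tilde Q(0),\tilde Z(0))|\le 1$, and by Proposition \ref{mainbridge} the limit $\tilde{\mathbb{X}}$ solves \eqref{HDfirst}--\eqref{HDlast}. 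The first clause of Assumption \ref{SSCcondition} then gives $g(\tilde Q(t),\tilde Z(t))\le H(t)$ for all $t$. As $|(Q^{r,m}(t),Z^{r,m}(t))-(\tilde Q(t),\tilde Z(t))|\le\delta$ with both points in $\mathcal{C}$, the choice of $\delta$ yields $g(Q^{r,m}(t),Z^{r,m}(t))\le g(\tilde Q(t),\tilde Z(t))+\epsilon\le H(t)+\epsilon$, uniformly in $t\in[0,L]$ and in $m$, which is \eqref{HDfuncconverge1}.

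To establish \eqref{HDfuncconverge2}, I would invoke the second (invariance) clause of Assumption \ref{SSCcondition}, now working with the family $\mathscr{E}_g$ of $m=0$ processes launched from good points $\omega\in\mathscr{L}^r$. Since $E^r_g\subseteq E^r$, every hydrodynamic limit of $\mathscr{E}_g$ is also a hydrodynamic limit of $\mathscr{E}$, hence a hydrodynamic model solution with $|(\tilde Q(0),\tilde Z(0))|\le 1$ by Proposition \ref{mainbridge}. Moreover any such limit satisfies $g(\tilde Q(0),\tilde Z(0))=0$ exactly: by definition it is approximated uniformly well by processes $\mathbb{X}^{r,0}(\cdot,\omega)$ with $\omega\in\mathscr{L}^r$ and $r$ arbitrarily large, and on $\mathscr{G}^r$ one has $|g(Q^{r,0}(0),Z^{r,0}(0))|\le\epsilon(r)\to 0$, so continuity of $g$ forces its value at the initial time of the limit to vanish. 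The invariance clause then gives $g(\tilde Q(t),\tilde Z(t))=0$ for all $t\ge 0$. Applying the analogue of Corollary \ref{convergencecorollary} for $\mathscr{E}_g$ (valid by the same almost Lipschitz / Arzel\`a--Ascoli argument, using $|\mathbb{X}^{r,0}(0)|\le 1$) together with the uniform-continuity transfer of the previous paragraph yields $\|g(Q^{r,0}(t),Z^{r,0}(t))\|_L\le\epsilon$ for $\omega\in\mathscr{L}^r$, which is \eqref{HDfuncconverge2}. Finally, \eqref{goodpoints2} is immediate: \eqref{goodpoints1} gives $P(\mathscr{K}^r)\to 1$, while \eqref{initialcondition} and \eqref{initialbridge} give $g(Q^{r,0}(0),Z^{r,0}(0))\to 0$ in probability, so $P(\mathscr{G}^r)\to 1$ for $\epsilon(r)\to 0$ slowly enough, whence $P(\mathscr{L}^r)=P(\mathscr{K}^r\cap\mathscr{G}^r)\to 1$.

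The main obstacle is the exact-vanishing step in \eqref{HDfuncconverge2}: the invariance clause of Assumption \ref{SSCcondition} applies only when $g$ is \emph{identically} zero at time $0$, whereas Corollary \ref{convergencecorollary} only produces $g$ small but nonzero there. The care required is to pass to genuine hydrodynamic limits of $\mathscr{E}_g$ rather than mere $\delta$-approximants, so that continuity of $g$ can upgrade ``$\epsilon(r)\to 0$ at time $0$'' into ``exactly $0$ at time $0$'' for the limit; once this is secured, everything else reduces to the routine uniform-continuity estimate on the compact set $\mathcal{C}$.
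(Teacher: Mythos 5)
Your proposal is correct and follows essentially the same route as the paper's proof: approximate $\mathbb{X}^{r,m}$ by a hydrodynamic limit via Corollary \ref{convergencecorollary}, certify the limit as a hydrodynamic model solution via Proposition \ref{mainbridge}, apply Assumption \ref{SSCcondition} (including the exact-vanishing upgrade at time $0$ for limits of $\mathscr{E}_g$, followed by the invariance clause), and transfer the bounds back by uniform continuity of $g$ on a compact set. The only cosmetic difference is how the compact set is produced — you use the almost Lipschitz bound \eqref{almostLP} directly on $\mathscr{K}^r$, while the paper bounds the hydrodynamic limit by a constant $R_L$ from the hydrodynamic model equations and then bounds the scaled process by $2R_L$ — but this does not change the argument.
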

\viva{The proof is the same as in \cite{Dai}. We state the proof modified for our notation in Appendix \ref{Appendix-HDSSCfunction}.}

\label{Section-hydro-limits}

\subsection{SSC in the diffusion limits}

In this section we change the scaling from hydrodynamic to diffusion to prove Theorem \ref{SSC1}. Once the scaling is changed, a few complications need to be dealt with regarding the change in the range of the time variable. In this process, all of the steps including lemmas and proofs remain the same as in \cite{Dai}. We include only final results in this section and leave the proofs in the Appendix. \par
We begin by changing the scaling. One can check by employing \eqref{diffusionscaling1}, \eqref{diffusionscaling2}, and \eqref{Qscaling}, \eqref{Bscaling} that
\begin{equation}
\begin{aligned}
Q^{r,m}(t) &= \sqrt{\frac{|N^r|}{x_{r,m}}} \diffuscal{Q}^r \left( \frac{\sqrt{x_{r,m}}t}{|N^r|} + \frac{m}{\sqrt{|N^r|}} \right) = \frac{1}{y_{r,m}} \diffuscal{Q}^r \left( \frac{1}{\sqrt{|N^r|}} (y_{r,m}t + m) \right)  \text{ and}\\
Z^{r,m}(t) &= \sqrt{\frac{|N^r|}{x_{r,m}}} \diffuscal{Z}^r \left( \frac{\sqrt{x_{r,m}}t}{|N^r|} + \frac{m}{\sqrt{|N^r|}} \right) = \frac{1}{y_{r,m}} \diffuscal{Z}^r \left( \frac{1}{\sqrt{|N^r|}} (y_{r,m}t + m) \right),
\end{aligned} \label{relationbetweenHDandD}
\end{equation}
where
\begin{equation}
y_{r,m} = \sqrt{\frac{x_{r,m}}{|N^r|}} = \left\vert \diffuscal{Z}^r\left( \frac{m}{\sqrt{|N^r|}} \right) \right\vert \vee 1. \label{diffusionfactor}
\end{equation} 

By changing the scaling in Proposition \ref{HDSSCfunction} as above, we can rephrase \eqref{HDfuncconverge1} and \eqref{HDfuncconverge2}. Also, the domain of the time scales will change and the domain $0 \leq t \leq L$ for the arguments on the left-hand side of \eqref{relationbetweenHDandD} will correspond to 
\begin{equation}
\frac{m}{\sqrt{|N^r|}} \leq t \leq \frac{1}{\sqrt{|N^r|}} (y_{r,m}L + m) \label{diffusiontrange}
\end{equation}
for the arguments on the right.\par
The next proposition uses the connection between hydrodynamic scaling $\mathbb{X}^{r,m}$ and diffusion scaling $\diffuscal{\mathbb{X}}^r$ in \eqref{relationbetweenHDandD} to translate the inequality \eqref{HDfuncconverge1} into another version for diffusively scaled processes.

\begin{proposition}
	Let $\{\mathbb{X}^r_{\pi}\}$ be a sequence of $\pi$-parallel random server system processes. Assume that Assumption \ref{heavytraffic} and Assumption \ref{steadycondition} hold, $g$ satisfies Assumption \ref{continuity}, and the hydrodynamic model of the system satisfies Assumption \ref{SSCcondition}. Fix $\epsilon > 0, L > 0$, and $T > 0$, and assume that $r$ is large. Then, for $\omega \in \mathscr{K}^r$ and for $H(\cdot)$ given as in Assumption \ref{SSCcondition},
	\begin{equation}
	g(\diffuscal{Q}^r(t), \diffuscal{Z}^r(t)) \leq y^c_{r,m} H \left( \frac{1}{y_{r,m}} (\sqrt{|N^r|}t - m) \right) + \epsilon y^c_{r,m} \label{diffusion1}
	\end{equation}
	for all $t \in [0,T]$ and $m$ satisfying \eqref{diffusiontrange}. Also
	\begin{equation}
	\left\Vert g(\diffuscal{Q}^r(t), \diffuscal{Z}^r(t)) \right\Vert_{Ly{r,0}/\sqrt{|N^r|}} \leq \epsilon y^c_{r,0} \label{diffusion2}
	\end{equation}
	for $\omega \in \mathscr{L}^r$. 
	\label{diffusionlemma1}
\end{proposition}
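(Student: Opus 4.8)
The plan is to obtain both inequalities directly from Proposition \ref{HDSSCfunction} by substituting the scaling relation \eqref{relationbetweenHDandD} and exploiting the homogeneity \eqref{homogeneity} of the SSC function $g$. The crucial observation is that, by the definition \eqref{diffusionfactor}, the factor $y_{r,m} = |\diffuscal{Z}^r(m/\sqrt{|N^r|})| \vee 1 \geq 1$, so its reciprocal $\alpha := 1/y_{r,m}$ lies in $(0,1]$, which is precisely the range over which the homogeneity relation $g(\alpha x) = \alpha^c g(x)$ is assumed to hold. This single fact is what lets us pass the bound from the hydrodynamic to the diffusion scale.

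First I would fix $\omega \in \mathscr{K}^r$ together with a pair $(t,m)$ for which $m$ satisfies \eqref{diffusiontrange}, and set $\tau := \frac{1}{y_{r,m}}(\sqrt{|N^r|}t - m)$, so that $\tau \in [0,L]$ and $t = \frac{1}{\sqrt{|N^r|}}(y_{r,m}\tau + m)$. From \eqref{relationbetweenHDandD} we then have $(Q^{r,m}(\tau), Z^{r,m}(\tau)) = \frac{1}{y_{r,m}}(\diffuscal{Q}^r(t), \diffuscal{Z}^r(t))$. Applying \eqref{homogeneity} with $\alpha = 1/y_{r,m} \in (0,1]$ gives
\[
g(Q^{r,m}(\tau), Z^{r,m}(\tau)) = y_{r,m}^{-c}\, g(\diffuscal{Q}^r(t), \diffuscal{Z}^r(t)),
\]
whence $g(\diffuscal{Q}^r(t), \diffuscal{Z}^r(t)) = y_{r,m}^{c}\, g(Q^{r,m}(\tau), Z^{r,m}(\tau))$. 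Since $\tau \in [0,L]$ and $\omega \in \mathscr{K}^r$, the hydrodynamic bound \eqref{HDfuncconverge1} yields $g(Q^{r,m}(\tau), Z^{r,m}(\tau)) \leq H(\tau) + \epsilon$; substituting $\tau = \frac{1}{y_{r,m}}(\sqrt{|N^r|}t - m)$ and multiplying through by $y_{r,m}^c$ produces exactly \eqref{diffusion1}.

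For \eqref{diffusion2} I would repeat the computation with $m = 0$, using the sharper bound \eqref{HDfuncconverge2} valid for $\omega \in \mathscr{L}^r$. Writing $\tau = \sqrt{|N^r|}t/y_{r,0}$, the identity
\[
g(\diffuscal{Q}^r(t), \diffuscal{Z}^r(t)) = y_{r,0}^{c}\, g(Q^{r,0}(\tau), Z^{r,0}(\tau))
\]
holds verbatim, and as $\tau$ ranges over $[0,L]$ the variable $t$ ranges over $[0, Ly_{r,0}/\sqrt{|N^r|}]$. Taking the supremum over this interval and invoking $\|g(Q^{r,0}(\cdot), Z^{r,0}(\cdot))\|_L \leq \epsilon$ delivers \eqref{diffusion2}.

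The argument is essentially bookkeeping, so I do not anticipate a deep obstacle; the only point requiring care is the time-domain correspondence between the hydrodynamic variable $\tau$ and the diffusion variable $t$ induced by the affine change $t = \frac{1}{\sqrt{|N^r|}}(y_{r,m}\tau + m)$, and in particular verifying that the domains match up as in \eqref{diffusiontrange}. One should also confirm that $y_{r,m} \geq 1$ genuinely licenses the use of \eqref{homogeneity} in the required direction: had $y_{r,m}$ been allowed below $1$, the reciprocal would exceed $1$ and the homogeneity relation would be unavailable. This is exactly where the $\vee\,1$ in the definition \eqref{diffusionfactor} is indispensable.
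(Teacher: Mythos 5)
Your proposal is correct and is essentially the argument the paper relies on (deferring to \cite{Dai}): the change of variables $t = \frac{1}{\sqrt{|N^r|}}(y_{r,m}\tau + m)$ in the scaling identity \eqref{relationbetweenHDandD}, the homogeneity \eqref{homogeneity} applied with $\alpha = 1/y_{r,m} \in (0,1]$ (licensed precisely by the $\vee\,1$ in \eqref{diffusionfactor}), and then the hydrodynamic bounds \eqref{HDfuncconverge1} and \eqref{HDfuncconverge2} on $\mathscr{K}^r$ and $\mathscr{L}^r$ respectively. Your identification of the time-domain correspondence between $\tau \in [0,L]$ and the range \eqref{diffusiontrange} is exactly the bookkeeping the paper's cited proof performs, so there is nothing to add.
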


At this point, if we can show that $(\sqrt{|N^r|}t - m)/y_{r,m}$ is large enough, we can conclude that the results in Theorem \ref{SSC1} hold by using the convergence property of $H(t)$, as given in Assumption \ref{SSCcondition}. It will be shown that it is enough to have $\sqrt{|N^r|}t - m$ and $L$ large.

Since the value of $L$ is a matter of choice, we can take $L$ sufficiently large and redefine $\mathscr{K}^r$ with the reselected $L$. Let $H$ be given as in Assumption \ref{SSCcondition}. Then $H(t) \to 0$ as $t \to \infty$, thus for any fixed $\epsilon>0$, there exists $s^{\ast}(\epsilon) >1$ such that, for all  $t > s^{\ast}(\epsilon), H(t) < \epsilon$. We assume for the rest of the paper that
\begin{equation}
L \geq 6N s^{\ast}(\epsilon),\label{diffusion3}
\end{equation}
where $N$ is chosen as in \eqref{almostLP}.

To make $\sqrt{|N^r|}t - m$ large, for a fixed $t \in [0,T]$, we take the smallest $m$ that satisfies \eqref{diffusiontrange}, which we denote by $m_r(t)$. We need the following lemmas to show that $\sqrt{|N^r|}t - m_r(t)$ is large.

\begin{lemma}
	Let $\{\mathbb{X}^r_{\pi}\}$ be a sequence of $\pi$-parallel random server system processes. Assume that Assumption \ref{heavytraffic} and Assumption \ref{steadycondition} hold. For fixed $L>0$ and $T>0$, and large enough $r$
	\begin{equation}
	y_{r,m+1} \leq 3Ny_{r,m} \label{diffusion4}
	\end{equation}
	for $\omega \in \mathscr{K}^r$ and $m < \sqrt{|N^r|}T$, with the constant $N$ chosen as in \eqref{almostLP}. \label{diffusionlemma2}
\end{lemma}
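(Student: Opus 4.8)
The plan is to exploit the explicit algebraic relationship \eqref{relationbetweenHDandD} between the hydrodynamic scaling $Z^{r,m}$ and the diffusion scaling $\diffuscal{Z}^r$, evaluated at a single well-chosen time point, and then to invoke the almost Lipschitz bound that holds on $\mathscr{K}^r$. The central observation is that setting $t = 1/y_{r,m}$ in the time argument $\frac{1}{\sqrt{|N^r|}}(y_{r,m}t + m)$ of \eqref{relationbetweenHDandD} returns exactly the time $\frac{m+1}{\sqrt{|N^r|}}$. Concretely, \eqref{relationbetweenHDandD} gives
$$Z^{r,m}\left(\tfrac{1}{y_{r,m}}\right) = \frac{1}{y_{r,m}} \diffuscal{Z}^r\left(\frac{m+1}{\sqrt{|N^r|}}\right),$$
so that $\left|\diffuscal{Z}^r\left(\frac{m+1}{\sqrt{|N^r|}}\right)\right| = y_{r,m}\left|Z^{r,m}\left(\tfrac{1}{y_{r,m}}\right)\right|$. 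This reduces the whole problem to bounding the right-hand factor.

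First I would check that $t = 1/y_{r,m}$ is an admissible time: since \eqref{diffusionfactor} forces $y_{r,m} \geq 1$, we have $1/y_{r,m} \leq 1 \leq L$ (using the standing assumption $L \geq 1$), so $Z^{r,m}$ is defined there and the almost Lipschitz estimate from $\mathscr{K}^r_0$ in \eqref{almostLP} applies on $[0, 1/y_{r,m}]$. Next I would combine the triangle inequality with the two facts that $|Z^{r,m}(0)| \leq 1$ (a consequence of $|\mathbb{X}^{r,m}(0)| \leq 1$) and that the Lipschitz bound $|Z^{r,m}(1/y_{r,m}) - Z^{r,m}(0)| \leq N/y_{r,m} + \epsilon(r)$ holds on $\mathscr{K}^r$ (the max-norm on the full vector $\mathbb{X}^{r,m}$ dominates the norm of its $Z$-block), to obtain
$$\left|Z^{r,m}\left(\tfrac{1}{y_{r,m}}\right)\right| \leq 1 + \frac{N}{y_{r,m}} + \epsilon(r).$$

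Multiplying by $y_{r,m}$ then yields $\left|\diffuscal{Z}^r\left(\frac{m+1}{\sqrt{|N^r|}}\right)\right| \leq y_{r,m} + N + y_{r,m}\epsilon(r)$. Because $y_{r,m}\geq 1$ and, by standing assumption, $N \geq 1$, the right-hand side is at least $2$ and hence exceeds $1$, so by \eqref{diffusionfactor} the quantity $y_{r,m+1} = \left|\diffuscal{Z}^r\left(\frac{m+1}{\sqrt{|N^r|}}\right)\right| \vee 1$ equals this same bound. Finally, using $N \leq N y_{r,m}$ and factoring out $y_{r,m}$, I would estimate $y_{r,m} + N + y_{r,m}\epsilon(r) \leq y_{r,m}\bigl(1 + N + \epsilon(r)\bigr)$; since $\epsilon(r) \to 0$, for all sufficiently large $r$ we have $\epsilon(r) \leq 2N - 1$ (which is nonnegative as $N\geq 1$), giving $1 + N + \epsilon(r) \leq 3N$ and therefore $y_{r,m+1} \leq 3N y_{r,m}$. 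The bound is uniform in $m < \sqrt{|N^r|}T$ because the almost Lipschitz estimate defining $\mathscr{K}^r$ is already a maximum over all such $m$.

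The computation is essentially mechanical; the only genuinely substantive point, and the one I would be most careful to state precisely, is the choice of the evaluation time $t = 1/y_{r,m}$ together with the verification that it lies in the admissible range $[0,L]$, since this is exactly what lets a bound on the one-step increment of the diffusion-scaled process be read off from a single application of the hydrodynamic almost Lipschitz estimate. The standing assumptions $N \geq 1$ and $L \geq 1$ serve precisely to make this step and the concluding numerical comparison go through without extra hypotheses.
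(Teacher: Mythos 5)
Your proposal is correct and takes essentially the same route as the paper's own proof (which follows Dai's argument): evaluate the scaling relation \eqref{relationbetweenHDandD} at $t = 1/y_{r,m}$ to identify $\diffuscal{Z}^r\bigl((m+1)/\sqrt{|N^r|}\bigr) = y_{r,m}Z^{r,m}(1/y_{r,m})$, bound $|Z^{r,m}(1/y_{r,m})|$ by $|Z^{r,m}(0)| + N/y_{r,m} + \epsilon(r) \leq 1 + N + \epsilon(r)$ using the almost Lipschitz event $\mathscr{K}^r$ and $|\mathbb{X}^{r,m}(0)| \leq 1$, and absorb $\epsilon(r)$ via $N \geq 1$ for large $r$. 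The key step of choosing the admissible evaluation time $1/y_{r,m} \leq 1 \leq L$ is exactly the one the paper relies on, so there is nothing to add.
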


Let $y_r(m_r(t)) = y_{r,m_r(t)}$.

\begin{lemma}
	Let $\{\mathbb{X}^r_{\pi}\}$ be a sequence of $\pi$-parallel random server systems processes. For fixed $L>0$ and $T>0$, and large enough $r$,
	\begin{equation}
	\sqrt{|N^r|}t - m_r(t) \geq Ly_r(m_r(t)) / 6N \label{diffusion5}
	\end{equation}
	for $\omega \in \mathscr{K}^r$ and $t \in (Ly_{r,0}/\sqrt{|N^r|},T]$, with the constant $N$ chosen as in \eqref{almostLP}. \label{diffusionlemma3}
\end{lemma}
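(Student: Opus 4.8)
The plan is to exploit the minimality built into the definition of $m_r(t)$ together with the one-step comparison estimate from Lemma \ref{diffusionlemma2}. Recall that $m_r(t)$ is, by definition, the smallest nonnegative integer $m$ for which the window condition \eqref{diffusiontrange} holds, i.e. $m/\sqrt{|N^r|} \leq t \leq (y_{r,m}L+m)/\sqrt{|N^r|}$. First I would observe that for $t \in (Ly_{r,0}/\sqrt{|N^r|},\,T]$ the index $m=0$ fails \eqref{diffusiontrange}: the left inequality $0 \leq t$ holds trivially, but the right inequality $t \leq Ly_{r,0}/\sqrt{|N^r|}$ is violated by the hypothesis on $t$. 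Hence $m_r(t) \geq 1$, and I may set $m^* := m_r(t) \geq 1$.

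The heart of the argument is to read off what the minimality of $m^*$ forces on its predecessor $m^*-1 \geq 0$. Since $m^*$ satisfies the left inequality $m^*/\sqrt{|N^r|} \leq t$, a fortiori $(m^*-1)/\sqrt{|N^r|} < t$, so $m^*-1$ satisfies the left half of \eqref{diffusiontrange}. Because $m^*$ is the smallest valid index, $m^*-1$ is invalid, and therefore its failure must occur in the right inequality:
\[
t > \frac{y_{r,m^*-1}L + (m^*-1)}{\sqrt{|N^r|}}.
\]
Multiplying through by $\sqrt{|N^r|}$ and rearranging yields $\sqrt{|N^r|}t - m^* > y_{r,m^*-1}L - 1$.

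Finally I would convert the bound involving $y_{r,m^*-1}$ into one involving $y_r(m_r(t)) = y_{r,m^*}$ via Lemma \ref{diffusionlemma2}, which (for $\omega \in \mathscr{K}^r$ and large $r$) gives $y_{r,m^*} \leq 3N y_{r,m^*-1}$, hence $y_{r,m^*-1} \geq y_{r,m^*}/(3N)$. Substituting gives $\sqrt{|N^r|}t - m^* > Ly_{r,m^*}/(3N) - 1$. The remaining step is to absorb the additive $-1$: since $y_{r,m} \geq 1$ always and $L \geq 6N s^{\ast}(\epsilon) > 6N$ by \eqref{diffusion3}, one has $Ly_{r,m^*}/(6N) \geq 1$, so that $Ly_{r,m^*}/(3N) - 1 \geq Ly_{r,m^*}/(6N)$, and \eqref{diffusion5} follows.

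The computations here are elementary; the step I expect to be the main obstacle conceptually is the bookkeeping of the minimality argument — namely verifying that $m^*-1$ genuinely satisfies the left inequality, so that its invalidity is unambiguously attributable to the right inequality, and then tracking the three constants (the factor $3N$ from Lemma \ref{diffusionlemma2}, the halving from $3N$ to $6N$, and the lower bound \eqref{diffusion3} on $L$) so that the stray $-1$ is cleanly dominated. Everything is carried out pointwise on the good event $\mathscr{K}^r$, so no probabilistic estimate beyond the already-established Lemma \ref{diffusionlemma2} is needed.
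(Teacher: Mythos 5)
Your proof is correct and is essentially the argument the paper relies on (the paper defers this lemma's proof to \cite{Dai}, and your minimality-of-$m_r(t)$ argument combined with Lemma \ref{diffusionlemma2} and the bound $L \geq 6Ns^{\ast}(\epsilon)$ to absorb the stray $-1$ is exactly that proof). All constants and side conditions ($m_r(t)\geq 1$, $y_{r,m}\geq 1$, $m_r(t)-1 < \sqrt{|N^r|}T$ so Lemma \ref{diffusionlemma2} applies) are handled correctly.
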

Finally we can use the results from all of these lemmas and propositions to prove Theorem \ref{SSC1} now.
\begin{proof}[Proof of Theorem \ref{SSC1}]
	Assume that Assumption \ref{heavytraffic} and Assumption \ref{steadycondition} hold, $g$ satisfies Assumption \ref{continuity}, the hydrodynamic model satisfies Assumption \ref{SSCcondition}, and condition \eqref{initialcondition} holds.
	
	Fix $\vartheta >0$. By \eqref{goodpoints1} and \eqref{goodpoints2}, there exists $r_0 > 0$ such that
	\begin{equation}
	P(\mathscr{K}^r) \geq P(\mathscr{L}^r) > 1 - \vartheta/2 \label{diffusion6}
	\end{equation}
	for all $r>r_0$. Fix $\epsilon >0 $ and take $L \geq 6Ns^*(\epsilon)$. Then, by \eqref{diffusion1} and Lemma \ref{diffusionlemma3}, for $\omega \in \mathscr{K}^r, t \in (Ly_{r,0}/\sqrt{|N^r|},T]$, and $r$ large enough
	\begin{equation}
	g(\diffuscal{Q}^r(t), \diffuscal{Z}^r(t)) \leq 2\epsilon (y_{r}(m_r(t)))^c. \label{diffusion7}
	\end{equation}
	We have almost proven the result. Now recall by \eqref{diffusionfactor},
	\begin{equation}
	y_r(m_r(t)) = \left\vert \diffuscal{Z}^r \left( \frac{m_r(t)}{\sqrt{|N^r|}}\right) \right\vert \vee 1 \leq  ||\diffuscal{Z}^r(\cdot)||_T \vee 1. \label{diffusion8}
	\end{equation}
	If we include the initial condition \eqref{initialcondition}, we will have \eqref{diffusion2} as discussed above.  Thus combine this and \eqref{diffusion8}, for $t \in [0,Ly_{r,0}/\sqrt{|N^r|}]$ and $\omega \in \mathscr{L}^r$,
	\begin{equation}
	g(\diffuscal{Q}^r(t), \diffuscal{Z}^r(t)) \leq 2\epsilon(y_{r,0})^c \leq 2\epsilon (||\diffuscal{Z}^r(\cdot)||_T \vee 1)^c.\label{diffusion9}
	\end{equation}
	Combining \eqref{diffusion7}, \eqref{diffusion8}, and \eqref{diffusion9} gives
	\begin{equation}
	g(\diffuscal{Q}^r(t), \diffuscal{Z}^r(t)) \leq 2\epsilon(y_{r,0})^c \leq 2\epsilon (||\diffuscal{Z}^r(\cdot)||_T \vee 1)^c \label{diffusion10}
	\end{equation}
	for all $t \in [0,T]$ and $\omega \in \mathscr{L}^r$. Finally, by \eqref{diffusion6} and \eqref{diffusion10}, for large enough $r$,
	\begin{equation}
	P \left\lbrace \frac{||g(\diffuscal{Q}^r(\cdot), \diffuscal{Z}^r(\cdot))||_T}{( ||\diffuscal{Z}^r(\cdot)||_T \vee 1)^c} > 2\epsilon \right\rbrace < \vartheta.
	\end{equation}
	This clearly implies \eqref{multiplicativeSSC} because $\epsilon > 0$ and $\vartheta > 0$ are arbitrary.
\end{proof}

\label{Section-proof-SSC-in-diffusion-limit}

\section{State space collapse in many server queue}
\viva{Although \cite{Dai}'s work is developed for multi-calss queueing networks, such SSC results can also be applied to the many server queue analysis. \viva{To gain more insight into how the SSC is related to many server queues with random servers,}  in this section, we provide a new approach using the SSC technique to prove diffusion limit result in \cite[Theorem 2.1]{Atar}. We state the theorem in Chapter 3. Our analysis is under the assumption of the LISF policy. By proving result of \cite{Atar} using the state space collapse phenomenon, we provide a more generic method for showing diffusion limits for many server queues with random servers. Under other routing policies, one can also prove the diffusion limits using this approach by inventing their unique SSC functions. } \par
Before the proof, we need some preliminary mathematics. First we have the shifted and scaled system process (the total number of customers in the system) as
\begin{equation}
\diffuscal{X}^r(t)=\diffuscal{X}^r(0)+W^r(t)+F^{r}(t)+b^{\systemindex}t, \label{Atar-equation1}
\end{equation}
where
\begin{equation}
b^{\systemindex} =\frac{\lambda^{\systemindex}t-\systemindex \lambda}{\sqrt{\systemindex}}+\sum_{\sindex=1}^{N^r}\left(\frac{\mu_{\sindex}-\bar{\mu}}{\sqrt{\systemindex}} \right)+\bar{\mu}\diffuscal{N}, \label{Atar-equation2}
\end{equation}
and 
\begin{align*}
F^r(t)&=\sum_{i=1}^I F^{r,(i)}(t) =\sum_{i=1}^I \frac{\sum_{\sindex \in K_i}\mu_{\sindex} t-T^{r,(i)}(t)}{\sqrt{\systemindex}}\\
&=\sum_{\sindex=1}^{N^r}\frac{\mu_{\sindex}t-T^r_{\sindex}(t)}{\sqrt{\systemindex}} =\frac{\int_0^t \sum_{\sindex=1}^{N^r}\mu_{\sindex} I^r_{\sindex}(s)ds}{\sqrt{\systemindex}} = \int_0^t \sum_{\sindex=1}^{N^r}\mu_{\sindex} \diffuscal{I}^r_{\sindex}(s)ds, \numberthis \label{Atar-equation3} 
\end{align*}
and all of the notations are as defined in Section 2 of Chapter 3.\par
We want to show that the process $\diffuscal{X}^r(t)$ converges to a diffusion as the system size grows large. The convergence of $W^r(t),$ and $b^rt$ will be the same as in \cite{Atar}, and we also stated this in  3.2 of this thesis. We focus on showing that the item $F^r(t)$ weakly converges to $\gamma \int_{0}^{t} \xi(s)^-ds$. To achieve this, we use the SSC result from \cite{Dai}.\par
In order to show the convergence of $F(t)$, first we consider another sequence of systems - a sequence of inverted-V systems with $I$ server pools. Each pool contains $N^r_i$ identical servers with service rates $\mu^{(i)}, i=1, 2,\dots,I$. $\mu^{(i)}$ are defined in the proof of Theorem \ref{Atar-theorem}. When there are several servers idle at a moment, customers are routed to one of them according to the Longest Idle Server First(LISF) policy, which means that customers are routed to the server who has been idle for the longest time at the time of routing. Such a sequence of systems can be seen as a special case of \cite{Atar}'s model in the sense that the random service rates have a discrete distribution with support $\{ \mu^{(i)}, i = 1,2,\dots,I \}$, and servers are allocated according to different scenarios.\par
Assume the following conditions hold for the number of servers in the inverted-V systems:
\begin{align}
\lim_{r \to \infty} \frac{N}{r} &= 1, \label{No.serversMMn}\\
\lim_{r \to \infty} \frac{N_i}{N} &= \beta_i, \ \ \text{for } i=1,2, \dots, I, \label{No.servers.percent}
\end{align}
where $\sum_{i=1}^I \beta_i = 1.$\par
The heavy traffic condition holds throughout the discussion:
\begin{equation}
\lim_{r \to \infty} \diffufactor \left(\lambda^r - \sum_{i=1}^I \mu^{(i)} N^r_i \right) = \hat{\lambda}, \label{heavytraffic1}
\end{equation}
and $\hat{\lambda}$ should be the same as in \eqref{arrival-2moment-condition}.\par
The idea of our new approach to Atar's results is to show the SSC result of the special model, then let the difference between $\mu^{(i-1)}$ and $\mu^{(i)}$ tends to zero, so that the discrete model will `converge' to a continuous model, and thus the SSC result of the continuous model will also be proven.\par
The SSC result for such inverted-V systems is a simpler case of the results in \cite{Dai}, thus it is well established. For completeness, we repeat the notations here. Let $\mathbb{X}^r = (Q^r, Z^r)$ denote the system processes, where $Q^r(t)$ is number of customers waiting in the queue at time $t$, and $Z^r(t) = (Z^r_i(t), i=1,2,\dots, I)$, where $Z^r_i(t)$ is the number of busy servers in pool $i$ at time $t$. Since there are no abandonments in the queue, we will make a small modification on the hydrodynamic scaling factor $x_{r,m}$. Different to the $x_{r,m}$ defined in \cite{Dai}, let
\begin{equation}
x_{r,m} = \left\vert Z^r \left( \frac{m}{\sqrt{N^r}} \right) - N^r \right\vert^2 \vee |N^r|, \label{hydro-scaling-factor}
\end{equation}
where $N^r = (N^r_1, N^r_2, \dots, N^r_I)$ is an $I$ dimensional vector, and $|N^r|$ is the total number of servers in the systems. Notice that this $x_{r,m}$ does not contain $Q^r(t)$.\par
We can show that the hydrodynamic limits results in \cite{Dai} still hold with the new $x_{r,m}$ defined above. The proof is trivial thus we omit it here. We still use $\tilde{\mathbb{X}}$ to represent the hydrodynamic limit.\par
Finally the diffusive scaling is defined as 
\begin{align*}
\diffuscal{Q}^r(t) = \frac{Q^r(t)}{\sqrt{|N^{r}|}} \mbox{ and } \diffuscal{Z}^r_{i}(t) = \frac{Z^r_i(t) - N^r_i}{\sqrt{|N^r|}}, \mbox{ for } t \geq 0.
\end{align*}

We will need the following lemma in preparation for our SSC proof.
\begin{lemma}
	Let $\{\mathbb{X}^r\}$ be a sequence of inverted-V system processes as defined in the beginning of this section. Assume \eqref{heavytraffic1} and \eqref{norandomZconv} hold, and $\tilde{\mathbb{X}}$ be any hydrodynamic limit of such system, then 
	\begin{equation}
	\lim_{t \to \infty} \left( \frac{\tilde{Z}_i(t)}{\tilde{Z}_l(t)} - \frac{\beta_i}{\beta_l}\right) = 0 \ \forall i \neq l, as \ t \to \infty. \label{hydro.time.conv}
	\end{equation}
	\label{lemma.hydro.time.conv}
\end{lemma}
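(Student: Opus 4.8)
The plan is to use the hydrodynamic model equations that $\tilde{\mathbb{X}}$ satisfies by Proposition \ref{mainbridge}, specialised to the single-class inverted-V system, and to read off from the LISF rule a balance that forces the per-pool idle levels to align as $t\to\infty$. Write $\tilde{I}_i(t)=-\tilde{Z}_i(t)\ge 0$ for the scaled number of idle servers in pool $i$, so that \eqref{hydro.time.conv} is the same as $\tilde{I}_i(t)/\tilde{I}_l(t)\to\beta_i/\beta_l$. First I would localise to the only regime in which the statement has content. Specialising the complementarity relation \eqref{HDmodel6} to a single class gives $\tilde{Q}(t)\sum_{i}\tilde{Z}_i(t)=0$, so on any interval where the total idleness $\sum_i\tilde{I}_i(t)>0$ we must have $\tilde{Q}(t)=0$; the non-idling relation \eqref{HDmodel7} then keeps $\tilde{A}_q$ and the transfer terms $\tilde{C}_i$ constant there, and \eqref{HDmodel4} with \eqref{HDmodel8} (in which $z_i=\beta_i$ and $\bar\mu_{ij}=\mu^{(i)}$) collapses to $\tilde{Z}_i(t)=\tilde{Z}_i(0)+\tilde{A}_{si}(t)-\mu^{(i)}\beta_i t$. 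Summing over $i$, using \eqref{HDfirst} and the balance $\sum_i\mu^{(i)}\beta_i=\bar\mu=\lambda$ that comes from \eqref{heavytraffic1}, shows that the total idleness stays constant on such an interval, so it suffices to understand how the fixed total idleness is split between the pools. Intervals on which $\sum_i\tilde{Z}_i=0$ are trivial, since there no pool carries idle servers.

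The heart of the proof, and the step I expect to be the main obstacle, is turning the longest-idle-first primitive into a usable relation among the routing terms $\tilde{A}_{si}$. Because LISF is blind and always assigns an arrival to the server idle for the longest time, all idle servers across the pools form a single stream that is consumed in order of idle age, and this is the content of the policy equations \eqref{HDlast} for the inverted-V. I would make this precise by tracking idle ages and showing that, in the hydrodynamic limit, an arrival is drawn from the pool whose normalised idle level is currently the largest; equivalently, introducing $u_i(t)=\tilde{I}_i(t)/\beta_i$, that the increments $d\tilde{A}_{si}$ load onto the indices attaining $\max_i u_i(t)$. The difficulty is that the idle ages are not coordinates of $\tilde{\mathbb{X}}$, so this balance must be recovered from the primitive dynamics and shown to pass to the limit; this is exactly the place where LISF differs from a rate-aware policy and where the precise splitting proportion is pinned down.

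Granting the balance, I would finish with a relaxation estimate. Hydrodynamic model solutions are Lipschitz continuous, so the $u_i$ are absolutely continuous and differentiable almost everywhere. I would take the spread $\Phi(t)=\max_i u_i(t)-\min_i u_i(t)$ as a Lyapunov functional: the LISF loading of $d\tilde{A}_{si}$ onto the maximal indices depletes the largest $u_i$, while the service-completion terms $\mu^{(i)}\beta_i$ replenish every pool at a strictly positive aggregate rate $\bar\mu=\lambda$, so $\Phi$ is nonincreasing and decays to $0$ as $t\to\infty$. Then $u_i(t)-u_l(t)\to 0$ for all $i\neq l$, whence $\tilde{I}_i(t)/\tilde{I}_l(t)=\beta_i u_i(t)/(\beta_l u_l(t))\to\beta_i/\beta_l$, which is \eqref{hydro.time.conv}. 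Here \eqref{norandomZconv} and \eqref{No.servers.percent} are used to guarantee that the initial normalised configuration is well defined and that the pool fractions $\beta_i$ are the correct weights, so that the relaxation limit is independent of the initialisation.
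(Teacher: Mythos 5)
Your proposal takes a genuinely different route from the paper, and the route breaks down at exactly the step you yourself flag as the main obstacle. The paper's proof never analyses the policy dynamics at all: it takes the fluid-limit convergence \eqref{norandomZconv}, which gives $Z^r_i(\cdot)/N^r \to \beta_i$ u.o.c., deduces that the ratios $Z^r_i(t)/Z^r_l(t)$ converge to $\beta_i/\beta_l$ uniformly in $t$, and then transfers this through the hydrodynamic scaling to the hydrodynamic limits $\tilde{Z}_i$ via the approximation $\Vert Z^{r,m}_i - \tilde{Z}_i\Vert_L \leq \epsilon$ of Corollary \ref{convergencecorollary}. Your scaffolding, by contrast, works inside the hydrodynamic model: the localisation to intervals of positive total idleness via \eqref{HDmodel6} and \eqref{HDmodel7}, and the conservation of total idleness there via \eqref{HDmodel4}, \eqref{HDmodel8}, \eqref{HDfirst} and $\lambda=\sum_i\mu^{(i)}\beta_i$, are both correct. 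But they only reduce the lemma to identifying how the routing terms $\tilde{A}_{si}$ split across pools, and that identification is precisely what you defer rather than prove.

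The deferred step is not merely missing; as stated it is false for LISF. LISF routes to the head of a FIFO queue of idle servers ordered by idle age, not to the pool maximising the normalised idle level $u_i=\tilde{I}_i/\beta_i$. In the hydrodynamic limit, pool $i$ feeds this queue at rate $\mu^{(i)}\beta_i$ (the depletion term in \eqref{HDmodel4} with $z_i=\beta_i$), arrivals drain it at rate $\lambda>0$, and since the total idle stock is of order one in hydrodynamic units, the queue turns over in finite hydrodynamic time. Once the initial stock is flushed, the FIFO discipline forces both the routing split $d\tilde{A}_{si}/dt$ and the standing idleness $\tilde{I}_i(t)$ to be proportional to the inflow rates $\mu^{(i)}\beta_i$, not to $\beta_i$. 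This rate-biased composition is exactly the fairness measure $\eta_{LISF}$ of Chapter 3, and it is what generates $\gamma=\int x^2\,dm/\int x\,dm$ in Theorem \ref{Atar-theorem}: the capacity lost per idle server is the rate-biased average $\sum_i \mu^{(i)}\cdot\mu^{(i)}\beta_i/\sum_i\mu^{(i)}\beta_i=\gamma(I)$. Your argmax-loading rule instead describes an idleness-balancing policy whose fixed point is $u_1=\cdots=u_I$, i.e.\ $\tilde{I}_i\propto\beta_i$; the Lyapunov functional $\Phi=\max_i u_i-\min_i u_i$ therefore relaxes the wrong dynamics, and no correct derivation from the LISF primitives can be substituted to rescue it, because the true LISF relaxation lands on $\tilde{I}_i/\tilde{I}_l\to\mu^{(i)}\beta_i/(\mu^{(l)}\beta_l)$. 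Note also that when the $\mu^{(i)}$ are not all equal, the configuration $\tilde{I}_i\propto\beta_i$ would make the SSC function \eqref{SSCfunction1} fail to vanish, since $\sum_i\beta_i\mu^{(i)}<\gamma(I)$ strictly by Cauchy--Schwarz, whereas the rate-biased configuration makes it vanish identically; so the rate-biased ratio is also the one the downstream SSC argument actually requires.
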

\vspace{-5mm}
\begin{proof}
	From \eqref{No.serversMMn} and \eqref{norandomZconv}, we know that $\frac{Z^r_i(\cdot)}{N^r} \to \beta_i$ u.o.c. in probability as $r \to \infty$. Recall that $\fluidscal{\mathbb{X}}$ is called a fluid limit of $\{\mathbb{X}^r\}$ if there exists an $\omega \in \mathscr{A}$ ($\mathscr{A}$ is taken from Appendix B. in \cite{Dai}) and a sequence $\{r_n\}$ with $r_n \to \infty$ as $n \to \infty$ such that $\frac{\mathbb{X}^{r_n}(\cdot, \omega)}{N^{r_n}}$ converges u.o.c. to $\fluidscal{\mathbb{X}}$ as $n \to \infty$. Therefore, $(\beta_1, \beta_2, \dots, \beta_I)$ is the fluid limit of $\left( \frac{Z^r_1(\cdot)}{N^r}, \frac{Z^r_2(\cdot)}{N^r}, \dots, \frac{Z^r_I(\cdot)}{N^r}  \right)$. Obviously $(\beta_1, \beta_2, \dots, \beta_I)$ is a time invariant state, thus it is also the steady state of the fluid limit.\par
	Now we try to show \eqref{hydro.time.conv1}. We know that $\tilde{Z}_i(t)$ is the hydrodynamic limit of $Z^{r,m}_i(t)$, i.e.\ if we fix $\epsilon > 0, L > 0$ and $T > 0$, and choose $r$ large enough, then for $\omega \in \mathscr{K}^r$ ($\mathscr{K}^r$ as defined in \cite[Corollary 5.1]{Dai}) and any $m < \sqrt{N^r}T$,
	\begin{equation}
	||Z^{r,m}_i(\cdot) - \tilde{Z}_l(\cdot)||_L \leq \epsilon \label{hydro.limit.conv}
	\end{equation}
	for some hydrodynamic limit $\tilde{Z}_i(\cdot), i=1,2,\dots, I$. Remember that
	\begin{equation}
	Z^{r,m}_i(t) = \frac{1}{\sqrt{x_{r,m}}} \left( Z^r_i \left( \frac{\sqrt{x_{r,m}} t }{N^r} + \frac{m}{\sqrt{N^r}} \right) -N^r_i \right).
	\end{equation}
	Thus in order to show $\frac{\tilde{Z}_i(t)}{\tilde{Z}_l(t)} \to \frac{\beta_i}{\beta_l}$ as $t \to \infty$, we can first show
	\begin{equation}
	\frac{Z^{r,m}_i(t)}{Z^{r,m}_l(t)} = \frac{Z^r_i \left( \frac{\sqrt{x_{r,m}}t}{N^r} + \frac{m}{\sqrt{N^r}} \right) - N^r_i}{Z^r_l \left( \frac{\sqrt{x_{r,m}}t}{N^r} + \frac{m}{\sqrt{N^r}} \right) - N^r_l} \to \frac{\beta_i}{\beta_l}, \text{ as } r \to \infty \text{ and } t \to \infty. \label{hydro.time.conv1}
	\end{equation}
	\vspace{-3mm}
	Since $\frac{N^r_i}{N^r_l} \to \frac{\beta_i}{\beta_l}$ as $r \to \infty$ and of course as $t \to \infty$, we only need to show 
	\begin{equation}
	\frac{Z^r_i \left( \frac{\sqrt{x_{r,m}}t}{N^r} + \frac{m}{\sqrt{N^r}} \right) }{Z^r_l \left( \frac{\sqrt{x_{r,m}}t}{N^r} + \frac{m}{\sqrt{N^r}} \right) } \to \frac{\beta_i}{\beta_l}, \text{ as } r \to \infty \text{ and } t \to \infty. \label{hydro.time.conv2}
	\end{equation}
	From the discussion about fluid limits in the beginning, 
	\begin{equation}
	\frac{Z^r_i(t)}{Z^r_l(t)} = \frac{Z^r_i(t)/N^r}{Z^r_l(t)/N^r} \to \frac{\beta_i}{\beta_l} \text{ u.o.c. in probability as } r \to \infty.
	\end{equation}
	This means that the limit of $\frac{Z^r_i(\cdot)}{Z^r_l(\cdot)}$ does not change with time $t$, so, as long as $r \to \infty$, we have
	\begin{equation}
	\frac{Z^r_i \left( \frac{\sqrt{x_{r,m}}t}{N^r} + \frac{m}{\sqrt{N^r}} \right) }{Z^r_l \left( \frac{\sqrt{x_{r,m}}t}{N^r} + \frac{m}{\sqrt{N^r}} \right) } - \frac{\beta_i}{\beta_l} \to 0, \forall \ t \geq 0.
	\end{equation}
	Then we have shown \eqref{hydro.time.conv2}, hence \eqref{hydro.time.conv1}. Therefore, as $r \to \infty$,
	\begin{align*}
	\frac{\tilde{Z}_i(t)}{\tilde{Z}_l(t)} = \left( \frac{\tilde{Z}_i(t)}{\tilde{Z}_l(t)} - \frac{Z^{r,m}_i(t)}{Z^{r,m}_l(t)}\right) + \frac{Z^{r,m}_i(t)}{Z^{r,m}_l(t)} \to 0 + \frac{\beta_i}{\beta_l} = \frac{\beta_i}{\beta_l}, \forall \ t \geq 0,
	\end{align*}
	and the lemma is proved as required.
\end{proof}
Now we have the SSC result for inverted-V systems.
\begin{lemma}[SSC for inverted-V]
	For the sequence of inverted-V systems mentioned above, there exists a continuous function $g: \mathbb{R}^{I+1} \to \mathbb{R}^+$, such that it satisfies Assumption 4.1 and 4.2 in \cite{Dai}, and if
	\begin{equation}
	g(\diffuscal{Q}^r(0), \diffuscal{Z}^r(0)) \to 0 \mbox{ in probability, as } r \to \infty, \label{SSC-inverted-V-initial}
	\end{equation}
	then, for each $T>0$,
	\begin{equation}
	||g(\diffuscal{Q}^r(t), \diffuscal{Z}^r(t))||_T \to 0 \mbox{ in probability}, \label{SSC-inverted-V}
	\end{equation}
	as $r \to \infty$. 
	\label{lemma-SSC for inverted-V}
\end{lemma}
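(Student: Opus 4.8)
The plan is to construct an explicit SSC function $g$ measuring how far the idle servers are from being balanced across the $I$ pools, to check that $g$ meets the hypotheses of Theorem \ref{SSC1}, and finally to strengthen the multiplicative conclusion of that theorem into the additive form \eqref{SSC-inverted-V}. First I would set, for $(q,z) \in \mathbb{R}^{I+1}$,
\[
g(q,z) = \sum_{i=1}^{I} \left\vert z_i - \beta_i \sum_{l=1}^{I} z_l \right\vert .
\]
Since the inverted-V has a single customer class with $x^*_i = 1$, the steady-state busy fractions are $z_i = \beta_i$, and the zero set of $g$ is precisely the idle-balance subspace $\{z : z_i/\beta_i \text{ is independent of } i\}$ onto which the state should collapse under LISF. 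This $g$ is a finite sum of absolute values of linear forms, hence continuous, and satisfies $g(\alpha q, \alpha z) = \alpha\, g(q,z)$ for $\alpha \ge 0$, so the homogeneity \eqref{homogeneity} holds with $c=1$ and Assumption \ref{continuity} is verified. The remaining structural hypotheses needed to invoke Theorem \ref{SSC1} hold for the inverted-V: the heavy-traffic Assumption \ref{heavytraffic} follows from \eqref{heavytraffic1}, and the LISF policy satisfies the fluid steady-state Assumption \ref{steadycondition}.

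The crux is to verify Assumption \ref{SSCcondition} for this $g$. Specialising the hydrodynamic model equations \eqref{HDfirst}--\eqref{HDlast} to the inverted-V under LISF, one has $\tilde{Z}_i(t) \le 0$, and the complementarity \eqref{HDmodel6} forces either $\tilde{Q}(t)=0$ or all servers busy ($\sum_i \tilde{Z}_i(t) = 0$); thus the only nontrivial content is the evolution of the scaled idle counts $-\tilde{Z}_i(t)$, which the policy equations \eqref{HDlast} drive towards proportionality to $\beta_i$. Quantitatively this is exactly Lemma \ref{lemma.hydro.time.conv}, which gives $\tilde{Z}_i(t)/\tilde{Z}_l(t) \to \beta_i/\beta_l$ and hence $g(\tilde{Q}(t),\tilde{Z}(t)) \to 0$ along any hydrodynamic model solution, while invariance of the balance subspace gives the preservation clause ($g=0$ at time $0$ implies $g \equiv 0$). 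To produce a single envelope $H(t) \to 0$ valid for \emph{every} solution with $|(\tilde{Q}(0),\tilde{Z}(0))| \le 1$, I would use that all hydrodynamic model solutions are Lipschitz with the common constant $N$ and uniformly bounded on compacts, so that the solution family is equicontinuous, and then upgrade the per-solution convergence to uniform decay of $H(t) := \sup g(\tilde{Q}(t),\tilde{Z}(t))$ by an Arzel\`a--Ascoli compactness argument. The hard part will be precisely this uniformisation step: Lemma \ref{lemma.hydro.time.conv} yields convergence of the idle ratios for each fixed limit, but turning ``each solution collapses'' into ``all solutions collapse at a common rate $H(t)$'' requires the equicontinuity of the family together with care that the complementarity \eqref{HDmodel6} does not inject fresh imbalance at the instants when the queue empties.

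With Assumptions \ref{continuity} and \ref{SSCcondition} verified and the initial condition \eqref{SSC-inverted-V-initial} in force, Theorem \ref{SSC1} delivers the multiplicative statement \eqref{multiplicativeSSC}, and it remains only to remove the denominator. Because $\diffuscal{Z}^r_i \le 0$ and work conservation gives $|\diffuscal{Z}^r_i(t)| \le |\diffuscal{X}^r(t)|$ at every $t$ (when $\diffuscal{Q}^r(t)>0$ all $\diffuscal{Z}^r_i(t)=0$, while when $\diffuscal{Q}^r(t)=0$ one has $\sum_i \diffuscal{Z}^r_i(t)=\diffuscal{X}^r(t)$ with every summand $\le 0$), the family $\|\diffuscal{Z}^r\|_T$ is dominated by $\|\diffuscal{X}^r\|_T$, which is stochastically bounded by the heavy-traffic comparison arguments of Chapter 3. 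Writing $\|g\|_T$ as the product of the multiplicatively vanishing ratio in \eqref{multiplicativeSSC} and the stochastically bounded factor $(\|\diffuscal{Z}^r\|_T \vee 1)^c$ then gives $\|g(\diffuscal{Q}^r(t),\diffuscal{Z}^r(t))\|_T \to 0$ in probability, which is \eqref{SSC-inverted-V}.
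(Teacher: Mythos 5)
Your overall architecture --- exhibit an explicit SSC function $g$, verify Assumptions \ref{continuity} and \ref{SSCcondition}, invoke the multiplicative SSC theorem, then remove the denominator using boundedness of $\hat{Z}^r$ --- is the same as the paper's. The genuine gap is your choice of $g$: it encodes collapse onto the \emph{wrong} subspace. Under LISF the diffusion-scaled idle counts do not equalise in proportion to the pool sizes $\beta_i$; they equalise in proportion to the pool service capacities $\beta_i \mu^{(i)}$. This can be read off Atar's own computation reproduced in Appendix A: equation \eqref{eq:Ihat} gives $\hat{I}^{(i)}(t) \approx \frac{\mu^{(i)} N^{(i)}}{\sum_l \mu^{(l)} N^{(l)}}\, \hat{I}(t)$ up to vanishing error terms, i.e.\ $-\hat{Z}^r_i \approx \frac{\beta_i \mu^{(i)}}{\sum_l \beta_l \mu^{(l)}}\, \hat{I}^r$ (each idle server waits the same time in the FIFO idle queue, and pool $i$ feeds that queue at rate $\approx \mu^{(i)} N^r_i$). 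This is also exactly why the coefficient in Theorem \ref{Atar-theorem} is $\gamma = \int x^2\, dm / \int x\, dm$, the \emph{idleness-weighted} mean rate, rather than $\bar{\mu}$: if idleness split proportionally to $\beta_i$, Atar's limit would have coefficient $\bar{\mu}$. Consequently, on the true collapse manifold $z_i = -c\, \beta_i \mu^{(i)}$, $c>0$, your $g$ evaluates to $c \sum_i \beta_i \left\vert \mu^{(i)} - \sum_l \beta_l \mu^{(l)} \right\vert > 0$ whenever the $\mu^{(i)}$ are not all equal; so no envelope $H(t) \to 0$ exists, your zero set (the line $z \propto \beta$) is not invariant under the hydrodynamic dynamics, and Assumption \ref{SSCcondition} fails. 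Leaning on Lemma \ref{lemma.hydro.time.conv} as stated does not repair this: the ratio $\beta_i/\beta_l$ asserted there is incompatible with the collapse that the value of $\gamma$ forces, whereas the paper's function $g(q,z) = \sum_i z_i \mu^{(i)} - \gamma(I) \sum_i z_i$, with $\gamma(I) = \sum_l \beta_l (\mu^{(l)})^2 / \sum_l \beta_l \mu^{(l)}$, has as zero set the hyperplane that \emph{contains} the correct line $z \propto (\beta_i \mu^{(i)})_i$.

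The error also shows downstream, which is the cleanest way to see it is not cosmetic: the lemma exists so that the proof of Theorem \ref{Atar-theorem} can use \eqref{SSC-inverted-V2}, namely $\left\Vert \sum_i \hat{Z}^r_i(t) \left( \mu^{(i)} - \gamma(I) \right) \right\Vert_T \to 0$. If your collapse held, then writing $\hat{Z}^r_i = \beta_i \sum_l \hat{Z}^r_l + o(1)$ gives $\sum_i \hat{Z}^r_i (\mu^{(i)} - \gamma(I)) = \left( \sum_l \beta_l \mu^{(l)} - \gamma(I) \right) \sum_l \hat{Z}^r_l + o(1)$; since $\gamma(I) > \sum_l \beta_l \mu^{(l)}$ strictly (Cauchy--Schwarz with unequal rates) and $-\sum_l \hat{Z}^r_l = \hat{I}^r$ is of order one in the Halfin--Whitt regime (it converges to $\xi^-$, not to $0$), the required quantity cannot vanish. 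In other words, your collapse statement and the one actually needed are mutually exclusive whenever there is idleness and the rates differ. Two secondary remarks: your final step asserts stochastic boundedness of $\Vert \hat{X}^r \Vert_T$ ``by Chapter 3,'' but Chapter 3 proves tightness of the steady-state variables $\hat{X}^r(\infty)$, not of path suprema, so that input needs its own argument (the paper instead verifies Dai's compact containment condition for $\hat{Z}^r$ directly); and your flagging of the uniform-envelope issue --- upgrading per-solution convergence to a single $H(t)$ for all hydrodynamic solutions --- identifies a real step that the paper glosses over, but it does not compensate for the wrong choice of $g$.
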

\begin{proof}
	To prove the SSC result in the inverted-V systems, first we need to check that four assumptions from \cite{Dai} are satisfied. \par	
	By \eqref{heavytraffic1}, \cite[Assumption 3.1]{Dai} is satisfied and the static planning problem in that paper has a unique optimal solution with $x^*_i = 1$ for all $i = 1,2,\dots,I$. Since such systems are special cases of the systems in \cite{Atar}, the condition in Lemma 3.1(\Rnum{2}) from that paper should also be satisfied by these systems. Thus, for fixed $T>0$,
	\begin{equation}
	\left\Vert r^{-1} T^{(i)}(t) - \rho_i t \right\Vert_T \to 0 \ \text{ in probability as } r \to \infty,
	\end{equation}
	which can be rewritten as
	\begin{equation}
	\left\Vert \int_0^t \left( \frac{1}{r} \sum_{\sindex \in K_i} \mu_{\sindex} B_{\sindex}(s) -\beta_i \mu^{(i)} \right) ds \right\Vert_T \to 0 \text{ in probability as } r \to \infty.
	\end{equation}
	Recall that there is no randomness among servers in the same pool, i.e.\ $\mu_{\sindex}=\mu^{(i)}, \forall \sindex \in K_i$. Therefore the convergence above can be simplified to
	\begin{equation}
	\left\Vert \int_0^t \left( \frac{1}{r} Z^r_i(s) -\beta_i \right) ds \right\Vert_T \to 0 \text{ in probability as } r \to \infty. \label{norandomZconv}
	\end{equation}
	Since we do not consider abandonments in this model, the convergence of the (scaled) queue length does not have to be taken into account, thus equation \eqref{norandomZconv} implies that the second part of \cite[Assumption 3.2]{Dai} is satisfied, which is adequate for our analysis.\par
	Now we can define the SSC function $g: \mathbb{R}^{I+1} \to \mathbb{R}$:
	\begin{equation}
	g(q,z_1,z_2,\dots, z_I) = \sum_{i=1}^I z_i \mu^{(i)} - \sum_{i=1}^I z_i \gamma(I), \label{SSCfunction1}
	\end{equation}
	where $\gamma(I) = \frac{\sum_{l=1}^I \beta_l \left(\mu^{(l)}\right)^2 }{ \sum_{l=1}^I \beta_l \mu^{(l)} }$.\par
	It is easy to see that $g$ is continuous and $g(\alpha q,\alpha z_1,\alpha z_2, \alpha z_I) = \alpha g(q,z_1,z_2, \dots, z_I)$ for all $(q, z_1, z_2, \dots, z_I) \in \mathbb{R}^{I+1}$ and for all $0 \leq \alpha \leq 1$. Hence $g$ satisfies Assumption 4.1 in \cite{Dai}. Next, we show that $g$ satisfies Assumption 4.2 in \cite{Dai}.\par
	From \eqref{SSCfunction1}, we have
	\begin{align*}
	&g(\tilde{Q}(t), \tilde{Z}_1(t), \tilde{Z}_2(t), \dots, \tilde{Z}_I(t)) = \sum_{i=1}^I \tilde{Z}_i(t) \mu^{(i)} - \sum_{i=1}^I \tilde{Z}_i(t) \gamma(I)\\
	& = \sum_{i=1}^I \tilde{Z}_i(t) \left( \mu^{(i)} - \frac{\sum_{l=1}^I \beta_l \left(\mu^{(l)}\right)^2 }{\sum_{l=1}^I \beta_l \mu^{(l)} } \right) = \frac{1}{\sum_{l=1}^I \beta_l \mu^{(l)} } \sum_{i=1}^I \tilde{Z}_i(t) \left( \mu^{(i)} \sum_{l=1}^I \beta_l \mu^{(l)} - \sum_{l=1}^I \beta_l \left(\mu^{(l)}\right)^2  \right) \\
	& = \frac{1}{\sum_{i=1}^I \beta_i \mu^{(i)} } \sum_{i=1}^I \tilde{Z}_i(t) \left( \sum_{l=1}^I \left( \mu^{(i)} \beta_l \mu^{(l)} - \beta_l \left(\mu^{(l)}\right)^2 \right)  \right) \\
	& = \frac{1}{\sum_{l=1}^I \beta_l \mu^{(l)} } \sum_{i=1}^I \frac{1}{\mu^{(i)}} \tilde{Z}_i(t) \mu^{(i)} \left( \sum_{l=1}^I \left( \mu^{(i)} \beta_l \mu^{(l)} - \beta_l \left(\mu^{(l)}\right)^2 \right)  \right)\\
	& = \frac{1}{\sum_{l=1}^I \beta_l \mu^{(l)} } \sum_{i=1}^I \sum_{l=1}^I \frac{1}{\mu^{(i)}} \left( \tilde{Z}_i(t) \mu^{(i)}   \beta_l \mu^{(l)}\left(\mu^{(i)} -\mu^{(l)}\right) \right) \\
	& = \frac{1}{\sum_{l=1}^I \beta_l \mu^{(l)} } \sum_{i=1}^I \sum_{l > i}^I \frac{1}{\mu^{(i)}} \left( \tilde{Z}_i(t) \mu^{(i)} \beta_l \mu^{(l)} \left(\mu^{(i)} -\mu^{(l)}\right) + \tilde{Z}_l(t) \mu^{(l)}   \beta_i \mu^{(i)} \left(\mu^{(l)} -\mu^{(i)}\right) \right)\\
	& = \frac{1}{\sum_{l=1}^I \beta_l \mu^{(l)} } \sum_{i=1}^I \sum_{l > i}^I \frac{1}{\mu^{(i)}} \left( \mu^{(i)} \mu^{(l)} \left(\mu^{(i)} -\mu^{(l)}\right)  \left( \tilde{Z}_i(t)\beta_l - \tilde{Z}_l(t)\beta_i \right) \right). \numberthis \label{SSCfunction3} 
	\end{align*}
	By Lemma \ref{lemma.hydro.time.conv}, we know that \eqref{SSCfunction3} $\to 0$ as $t \to \infty$, thus \eqref{SSCfunction1} is a valid SSC function. So far all of the four assumptions in \cite{Dai} are satisfied. However, we can only derive the multiplicative SSC result of Theorem 4.1 under current conditions. What we really need is the strong SSC. Thus by Remark 4.2 in that paper, we need to check the so-called compact containment condition for $\diffuscal{Z}^r$ (We do not need to worry about this condition for $\diffuscal{Q}^r$ because the new hydrodynamic scaling factor \eqref{hydro-scaling-factor} in this case does not include $\diffuscal{Q}^r$).\par
	The compact containment condition for $\{\diffuscal{Z}^r(\cdot)\}$ is defined as
	\begin{equation}
	\lim_{K \to \infty}\limsup_{r \to \infty} \mathbb{P}(||\diffuscal{Z}^r(t)||_T > K) =0, \forall \ T>0. \label{compact-containment}
	\end{equation} 
	If $\{||\diffuscal{Z}^r(t)||_T\}$ is tight, then obviously \eqref{compact-containment} holds, hence we only need to show tightness of $\{||\diffuscal{Z}^r(t)||_T\}$. We will prove this by contradiction. \par
	Fix $T>0$. If $\{||\diffuscal{Z}^r(t)||_T\}$ is not tight, then, $\forall r$, $\exists \epsilon>0$ such that $\forall K>0$, we always have
	\begin{equation}
	\mathbb{P} \left( ||\diffuscal{Z}^r(t)||_T > K \right) = C_K \geq \epsilon >0,
	\end{equation} 
	i.e.\
	\begin{equation}
	\mathbb{P} \left( \max_{i}\left\Vert \frac{Z^r_i(t)-N^r_i}{\sqrt{|N^r|}} \right\Vert_T > K \right) = C_K \geq \epsilon >0. \label{tightness-of-Z-equation1}
	\end{equation} 
	However by \eqref{norandomZconv}, $\forall \delta>0$,
	\begin{equation}
	\lim_{r \to \infty} \mathbb{P} \left( \left\Vert \frac{1}{r} Z^r_i(t) - \beta_i \right\Vert_T > \delta \right) =0. 
	\end{equation}
	Using this and \eqref{No.serversMMn}, and \eqref{No.servers.percent},
	\begin{equation}
	\lim_{r \to \infty} \mathbb{P} \left( \left\Vert \frac{Z^r_i(t)}{N^r_i} - 1 \right\Vert_T > \delta \right) =0, 
	\end{equation}
	which contradicts \eqref{tightness-of-Z-equation1}. Therefore $\{||\diffuscal{Z}^r(t)||_T\}$ is indeed tight, so the condition \eqref{compact-containment} is satisfied. Thus by Theorem 4.1 and Remark 4.2 from \cite{Dai}, \eqref{SSC-inverted-V} holds, i.e.\ for any $T>0$
	\begin{equation}
	\left\Vert \sum_{i=1}^{I}  \diffuscal{Z}^r_i(t) \left( \mu^{(i)} -  \gamma(I) \right) \right\Vert_T \to 0 \mbox{ in probability as } r \to \infty. \label{SSC-inverted-V2}
	\end{equation}
\end{proof}
\vspace{-3mm}
Now that we have shown SSC for the inverted-V systems, we can prove Atar's result, i.e.\ Theorem \ref{Atar-theorem}.
\vspace{-3mm}
\begin{proof}[Proof of Theorem \ref{Atar-theorem}]
	All we need to show is 
	\begin{equation}
	\left\Vert F^r(t) - \gamma \int_{0}^{t} \diffuscal{I}^r(s) ds\right\Vert_T \to 0 \mbox{ as }  r \to \infty \mbox{ for any } T>0.
	\end{equation}	
	Since
	\begin{align*}
	F^r(t) - \gamma \int_{0}^{t} \diffuscal{I}^r(s) ds 
	&= F^r(t) - \gamma(I) \int_{0}^{t} \diffuscal{I}^r(s) ds + \gamma(I) \int_{0}^{t} \diffuscal{I}^r(s) ds - \gamma \int_{0}^{t} \diffuscal{I}^r(s). \numberthis \label{SSC-Atar-equation1}
	\end{align*}
	The first difference from this equation is
	\begin{align*}
	F^r(t) - \gamma(I) \int_{0}^{t} \diffuscal{I}^r(s) ds 
	&= \int_0^t \sum_{\sindex=1}^{N^r}\mu_{\sindex} \diffuscal{I}^r_{\sindex}(s)ds - \gamma(I) \int_{0}^{t} \diffuscal{I}^r(s) ds\\
	&= \int_{0}^{t} \sum_{i=1}^{I} \sum_{k \in K_i}\left( \mu_k \diffuscal{I}_k(s) - \gamma(I) \diffuscal{I}^r_k(s) \right) ds\\
	&= \int_{0}^{t} \sum_{i=1}^{I} \sum_{k \in K_i}\left( \mu_k -\mu^{(i)} + \mu^{(i)} - \gamma(I) \right) \diffuscal{I}^r_k(s) ds\\
	&= \int_{0}^{t} \sum_{i=1}^{I} \sum_{k \in K_i}\left( \mu_k -\mu^{(i)}\right)\diffuscal{I}^r_k(s) ds  + \int_{0}^{t}\sum_{i=1}^{I} \sum_{k \in K_i}\left( \mu^{(i)} - \gamma(I) \right) \diffuscal{I}^r_k(s) ds. \numberthis \label{SSC-Atar-equation2}
	\end{align*}
	The first integral in \eqref{SSC-Atar-equation2} converges to zero u.o.c as $r \to \infty$, which is a result of \cite[Lemma 3.1 (\Rnum{3})]{Atar} -  actually being $e_1$ in the lemma. The second integral is
	\begin{align*}
	&\int_{0}^{t}\sum_{i=1}^{I} \sum_{k \in K_i}\left( \mu^{(i)} - \gamma(I) \right) \diffuscal{I}^r_k(s) ds = \int_{0}^{t} \sum_{i=1}^{I} \left( \mu^{(i)} - \gamma(I) \right) \diffuscal{I}^{r,(i)}(s) ds\\
	&\quad = \int_{0}^{t} \sum_{i=1}^{I} \left( \mu^{(i)} - \gamma(I) \right) \diffufactor (N^r_i-Z^r_i(s)) ds = \int_{0}^{t} \sum_{i=1}^{I} \left( \mu^{(i)} - \gamma(I) \right) (-\diffuscal{Z}^r_i(s)) ds. \numberthis \label{SSC-Atar-equation3}
	\end{align*}
	By Lemma \ref{lemma-SSC for inverted-V} and equation \eqref{SSC-inverted-V2}, \eqref{SSC-Atar-equation3} converges to zero u.o.c as $r \to \infty$. Hence \eqref{SSC-Atar-equation2} converges to zero u.o.c as $r \to \infty$.\par
	For the latter half of equation \eqref{SSC-Atar-equation1}, since $\{\mu^{(i)}\}$ are chosen in a way such that $0 < \mu^{(i)} - \mu^{(i-1)} \leq \epsilon, \forall i=1,2,\dots,I$, and $\epsilon$ is arbitrary, we can let $\epsilon \to 0$, and then $\gamma(I) \to \gamma$ (remember $\gamma=\frac{\int x^2 dm}{\int x dm}$). Thus the second half of \eqref{SSC-Atar-equation1} also converges to zero u.o.c. Therefore, 
	\begin{equation}
	\left\Vert F^r(t) - \gamma \int_{0}^{t} \diffuscal{I}^r(s) ds  \right\Vert_T \to 0, \mbox{ as } r \to \infty,
	\end{equation} 
	and since $\diffuscal{I}^r(t) = \diffufactor(X^r(t) - N^r)^- = \diffuscal{X}^r(t)^-$, we have
	\begin{equation}
	\left\Vert F^r(t) - \gamma \int_{0}^{t} \diffuscal{X}^r(s)^- ds  \right\Vert_T \to 0, \mbox{ as } r \to \infty.
	\end{equation}
	Combining with \eqref{Atar-equation1}, we have $\diffuscal{X}^r(t)$ weakly converge to the solution of SDE
	\begin{equation}
	\xi(t) = \xi(0) + \sigma w(t) + \beta t + \gamma \int_0^t \xi(s)^- ds, t \geq 0,
	\end{equation}
	where all of the coefficients are as in Theorem \ref{Atar-theorem}.
\end{proof}

\chapter{Conclusion and Future Work}

In this work, we study queueing systems with heterogeneous servers and parameter uncertainty. In particular, we consider these systems under the \textit{Quality-Efficiency Driven} regime and heavy traffic condition. Our work is motivated by call centres where agents' service speeds may be influenced by their environment, and thus it is uncertain and unknown prior to the operation of the system. We assume service rates to be i.i.d.\ random variables. Their realisations are given at time zero and are kept fixed during the running of the system. Although it is modelled on call centres, our results are generic and can be applied to other domains such as healthcare, computer science, instant messaging service, among others.

We start by deriving diffusion limits for many server queues under the LISF policy with random service rates and abandonments. \cite{Atar} shows diffusion limits for such systems without abandonments. Then we obtain our results by extending his result to systems with abandonments, and use a martingale method to prove this. Unlike the constant drift of diffusion limits for identical server systems, our diffusion limits have a normal random drift which results from the randomness of the service rates. For systems without abandonments, the existence of such a random drift means that whether the diffusion has steady states depends on the value of the drift. From the perspective of the system, since the arrival rate is fixed, random service rates may cause the system to be unstable and the queue length may become unbounded. However, for systems with abandonments, even if it has random service rates it is always stable because of the abandonment process.

Then, we formulate an optimisation problem for the staffing for such systems. Staffing has always been of significance in call centre management since an unwise decision about it will cost companies immensely. For our systems, it is especially important because we need to take the randomness of servers into account. For systems without abandonments, we simplify the problem by assigning a fixed cost to unstable systems, and focus on the cost for stable systems. For stable systems, we consider both staffing costs and holding costs. The holding cost involves the expected queue length of steady state, thus we use the diffusion limits derived previously to establish a continuous approximation of this cost. In order to show the validity of this estimation, we prove the tightness of the steady state, by which we can show interchangeability of the limits. For systems with abandonments, the systems are always stable and we also simplify the problem by neglecting the holding cost. 

Finally, we show the state space collapse results for systems with random service rates. SSC is an important phenomenon because it reduces dimensions of the processes and significantly simplifies analysis. In particular, it is the central part in proving diffusion limits. Our work is based on the model developed by \cite{Dai}. They show a generic SSC result for queueing networks with multi-class customers and skilled based parallel server pools. We generalise their results to systems with random service rates inside each pool. To show the SSC in such systems, the challenging part is to show the departure processes are almost Lipschitz. We cannot show it directly as in \cite{Dai}, thus we use a coupling method by splitting Poisson processes with the maximum rate. We later use the SSC result from \cite{Dai} to show the diffusion limit in \cite{Atar} again. By using this approach, we identify that such a SSC result can also be applied in systems with random service rates and thus gain more insight into this phenomenon. And since \cite{Atar} uses a method particular to this model, this new approach also indicates that we can use a more general way to prove the limit theorem .

Queueing systems with random service rates exhibit many interesting properties, and we provide a few promising future research directions as follows. In our work, only the LISF routing policy is considered. One can consider other blind policies such as the random routing, where customers are routed to idle servers randomly, or the longest accumulated idle server first, where customers are routed to the idle server which has the longest accumulated idle time. Moreover, we can investigate the optimal routing and scheduling. In Section 3.3, we use a numerical method to analyse how the abandonment rate is influenced by the variation of service rates. We only consider the situation where random service rates are uniformly distributed. We can analyse other distributions as well as the theoretical proof of this result. In Section 3.4, we try to establish a fairness measure for general policies. We can continue this work and develop a general method for proving diffusion limits under different policies.

\appendix

\chapter{Proofs in Chapter 2}
\section{Proposition 3.1 of Atar}
\viva{
This proposition shows how Atar partitions servers such that the systems before and after pooling are equivalent in distribution. For simplicity, we will omit superscript $\systemindex$ from the notation of all random variables and stochastic processes throughout this appendix. The deterministic parameters that depend on $\systemindex$ will still keep the $\systemindex$ in their notation. 
\begin{proposition}(\cite[Proposition 3.1]{Atar})
Fix $r \in \mathbb{N}$. Let $(K_1,\dots,K_I)$ be a partition of $1,\dots, N$ that measurable on $\sigma\{ N, \{\mu_{\sindex}\} \}. Let \{S^{(1)}, \dots, S^{(I)}\}$ be independent standard Poisson processes. For each $i=1,\dots,I$ and for each nonempty subset $\Theta$ of $K_i$, let $\{ e(i,L,l), l \in \mathbb{N} \}$ be a sequence of i.i.d.\ random variables distributed uniformly on $\Theta$, independent across $i$ and $\Theta$. Assume also that the four random objects $(N, \{\mu_{\sindex}\}, X(0), \{B_{\sindex}(0)\}), a, \{S^{(i)}\}$ and $\{e(i,\Theta,l)\}$ are mutually independent. Define
\begin{equation}
D^{(i)}(t)=S^{(i)}(T^{(i)}(t)), i=1,\dots,I, \label{Atar-prop3.1-equ1}
\end{equation}
where
\begin{equation}
T^{(i)}(t) = \sum_{\sindex \in K_i} T_{\sindex}(t), i = 1,\dots, I, \label{Atar-prop3.1-equ2}
\end{equation}
and consider 
\begin{align}
D_{\sindex}(t) = \sum_{s \in (0,t]: \Delta D^{(i)}(s)=1} \mathds{1}_{ \{ e(i,\{ p \in K_i: B_p(s-)=1 \}, D^{(i)}(s)) =\sindex \} }, \quad \sindex \in K_i, i=1,\dots,I \label{Atar-prop3.1-equ3}
\end{align}
as a substitute for equation \eqref{Atar-Dk}. Then the process $\Sigma^{\prime}$, defined analogously to $\Sigma$, with \eqref{Atar-prop3.1-equ1}-\eqref{Atar-prop3.1-equ3} in place of \eqref{Atar-Dk}, is equal in law to $\Sigma$.
\end{proposition}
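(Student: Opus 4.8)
The plan is to recognize the claim as an equality in law of the \emph{marked point process of service completions}, and to establish it by matching stochastic intensities. Observe first that, given the arrival process $a$, the initial data $(N,\{\mu_{\sindex}\},X(0),\{B_{\sindex}(0)\})$, and the labelled departure epochs, the full descriptor $\Sigma=(A,X,Q,I,\{B_{\sindex},D_{\sindex}\})$ is a deterministic, measurable functional of these inputs: the LISF routing rule, the FIFO service discipline, work conservation and non-preemption all prescribe, path by path, which server picks up which customer and hence how $B_{\sindex}$, $Q$, $X$ and $I$ evolve between departure epochs. Consequently it suffices to show that the counting process recording the pairs (time of completion, identity of completing server) has the same law under the two constructions, conditionally on $a$ and the initial data; the identity $\Sigma' \stackrel{d}{=}\Sigma$ then follows by applying the same functional.

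Next I would compare the two constructions through their predictable intensities with respect to the natural filtration $\{\mathcal{F}_t\}$ generated by $(a, X(0),\{B_{\sindex}(0)\})$ together with the departures. In the original construction, the representation $D_{\sindex}(t)=S_{\sindex}(\mu_{\sindex} T_{\sindex}(t))$ with $T_{\sindex}(t)=\int_0^t B_{\sindex}(s)\,ds$ from \eqref{Atar-Dk} shows, by the random time-change theorem for Poisson processes, that $D_{\sindex}(t)-\int_0^t \mu_{\sindex} B_{\sindex}(s)\,ds$ is an $\mathcal{F}_t$-martingale; that is, server $\sindex$ completes at predictable rate $\mu_{\sindex} B_{\sindex}(t-)$. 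In the pooled construction I take $T^{(i)}(t)=\sum_{\sindex\in K_i}\mu_{\sindex} T_{\sindex}(t)$, so that $D^{(i)}(t)=S^{(i)}(T^{(i)}(t))$ completes jobs in pool $i$ at predictable rate $\frac{d}{dt}T^{(i)}(t)=\sum_{\sindex\in K_i}\mu_{\sindex} B_{\sindex}(t-)$, and I assign the $l$-th such completion to a currently busy server $p\in\Theta_s=\{p\in K_i:B_p(s-)=1\}$ with probability proportional to its rate $\mu_p$. The superposition property of Poisson processes and the independent thinning/marking property then give the label-$\sindex$ intensity
\begin{equation*}
\Big(\textstyle\sum_{j\in K_i}\mu_j B_j(t-)\Big)\cdot\frac{\mu_{\sindex} B_{\sindex}(t-)}{\sum_{j\in K_i}\mu_j B_j(t-)}=\mu_{\sindex} B_{\sindex}(t-),
\end{equation*}
which coincides exactly with the intensity in the original construction. (When the rates inside a pool are taken equal, these weights reduce to the uniform choice on $\Theta_s$ appearing in the statement.) Matching intensities driven by the same exogenous inputs yields equality in law of the labelled departure process, by the characterization of a counting process through its predictable compensator.

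The delicate point, and what I expect to be the main obstacle, is the \emph{self-referential} nature of the coupling: $B_{\sindex}$ is itself determined by the past departures, so one cannot simply read off ``intensity $\mu_{\sindex} B_{\sindex}$'' without first making sense of the process it defines. The clean way around this is a pathwise induction over the ordered sequence of events (arrivals and completions): between consecutive events all quantities evolve deterministically and identically in both constructions, while at a completion event the memorylessness of the exponential clocks makes the competing-clocks mechanism of the original model and the pool-clock-plus-rate-proportional-marking mechanism produce the \emph{same} joint law for the next inter-event time and the identity of the departing server. I would carry out this induction to show that the two constructions agree in law up to the $n$-th event for every $n$, taking care to verify that arrival and departure epochs are almost surely distinct, so the ordering is well defined, and that the bookkeeping of which busy-server set $\Theta_s$ is in force at each epoch is consistent across the two models. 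Letting $n\to\infty$ and using that only finitely many events occur in any bounded interval then gives the full equality $\Sigma'\stackrel{d}{=}\Sigma$.
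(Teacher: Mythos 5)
Your proposal cannot be checked against the paper's own argument, because the thesis never proves this proposition: it is transcribed from \cite[Proposition 3.1]{Atar} into the appendix and used as a black box, with the reader referred to Atar for the proof. Judged on its own terms, your overall strategy is sound and is essentially the natural (and, as far as the thesis's sketch indicates, Atar's) route: reduce to the marked point process of completion epochs and completing-server identities, note that $\Sigma$ is recovered from the arrivals, the initial data and this marked process by a single deterministic measurable map (LISF, FIFO, work conservation, non-preemption), and then match the conditional laws / predictable compensators of the two constructions, handling the feedback of $B_{k}$ into the intensities by induction over event epochs via memorylessness. Modulo routine technicalities (a.s.\ no simultaneous events, non-explosion, the strong-Markov argument that time-changed Poisson increments are ``fresh'' after each event), that scheme does yield an equality in law.

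The genuine problem is that what you prove is not the statement as written, and your parenthetical defense of the gap is not valid. The statement uses the unweighted time change $T^{(i)}(t)=\sum_{k\in K_i}T_k(t)$ and draws the departing server \emph{uniformly} from the busy set $\Theta_s$; you silently replace these by $T^{(i)}(t)=\sum_{k\in K_i}\mu_k T_k(t)$ and by rate-proportional selection, and those replacements are exactly what make your intensity computation close. Under uniform selection the label-$k$ intensity would be
\begin{equation*}
\left(\sum_{j\in K_i}\mu_j B_j(t-)\right)\cdot \frac{B_k(t-)}{\sum_{j\in K_i}B_j(t-)}\ \neq\ \mu_k B_k(t-)
\end{equation*}
unless all busy rates in the pool coincide, and in this setting they do not: Atar's pools only guarantee rates within $\epsilon$ of one another (a.s.\ pairwise distinct when $m$ is continuous), so ``equal rates inside a pool'' is not a special case you may appeal to. Indeed the literal statement is false: take one pool with two perpetually busy servers of rates $\mu_1=1$, $\mu_2=2$; in the original system the first departure is server $1$ with probability $1/3$, while under uniform selection it is server $1$ with probability $1/2$ (and with the unweighted time change the pool even departs at total rate $2$ rather than $3$), so $\Sigma'$ and $\Sigma$ differ in law. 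The version your argument establishes --- weighted time change plus selection probability $\mu_p/\sum_{q\in\Theta_s}\mu_q$ --- is the one consistent with the main text of Chapter 2, where $T^{r,(i)}=\sum_{k}\mu_k T^r_k(t)$, and is the only version a correct proof can deliver; the appendix transcription garbles it. You should state this correction explicitly rather than elide it: as a proof of the proposition as literally quoted, your argument fails at, and because of, the selection rule.
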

}
\label{Appendix-Atar-prop3.1}

\section{Proof of Theorem\ref{Atar-theorem} by Atar}

\begin{proof}[Proof of Theorem \ref{Atar-theorem}]
	We rephrase the method of \cite{Atar}'s proof here.
	
	From \eqref{Atar-system-equation0} and \eqref{Atar-diffusion-scaling}, we have
	\begin{align*}
	\diffuscal{X}(t) 
	&= \diffufactor(X(0) - N) + \diffufactor A(t) - \diffufactor \sum_{\sindex=1}^{N} D_{\sindex}(t)\\
	&= \diffuscal{X}(0) + \diffufactor (A(t) -\lambda^{\systemindex} t) + \diffufactor \lambda^{\systemindex} t - \diffufactor \sum_{\sindex=1}^{N} D_{\sindex}(t). \numberthis \label{Atar-system-equation1}
	\end{align*}
	Denote
	\begin{equation}
	\diffuscal{A}(t) = \diffufactor (A(t) -\lambda^{\systemindex} t). \label{Arrivalscaling}
	\end{equation}
	
	Proposition 3.1 in \cite{Atar} provides a way to consider departure processes $D_{\sindex}(t)$ aggregately instead of individually. In order to apply \cite[Proposition 3.1]{Atar} to $\sum_{\sindex=1}^{N} D_{\sindex}(t)$, first we need to define some notations. 
	
	Let $\epsilon$ be given and let $I \in \mathbb{N}$ and $\mu^{(i)} \in \mathbb{R}_+$, $i=1,\dots,I$ satisfying the following conditions:
	\begin{itemize}
		\item $\mu^{(1)}=0,\mu^{(I)} \geq 1$,
		\item $0<\mu^{(i)}-\mu^{(i-1)} \leq \epsilon, i=1,\dots,I$,
		\item $\int_{[\mu^{(I)},\infty)} x^2 dm \leq \epsilon$,
		\item for $i=2,\dots,I, \ \mu^{(i)}$ is a continuity point of $x \mapsto m([0,x]) \equiv P(\mu_{\sindex}^2 \leq x)$.
		\end{itemize}
		Set $\mu^{(I+1)} = \infty$, and
		\begin{equation}
		K_i = \{ \sindex \in \{ 1,2,\dots,N^{\systemindex}\}: \mu_{\sindex} \in [\mu^{(i)}, \mu^{(i+1)}) \}, i=1,2,\dots,I.
		\end{equation}
		Let $\{S^{(1)},\dots,S^{(q)}\}$ be independent standard Poisson processes. Now we are ready to use Proposition 3.1. 
		\begin{equation}
		\sum_{\sindex=1}^{N} D_{\sindex}(t) = \sum_{i=1}^I D^{(i)}(t) = \sum_{i=1}^I S^{(i)}(T^{(i)}(t)), \label{Atardeparture}
		\end{equation}
		where
		\begin{equation}
		T^{(i)}(t) = \sum_{\sindex \in K_i} \mu_{\sindex} T_{\sindex}(t). 
		\end{equation}
		Hence, by \eqref{Arrivalscaling} and \eqref{Atardeparture}, \eqref{Atar-system-equation1} can be rewritten as 
		\begin{align*}
		\diffuscal{X}(t) 
		&= \diffuscal{X}(0) + \diffuscal{A}(t) + \diffufactor \lambda^{\systemindex} t - \diffufactor \sum_{i=1}^I D^{(i)}(t) \\
		&= \diffuscal{X}(0) + \diffuscal{A}(t) + \diffufactor \lambda^{\systemindex} t - \diffufactor \sum_{i=1}^I \left( S^{(i)}(T^{(i)}(t)) - T^{(i)}(t) + T^{(i)}(t)\right). \numberthis \label{Atar-proof-1} 
		\end{align*}
		Define $W^{(i)}(t)=S^{(i)}\left(T^{(i)}\right)-T^{(i)}(t)$. Then \eqref{Atar-proof-1} becomes
		\begin{align*}
		\diffuscal{X}(t) &=\diffuscal{X}(0)+\diffuscal{A}(t)+\frac{\lambda^{\systemindex}t}{\sqrt{\systemindex}}-\sum_{i=1}^I \left(W^{(i)}(t)-\frac{T^{(i)}(t)}{\sqrt{\systemindex}}\right)\\
		&=\diffuscal{X}_0+\diffuscal{A}(t)+\frac{\lambda^{\systemindex}t}{\sqrt{\systemindex}}-\sum_{i=1}^I \left(W^{(i)}(t)-\frac{T^{(i)}(t)-\sum_{\sindex \in K_i}\mu_{\sindex} t}{\sqrt{\systemindex}}\right)+\sum_{k=1}^{N^{\systemindex}}\frac{\mu_{\sindex}}{\sqrt{\systemindex}}t. \numberthis \label{Atar-proof-2}
		\end{align*}
		Let $F^{(i)}(t)=-\frac{T^{(i)}(t)-\sum_{\sindex \in K_i}\mu_{\sindex}t}{\sqrt{r}}$. Then \eqref{Atar-proof-1} is
		\begin{align}
		\diffuscal{X}(t)= \diffuscal{X}(0)+\diffuscal{A}(t)+\frac{\lambda^{\systemindex}t}{\sqrt{\systemindex}}-\sum_{i=1}^I \left(W^{(i)}(t)+F^{(i)}(t)\right)+\sum_{\sindex = 1}^{N}\frac{\mu_{\sindex}}{\sqrt{\systemindex}}t. \label{Atar-proof-3}
		\end{align}
		To see this more clearly, let $W(t)= \diffuscal A(t)-\sum_{i=1}^I W^{(i)}(t)$ so \eqref{Atar-proof-3} is
		\begin{align*}
		\diffuscal{X}(t)
		&=\diffuscal{X}(0)+W(t)+\frac{\lambda^{\systemindex}t}{\sqrt{\systemindex}}+\sum_{i=1}^I F^{(i)}(t)+\sum_{\sindex = 1}^{N}\frac{\mu_{\sindex}}{\sqrt{\systemindex}}t\\
		&=\diffuscal{X}(0)+W(t)+\frac{\lambda^{\systemindex}t}{\sqrt{\systemindex}}+\sum_{i=1}^I F^{(i)}(t)+\sum_{\sindex=1}^{N}\frac{\mu_{\sindex}}{\sqrt{\systemindex}}t. \numberthis \label{Atar-proof-4}
		\end{align*}
		Recall that $\lambda=\bar{\mu}$, thus
		\begin{align*}
		\diffuscal{X}(t)
		&=\diffuscal{X}(0)+W(t)+\frac{\lambda^{\systemindex}t-N\lambda t-\systemindex \lambda t+ \systemindex \lambda t}{\sqrt{\systemindex}}+\sum_{i=1}^I F^{(i)}(t)+\sum_{\sindex = 1}^{N}\frac{\mu_{\sindex}-\bar{\mu}}{\sqrt{\systemindex}}t\\
		&=\diffuscal{X}(0)+W(t)+\frac{\lambda^{\systemindex}t - \systemindex\lambda t}{\sqrt{\systemindex}}+\sum_{i=1}^I F^{(i)}(t)+\sum_{\sindex=1}^{N}\left(\frac{\mu_{\sindex}-\bar{\mu}}{\sqrt{\systemindex}}t \right) + \bar{\mu}\frac{N-\systemindex}{\sqrt{\systemindex}}t\\
		&=\diffuscal{X}(0)+W(t)+\frac{\lambda^{\systemindex}t-\systemindex \lambda t}{\sqrt{\systemindex}}+\sum_{i=1}^I F^{(i)}(t)+\sum_{\sindex=1}^{N}\left(\frac{\mu_{\sindex}-\mu}{\sqrt{\systemindex}}t \right)+\bar{\mu}\diffuscal{N}t.
		\end{align*}
		Finally define $b^{\systemindex}=\frac{\lambda^{\systemindex}t-\systemindex \lambda}{\sqrt{\systemindex}}+\sum_{\sindex=1}^{N}\left(\frac{\mu_{\sindex}-\bar{\mu}}{\sqrt{\systemindex}} \right)+\bar{\mu}\diffuscal{N}$. Thus
		\begin{align}
		\diffuscal{X}(t) &=\diffuscal{X}(0)+W(t)+\sum_{i=1}^I F^{(i)}(t)+b^{\systemindex}t. \label{Atar-proof-5}
		\end{align}
		$\sum_{i=1}^I F^{(i)}(t)$ is a troublesome component which needs our particular attention. For this reason, we define
		\begin{align*}
		F(t)&=\sum_{i=1}^I F^{(i)}(t)=\sum_{i=1}^I\frac{\sum_{\sindex \in K_i}\mu_{\sindex} t-T^{(i)}(t)}{\sqrt{\systemindex}}\\
		&=\sum_{\sindex=1}^{N}\frac{\mu_{\sindex}t-T_{\sindex}(t)}{\sqrt{\systemindex}}=\frac{\int_0^t \sum_{\sindex=1}^{N}\mu_{\sindex} I_{\sindex}(s)ds}{\sqrt{\systemindex}}.
		\end{align*}
		Now define $I^{(i)}(t)$ to be the number of idle servers in pool $i$. We can write 
		\[
		I^{(i)}(t-)=D^{(i)}(t-)-D^{(i)}(H(t)-)+e_0^{(i)}(t),
		\]
		where $H(t)$ is defined in (2.16) in \cite{Atar} to be the time at which the longest idle server at $t$ became idle. So, the difference of $D_{\sindex}(t-) - D_{\sindex}(H(t)-)$ can be 1 or 0 based on whether server $\sindex$ is idle at time $t-$. Also, $e_0$ accounts for the fact that servers can be idle from 0 to $t$. This uses the fact that we are routing customers to the longest idle server first. If we use another policy, this will be the key point in proving SSC.
		
		The inequality below follows because some of the servers idle at 0 can start serving: 
		$$\sum_{i=1}^I \hat{e}_0^i(t) = \frac{\sum_{i=1}^I e_0^i(t)}{\sqrt{\systemindex}} \leq \frac{I(0)I_{\{H(t)=0\}}}{\sqrt{\systemindex}}=\diffuscal{X}(0)^-(t)I_{\{H(t)=0\}}.$$
		
		Now, we will do the following manipulation:
		\begin{align*}
		\diffuscal{I}^{(i)}(t-)&=\frac{I^{(i)}(t-)}{\sqrt{\systemindex}}\\&=\frac{D^{(i)}(t-)-D^{(i)}(H(t)-)+e_0^{(i)}(t)}{\sqrt{\systemindex}}\\
		&=\frac{S^{(i)}(T^{(i)}(t-))-S^{(i)}(T^{(i)}(H(t)-)) + e_0^{(i)}(t)}{\sqrt{\systemindex}}\\
		&=\frac{S^{(i)}(T^{(i)}(t-)) - T^{(i)}(t-) + T^{(i)}(t-) - S^{(i)}(T^{(i)}(H(t)-))-T^{(i)}(H(t)-) + T^{(i)}(H(t)-)}{\sqrt{\systemindex}}\\
		& \quad +\hat{e}_0^{(i)}(t)\\
		&=W^{(i)}(t-)-W^{(i)}(H(t)-)+\frac{T^{(i)}(t-)-T^{(i)}(H(t)-)}{\sqrt{\systemindex}}+\hat{e}_0^{(i)}(t).
		\end{align*}
		Recall $h(t)=t-H(t)$. Hence 
		\begin{align}
		\diffuscal{I}^{(i)}(t-)&=W^{(i)}(t-)-W^{(i)}(H(t)-)+\frac{T^{(i)}(t-)-T^{(i)}(H(t)-)}{\sqrt{\systemindex}}+\frac{\mu^{(i)}N^{(i)}h(t)}{\sqrt{\systemindex}}-\frac{\mu^{(i)} N^{(i)} h(t)}{\sqrt{\systemindex}}+\hat{e}_0^{(i)}(t). \label{Atar-proof-6}
		\end{align}
		To manipulate this even further, we need to realize
		\begin{align*}
		T^{(i)}(t-)-T^{(i)}(H(t)-)&=\sum_{\sindex \in K_i} \mu_{\sindex} \int_{H(t)}^t B(s)ds,\\
		N^{(i)}h(t)&=\int_{H(t)}^t(I(s)+B(s))ds.
		\end{align*}
		Now, substituting these into \eqref{Atar-proof-6},
		\begin{align*}
		\diffuscal{I}^{(i)}(t-)
		&=W^{(i)}(t-)-W^{(i)}(H(t)-)+\sum_{\sindex \in K_i}\left(\left(\mu_{\sindex} - \mu^{(i)}\right)\int_{H(t)}^t \diffuscal{B}(s)ds-\mu^{(i)}\int_{H(t)}^t \diffuscal{I}(s)ds \right)\\
		&\quad+\frac{\mu^{(i)}N^{(i)}h(t)}{\sqrt{\systemindex}}+\hat{e}_0^{(i)}(t),
		\end{align*}
		and aggregating terms gives us
		\begin{align}
		\diffuscal{I}^{(i)}(t-)&=E^{(i)}(t)+\frac{\mu^{(i)}N^{(i)}h(t)}{\sqrt{\systemindex}}. \label{Atar-proof-7}
		\end{align}
		Now, we note the following relation
		\[
		h(t)=\sqrt{\systemindex}\frac{\sum_{i=1}^I \diffuscal{I}^{(i)}(t-)-\sum_{i=1}^I E^{(i)}(t)}{\sum_{i=1}^I \mu^{i}N^{(i)}},
		\]
		then use this to write
		\begin{equation}
		\label{eq:Ihat}
		\diffuscal{I}^{(i)}(t)=E^{(i)}(t)+\frac{\mu^{(i)}N^{(i)}}{\sum_{i=1}^I \mu^{(i)}N^{(i)}} \left(\sum_{i=1}^I \diffuscal{I}^{(i)}(t-)-\sum_{i=1}^I E^{(i)}(t)\right).
		\end{equation}
		Now, let us try to rewrite $F(t)$:
		\begin{align*}
		F(t)&=\displaystyle \int_0^t\sum_{\sindex=1}^{N}\mu_{\sindex} \diffuscal{I}_{\sindex}(s)ds\\
		&=\displaystyle \int_0^t \left(\sum_{\sindex=1}^{N}\mu_{\sindex} \diffuscal{I}_{\sindex}(s)-\sum_{i=1}^I \mu^{(i)} \diffuscal{I}^{(i)}(s)+\sum_{i=1}^I \mu^{(i)} \diffuscal{I}^{(i)}(s) \right)ds\\
		&=\displaystyle \int_0^t \left(\sum_{i=1}^I \sum_{\sindex \in K_i} (\mu_{\sindex}-\mu^{(i)}) \diffuscal{I}_{\sindex}(s)+\sum_{i=1}^I \mu^{(i)}\diffuscal{I}^{(i)}(s)\right)ds. \numberthis \label{Atar-proof-8}
		\end{align*}
		We define the four $e_i$s of \cite[Lemma 3.1 (\Rnum{3})]{Atar} as follows.\\
		Let $e_1(t)=\sum_{i=1}^I \sum_{\sindex \in K^i}(\mu_{\sindex}-\mu^{(i)}) \int_0^t \diffuscal{I}_{\sindex}(s)ds$. Then \eqref{Atar-proof-8} is equal to
		\begin{align*}
		F(t)
		&=e_1(t)+\sum_{i=1}^I\mu^{(i)} \int_0^t\diffuscal{I}^{(i)}(s)ds.
		\end{align*}
		Use \eqref{eq:Ihat}, we have
		\begin{align*}
		F(t)
		&=e_1(t)+\sum_{i=1}^I\mu^{(i)}\int_0^t\left(E^{(i)}(s)+\frac{\mu^{(i)}N^{(i)}}{\sum_{i=1}^I \mu^{(i)}N^{(i)}}\left(\sum_{i=1}^I \diffuscal{I}^{(i)}(s-)-\sum_{i=1}^I E^{(i)}(s)\right)\right)ds.
		\end{align*}
		Define $e_2(t)=\sum_{i=1}^I\mu^{(i)}\int_0^t E^{(i)}(s)ds$. Then
		\begin{align*}
		F(t)
		&=e_1(t)+e_2(t)+\sum_{i=1}^I\int_0^t\left(\frac{(\mu^{(i)})^2 N^{(i)}}{\sum_{i=1}^I \mu^{(i)}N^{(i)}} \left(\sum_{i=1}^I \diffuscal{I}^{(i)}(s-)-\sum_{i=1}^I E^{(i)}(s)\right)\right)ds.
		\end{align*}
		Further define $e_3(t)=\frac{\sum_{i=1}^I(\mu^{(i)})^2N^{(i)}}{\sum_{i=1}^I \mu^{(i)}N^{(i)}}\sum_{i=1}^I\int_0^t E^{(i)}(s)ds$, so
		\begin{align*}
		F(t)
		&=e_1(t)+e_2(t)+e_3(t)+\sum_{i=1}^I\int_0^t\left(\frac{(\mu^{(i)})^2N^{(i)}}{\sum_{i=1}^I \mu^{i}N^{(i)}}\left(\sum_{i=1}^I \diffuscal{I}^{(i)}(s-)\right)\right)ds.
		\end{align*}
		Remember that $\diffuscal{I}=\sum \diffuscal{I}^{(i)}$ and add and subtract $\gamma\int_0^t \diffuscal{I}(s)ds$ to get
		\begin{align*}
		F(t)&=e_1(t)+e_2(t)+e_3(t)+e_4(t)+\gamma\int_0^t\diffuscal{I}(s)ds.
		\end{align*}
		Notice that $e_4(t)=\left(\frac{\sum_{i=1}^I (\mu^{(i)})^2N^{(i)}}{\sum_{i=1}^I \mu^{(i)}N^{(i)}}-\gamma\right)\int_0^t\diffuscal{I}(s)ds$, where $\gamma=E(\mu_1^2)/E(\mu_1)$.
		
		Summarising all of these gives
		\[
		\diffuscal{X}(t)=\diffuscal{X}(0)+W(t)+bt+\gamma\int_0^t\diffuscal{X}(s)^-ds+\sum_{i=1}^4 e_i(t).
		\]
		
		First, we use Gronwall's inequality:
		\begin{equation}
		||\diffuscal{X}-\xi||_{t}\leq (|\diffuscal{X}(0)-\xi(0)|+|b-\beta|+||W-\sigma w||_{t}+||e||_{t})\exp(\gamma t). \label{Gronwall's inequality}
		\end{equation}

Now we need to analyse these terms separately:
\begin{enumerate}
	\item $\diffuscal{X}(0)$ is simply the usual Central Limit Theorem applied to the initial random variables. 
			
	\item $W(t)=\hat{A}(t)-\sum_{i=1}^IW^{(i)}(t)$
\begin{enumerate}
\item $\hat{A}(t)\Rightarrow B_1(t)$, where $B_1(t)$ is a Brownian Motion with 0 drift and diffusion coefficient $\sqrt{\lambda}C_{\check{U}}$ using the Functional Central Limit Theorem for renewal processes.
				
\item $W^{(i)}(t)=S^{(i)}\left(T^{(i)}(t)\right)-T^{(i)}(t)$. By \cite[Lemma 3.1(\Rnum{2})]{Atar}, we have $r^{-1} T^{(i)} \to \rho_i t$, where $\rho_i=\int_{\mu^{(i)}}^{\mu^{(i+1)}} x dm$. Again using the FCLT for renewal processes, $S^{(i)}(t)-t$ converges to a standard Brownian motion. Then replace $T^{(i)}(t)$ with $t$. By the random change of time, we show that $W^{(i)}(t)$ weakly converges to Brownian motion $B_2(t)$ with zero mean and diffusion coefficient $\sqrt{\rho_i}$.
				
\end{enumerate}
Summarising (a) and (b), we show that $W(t)$ converges weakly to $\sigma w$, where $w$ is a standard Brownian motion and $\sigma^2 = \lambda C^2_{\check{U}} + \mu$.
\item
For $j=1,2,3,4$, $||e_j||_{\theta} \to 0$ in probability as $\systemindex \to \infty$. This is shown in \cite[Lemma 3.1 (\Rnum{3})]{Atar}.
\end{enumerate}
				
By steps 1, 2 and 3, the right hand side of equation \eqref{Gronwall's inequality} converges to zero uniformly on a compact set, thus $\diffuscal{X}(t) \Rightarrow \xi(t)$ as required.
				
\end{proof}

\label{Appendix-Atar-proof}

\chapter{Proofs in Chapter 4}
\section{Proof of fluid limits in Theorem \ref{fluidlimit}}
\begin{proof}[Proof of Theorem \ref{fluidlimit}]
	\vspace{-2mm}
	The proof is similar to that in Theorem B.1 in \cite{Dai}. Specifically, the proof of the precompactness of $\fluidscal{A}, \fluidscal{A}_q, \fluidscal{A}_s, \fluidscal{Q}$, and $ \fluidscal{C}$ is the same as in Theorem B.1 in \cite{Dai}, and equations \eqref{fluidequ1}, \eqref{fluidequ2}, \eqref{fluidequ7}, \eqref{fluidequ8}, and \eqref{fluidequ9} are also proved in the same paper, so in the rest of the proof we use these results directly and focus on the precompactness of $\fluidscal{Z}, \fluidscal{T}, \fluidscal{I}$ and equations \eqref{fluidequ3}, \eqref{fluidequ4}, and \eqref{fluidequ6} here.\par
	First we prove the precompactness. Assume \eqref{No.servers} and \eqref{arriallimits} hold. With a slight abuse of notation, consider a sequence of numbers that is denoted as $\{r\}$. We show that $\{ \fluidscal{\mathbb{X}}^r(\cdot, \omega) \}$ has a convergent subsequence, for all $\omega \in \mathscr{A}$. Fix $\omega$ in the rest of the proof. One can observe that
	\begin{equation*}
	\left\vert \frac{T^r(t_2, \omega)}{N^r} - \frac{T^r(t_1, \omega)}{N^r} \right\vert \leq |t_2 - t_1|,
	\end{equation*}
	for all $0 \leq t_1 \leq t_2$. Hence $\{\fluidscal{T}^r(\cdot, \omega)\}$ is uniformly bounded and uniformly continuous, which means, by \cite[Theorem 12.3]{Billingsley}, that there exists a subsequence $\{r_l\}$ such that $\fluidscal{T}^{r_l}(\cdot, \omega)$ converges u.o.c. to some continuous function $\fluidscal{T}$.\par
	We define the fluid scaled total idle process for the $i$th server pool by
	\begin{equation}
	\fluidscal{I}^r_i(t) = \frac{N^r_i}{|N^r|}t - \sum_{j \in \mathcal{J}(i)}\fluidscal{T}^r_{ij}(t).
	\end{equation} 
	Then obviously $\fluidscal{I}^{r_l}_i(\cdot)$ is precompact.\par
	For departure processes, we need to treat each server individually. Since 
	$$\fluidscal{D}^{r_l}_{ij\sindex}(t) = \frac{1}{|N^r|} S_{ij\sindex}(\mu_{ij\sindex}|N^r| \fluidscal{T}^{r_l}_{ij\sindex}(t)),$$ we have
	\begin{align*}
	\fluidscal{D}^{r_l}_{ij }(t) 
	&= \sum_{\sindex = 1}^{N^r_i} \fluidscal{D}^{r_l}_{ij\sindex}(t) = \frac{1}{|N^r|} \sum_{\sindex = 1}^{N^r_i} S_{ij\sindex}(|N^r| \mu_{ij\sindex} \fluidscal{T}^{r_l}_{ij\sindex}(t)) \overset{d}{=} \frac{1}{|N^r|} S\left( |N^r| \sum_{\sindex = 1}^{N^r_i} \mu_{ij\sindex}  \fluidscal{T}^{r_l}_{ij\sindex}(t)\right)
	\end{align*}
	where $S$ is a standard Poisson process. $\overset{d}{=}$ means equal in distribution and it is a basic statement of the superposition of Poisson processes.\par
	To see the convergence of $\fluidscal{D}^{r_l}_{ij}(t)$, fix $L > 0$, and consider
	\begin{align*}
	&\left\vert \sum_{\sindex = 1}^{N^r_i} \mu_{ij\sindex} \fluidscal{T}^{r_l}_{ij\sindex}(t) -  \bar{\mu}_{ij} \fluidscal{T}_{ij}(t) \right\vert\\
	& = \left\vert \sum_{\sindex = 1}^{N^r_i} \left( \mu_{ij\sindex} \fluidscal{T}^{r_l}_{ij\sindex}(t) - \bar{\mu}_{ij}\fluidscal{T}^{r_l}_{ij\sindex}(t) \right) +  \sum_{\sindex = 1}^{N^r_i} \left( \bar{\mu}_{ij} \fluidscal{T}^{r_l}_{ij\sindex}(t) - \bar{\mu}_{ij} \fluidscal{T}_{ij}(t)\frac{1}{N^r_i} \right) \right\vert\\
	&= \left\vert \frac{1}{|N^r|} \sum_{\sindex = 1}^{N^r_i} \left( \mu_{ij\sindex} - \bar{\mu}_{ij} \right) T^{r_l}_{ij\sindex}(t) + \bar{\mu}_{ij} \sum_{\sindex = 1}^{N^r_i}  \left( \fluidscal{T}^{r_l}_{ij\sindex}(t) - \fluidscal{T}_{ij}(t) \frac{1}{N^r_i} \right) \right\vert\\
	& \leq \left\vert \frac{N^r_i}{|N^r|} \frac{1}{N^r_i} \sum_{\sindex = 1}^{N^r_i} \left( \mu_{ij\sindex} - \bar{\mu}_{ij} \right) t \right\vert + \left\vert \bar{\mu}_{ij}  \left( \fluidscal{T}^{r_l}_{ij}(t) - \fluidscal{T}_{ij}(t) \right) \right\vert. \numberthis \label{depaconv1}
	\end{align*}
	By the Law of Large Numbers, the first term in \eqref{depaconv1} converges u.o.c. to $0$, and the second term also converges u.o.c. to $0$ as proved already. This means $\sum_{\sindex = 1}^{N^r_i} \mu_{ij\sindex} \fluidscal{T}^{r_l}_{ij\sindex}(\cdot)$ converges u.o.c. to $ \bar{\mu}_{ij} \fluidscal{T}_{ij}(\cdot)$. Then using \cite[Lemma 11]{ata2005heavy} and the Functional Strong Law of Large Numbers we can get
	\begin{equation}
	\fluidscal{D}^{r_l}_{ij}(\cdot) \text{ converges u.o.c. to } \fluidscal{D}_{ij}(\cdot), \label{fluidDconv}
	\end{equation}
	where $\fluidscal{D}_{ij}(t) = \bar{\mu}_{ij} \fluidscal{T}(t)$. By the precompactness of $\fluidscal{A}^{r_l}_s, \fluidscal{C}^{r_l}$ and \eqref{fluidDconv}, and the process equation \eqref{originprocess3}, $\fluidscal{Z}^{r_l}(\cdot, \omega)$ is precompact.\par
	Next we show that every fluid limit satisfies \eqref{fluidequ3}-\eqref{fluidequ6}. \eqref{fluidequ4}-\eqref{fluidequ6} are trivial. For \eqref{fluidequ4}, let $\fluidscal{\mathbb{X}}$ be a fluid limit and for notational convenience assume that
	\begin{equation}
	\fluidscal{\mathbb{X}}^r (\cdot, \omega) \to \fluidscal{\mathbb{X}} \ \ \ \text{u.o.c. as } r \to \infty \text{ for some } \omega \in \mathscr{A}. \label{fluidconv}
	\end{equation}
	Then equation \eqref{fluidequ3} follows from \eqref{fluidDconv}, the convergence of $\fluidscal{Z}^r(0, \omega), \fluidscal{A}^r_s(\cdot, \omega)$, and $\fluidscal{C}^r(\cdot, \omega)$. 
\end{proof}
\label{Appendix-fluidlimit}

\vspace{-4mm}
\section{Proof of \eqref{prop1equ3}}
\begin{proof}[Proof of \eqref{prop1equ3}]
	By setting $\epsilon=1, t_2=L$ and $t_1=0$ in \eqref{mequ0}, and since $D^r_{ij}(t)$ has the same probability distribution as $\breve{D}^r_{ij}(t)$, we have that
	\begin{equation}
	P \left\lbrace D^r_{ij} \left( \frac{\sqrt{x_{r,0}}}{|N^r|}L\right) \geq 2NL\sqrt{x_{r,0}} \right\rbrace \leq \frac{\epsilon}{\sqrt{|N^r|}}.
	\end{equation}
	Then for $r$ large enough, 
	\begin{equation*}
	D^r_{ij} \left( \frac{\sqrt{x_{r,0}}}{|N^r|}L\right) +1 \leq 3NL\sqrt{x_{r,0}}.
	\end{equation*}
	Also notice that $D^r_{ij \sindex}(t) \leq D^r_{ij}(t)$ for all $t \geq 0$ and all $a=1,2,\dots,|N^r|$, we have that
	\begin{equation}
	D^r_{ij \sindex} \left( \frac{\sqrt{x_{r,0}}}{|N^r|}L\right) +1 \leq 3NL\sqrt{x_{r,0}}.
	\end{equation}
	Let $e = 0$ or $1$. It follows from \cite[Proposition 4.2]{Bramson} that, for large enough $n$,
	\begin{equation}
	P \left\lbrace \left\Vert V_{ij \sindex}(l) - \frac{l}{\mu_{ij \sindex}} \right\Vert_n \geq \epsilon n \right\rbrace \leq \frac{\epsilon}{n}.
	\end{equation}
	By setting $n = 3NL\sqrt{x_{r,0}}$, we get
	\begin{equation}
	P \left\lbrace \left\Vert V_{ij \sindex}(D^r_{ij \sindex}(t) + e) - \frac{D^r_{ij \sindex}(t)}{\mu_{ij \sindex}} \right\Vert_{(\sqrt{x_{r,0}}/|N^r|)L} \geq 3NL\sqrt{x_{r,0}} \epsilon \right\rbrace \leq  B_2 \frac{\epsilon}{\sqrt{|N^r|}},
	\end{equation}
	for $B_2 \geq 2/(3NL)$. By enlarging $\epsilon$ appropriately, we get, for $\tilde{b} = (1,0)$ or $(0,0)$,
	\begin{equation}
	P \left\lbrace \left\Vert V^{r,0}_{ij \sindex}(D^{r,0}_{ij \sindex}(t) , \tilde{b}) - \frac{D^{r,0}_{ij \sindex}(t)}{\mu_{ij \sindex}} \right\Vert_{L} \geq \epsilon \right\rbrace \leq \frac{\epsilon}{\sqrt{|N^r|}}.
	\end{equation}
	Multiplying the error bound $\lceil \sqrt{|N^r|}T \rceil $ and enlarging $\epsilon$ appropriately, we obtain 
	\begin{equation}
	P \left\lbrace \max_{m<\sqrt{|N^r|}T} \left\Vert V^{r,m}_{ij \sindex}(D^{r,m}_{ij \sindex}(t) , \tilde{b}) - \frac{D^{r,0}_{ij \sindex}(t)}{\mu_{ij \sindex}} \right\Vert_{L} \geq \epsilon \right\rbrace \leq \epsilon. \label{V1}
	\end{equation}
	For $b=(0,1)$ and $\tilde{b}=(0,0)$, by \eqref{srvscaling},
	\vspace{-4mm}
	\begin{align*}
	P &\left\lbrace \max_{m<\sqrt{|N^r|}T} \left\Vert V^{r,m}_{ij \sindex} (D^{r,m}_{ij \sindex}, \tilde{b}) - V^{r,m}_{ij \sindex}(D^{r,m}_{ij \sindex}(t),b) \right\Vert_L \geq \epsilon \right\rbrace\\
	&= P \left\lbrace \max_{m<\sqrt{|N^r|}T} \left\vert V_{ij \sindex} \left( D^r_{ij \sindex} \left( \frac{m}{\sqrt{|N^r|}} \right) \right) - V_{ij \sindex} \left( D^r_{ij \sindex} \left( \frac{m}{\sqrt{|N^r|}} \right) + 1 \right) \right\vert \geq \sqrt{x_{r,m}} \epsilon \right\rbrace. \numberthis \label{V2}
	\end{align*}
	Observe that, by \eqref{srvtinequa}, $V_{ij \sindex}(D^r_{ij \sindex}(m/\sqrt{|N^r|})) \leq |N^r|T$ and, by Lemma \ref{lemma-first-residual-time},
	\begin{equation}
	P \{ v_{ij \sindex}^{r,T,\max} \geq \sqrt{x_{r,m}}\epsilon \} \leq \epsilon \label{V3}
	\end{equation}
	for large enough $r$. Thus, we get \eqref{prop1equ3} by combining \eqref{srvtinequa} with \eqref{V1}-\eqref{V3}.
\end{proof}
\label{Appendix-Proof-of-prop1equ3}
\vspace{-5mm}
\section{Proof of Proposition \ref{prop5.3}}
\begin{proof}
	We use the bounds established in Proposition \ref{prop5.1}. Fix $L$, $T$ and $\epsilon > 0 $. Let $\mathscr{V}^r$ be the intersection of the complements of the events given in \eqref{prop1equ1}-\eqref{prop1equ3}, so $P\{ \mathscr{V}^r > 1 - \epsilon \}$. We show that for $r$ large enough and all $\omega \in \mathscr{V}^r$
	\vspace{-5mm}
	\begin{equation}
	\max_{m<\sqrt{|N^r|}T} \sup_{0 \leq t_1 \leq t_2 \leq L} |\mathbb{X}^{r,m}(t_2) - \mathbb{X}^{r,m}(t_1)| \leq \tilde{N} |t_2 - t_1| + \epsilon \label{Lipgeneral}
	\end{equation}
	for some $\tilde{N}$ that depends only on $\lambda$. We fix $\omega \in \mathscr{V}^r$ for the rest of the proof and so omit it from the notation. Let $t_1, t_2 \in [0,T]$ and $m \geq 0$. We first show that
	\begin{equation}
	|C^{r,m}(t_2) - C^{r,m}(t_1)| \leq N_0|t_2 - t_1| + \epsilon \label{LipC}
	\end{equation}
	for some $N_0 > 0$. Because $C^{r,m}_{ij}$ is nondecreasing, we have, by \eqref{generalscaling} and \eqref{process3}, that
	\begin{equation}
	0 \leq C^{r,m}_{ij}(t_2) - C^{r,m}_{ij}(t_1) \leq D^{r,m}_{ij}(t_2) - D^{r,m}_{ij}(t_1). \label{LiprelationCD}
	\end{equation}
	Then \eqref{LipC} is immediately satisfied from \eqref{prop1equ2}, and $N_0 = N$. Combining \eqref{LiprelationCD} with \eqref{process3} yields
	\begin{equation}
	|Z^{r,m}_{ij}(t_2) - Z^{r,m}_{ij}(t_1)| \leq 2 \left\vert D^{r,m}_{ij }(t_2) - D^{r,m}_{ij}(t_1)\right\vert +   |A_{sij}^{r,m}(t_2) - A_{sij}^{r,m}(t_1)|.
	\end{equation}
	By \eqref{prop1equ1}, $|A^{r,m}_i(t_2) - A^{r,m}_i(t_1)| < 2|\lambda||t_2 - t_1| + \epsilon$ for $r$ large enough. By setting $N_1 = 2N_0 + 2|\lambda|$, and using \eqref{prop1equ2}, we get
	\begin{equation}
	|Z^{r,m}(t_2) - Z^{r,m}(t_1)| \leq N_1 |t_2 -t_1| + \epsilon.
	\end{equation}
	Combining the results above with \eqref{process2} gives
	\begin{equation}
	|Q^{r,m}(t_2) - Q^{r,m}(t_1)| \leq N_2|t_2 - t_1| + \epsilon,
	\end{equation}
	for $N_2 = N_0 + 2|\lambda|$. Note that $N_1 \geq N_2$. Also, for $r$ large enough, by \eqref{timeshift}, \eqref{generalscaling}, and the fact that $T^r_{ij}(t) = \sum_{\sindex = 1}^{N^r_i} T^{r}_{ij \sindex}(t)$, 
	\begin{equation}
	|T^{r,m}_{ij}(t_2) - T_{ij}^{r,m}(t_1)| \leq 2 \beta_i |t_2 - t_1|.
	\end{equation}
	Note that, by definition of $\mathscr{V}^r$, the inequalities above hold for all $m < \sqrt{|N^r|}T.$ This shows that \eqref{Lipgeneral} holds for $r$ large enough with $\tilde{N} = N_1 \vee 2 $.
	
	Summarise the discussion above, processes $$\{\mathbb{X}^{r,m} = (A^{r,m}, A_s^{r,m}, A_q^{r,m}, Q^{r,m}, Z^{r,m}, C^{r,m}, T^{r,m}, D^{r,m})\}$$ are almost Lipschitz.
	\end{proof}
\label{Appendix-prop5.3}
\vspace{-6mm}
\section{Proof of Proposition \ref{HDSSCfunction}}
\begin{proof}
	The proof is similar to that of Proposition 5.5 in \cite{Dai}. Fix $L>0$ and let $\tilde{\mathbb{X}}$ be a hydrodynamic limit of $\mathscr{E}$. By the definition of hydrodynamic limit, $|\tilde{\mathbb{X}}(0)| \leq 1$. By Proposition \ref{mainbridge}, $\tilde{\mathbb{X}}$ satisfies the hydrodynamic model equations \eqref{HDfirst}-\eqref{HDlast} on $[0,L]$, thus by \eqref{HDfirst}, \eqref{HDmodel2}, and \eqref{HDmodel4}, one can easily find a $R_L$ such that
	\begin{equation}
	||\tilde{\mathbb{X}}(t)||_L \leq R_L. \label{HDSSC1}
	\end{equation}
	Fix $\epsilon >0$. Since $g$ is continuous, there exists $\delta>0$ such that 
	\begin{equation}
	|g(x)-g(y)|<\epsilon \label{HDSSC2}
	\end{equation}
	if $|x-y|<\delta$ and $x,y \in [-2R_L, 2R_L]$.\par
	By Corollary \ref{convergencecorollary}, fix $T>0$, and also fix $0<\delta<R_L$. For $\omega \in \mathscr{K}^r$, any $m < \sqrt{|N^r|}T$, and choosing $r$ large enough, there exists a hydrodynamic limit $\tilde{\mathbb{X}}$ such that
	\begin{equation}
	|| \mathbb{X}^{r,m}(t) - \tilde{\mathbb{X}}(t) ||_L \leq \delta. \label{HDSSC3}
	\end{equation}
	Together with \eqref{HDSSC1}, we have
	\begin{equation}
	||\mathbb{X}^{r,m}(t)||_L \leq || \mathbb{X}^{r,m}(t) - \tilde{\mathbb{X}}(t) ||_L + || \tilde{\mathbb{X}}(t) ||_L \leq 2R_L. \label{HDSSC4}
	\end{equation}
	Thus by \eqref{HDSSC1}, \eqref{HDSSC2}, \eqref{HDSSC4}, and Assumption \ref{SSCcondition}, $\forall t \in [0,L]$,
	\begin{align*}
	g(Q^{r,m}(t),Z^{r,m}(t)) 
	&= g(Q^{r,m}(t),Z^{r,m}(t)) - g(\tilde{Q}(t),\tilde{Z}(t)) + g(\tilde{Q}(t),\tilde{Z}(t))\\
	&\leq \epsilon + H(t), \label{HDSSC5}
	\end{align*}
	which is \eqref{HDfuncconverge1}.\par
	\eqref{HDfuncconverge2} is obtained similarly. Let $\tilde{\mathbb{X}}$ be a hydrodynamic limit of $\mathscr{E}_g$. Then there exists a subsequence $r_k$ of $r$ such that 
	\begin{equation}
	||\mathbb{X}^{r_k,0}(t) - \tilde{\mathbb{X}}(t)||_L \to 0, \label{HDSSC6}
	\end{equation}
	as $k \to \infty$. By the definition of $\mathscr{E}_g$, and the fact that $g(Q^{r_k,0}(0),Z^{r_k,0}(0)) \to 0$, combined with \eqref{HDSSC6} and the continuity of $g$, we have $g(\tilde{Q}(0),\tilde{Z}(0))=0$, thus, by the last statement of Assumption \ref{SSCcondition}, 
	\begin{equation}
	g(\tilde{Q}(t),\tilde{Z}(t)) = 0 \mbox{ for any } t \geq 0. \label{HDSSC7}
	\end{equation}
	Similar to \eqref{HDSSC3}, for $\omega \in \mathscr{L}^r$, and $r$ large enough, we can find a hydrodynamic limit $\tilde{\mathbb{X}}$ in $\mathscr{E}_g$ such that
	\begin{equation*}
	|| \mathbb{X}^{r,m}(t) - \tilde{\mathbb{X}}(t) ||_L \leq \delta, \label{HDSSC8}
	\end{equation*}
	and by \eqref{HDSSC7} and \eqref{HDSSC1}
	\begin{equation*}
	g(Q^{r,0}(0),Z^{r,0}(0)) \leq \epsilon.
	\end{equation*}
	\end{proof}
\label{Appendix-HDSSCfunction}

\bibliography{notes}
\end{document}